\documentclass[a4paper,10pt]{article}
\pagestyle{headings}
\usepackage[vlined]{algorithm2e}
\usepackage[symbol]{footmisc}

    %\makeatletter
%\def\@fnsymbol#1{\ensuremath{\ifcase#1\or \dagger\or \ddagger\or
  % \mathsection\or \mathparagraph\or \|\or **\or \dagger\dagger
  % \or \ddagger\ddagger \else\@ctrerr\fi}}
   % \makeatother

\SetFuncSty{textsc}
\SetKwFunction{frun}{Run}
\SetKwHangingKw{arun}{\frun}
\SetKwFunction{fset}{Set}
\SetKwHangingKw{aset}{\fset}
\SetKwFunction{fselect}{Select}
\SetKwHangingKw{aselect}{\fselect}
\SetKwFunction{fcompute}{Compute}
\SetKwHangingKw{acompute}{\fcompute}
\SetKwFunction{fsolve}{Solve}
\SetKwHangingKw{asolve}{\fsolve}
\SetKwFunction{festimate}{Estimate}
\SetKwHangingKw{aestimate}{\festimate}
\SetKwFunction{fmark}{Mark}
\SetKwHangingKw{amark}{\fmark}
\SetKwFunction{frefine}{Refine}
\SetKwHangingKw{arefine}{\frefine}
\SetKwFunction{compute}{Compute}
\SetKwFunction{set}{Set}

{\algorithm}%
{\endalgorithm}% Figure/float layout

%\title{
%}

\title{Parameter dependent finite element analysis for  ferronematics solutions
	%\\
%{	\large (Confidential research document)}
}
\author{\stepcounter{footnote} \stepcounter{footnote} \stepcounter{footnote} Ruma Rani Maity\footnote{
		Department of Mathematics, Indian Institute of Technology Bombay, Powai, Mumbai 400076, India. Email. ruma@math.iitb.ac.in} $\;\;$
	Apala Majumdar* \footnote{ Department of Mathematics And Statistics, University of Strathclyde, 16 Richmond St, Glasgow G1 1XQ, United Kingdom. Visiting Professor,  Indian Institute of Technology Bombay, Powai, Mumbai 400076, India. Email. apala.majumdar@strath.ac.uk}        
	$\;\;$ Neela Nataraj\footnote{Department of Mathematics, Indian Institute of Technology Bombay, Powai, Mumbai 400076, India. Email. neela@math.iitb.ac.in}
}

%\date{\vspace{-5ex}}
\usepackage{amsmath,amsthm,amssymb,enumerate, amsbsy}
%For degree symbol
\usepackage[T1]{fontenc}
\usepackage{gensymb}
\usepackage[sc]{mathpazo}
\usepackage{multirow} 
\usepackage{newtxtext,newtxmath}
\usepackage{subfig}
\usepackage{graphicx}
\usepackage{epstopdf}
\usepackage{hyperref}
\usepackage{cancel}
\usepackage[margin=3cm]{geometry}
\usepackage{tikz,pstricks}

\usepackage{epsfig}
 \usepackage{pst-grad} % For gradients
 \usepackage{pst-plot} % For axes
 \usepackage[space]{grffile} % For spaces in paths
 \usepackage{etoolbox} % For spaces in paths
 \makeatletter % For spaces in paths
 \patchcmd\Gread@eps{\@inputcheck#1 }{\@inputcheck"#1"\relax}{}{}
\usepackage[titletoc]{appendix}
\usepackage{caption}
\usetikzlibrary{shapes,calc}
\usepackage{verbatim}
\usepackage{mathrsfs}
\usepackage{accents}
\usepackage[utf8]{inputenc}
\usepackage{float}

\restylefloat{table}

\usetikzlibrary{decorations.pathmorphing}
\usetikzlibrary{decorations.pathreplacing}
\usetikzlibrary{positioning}
\usetikzlibrary{shapes}
\usetikzlibrary{arrows}
\usetikzlibrary{patterns}
\usetikzlibrary{fadings}
\usetikzlibrary{plotmarks}
\usetikzlibrary{calc}
\usetikzlibrary{intersections}
\tikzstyle{every picture}+=[font=\footnotesize]
\usepackage{paralist}
% Misc packages
%
\usepackage{bbm}
\usepackage{latexsym}           %benoetigt fuer \Box am Beweisende
\usepackage{enumerate}
\usepackage{enumitem}

\setlist{noitemsep, topsep=0.8ex, partopsep=0pt%, parsep=0pt, itemsep=0pt
	, leftmargin=3em}
\setlist[1]{labelindent=\parindent}

\newlist{axioms}{enumerate}{1}
\setlist[axioms]{font=\bfseries}

\newlist{alphenum}{enumerate}{1}
\setlist[alphenum]{label=\textbf{(\alph*)}, leftmargin=4em}

\newlist{alphienum}{enumerate}{1}
\setlist[alphienum]{label=\textit{(\alph*)}}

\newlist{romanenum}{enumerate}{1}
\setlist[romanenum]{label=\textit{(\roman*)}}

\newlist{romaninenum}{enumerate*}{1}
\setlist[romaninenum]{label=\textit{(\roman*)}}
\usepackage[vlined]{algorithm2e}
\SetKwIF{If}{ElseIf}{Else}{if}{}{else if}{else}{endif}
\SetKwFor{For}{for}{}{endfor}
\usepackage[noabbrev, capitalise]{cleveref}
\usepackage{tabu}
\tabulinesep=0.5ex
\crefname{equation}{\unskip}{\unskip}
\creflabelformat{equation}{#2(#1)#3}
\newtheorem{thm}{Theorem}[section]

\newtheorem{lem}[thm]{Lemma}

\newtheorem{prop}[thm]{Proposition}

\theoremstyle{definition}

\newtheorem{defn}{Definition}[section]

\theoremstyle{remark}

\newtheorem{rem}[thm]{Remark}

\numberwithin{equation}{section}

\usepackage[many]{tcolorbox}
\usetikzlibrary{shadows}

\newtcolorbox{shadedbox}{
	drop shadow southeast,
	breakable,
	enhanced jigsaw,
	colback=white,
}

%\usepackage{etoolbox}
%\AtEndEnvironment{rem}{\null\hfill\qedsymbol}
\usepackage{listings}
\usepackage{xcolor}
\definecolor{codegreen}{rgb}{0,0.6,0}
\definecolor{codegray}{rgb}{0.5,0.5,0.5}
\definecolor{codepurple}{rgb}{0.58,0,0.82}
\definecolor{backcolour}{rgb}{0.95,0.95,0.92}
\lstdefinestyle{mystyle}{
	backgroundcolor=\color{backcolour},   
	commentstyle=\color{codegreen},
	keywordstyle=\color{magenta},
	numberstyle=\tiny\color{codegray},
	stringstyle=\color{codepurple},
	basicstyle=\ttfamily\footnotesize,
	breakatwhitespace=false,         
	breaklines=true,                 
	captionpos=b,                    
	keepspaces=true,                 
	numbers=left,                    
	numbersep=5pt,                  
	showspaces=false,                
	showstringspaces=false,
	showtabs=false,                  
	tabsize=2
}

\lstset{style=mystyle}

\newcommand{\bTheta}{{\Theta}}

\newcommand{\Qvec}{\mathbf{Q}}
\newcommand{\Mvec}{\mathbf{M}}

\newcommand{\e}{\mathbf{e}}

\newcommand{\V}{\mathbf{V}}
\newcommand{\X}{\mathbf{X}}
\newcommand{\h}{\mathbf{H}}

\newcommand{\abs}[1]{\left\lvert#1\right\rvert}
\newcommand{\dx}{\,{\rm dx}}

\newcommand{\ds}{\,{\rm ds}}

\newcommand{\norm}[1]{{\vert\kern-0.25ex\vert #1 
		\vert\kern-0.25ex \vert}}
\newcommand{\vertiii}[1]{{\vert\kern-0.25ex\vert\kern-0.25ex\vert #1 
		\vert\kern-0.25ex \vert\kern-0.25ex \vert}}
\newcommand{\vertiiih}[1]{{\vert\kern-0.25ex \vert\kern-0.25ex \vert #1 
		\vert\kern-0.25ex \vert\kern-0.25ex \vert}_{h}}
\newcommand{\verti}[1]{{\vert\kern-0.25ex \vert\kern-0.25ex \vert #1 
		\vert\kern-0.25ex \vert\kern-0.25ex \vert}_{1}}	
	\newcommand{\vertiiii}[1]{{\vert\kern-0.25ex\vert\kern-0.25ex\vert #1 
			\vert\kern-0.25ex \vert\kern-0.25ex \vert}}
		
	\newcommand{\vertii}[1]{{ \vert\kern-0.25ex \vert\kern-0.25ex \vert #1 
		 \vert\kern-0.25ex \vert\kern-0.25ex \vert}_{1}}	
	\newcommand{\vertiiiih}[1]{{ \vert\kern-0.25ex \vert\kern-0.25ex \vert #1 
			 \vert\kern-0.25ex \vert\kern-0.25ex \vert}_{h}}
\newcommand{\vertiiidg}[1]{{\vert\kern-0.25ex \vert\kern-0.25ex \vert #1 
		\vert\kern-0.25ex \vert\kern-0.25ex\vert}_{h}}

\newcommand{\dual}[1]{\langle #1 \rangle}

\theoremstyle{plain}

\begin{document}
		\maketitle
		\begin{abstract}
			
	\medskip
	
	\noindent	This paper focuses on the analysis of a free energy functional, that models a dilute suspension of magnetic nanoparticles in a two-dimensional nematic well.  %study of global minimizers of the energy functional that models a dilute suspension of magnetic nano-particles in a nematic filled micron sized wells in two dimensional geometry. 
	The {\it first part} of the article is devoted to the asymptotic analysis of global energy minimizers in the limit of vanishing elastic constant, $\ell \rightarrow  0$ where the re-scaled elastic constant $\ell$ is inversely proportional to the domain area. %discusses the limit of small rescaled elastic constant, $\ell \propto \frac{1}{\text{domain length}^2},$ 
	The first results concern the strong $H^1$-convergence and a $\ell$-independent $H^2$-bound for the global minimizers on smooth bounded 2D domains, with smooth boundary and topologically trivial Dirichlet conditions. %$\mathbf{g}$ is smooth on the domain boundary $\partial \Omega$ with deg$(\mathbf{g}, \partial \Omega)=0.$ 
	The {\it second part} focuses on the discrete approximation of regular solutions of the corresponding non-linear system of partial differential equations with cubic non-linearity and non-homogeneous Dirichlet boundary conditions. We establish (i) the existence and local uniqueness of the discrete solutions using fixed point argument, (ii) a best approximation result in energy norm, (iii) error estimates in the energy and $L^2$ norms with $\ell $- discretization parameter dependency  for the conforming finite element method.  Finally, the theoretical results are complemented by numerical experiments on the discrete solution profiles, the numerical convergence rates that corroborates the theoretical estimates, followed by plots that illustrate the dependence of the discretization parameter on $\ell$. 	\end{abstract}
		\noindent {\bf Keywords:} ferronematics, composite system energy optimization,  convergence of minimizers, finite element method, error estimates, numerical experiments
		
		\section{Int\(  \)roduction}
		Nematic liquid crystals (NLCs) are classical examples of partially ordered materials that combine fluidity with the directional order of crystalline solids \cite{dg}. NLCs have long-range orientational order i.e. there are distinguished material directions, referred to as \emph{nematic directors} such that the NLC properties are different in different directions e.g. along the directors. The anisotropic or direction-dependent NLC response to incident light and electric fields make them the working material of choice for the multi-billion dollar liquid crystal display (LCD) industry \cite{scalia_review}. However, NLC devices largely rely on their dielectric anisotropy i.e. the NLC response to external electric fields depends on whether the electric field is parallel or non-parallel to the nematic directors \cite{dg}. NLC devices rarely use external magnetic fields since the magnetic anisotropy is typically much smaller than the NLC dielectric anisotropy, so that NLC responses to external magnetic fields are very weak \cite{Brochard_DE_Gennes_1970}. In the 1970's, Brochard and de Gennes \cite{Brochard_DE_Gennes_1970} proposed that a suspension of magnetic nanoparticles in a nematic host could generate a spontaneous magnetization at room temperature, without any external field, substantially enhancing the NLC responses to external magnetic fields. These composite systems have been labelled as \emph{ferronematics} in the literature. In 2013, Mertelj et al. \cite{Mertelj_Lisjak_Drofenik_Copic_2013} experimentally designed  stable ferronematic suspensions using barium hexaferrite (BaHF)
		magnetic nanoplatelets in pentylcyano-biphenyl (5CB)
		liquid crystals. In ferronematic suspensions, the nematic director is coupled to the suspended magnetic nanoparticles through surface interactions, so that the nematic director influences the magnetic moments of the nanoparticles and vice-versa. In fact, this nemato-magnetic coupling induces averaged orientations of the suspended nanoparticles, leading to a spontaneous magnetization, in addition to the ambient nematic order. Consequently, this nemato-magnetic coupling strongly enhances the optical and magnetic responses of this composite system, making them attractive for 
		novel display devices, sensors \cite{Cirtoaje_Petrescu_Stan_Creanga_2016},
		telecommunications \cite{Mertelj_Lisjak_2017}, and potentially pharmaceutical applications too.
		
		In this paper, we model a dilute suspension of magnetic nanoparticles in a NLC-filled two-dimensional (2D) domain, with tangent boundary conditions. The tangent boundary conditions require the nematic directors to be tangent to the boundary e.g. for a square domain, the director is tangent to the square edges, naturally creating some sort of mismatch or discontinuity at the square vertices with two intersecting edges. The domain is typically on the micron scale, and the volume fraction of the suspended nanoparticles is small such that the distance between the nanoparticles is usually large compared to the typical nanoparticle size. In the dilute limit, one does not see the individual nanoparticles or the interactions between pairs of nanoparticles, but rather the entire suspension is modelled as a single system with two order parameters: a reduced Landau-de Gennes $\mathbf{Q}$-tensor order parameter with two independent components and a magnetization vector, $\Mvec:=(M_1, M_2)$ which models the induced spontaneous magnetization of the suspended nanoparticles \cite{Ferronematics_2D}. This reduced approach works well for thin NLC systems i.e. NLCs confined to a shallow three-dimensional (3D) system, with a 2D cross-section, such that the height is much smaller than the cross-sectional dimensions \cite{golovaty2015}, with tangent boundary conditions on the bounding surfaces. 
		
		Here  $ \Qvec\in \mathbf{S}_0:=\lbrace \Qvec=(Q_{ij})_{1\leq i,j \leq 2} \in \mathbb{M}^{2 \times 2}: \Qvec=\Qvec^T, \text{tr}\Qvec=0 \rbrace ,$ 	where $\displaystyle \Qvec:= s(2\mathbf{n}\otimes\mathbf{n}-\textit{I})$. The nematic director in the plane is a 2D unit-vector, $\mathbf{n}:=(\cos\theta,\sin\theta),$ where  $\theta(x,y)$ is the director angle, that models the preferred in-plane direction of the NLC molecules. The scalar order parameter $'s(x,y)'$ measures the degree of order about $\mathbf{n}$, so that the zero set of $s$ is identified with the set of planar nematic defects, and  $I$ is a $2 \times 2$ identity matrix. The independent component of $\Qvec$ are given by $Q_{11}= s\cos2\theta$ and $Q_{12}= s\sin2\theta.$ Building on the work in \cite{Ferronematics_2D}, \cite{Burylov_Raikher_1995}, \cite{calderer2014},   \cite{Hanetal2021Ferro2D}, the free energy of this dilute suspension of magnetic nanoparticles in a NLC-filled square well is given by
		\begin{eqnarray}
		    \label{eq:1}
		   &&  E\left[\Qvec, \mathbf{M} \right]: = \int_{\Omega} \big(\frac{K}{2}\left| \nabla \Qvec \right|^2 + \frac{A}{2}|\Qvec|^2 + \frac{C}{4}|\Qvec|^4 \big) \dx  \nonumber \\ && + \int_{\Omega}\big(\frac{\kappa}{2}\left|\nabla \mathbf{M}\right|^2 + \frac{\alpha}{2}|\mathbf{M}|^2 + \frac{\beta}{4}|\mathbf{M}|^4 \big) ~\dx  \nonumber 
		   \\ && - \int_{\Omega} \frac{\gamma \mu_0}{2} \mathbf{M}^T \Qvec \mathbf{M}~\dx,
		\end{eqnarray}
		where $\Omega$ is a 2D domain with characteristic length $L$ microns, $K$ and $\kappa$ are the nematic and magnetic stiffness constants respectively, $A$ is the re-scaled temperature as is $\alpha$, $C$ and $\beta$ are positive material-dependent constants and $\gamma \mu_0$ is the nemato-magnetic coupling parameter. Working at low temperatures requires $A$ and $\alpha$ to be negative, as has been done in this manuscript. The first line is the reduced 2D Landau-de Gennes NLC free energy, the second line is the magnetic energy and the third line is the nemato-magnetic coupling energy. The Dirichlet energy density of $\mathbf{M}$ is introduced to penalize arbitrary rotations between $\mathbf{M}$ and $-\mathbf{M}$, and can be viewed as a regularization term. Some authors argue that this term should be discarded for dilute suspensions but we retain it for the well-posedness of the associated variational problems. Further, if $\gamma >0$, then the coupling energy coerces $\mathbf{n}\cdot \mathbf{M} \approx \pm 1 $ whereas if $\gamma < 0$, then the coupling energy coerces $\mathbf{n}\cdot \mathbf{M}\approx 0$ almost everywhere. Using the re-scalings as in \cite{Ferronematics_2D} and \cite{Hanetal2021Ferro2D} and assuming that $\frac{K}{|A|} = \frac{\kappa}{|\alpha|}$ (an idealised assumption for analytical convenience), the ferronematic free energy reduces to 
		\begin{align}\label{energy functional ferronematics}
		\mathcal{E}(\Qvec,\Mvec):=& \int_\Omega \frac{1}{2}(\abs{\nabla Q_{11}}^2+\abs{\nabla Q_{12}}^2+\abs{\nabla M_{1}}^2+\abs{\nabla M_{2}}^2) \dx+ \frac{1}{\ell}\int_\Omega f_B(\Qvec,\Mvec)\dx ,
		\end{align}
	where the re-scaled domain $\Omega$ has unit characteristic length, and $\ell = \frac{K}{|A| L^2}$ depends on the nematic elastic constant, temperature and domain size $L$. The first integral is the elastic energy  of $\Qvec$ and $\mathbf{M}$ whereas $f_B$  is the quartic bulk energy density given by:
		\begin{align}\label{bulk energy potential}
		f_B(\Qvec,\Mvec) :=\frac{1}{4}(Q_{11}^2 +Q_{12}^2 -1)^2+\frac{1}{4}( M_1^2 +M_2^2 - 1)^2 - \frac{c}{2}\big(Q_{11}(M_1^2 - M_2^2) +2Q_{12}M_1M_2\big)
		\end{align}
		where the coupling parameter, $c = \frac{\gamma \mu_0}{|A|}\sqrt{\frac{C}{2|A|}}\frac{|\alpha|}{\beta}.$  In other words, the sign of $c$ is determined by the sign of $\gamma$ and has the same implications for the nemato-magnetic coupling as $\gamma$. 
		For any $\Qvec:=(Q_{11}, Q_{12})$ and $\Mvec:=(M_{1}, M_{2})$, the admissible space is, $\mathcal{A}:=\{\Psi:=(Q_{11}, Q_{12},M_{1}, M_{2}) \in \mathbf{H}^1(\Omega)|\, \Psi=\mathbf{g} \text{ on } \partial \Omega \},$ with $\mathbf{H}^1(\Omega):=( {H}^1(\Omega))^4$ and a given Dirichlet boundary condition $\mathbf{g}$ (see Section \ref{Asymptotic analysis of the minimizers}). 
		\noindent The existence of the global energy minimizers of $\mathcal{E}$ in the admissible space follows from a direct method in the calculus of variations; \cite[Section 8.2]{Evance19}  the crucial facts are that the admissible space  is non-empty; the energy functional $\mathcal{E}$ is coercive and convex in gradient of the variables $(Q_{11}, Q_{12}, M_1, M_2)$. Setting $\tilde{Q}:=\textit{Q}_{11}^2+ \textit{Q}_{12}^2 -1, $ $\tilde{M}:=M_1^2 + M_2^2-1$, the corresponding Euler-Lagrange equations are given by
	\begin{align}\label{continuous nonlinear ferro strong form}
	\Delta \textit{Q}_{ij}-\ell^{-1} (\widetilde{Q} \textit{Q}_{ij}-c(M_i M_j - \delta_{ij}\abs{M}^2/2)) =0
	\text{ and } 
	\Delta M_i -	\ell^{-1} (\widetilde{M}M_i - c \textit{Q}_{ij}M_j)=0, \quad i,j=1,2.
	\end{align} 	
	
	When $c=0$, the ferronematic free energy simply reduces to the celebrated Ginzburg-Landau free energy for superconductors \cite{Brezis_Bethual_Book}. This is a very well-studied problem, and arises naturally in reduced 2D Landau-de Gennes descriptions of confined NLCs \cite{golovaty2015}. From a purely analytic point of view, this 2D problem has been addressed in a batch of papers  \cite{han_majumdar_zhang_siap}, \cite{canevari_majumdar_spicer} etc. where the authors analyse the reduced minimizers on 2D polygons and obtain powerful results on the multiplicity of minimizers, the dimensionality, structure and locations of the corresponding defect sets and bifurcations as a function of the domain size.  In \cite{MultistabilityApalachong}, the authors study the reduced 2D NLC model on square domains and numerically compute the solution branches using finite element methods, investigating the effect of surface anchoring effects on the energy minimizers. An {\it a priori } and  {\it a posteriori  } error analysis for Nitsche's and the discontinuous Galerin methods have been analyzed for the reduced model (with $c=0$) \cite{DGFEM,AposterioriRMAMNN}. A structure-preserving finite element method for the computation of equilibrium configurations, when the $\Qvec$ is constrained to be uniaxial in 3D, has been proposed in \cite{Nochetto2020}, and the stability and consistency of this method without
regularization, and $\Gamma$-convergence of the discrete energies to the continuous
solutions, in the limit of vanishing mesh size, is studied. The reader is referred to \cite{LeiZhang} for a detailed survey of the mathematical models of liquid crystals and the developments of numerical methods to find liquid crystal configurations.

For ferronematic systems with $c \neq 0$, the volume of work is limited. In \cite{JDPEFAMJX}, the authors analyze a dilute ferronematic suspension in a one-dimensional channel geometry with Dirichlet boundary conditions for both $\Qvec$ and $\Mvec$. The authors derive some key analytic ingredients - existence theorems, uniqueness theorems, maximum principle arguments and symmetric solution profiles. They also compute bifurcation diagrams for the solution branches as a function of $\ell$ and $c$. In 2D, there is fairly elaborate numerical work in \cite{Ferronematics_2D} and \cite{Hanetal2021Ferro2D} where the authors numerically compute stable ferronematic equilibria on 2D polygons with tangent boundary conditions. They report the co-existence of stable equilibria with magnetic domain walls, and stable equilibria with pairs of interior NLC defects, again paying attention to the effects of $\ell$ and $c$. In particular, there are two distinguished limits: the $\ell \to \infty$ limit for small nano-scale systems which admits a unique equilibrium, and the $\ell \to 0$ limit for large micron-scale systems with multiple stable equilibria. Their numerical results suggest that positive $c$ suppresses multistability whereas negative $c$ strongly enhances multistability for applications.

There has been little, if any, analytic and numerical analysis for 2D ferronematic systems, to the best of our knowledge. The main contributions of this paper are: (i) asymptotic analyses of minimizers of the ferronematic free energy on smooth 2D domains, with smooth boundaries and topologically trivial Dirichlet conditions in the $\ell \to 0$ limit and (ii) an a priori finite element error analysis for the discrete approximation of the solutions of the corresponding Euler-Lagrange equations and (iii) relevant numerical experiments to tie the asymptotic analysis and finite element analysis.  The asymptotic analysis includes a strong convergence result of the global energy minimizers to minimizers of the bulk energy density in $H^1$, and an uniform convergence of the global energy minimizers to the limiting maps, as $\ell \rightarrow 0$, which follows from a  non-trivial Bochner inequality for the ferronematic free energy density. The Bochner inequality allows us to prove  a $\ell$-independent $H^2$-bound for global energy minimizers in this limit. The $H^2$-bound is subsequently exploited in the finite element analysis, and the key results are: (i) an elegant representation of the non-linear operator corresponding to the coupled system of  partial differential equations (PDEs) \eqref{continuous nonlinear ferro strong form} with cubic and quadratic non-linear lower order terms and non-homogeneous boundary datum; (ii) a finite element convergence analysis that includes the existence and uniqueness of discrete solutions approximating the regular solutions of \eqref{continuous nonlinear ferro strong form} for which $\ell$-independent $H^2$-bound hold, a best approximation result in energy norm, %the theoretical optimal convergence rate of $\mathcal{O}(h)$ in energy norm, %sub-optimal convergence rate of $\mathcal{O}(h^\frac{3}{3})$ in $L^2$ norm for conforming finite element method, 
optimal convergence rates of $\mathcal{O}(h)$ and  $\mathcal{O}(h^2)$ in energy and $L^2$ norms, respectively, for a sufficiently small choice of the discretization parameter (denoted by $h$) which depends on $\ell$ and (iii) numerical results for discrete solution landscapes with positive and negative $c$, order of convergence in energy and $L^2$ norms, and  numerical errors as a function of $\ell$ and $h$.

Throughout the paper, standard notations on Sobolev spaces and their norms are employed. The standard semi-norm and norm on $H^s(\Omega)$ $(\text{resp.} \,W^{s,p}(\Omega))$ for $s,p$ positive real numbers, are denoted by $\abs{\cdot}_s$ and $\norm{\cdot}_s$ $(\text{resp. } \abs{\cdot}_{s,p} \text{ and } \norm{\cdot}_{s,p} )$. The standard $L^2(\Omega)$ inner product is denoted by $(\cdot,\cdot)$. We use the notation $\mathbf{H}^s(\Omega)$ (resp.\,$\mathbf{L}^p(\Omega)$) to denote the product space $(H^s(\Omega))^4
% \times H^s(\Omega)\times H^s(\Omega) \times H^s(\Omega)
$ $(\text{resp. } (L^p(\Omega))^4.$ %\times L^p(\Omega) \times L^p(\Omega) \times L^p(\Omega))$. 
The standard norms $\vertiii{\cdot}_s$ ($\text{resp. }\vertiii{\cdot}_{s,p}$) in the Sobolev spaces $\mathbf{H}^s(\Omega)$ ($\text{resp. }\mathbf{W}^{s,p}(\Omega)$) and defined by $\vertiii{\Phi}_s\!=(\sum_{i =1}^{4}\norm{\varphi_i}_{s}^2 )^{\frac{1}{2}} $
%$(\norm{\varphi_1}_s^2+\norm{\varphi_2}_s^2+\norm{\varphi_3}_s^2+\norm{\varphi_4}_s^2)^{\frac{1}{2}}$
for all $ \Phi\!=\!(\varphi_1, \varphi_2,\varphi_3, \varphi_4) \in \!\mathbf{H}^s(\Omega) $ $ ( \text{resp.  }\vertiii{\Phi}_{s,p}\!\!=(\sum_{i =1}^{4}\norm{\varphi_i}_{s,p}^2 )^{\frac{1}{2}}$
%$	(\norm{\varphi_i}_{s,p}^2+\norm{\varphi_2}_{s,p}^2 +\norm{\varphi_3}_{s,p}^2+\norm{\varphi_4}_{s,p}^2)^{\frac{1}{2}}$ 
for all $\Phi\!=\!(\varphi_1, \varphi_2,\varphi_3, \varphi_4) \!\in \!\mathbf{W}^{s,p}(\Omega)).$
The norm on $\mathbf{L}^2(\Omega)$ space is defined by $\vertiii{\Phi}_0\!=(\sum_{i =1}^{4}\norm{\varphi_i}_{0}^2 )^{\frac{1}{2}}$
%	$(\norm{\varphi_1}_0^2+\norm{\varphi_2}_0^2+\norm{\varphi_1}_3^2+\norm{\varphi_4}_0^2)^{\frac{1}{2}}$ 
for all $ \Phi\!=\!(\varphi_1, \varphi_2,\varphi_3, \varphi_4) \in \!\mathbf{L}^2(\Omega). $
Set $V:=H_0^1(\Omega),$ $  \,\!\V:=\mathbf{H}_0^1(\Omega)=(H_0^1(\Omega))^4
% \times H_0^1(\Omega)\times H_0^1(\Omega) \times H_0^1(\Omega)
$, and  $X:=H^1(\Omega), \, \mathbf{X}:= \mathbf{H}^1(\Omega)$. Throughout this paper, $C$ will denote a generic constant which will always be
independent of $\ell$ and the mesh parameter $h$. Define the trace spaces $\h^{m-\frac{1}{2}}(\partial \Omega):=\{ \mathbf{w}|_{\partial \Omega}:  \mathbf{w} \in \h^m(\Omega) \} $ for $m=1,2,$ and $\h^{-\frac{1}{2}}(\partial \Omega):= (\h^{\frac{1}{2}}(\partial \Omega))^*$ equipped with the norm $\displaystyle \vertiii{\mathbf{q}}_{m-\frac{1}{2},\partial \Omega}:= \inf_{\substack{ \mathbf{w} \in \h^m(\Omega)\\ \mathbf{w}|_{\partial \Omega}=\mathbf{q}}}\vertiii{\mathbf{w}}_m$ and $\displaystyle \vertiii{\mathbf{q}}_{-\frac{1}{2},\partial \Omega}:=\sup_{\substack{ \mathbf{k} \in \h^{\frac{1}{2}}(\partial\Omega) \\  \mathbf{k}\neq 0 }}
	\frac{\dual{\mathbf{q},\mathbf{k} }_{\partial \Omega}}{\vertiii{\mathbf{k}}_{\frac{1}{2},\partial \Omega}}$, respectively. Also, the operator norm for $\mathbf{f} \in \mathbf{L}^2(\Omega)$ is defined as $\displaystyle \vertiii{\mathbf{f}}_{\mathbf{L}^2}:=\sup_{\substack{ \mathbf{v} \in \mathbf{L}^2(\Omega) \\  \mathbf{v}\neq 0 }}
	\frac{(\mathbf{f},\mathbf{v} )_{\Omega}}{\vertiii{\mathbf{v}}_{0}}$.

%\subsection{Organization}
The paper is organized
as follows. Section \ref{Asymptotic analysis of the minimizers} 
 focuses on the asymptotic analysis of global energy minimizers in the $\ell \to 0$ limit. Section \ref{Finite element analysis} discusses the finite element convergence analysis of the discrete solutions of \eqref{continuous nonlinear ferro strong form}. The conforming finite element formulation of the problem is derived and the a priori estimates are proven in Section \ref{A priori estimates}. 
 The article concludes with several numerical results for the discrete solutions, and the convergence history for various  values of $\ell$, $c$ and $h$ in Section \ref{Ferronematic numerical experiments}. 
\section{Asymptotic analysis of the minimizers}\label{Asymptotic analysis of the minimizers} The primary goal of this section is to establish  ${H}^2$-bound for the global energy minimizers independent of $\ell,$ in the $\ell \to 0$ limit, under suitable assumptions on the domain and boundary conditions. The $\ell \to 0$ limit is relevant for macroscopic domains or large domains, that are much larger than characteristic material-dependent and temperature-dependent nematic and magnetic correlation lengths \cite{Ferronematics_2D}. The proof is done in several stages: analysis of the minimizers of the bulk potential reproduced from \cite{JDPEFAMJX}, a strong convergence result for global minimizers followed by convergence results for the bulk potential that largely follow from \cite{Brezis_Bethual_Book}, followed by a delicate Bochner inequality for the energy density that combines ideas from \cite{Brezis_Bethual_Book} and \cite{Canevari2015}. Once we have the Bochner inequality, the $\ell$-independent ${H}^2$-bound for global energy minimizers in the $\ell \to 0$ limit is relatively standard, from estalished techniques in the Ginzburg-Landau theory for superconductivity although additional technical difficulties are encountered due to the four degrees of freedom in the problem.

\medskip

\begin{lem}[Global minimizers of $f_B$] \label{Global minimizer to bulk energy functional}
The bulk potential $f_B$ in \eqref{bulk energy potential} is coercive. 
 For $c>0,$ $f_B$ attains its minimum at $ \Psi^{\min}:=(Q_c\cos 2\varphi,Q_c\sin 2\varphi,M_c\cos\varphi,M_c\sin\varphi),$  where $Q_c$ and $M_c$ satisfies  \begin{subequations}\label{equations for Q_c and M_c}
	\begin{align}
	& Q_c^3-(1+\frac{c^2}{2})Q_c-\frac{c}{2} =0, \text{  and }  M_c^2=1+cQ_c \\&
	\text{  with } \, Q_c:=\bigg(\frac{c}{4}+\sqrt{\frac{c^2}{16}- \frac{1}{27}\bigg(1+\frac{c^2}{2}\bigg)^3} \bigg)^{1/3}+\bigg(\frac{c}{4}-\sqrt{\frac{c^2}{16}- \frac{1}{27}\bigg(1+\frac{c^2}{2}\bigg)^3} \bigg)^{1/3}, \,\, M_c:=\sqrt{1+cQ_c} .
	\end{align}
\end{subequations}
\end{lem}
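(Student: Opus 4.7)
Coercivity follows directly from controlling the cubic cross term by the quartic parts. Note that $Q_{11}(M_1^2-M_2^2)+2Q_{12}M_1M_2$ can be written as the inner product $\Qvec\cdot R(\Mvec)$ where $R(\Mvec)=(M_1^2-M_2^2,2M_1M_2)$, so $|Q_{11}(M_1^2-M_2^2)+2Q_{12}M_1M_2|\le (Q_{11}^2+Q_{12}^2)^{1/2}(M_1^2+M_2^2)$. A Young inequality with a small parameter then absorbs $\frac{c}{2}|\Qvec|\,|\Mvec|^2$ into a fraction of $\frac{1}{4}(|\Qvec|^2-1)^2+\frac{1}{4}(|\Mvec|^2-1)^2$ plus a constant, giving $f_B(\Qvec,\Mvec)\to +\infty$ as $|\Qvec|+|\Mvec|\to\infty$, hence coercivity.

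For the minimizer when $c>0$, the plan is to reduce to a low-dimensional problem by a polar parametrization adapted to the symmetry of $\Qvec\in\mathbf{S}_0$. Write $Q_{11}=Q\cos 2\varphi$, $Q_{12}=Q\sin 2\varphi$ with $Q\ge 0$, and $M_1=M\cos\psi$, $M_2=M\sin\psi$ with $M\ge 0$. A short trigonometric calculation using the double-angle identities $M_1^2-M_2^2=M^2\cos 2\psi$ and $2M_1M_2=M^2\sin 2\psi$ gives
\begin{align*}
Q_{11}(M_1^2-M_2^2)+2Q_{12}M_1M_2 = QM^2\cos\bigl(2(\varphi-\psi)\bigr).
\end{align*}
Since $c>0$, the angular minimization is achieved precisely when $\cos(2(\varphi-\psi))=1$, i.e.\ $\psi\equiv\varphi\pmod\pi$, which yields exactly the alignment $(\cos 2\varphi,\sin 2\varphi,\cos\varphi,\sin\varphi)$ of the stated minimizer (the residual sign ambiguity $\psi\mapsto\psi+\pi$ just flips $\Mvec\to-\Mvec$, which is the known gauge freedom).

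With $\psi=\varphi$ the problem reduces to minimizing $g(Q,M):=\frac14(Q^2-1)^2+\frac14(M^2-1)^2-\frac{c}{2}QM^2$ over $Q,M\ge 0$. The critical-point equations are $M(M^2-1-cQ)=0$ and $Q(Q^2-1)-\frac{c}{2}M^2=0$. The nontrivial branch $M^2=1+cQ$ substituted into the second equation gives the depressed cubic $Q^3-(1+\tfrac{c^2}{2})Q-\tfrac{c}{2}=0$, whose real root via Cardano's formula (with $p=-(1+c^2/2)$, $q=-c/2$) produces exactly the expression for $Q_c$ stated in the lemma, and then $M_c=\sqrt{1+cQ_c}$.

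The main obstacle I anticipate is establishing that the Cardano root actually yields the \emph{global} minimum rather than merely a critical point, for two distinct reasons. First, one must rule out the degenerate branch $M=0$: on this branch $g$ reduces to $\frac14(Q^2-1)^2+\frac14$ with minimum value $\frac14$, so one needs to verify $g(Q_c,M_c)<\frac14$, which reduces algebraically to showing $Q_c>0$ for $c>0$ and exploiting the cubic identity to simplify $g(Q_c,M_c)=\frac14(Q_c^2-1)^2+\frac14(cQ_c)^2-\frac{c}{2}Q_c(1+cQ_c)$ into a manifestly smaller quantity. Second, when the Cardano discriminant $\tfrac{c^2}{16}-\tfrac{1}{27}(1+\tfrac{c^2}{2})^3$ is negative the two cube-root summands are complex conjugates; one must argue that the \emph{casus irreducibilis} still produces a real $Q_c$ (which it does, as the real part of each conjugate pair), and among the three real roots of the cubic one selects the positive root by sign analysis of the cubic at $Q=0$ and $Q=\pm 1$. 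Once these points are settled, the conclusion follows from the fact that $g$ is continuous, coercive on $[0,\infty)^2$, and therefore attains its infimum at an interior critical point, and the list of such critical points is exhausted by the construction above.
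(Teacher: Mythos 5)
Your proposal follows essentially the same route as the paper: polar parametrization, angular minimization, and Cardano on the reduced cubic, but with two worthwhile points of difference. First, you give an explicit coercivity argument (Cauchy--Schwarz on $\Qvec\cdot(M_1^2-M_2^2,2M_1M_2)$, then Young) whereas the lemma asserts coercivity but the paper's proof never establishes it; your paragraph fills that gap. Second, to conclude global minimality you invoke coercivity plus exhaustion of interior critical points, whereas the paper computes the Hessian at $\Psi^{\min}$ and checks it is non-negative definite --- which by itself only gives a \emph{local} minimum (and, because of rotational degeneracy, the Hessian has a zero eigenvalue), so your exhaustion-of-critical-points argument is arguably the more complete way to obtain \emph{global} minimality. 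Parametrically, the paper uses $\Qvec=(S\cos\theta,S\sin\theta)$ and later discovers the constraint $\theta-2\varphi\equiv 0\ (\mathrm{mod}\ 2\pi)$, while you bake in the double-angle structure $\Qvec=Q(\cos 2\varphi,\sin 2\varphi)$ up front, which makes the coupling term come out as $QM^2\cos(2(\varphi-\psi))$ immediately; these are equivalent.

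Two small comments on the concerns you flagged. Ruling out the boundary branch $M=0$ (and $Q=0$) is cheaper than computing $g(Q_c,M_c)$ directly: on those branches $g\ge\tfrac14$, while $g(1,1)=-\tfrac{c}{2}<0$, so the minimizer is interior and unique by your critical-point classification; this is effectively the paper's observation that $\min f_B<0$ for $c>0$. Regarding the casus irreducibilis, note that the angular step requires $Q_cM_c^2>0$ before you can conclude $\cos(2(\varphi-\psi))=1$; you do get that a posteriori, but it is cleaner to first observe (as the paper does) that the minimum value is negative, hence $Q_c,M_c>0$, and only then impose the angular alignment. Once that is in place, the cubic $Q^3-(1+\tfrac{c^2}{2})Q-\tfrac{c}{2}=0$ has a unique positive root by Descartes' rule of signs regardless of the sign of the Cardano discriminant, so the real-root selection you worry about is automatic.
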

\begin{proof}
For a fixed $c>0,$	recast the potential  ${f}_B(\Psi) $ (see \eqref{bulk energy potential}) using the parameterization: $\Psi:=(S\cos\theta,S\sin\theta,R\cos\varphi,R\sin\varphi),$ to obtain
%\begin{align*}
$${f}_B(S,R,\theta, \varphi):= \frac{1}{4}(S^2 -1)^2+\frac{1}{4}( R^2 - 1)^2 - \frac{c}{2}SR^2\cos(\theta - 2\varphi) .$$
%\end{align*}
 For any bulk energy minimizer, let $\Psi^{\min}:=(\Qvec^{\min},\Mvec^{\min})$ with $\Qvec^{\min}:=(Q_c\cos\theta_c,Q_c\sin\theta_c) $ and  $\Mvec^{\min}:=(M_c\cos\varphi_c,M_c\sin\varphi_c),$  and the minimality condition
% $\frac{\partial \tilde{f}_B }{\partial \Psi} (\Psi^\min)=0$
$\frac{\partial {f}_B}{\partial \Psi} (\Psi^{\min})=0,$ 
reduces to:
	\begin{align*}
	&\frac{\partial f_B}{\partial S}(\Psi^{\min})=Q_c(Q_c^2-1)-\frac{c}{2}M_c^2\cos(\theta_c - 2\varphi_c ) =0,	\frac{\partial f_B}{\partial R}(\Psi^{\min})=M_c(M_c^2-1)-cQ_cM_c\cos(\theta_c - 2\varphi_c ) =0,%\label{minimizer of f_B equation1} 
	\\&
%\label{minimizer of f_B equation2} \\&
	\frac{\partial f_B}{\partial \theta }(\Psi^{\min})= \frac{c}{2}Q_c M_c^2\sin(\theta_c - 2\varphi_c ) =0 ,\frac{\partial f_B}{\partial \varphi }(\Psi^{\min})= -cQ_c M_c^2\sin(\theta_c - 2\varphi_c ) =0 .%\label{minimizer of f_B equation3}
%	\\&	\label{minimizer of f_B equation4}
	\end{align*}
Since $Q_c, M_c \geq 0,$	the conditions on the second line above require that $\theta_c = 2\varphi_c +k\pi , k\in \mathbb{Z}.$ 	Note that $Q_c>0$ and $M_c>0$ for the bulk energy minimizer since one can easily check that $\displaystyle \min_{\Qvec,\Mvec \in \mathbb{R}^2}f_B(\Qvec,\Mvec)<0$ for $c>0$.	If $\theta_c = 2\varphi_c +(2k+1)\pi , k\in \mathbb{Z}$, then
%	\begin{align*}
$$	f_B(Q_c,M_c,\theta_c, \varphi_c):= \frac{1}{4}(Q_c^2 -1)^2+\frac{1}{4}( M_c^2 - 1)^2 + \frac{c}{2}Q_cM_c^2>0,$$
%	\end{align*} 
	which is not the minimum value of $f_B$  for  $c>0$. 
	\noindent Hence, the bulk energy minimizers have 
	\begin{align}\label{angle constraint for c>0}
	\theta_c = 2\varphi_c +2k\pi , k\in \mathbb{Z},
	\end{align}
 which in turn, requires that $	Q_c(Q_c^2-1)-\frac{c}{2}M_c^2 =0,  M_c^2=1+cQ_c,$ or equivalently,  
	\begin{align}\label{equation1 satisfied by the minimizers}
	Q_c^3-(1+\frac{c^2}{2})Q_c-\frac{c}{2} =0 \text{  and } M_c^2=1+cQ_c.
	\end{align}	
 By Descartes' rule of sign, this  equation has one positive and two negative roots. Therefore, $f_B$ attains its minimum at $\Psi^{\min}:=(Q_c \cos 2\varphi,Q_c \sin 2\varphi,M_c \cos\varphi,M_c \cos\varphi )$, where  $Q_c$ is the positive root of  $Q_c^3-(1+\frac{c^2}{2})Q_c-\frac{c}{2} =0$ and $ M_c=\sqrt{1+cQ_c}.$
	The Hessian matrix, $Hf_B,$ at the point $\Psi^{\min}$ is given by,
	$Hf_B=\begin{pmatrix}
	3Q_c^2- 1&  -cM_c & 0&0\\
	-cM_c& 2M_c^2 &0&0\\
	0 &0& \frac{c}{2}Q_c M_c^2 & -cQ_c M_c^2\\
	0 &0& -cQ_c M_c^2 & 2cQ_c M_c^2
	\end{pmatrix} = \begin{pmatrix}
	A & O\\
	O &  B
	\end{pmatrix},$
	where $A= \begin{pmatrix}
	3Q_c^2- 1&  -cM_c \\
	-cM_c& 2M_c^2
	\end{pmatrix}$, $B= c^2Q_c^2 M_c^4 \begin{pmatrix}
	\frac{1}{2} & -1\\
	-1 & 2
	\end{pmatrix},$ and $O$ is a $2 \times 2$ zero matrix. 
	
	\medskip
	
	\noindent Since the matrix $Hf_B$ is symmetric, all the eigen values of $Hf_B$ are real. The eigenvalues of $B$ are $0$ and $\frac{5}{2}c^2Q_c^2 M_c^4>0,$ which implies that the matrix $B $ is non-negative definite. Using \eqref{equation1 satisfied by the minimizers}, one can check that the determinant of $A,$ $\det A =M_c^2(6Q_c^2-2-c^2)=M_c^2(4+2c^2+\frac{3c}{Q_c})>0.$ Therefore, both the eigenvalues of $A$ are either negative or positive, and zero is not an eigenvalue. Moreover, the trace of $A,$ Tr${(A)}=3Q_c^2-1+2M_c^2=3Q_c^2+1 +2cM_c>0, $ so that all the eigenvalues of $A$ are positive. Therefore, $A$ is positive definite and the Hessian matrix of $f_B$ at $\Psi^{\min}$ is non-negative definite. This concludes the proof that $\Psi^{\min}$ is a global minimizer of $f_B$. For a detailed computation of $Q_c$, the positive solution of \eqref{equation1 satisfied by the minimizers}, we refer the reader to to \cite[Section 2.1]{JDPEFAMJX}.
\end{proof}
\begin{rem}
 For $c<0$,  ${f}_B$ attains its minimum at  $\Psi^{\min}:=(-Q_a\cos2\varphi,-Q_a\sin2\varphi, M_a\cos\varphi,M_a\sin\varphi)$ with $a:=-c,$ and 
	\begin{align*}
	&Q_a^3-(1+\frac{a^2}{2})Q_a-\frac{a}{2} =0, M_a^2=1+aQ_a\\&
	\text{  with }\,  Q_a:=\bigg(\frac{a}{4}+\sqrt{\frac{a^2}{16}- \frac{1}{27}\bigg(1+\frac{a^2}{2}\bigg)^3} \bigg)^{1/3}+\bigg(\frac{a}{4}-\sqrt{\frac{a^2}{16}- \frac{1}{27}\bigg(1+\frac{a^2}{2}\bigg)^3} \bigg)^{1/3}, \,\, M_a:=\sqrt{1+aQ_a} .
	\end{align*}
	For the minimizers of ${f}_B$ for $c=0$, see \cite[Remark 2.3]{JDPEFAMJX}. \qed 
\end{rem}
 Let $$ \mathcal{A}_{\min}:=\{\Psi^{\min} \in \mathbf{H}^1(\Omega) |\, \Psi^{\min}:=(Q_c \cos2\varphi, Q_c \sin2\varphi, M_c \cos\varphi, M_c \sin\varphi)\text{ with } Q_c,M_c \text{ satisfying \eqref{equations for Q_c and M_c}}\}$$  and define a non-negative bulk energy $\tilde{f}_B, $ to be 
\begin{align}\label{modified bulk energy functional}
\tilde{f}_B(\Qvec,\Mvec):= f_B(\Qvec,\Mvec) - \min_{\Qvec , \Mvec \in \mathbb{R}^2} f_B(\Qvec,\Mvec),
\end{align}
so that $\tilde{f}_B(\Qvec,\Mvec) \geq 0$  and  $\tilde{f}_B(\Qvec,\Mvec)=0 $ if and only if $(\Qvec,\Mvec) \in \mathcal{A}_{\min} $ for $c>0.$
In what follows, for $\Qvec=\left(Q_{11}, Q_{12} \right), \Mvec = \left(M_1, M_2 \right)$, we study global minimizers of the modified energy functional, 
\begin{align}\label{modified energy functional}
\tilde{\mathcal{E}}(\Qvec,\Mvec):=& \int_\Omega \frac{1}{2}(\abs{\nabla Q_{11}}^2+\abs{\nabla Q_{12}}^2+\abs{\nabla M_{1}}^2+\abs{\nabla M_{2}}^2) \dx+ \frac{1}{\ell}\int_\Omega \tilde{f}_B(\Qvec,\Mvec)\dx
\end{align}
in the admissible space $\mathcal{A}, $ with the added restrictions that $\Omega \subset \mathbb{R}^2$ is a smooth bounded domain with smooth boundary and  $\mathbf{g} \in \mathcal{A}_{\min}$ with $\textrm{deg}\left(\mathbf{g} \right) = 0$. More precisely, 
\begin{align}\label{boundary condition}
\mathbf{g}:=(\Qvec^b , \Mvec^b) \in \mathcal{A}_{\min}
\end{align}
 %$\mathbf{g}:=(\Qvec^b , \Mvec^b) \in \mathcal{A}_{\min}, $ 
 is smooth such that 
$\Qvec^b:= (Q_c \cos2\varphi_b, Q_c \sin2\varphi_b),\Mvec^b:=(M_c \cos\varphi_b, M_c \sin\varphi_b)  , \varphi_b\in C^{\infty } (\partial \Omega; \mathbb{R}),$ and $\Qvec^b, \Mvec^b$ have zero winding number around $\partial \Omega$.
%\subsection{Limiting function and uniform convergence in the interior}
Next, the limiting profiles for the global minimizers, $\Psi^{\ell}$ as $\ell \to 0 $, are analysed.

\medskip

\noindent 	Let $\Psi^{\ell}$ be a minimizer for $\tilde{\mathcal{E}}$ from \eqref{modified energy functional}. Then $\Psi^{\ell}$ is a weak solution of the Euler-Lagrange equation	
	\begin{align}\label{minimization pde}
	\Delta \Psi^{\ell} =\ell^{-1} D\tilde{f}_B(\Psi^{\ell} ) \text{ in } \Omega, \text{  and } \Psi^{\ell}=\mathbf{g} \text{  on } \partial\Omega, 
	\end{align}
	where 	$D\tilde{f}_B$ is the gradient of $\tilde{f}_B$ with respect to the variable $\Psi :=(Q_{11} ,Q_{12},M_1, M_2)$ and %for any $\Psi :=(Q_{11} ,Q_{12},M_1, M_2), $
	\begin{align}\label{definition of Df}
	    D\tilde{f}_B(\Psi):= \begin{pmatrix}
	(Q_{11}^2+Q_{12}^2 -1)Q_{11} - \frac{c}{2}(M_1^2-M_2^2)\\(Q_{11}^2+Q_{12}^2 -1)Q_{12} - cM_1M_2\\(M_{1}^2+M_{2}^2 -1)M_{1} - c(Q_{11}M_1+Q_{12}M_2)\\ (M_{1}^2+M_{2}^2 -1)M_{2} - c(Q_{12}M_1-Q_{11}M_2)
	\end{pmatrix}.
	\end{align}
\begin{prop}[$H^1$ convergence to harmonic maps] \label{Proposition: limiting functions}
	Let $\Omega \subset \mathbb{R}^2$ be a simply-connected bounded open set with smooth boundary.	Let $\Psi^{\ell}:=(\Qvec^{\ell}, \Mvec^{\ell})$ be a global minimizer of $\tilde{\mathcal{E}}$ from  \eqref{modified energy functional} in the admissible space $\mathcal{A},$  with $\mathbf{g} \in \mathcal{A}_{\min}$ defined in \eqref{boundary condition}. Then 
	the sequence $(\Qvec^{\ell}, \Mvec^{\ell}) \rightarrow (\Qvec^0, \Mvec^0)$  converges strongly in  $\mathbf{H}^1(\Omega)$ upto a subsequence as $\ell \rightarrow 0,$ where $\Qvec^0:= Q_c e^{2i\varphi_0}$ and $\Mvec^0:= M_c e^{i\varphi_0}$ and $\varphi_0$ is a solution of 
	\begin{align}
		\Delta &\varphi_0 = 0 \text{ on } \Omega,\,\, \text{  and }\,\, 
	\varphi_0 = \varphi_b \text{ on } \partial \Omega. 
   \label{limiting function angle equation}
	\end{align}
\end{prop}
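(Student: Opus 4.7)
The plan is to follow the Bethuel--Brezis--H\'elein strategy from the Ginzburg--Landau setting, adapted to the coupled variables $(\Qvec,\Mvec)$.
First I would establish an $\ell$-uniform bound by testing against an explicit competitor: pick any smooth extension $\tilde{\varphi}\in C^{\infty}(\overline{\Omega})$ of $\varphi_b$ and set
$\tilde{\Psi}:=(Q_c\cos 2\tilde{\varphi},Q_c\sin 2\tilde{\varphi},M_c\cos\tilde{\varphi},M_c\sin\tilde{\varphi})\in\mathcal{A}$.
Since $\tilde{\Psi}(x)\in\mathcal{A}_{\min}$ pointwise, $\tilde{f}_B(\tilde{\Psi})\equiv 0$ and $\tilde{\mathcal{E}}(\tilde{\Psi})=\tfrac{1}{2}(4Q_c^2+M_c^2)\int_\Omega|\nabla\tilde{\varphi}|^2\dx$ is $\ell$-independent.
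Minimality then delivers $\vertiii{\Psi^{\ell}}_1\le C$ and $\int_\Omega\tilde{f}_B(\Psi^{\ell})\dx\le C\ell\to 0$, and Rellich compactness gives a subsequence $\Psi^{\ell_k}\rightharpoonup\Psi^0$ in $\mathbf{H}^1(\Omega)$, strongly in $\mathbf{L}^p(\Omega)$ for every $p<\infty$ and pointwise a.e.\ on $\Omega$. Continuity and non-negativity of $\tilde{f}_B$ combined with Fatou's lemma force $\tilde{f}_B(\Psi^0)=0$ a.e., so $\Psi^0(x)\in\mathcal{A}_{\min}$ pointwise.

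Next I would recover the scalar phase. The normalised field $\mathbf{m}^0:=\Mvec^0/M_c$ lies in $H^1(\Omega;\mathbb{S}^1)$ with boundary trace $(\cos\varphi_b,\sin\varphi_b)$, which has winding number zero by hypothesis. Because $\Omega$ is simply connected, the Sobolev lifting theorem for $\mathbb{S}^1$-valued $H^1$-maps produces $\varphi_0\in H^1(\Omega)$ with $\mathbf{m}^0=(\cos\varphi_0,\sin\varphi_0)$, and, after fixing the additive $2\pi$-ambiguity using the connectedness of $\partial\Omega$, $\varphi_0=\varphi_b$ on $\partial\Omega$. The pointwise constraint $\Psi^0\in\mathcal{A}_{\min}$, together with the rigidity $\theta_c=2\varphi_c\pmod{2\pi}$ coming from \eqref{angle constraint for c>0}, then locks the phase of $\Qvec^0$ to twice that of $\Mvec^0$, so $\Qvec^0=Q_c e^{2i\varphi_0}$ and $\Mvec^0=M_c e^{i\varphi_0}$.

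To finish, let $\varphi_\star\in H^1(\Omega)$ be the harmonic extension of $\varphi_b$ and set $\Psi_\star:=(Q_c\cos 2\varphi_\star,Q_c\sin 2\varphi_\star,M_c\cos\varphi_\star,M_c\sin\varphi_\star)\in\mathcal{A}$; again $\tilde{f}_B(\Psi_\star)\equiv 0$. The direct identity $\int_\Omega|\nabla\Psi^0|^2\dx=(4Q_c^2+M_c^2)\int_\Omega|\nabla\varphi_0|^2\dx$ (and similarly for $\Psi_\star$), weak lower semicontinuity, and minimality $\tilde{\mathcal{E}}(\Psi^{\ell})\le\tilde{\mathcal{E}}(\Psi_\star)$ sandwich the chain
\[
(4Q_c^2+M_c^2)\int_\Omega|\nabla\varphi_0|^2\dx\le\liminf_{\ell\to 0}\int_\Omega|\nabla\Psi^{\ell}|^2\dx\le\limsup_{\ell\to 0}2\tilde{\mathcal{E}}(\Psi^{\ell})\le(4Q_c^2+M_c^2)\int_\Omega|\nabla\varphi_\star|^2\dx,
\]
so uniqueness of the Dirichlet minimiser forces $\varphi_0=\varphi_\star$, which proves \eqref{limiting function angle equation}. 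Equality of the $L^2$-norms of the gradients together with the weak convergence in $\mathbf{H}^1(\Omega)$ finally upgrades $\Psi^{\ell_k}\to\Psi^0$ to strong convergence in $\mathbf{H}^1(\Omega)$, and $\ell^{-1}\int_\Omega\tilde{f}_B(\Psi^{\ell})\dx\to 0$ follows as a by-product. The hard step is the middle one: producing a single $H^1$ scalar phase that simultaneously parametrises both $\Qvec^0$ and $\Mvec^0$ hinges on the zero-winding hypothesis on $\mathbf{g}$ and the simple-connectedness of $\Omega$ to exclude topological obstructions in the lifting, and on the bulk rigidity in $\mathcal{A}_{\min}$ to couple the two phases correctly.
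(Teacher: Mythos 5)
Your proposal is correct and follows essentially the same route as the paper: bound the energy with a competitor built from a phase extending $\varphi_b$, extract a weak $\mathbf{H}^1$ limit, show the bulk term vanishes so the limit lives in $\mathcal{A}_{\min}$ pointwise, and close with the weak-lower-semicontinuity sandwich against the harmonic-phase competitor to upgrade to strong convergence. The one place you are visibly more careful than the paper is the scalar-phase step: the paper simply asserts that the weak limit ``is of the form'' $(Q_c\cos 2\varphi_1,\ldots,M_c\sin\varphi_1)$ with $\varphi_1\in H^1$, whereas you explicitly invoke the $\mathbb{S}^1$-lifting theorem (using simple-connectedness and zero winding) to produce $\varphi_0\in H^1(\Omega)$ and use the bulk rigidity $\theta_c=2\varphi_c\pmod{2\pi}$ to couple the $\Qvec$- and $\Mvec$-phases, and you also explicitly invoke uniqueness of the Dirichlet minimizer to identify the weak limit with $\Psi_\star$; both of these are left implicit in the paper.
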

\begin{rem}
The proof of Proposition \ref{Proposition: limiting functions} closely follows the methodology used in \cite[Proposition 1]{Bethuel} and \cite{Majumdar2010}. The primary difference is that we have two harmonic limits; $\Qvec^0$ corresponding to the nematic order parameter  and, $\Mvec^0$ corresponding to the magnetization vector.     The proof is given for the sake of completeness.	\qed
\end{rem}
\begin{proof}[Proof of Proposition \ref{Proposition: limiting functions}]
	Observe that $\Psi_0:=(\Qvec^0, \Mvec^0) \in \mathcal{A}_{\min} \cap \mathcal {A}$ and $ \tilde{f}_B(\Psi_0)=0$. Since $\Psi^{\ell}:=(\Qvec^{\ell}, \Mvec^{\ell})$ is a minimizer of $\tilde{\mathcal{E}}$ defined in \eqref{modified energy functional} for a fixed $\ell$ and $\tilde{f}_B(\Psi^{\ell}) \geq 0,$  
	\begin{align}\label{Strong convergence in H1 equation1}
	\int_{\Omega}\frac{1}{2}\abs{\nabla \Psi^{\ell}}^2 \dx \leq 	\int_{\Omega}\frac{1}{2}\abs{\nabla \Psi^{\ell}}^2 \dx+\frac{1}{\ell}\int_{\Omega} \tilde{f}_B(\Psi^{\ell})\dx \leq \int_{\Omega}\frac{1}{2}\abs{\nabla \Psi_0}^2\dx.
	\end{align}
	%where $ \tilde{f}_B(\Psi_0)=0$ for $\Psi_0 \in \mathcal{A}_{\min}$ is used on the right hand side of \eqref{Strong convergence in H1 equation1}. 
%	Since $\tilde{f}_B(\Psi^{\ell}) \geq 0,$ we obtain for all $\ell,$ $\int_{\Omega}\frac{1}{2}\abs{\nabla \Psi^{\ell}}^2  \dx \leq \int_{\Omega}\frac{1}{2}\abs{\nabla \Psi_0}^2 \dx\implies$ the sequence $(\Psi^{\ell})$ is bounded in $\mathbf{H}^1(\Omega)$ uniformly and the bound is independent of $\ell.$ 
	Then there exists a subsequence  $(\Psi^{\ell}) \rightharpoonup \Psi_1$ weakly in   $\mathbf{H}^1(\Omega)$ as $\ell \rightarrow 0.$ Therefore, Majur's theorem \cite[Page 723]{Evance19} yields that trace of $\Psi_1$ is $(\Qvec^b, \Mvec^b)$.
	 %Since the sequence $(\Psi^{\ell})$ is bounded in $\mathbf{H}^1(\Omega)$ uniformly in $\ell,$ and $\mathbf{H}^1(\Omega)\hookrightarrow \mathbf{L}^2(\Omega) $,
Now, Rellich–Kondrachov compactness theorem \cite[Page 286]{Evance19} gives the existence of a subsequence $(\Psi^{\ell})$ that converges strongly to $\Psi_1$ in $\mathbf{L}^2$.  The lower semi-continuity of $\mathbf{H}^1$  norm with respect to the weak convergence yields
	\begin{align}\label{Strong convergence in H1 equation2}
	\int_{\Omega}\abs{\nabla \Psi_1}^2 \dx \leq \int_{\Omega}\abs{\nabla \Psi_0}^2 \dx.
	\end{align}
	Moreover, since $\int_{\Omega} \tilde{f}_B(\Psi^{\ell}) \dx \leq \ell \int_{\Omega}\frac{1}{2}\abs{\nabla \Psi_0}^2 \dx $ from \eqref{Strong convergence in H1 equation1}, $ \int_{\Omega} \tilde{f}_B(\Psi^{\ell}) \dx \rightarrow 0 $   as $\ell \rightarrow 0.$  Since $\tilde{f}_B \geq0,$ on  a subsequence, $\tilde{f}_B(\Psi^{\ell}) \rightarrow 0$ for almost all $x \in \Omega.$   Therefore, $\Psi_1$ is of the form 
	$$\Psi_1:=(\Qvec_1, \Mvec_1)=(Q_c \cos2\varphi_1 ,Q_c \sin2\varphi_1 ,M_c \cos\varphi_1,M_c \sin\varphi_1 ) \text{ a.e. } x \in \Omega \text{ and }  \varphi_1=\varphi_b \text{ on } \partial\Omega.$$	
Let $\mathcal{A}_\varphi:= \{\varphi \in H^1(\Omega)| \, \varphi =\varphi_b \text{ on } \partial \Omega\}$. By definition, $\displaystyle \int_{\Omega} \abs{\nabla {\varphi }_0}^2 \dx =\min_{\varphi \in \mathcal{A}_\varphi}\int_{\Omega} \abs{\nabla {\varphi}}^2 \dx$ so that
	\begin{align}\label{Strong convergence in H1 equation3}
	\int_{\Omega}\abs{\nabla \Psi_0}^2 \dx = \int_{\Omega}(4Q_c^2+M_c^2) \abs{\nabla {\varphi }_0}^2 \dx \leq \int_{\Omega}(4Q_c^2+M_c^2) \abs{\nabla {\varphi }}^2 \dx = \int_{\Omega}\abs{\nabla \Psi_1}^2 \dx.
	\end{align}
	The inequalities \eqref{Strong convergence in H1 equation2} and \eqref{Strong convergence in H1 equation3} imply that $\int_{\Omega}\abs{\nabla \Psi_1}^2 \dx = \int_{\Omega}\abs{\nabla \Psi_0}^2 \dx.$ This together with the lower semi-continuity of $\mathbf{H}^1$  norm and \eqref{Strong convergence in H1 equation1} lead to the following sequence of inequalities:
	\begin{align*}
	\int_{\Omega}\abs{\nabla \Psi_0}^2  \dx\leq\liminf_{\ell \rightarrow 0}  \int_{\Omega}\abs{\nabla \Psi^{\ell}}^2 \dx \leq \limsup_{\ell \rightarrow 0}  \int_{\Omega}\abs{\nabla \Psi^{\ell}}^2 \dx \leq  \int_{\Omega}\abs{\nabla \Psi_0}^2 \dx,
	\end{align*}
	yielding the convergence, $\vertiii{\nabla\Psi^\ell}_0 \rightarrow \vertiii{\nabla\Psi_0}_0$ as $\ell \rightarrow 0.$ This norm convergence together with the weak convergence $(\Psi^{\ell}) \rightharpoonup \Psi_0$ in $\mathbf{H}^1(\Omega)$, establishes the strong convergence $\Psi^{\ell} \rightarrow \Psi_0$ in $\mathbf{H}^1(\Omega).$
\end{proof}
%{\color{blue}\begin{rem}
%	We restrict the analysis for the energy minimizers of $\tilde{\mathcal{E}}$ satisfying the energy bound in  \eqref{Strong convergence in H1 equation1}. Note that the inequality \eqref{Strong convergence in H1 equation1} holds for physically relevant critical points, which include local energy minimizers, at least the ones that might be observed numerically and experimentally.
%\end{rem}}
A $L^\infty $ bound for $\Qvec^{\ell}$ and $\Mvec^{\ell}$ follow from maximum principle arguments for the system \eqref{continuous nonlinear ferro strong form}, as has been done in \cite{JDPEFAMJX}.   
\begin{prop}[$L^\infty $ bound]\cite[Theorem 2.5]{JDPEFAMJX} \label{Proposition: L infinity bound of solutions}
	Let $\Psi^{\ell}:=(\Qvec^{\ell}, \Mvec^{\ell}) \in \mathcal{A}$ be a solution of  \eqref{minimization pde}, with $\mathbf{g} \in \mathcal{A}_{\min}$ defined in \eqref{boundary condition}. Then $\abs{\Qvec^{\ell}} \leq Q_c$ and $\abs{\Mvec^{\ell}} \leq M_c$. 	
\end{prop}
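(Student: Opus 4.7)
The plan is to apply the maximum principle to the scalar functions $u:=\abs{\Qvec^{\ell}}^2=Q_{11}^2+Q_{12}^2$ and $v:=\abs{\Mvec^{\ell}}^2=M_1^2+M_2^2$, using the Euler-Lagrange system \eqref{continuous nonlinear ferro strong form} together with the algebraic identity for $Q_c,M_c$ in \eqref{equations for Q_c and M_c}. A key first observation is that the hypothesis $\mathbf{g}\in\mathcal{A}_{\min}$ defined in \eqref{boundary condition} forces $u\equiv Q_c^2$ and $v\equiv M_c^2$ on $\partial\Omega$; hence if either of the bounds $u\le Q_c^2$ or $v\le M_c^2$ fails, the failure must occur at an interior maximizer in $\Omega$.

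First, I would multiply the $Q_{ij}$-equation and the $M_i$-equation in \eqref{continuous nonlinear ferro strong form} by $Q_{ij}$ and $M_i$ respectively and sum, obtaining explicit expressions for $\Delta u$ and $\Delta v$ in which the coupling contribution takes the common form $W:=Q_{11}(M_1^2-M_2^2)+2Q_{12}M_1M_2$. The crucial algebraic observation is that the auxiliary vector $(M_1^2-M_2^2,\,2M_1M_2)$ has Euclidean length equal to $M_1^2+M_2^2=v$, so by Cauchy-Schwarz $\abs{W}\le\sqrt{u}\,v$. Next, at an interior maximum of $u$ (resp.\ of $v$) the local condition $\Delta u\le 0$ (resp.\ $\Delta v\le 0$) together with this bound yields, after discarding the non-negative $\abs{\nabla\Qvec^{\ell}}^2$ and $\abs{\nabla\Mvec^{\ell}}^2$ terms and replacing pointwise values by the global suprema where convenient, the two inequalities
\begin{align*}
(U-1)\sqrt{U}\le \tfrac{c}{2}V \qquad\text{and}\qquad V-1\le c\sqrt{U},
\end{align*}
where $U:=\sup_\Omega u$ and $V:=\sup_\Omega v$. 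Eliminating $V$ collapses these to the single cubic inequality $r^3-(1+\tfrac{c^2}{2})r-\tfrac{c}{2}\le 0$ in $r:=\sqrt{U}$, which is exactly the defining cubic for $Q_c$ from \eqref{equations for Q_c and M_c}; since $3Q_c^2>1+\tfrac{c^2}{2}$ the cubic is strictly increasing at its positive root $Q_c$, so $r\le Q_c$ and therefore $U\le Q_c^2$. Substituting back into the second inequality yields $V\le 1+cQ_c=M_c^2$.

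The main obstacle I anticipate is that the suprema of $u$ and $v$ are in general attained at different points of $\Omega$, so the two Laplacian inequalities naturally live at different locations; this is handled by dominating the pointwise value of $v$ at the interior maximum of $u$ (resp.\ of $u$ at the interior maximum of $v$) by $V$ (resp.\ $U$), so that the cubic obstruction involves only these scalar suprema. The remaining boundary sub-cases, in which one of the suprema is attained on $\partial\Omega$, are handled by reading off the relevant value from $\mathbf{g}\in\mathcal{A}_{\min}$ and re-running the same cubic monotonicity argument, which makes essential use of the identity $2Q_c(Q_c^2-1)=cM_c^2$ that is a direct rearrangement of \eqref{equation1 satisfied by the minimizers}.
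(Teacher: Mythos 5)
The paper does not present its own proof of this proposition: it cites Theorem~2.5 of the reference \cite{JDPEFAMJX} and remarks only that the bound ``follows from maximum principle arguments'' applied to the system \eqref{continuous nonlinear ferro strong form}. Your proposal supplies precisely such an argument, and the key steps check out. Differentiating \eqref{continuous nonlinear ferro strong form} against $\Qvec^{\ell}$ and $\Mvec^{\ell}$ gives $\tfrac12\Delta u = \abs{\nabla\Qvec^{\ell}}^2 + \ell^{-1}\bigl((u-1)u - \tfrac{c}{2}W\bigr)$ and $\tfrac12\Delta v = \abs{\nabla\Mvec^{\ell}}^2 + \ell^{-1}\bigl((v-1)v - cW\bigr)$ with $W = Q_{11}(M_1^2-M_2^2)+2Q_{12}M_1M_2$; the identity $(M_1^2-M_2^2)^2+(2M_1M_2)^2=v^2$ makes the Cauchy--Schwarz bound $\abs{W}\le\sqrt{u}\,v$ exact as you use it. At interior maximizers (dominating the companion variable by its global supremum) one gets, for $c>0$ as forced by \eqref{boundary condition}, the two inequalities $(U-1)\sqrt{U}\le\tfrac{c}{2}V$ and $V-1\le c\sqrt{U}$; eliminating $V$ yields $p(r):=r^3-(1+\tfrac{c^2}{2})r-\tfrac{c}{2}\le 0$ for $r=\sqrt{U}$, and since $p$ has exactly one positive root $Q_c$ with $p'(Q_c)=3Q_c^2-(1+\tfrac{c^2}{2})>0$ (a fact the paper itself records via $\det A>0$), $p\le 0$ on $[0,\infty)$ forces $r\le Q_c$, and then $V\le 1+cQ_c=M_c^2$. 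Your boundary sub-cases are also handled correctly via $2Q_c(Q_c^2-1)=cM_c^2$ and the monotonicity of $r\mapsto r(r^2-1)$ on $[1,\infty)$. One point worth making explicit: the pointwise second-derivative test at an interior maximizer requires $\Psi^{\ell}$ to be classical, which follows from elliptic regularity since $\partial\Omega$ and $\mathbf{g}$ are smooth under \eqref{boundary condition}; the hypothesis as stated places $\Psi^{\ell}$ only in $\mathcal{A}\subset\mathbf{H}^1(\Omega)$, so this bootstrapping step deserves a sentence.
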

\begin{lem}\cite[Lemma A.1]{Bethuel}\label{regularity 1}
	Assume $u$ is a scalar-valued function such that $-\Delta u = f$ on $\Omega \subset \mathbb{R}^n$. Then $\abs{\nabla u(x)}^2 \leq C(\norm{f}_{L^\infty(\Omega)}\norm{u}_{L^\infty(\Omega)} + \frac{1}{{\text{dist}}^2(x, \partial \Omega )} \norm{u}_{L^\infty(\Omega)}^2)$ for all $x\in \Omega$, where the constant $C$ depends only on the dimension $n.$
\end{lem}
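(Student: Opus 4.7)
The plan is to combine a standard interior gradient estimate for Poisson's equation on a ball with an optimization over the radius. Fix $x \in \Omega$ and let $d := \text{dist}(x, \partial\Omega)$, so $B_r(x) \subset \Omega$ for every $r \in (0, d]$.

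First I would establish, for each such $r$, the scale-aware bound
\begin{align*}
|\nabla u(x)| \leq C_1 \, \frac{\|u\|_{L^{\infty}(B_r(x))}}{r} + C_2 \, r \, \|f\|_{L^{\infty}(B_r(x))},
\end{align*}
with $C_1, C_2$ depending only on the dimension $n$. The cleanest route is to represent $u$ on $B_r(x)$ via the Green's function $G_r$ and Poisson kernel $P_r$ for the ball, writing $u = u_{\mathrm{harm}} + u_{\mathrm{part}}$ where $u_{\mathrm{harm}}$ is harmonic with trace $u|_{\partial B_r(x)}$ and $u_{\mathrm{part}}$ has zero trace with $-\Delta u_{\mathrm{part}} = f$. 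Differentiating the representation formula at the center and using the standard pointwise estimates $|\nabla_y P_r(x,z)| \leq C r^{-n}$ on $\partial B_r(x)$ and $|\nabla_y G_r(x,z)| \leq C |x-z|^{1-n}$ inside $B_r(x)$, the boundary contribution yields the $\|u\|_\infty / r$ term and the volume integral contributes the $r \|f\|_\infty$ term (since $\int_{B_r(x)} |x-z|^{1-n}\,dz = c_n \, r$). Equivalently, one can rescale $v(y) := u(x + ry)$ to $B_1$ and invoke a fixed interior gradient estimate there, scaling back.

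Next I would optimize in $r \in (0, d]$ to produce the product structure on the right-hand side. Set $r^{*} := \sqrt{\|u\|_{L^\infty(\Omega)}/\|f\|_{L^\infty(\Omega)}}$ (interpreted as $+\infty$ if $\|f\|_\infty = 0$). If $r^{*} \leq d$, I take $r = r^{*}$: the two terms balance and squaring gives $|\nabla u(x)|^2 \leq C \|f\|_{L^\infty(\Omega)} \|u\|_{L^\infty(\Omega)}$. If $r^{*} > d$, then $d^2 \|f\|_{L^\infty(\Omega)} < \|u\|_{L^\infty(\Omega)}$, so taking $r = d$ and using $(a+b)^2 \leq 2a^2 + 2b^2$,
\begin{align*}
|\nabla u(x)|^2 \leq 2 C_1^2 \, \frac{\|u\|_{L^\infty(\Omega)}^2}{d^2} + 2 C_2^2 \, d^2 \|f\|_{L^\infty(\Omega)}^2 \leq 2C_1^2 \frac{\|u\|_{L^\infty(\Omega)}^2}{d^2} + 2C_2^2 \|f\|_{L^\infty(\Omega)} \|u\|_{L^\infty(\Omega)}.
\end{align*}
In both cases the bound is dominated by $C\bigl(\|f\|_{L^\infty(\Omega)}\|u\|_{L^\infty(\Omega)} + d^{-2}\|u\|_{L^\infty(\Omega)}^2\bigr)$, which is exactly the claimed inequality.

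The only nontrivial step is the ball estimate in the first paragraph; once it is in place the dimensional optimization in the second paragraph is mechanical. The constants produced by the Green's function integrals depend only on $n$, matching the statement. I expect the main technical care to go into verifying the pointwise bound on $\nabla_y G_r$ near the center (the singularity at $z=x$) and ensuring the Poisson-kernel estimate is the one giving the $1/r$ factor, both of which are classical and can be cited from standard references on elliptic PDE.
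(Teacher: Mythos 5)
Your proof is correct, and since the paper simply cites this as \cite[Lemma A.1]{Bethuel} without reproducing the argument, your job was to reconstruct the classical proof — which you have done following the standard route (interior gradient estimate on $B_r(x)$ via the Green's/Poisson representation or rescaling, then optimizing $r$ against $d=\mathrm{dist}(x,\partial\Omega)$, with the two-case split handling whether the balancing radius $r^*=\sqrt{\|u\|_\infty/\|f\|_\infty}$ fits inside $\Omega$). This matches the argument in the cited source; the case analysis and the use of $d^2\|f\|_\infty<\|u\|_\infty$ to reduce $d^2\|f\|_\infty^2$ to $\|f\|_\infty\|u\|_\infty$ when $r^*>d$ are both handled correctly.
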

\begin{prop}[Uniform convergence in the interior]\label{Proposition: uniform convergence on compact sets}
	Let $\Omega \subset \mathbb{R}^2$ be a simply-connected bounded open set with smooth boundary.
	Let $\Psi^{\ell}:=(\Qvec^{\ell}, \Mvec^{\ell})$ be a global minimizer of $\tilde{\mathcal{E}}$ from  \eqref{modified energy functional} in the admissible space $\mathcal{A}$,  with $\mathbf{g} \in \mathcal{A}_{\min}$ defined in \eqref{boundary condition}. Then $\abs{\,\abs{\Qvec^{\ell}}- Q_c} $ = {\scalebox{1.2}{$\scriptscriptstyle\mathcal{O}$}}$(1)$, $\abs{\,\abs{\Mvec^{\ell}}- M_c} $ = {\scalebox{1.2}{$\scriptscriptstyle\mathcal{O}$}}$(1)$, and $\abs{\,\cos(\theta_{\ell} -2\varphi_{\ell})- 1} $ = {\scalebox{1.2}{$\scriptscriptstyle\mathcal{O}$}}$(1)$,  as $\ell \rightarrow 0$  on every compact subset $K \subset \Omega.$ 	
\end{prop}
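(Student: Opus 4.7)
The plan is to combine an $\ell^{-1/2}$ Lipschitz bound for $\Psi^\ell$ on compact subsets with a refined $o(\ell)$ bound on $\int_\Omega \tilde f_B(\Psi^\ell)\,\dx$, and then run a Bethuel--Brezis--Hélein ``propagation of smallness'' argument to conclude $\sup_K \tilde f_B(\Psi^\ell) \to 0$. The three pointwise convergences then follow by reading off the zero set of $\tilde f_B$ in polar coordinates near $\mathcal{A}_{\min}$. First, Proposition \ref{Proposition: L infinity bound of solutions} gives $|\Psi^\ell| \leq C$ uniformly in $\ell$, so $D\tilde f_B(\Psi^\ell)$ is bounded as a polynomial of bounded quantities; the Euler--Lagrange equation \eqref{minimization pde} then yields $\|\Delta \Psi_i^\ell\|_{L^\infty(\Omega)} \leq C\ell^{-1}$ componentwise. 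Applying Lemma \ref{regularity 1} to each component gives, on any compact $K\subset\Omega$ with $d := \mathrm{dist}(K, \partial\Omega) > 0$,
$$|\nabla \Psi^\ell(x)|^2 \leq C\bigl(\ell^{-1} + d^{-2}\bigr), \qquad x \in K,$$
which for small $\ell$ reduces to $|\nabla \Psi^\ell(x)| \leq C_K\,\ell^{-1/2}$.

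Second, the minimality inequality \eqref{Strong convergence in H1 equation1} rearranges to $\ell^{-1}\int_\Omega \tilde f_B(\Psi^\ell)\,\dx \leq \tfrac12 \int_\Omega (|\nabla\Psi_0|^2 - |\nabla\Psi^\ell|^2)\,\dx$, and the norm convergence $\vertiii{\nabla\Psi^\ell}_0 \to \vertiii{\nabla\Psi_0}_0$ established inside the proof of Proposition \ref{Proposition: limiting functions} forces the right-hand side to vanish as $\ell\to 0$, giving $\int_\Omega \tilde f_B(\Psi^\ell)\,\dx = o(\ell)$. Suppose toward a contradiction that for some $\delta > 0$, some compact $K\subset\Omega$ and a subsequence $\ell_n\to 0$ there exist $x_n\in K$ with $\tilde f_B(\Psi^{\ell_n}(x_n)) \geq \delta$. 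Since $\tilde f_B$ is $C^1$ on the bounded set containing the image of $\Psi^{\ell_n}$, the composition $\tilde f_B\circ \Psi^{\ell_n}$ has Lipschitz constant at most $C\ell_n^{-1/2}$ in a neighborhood of $K$, so $\tilde f_B(\Psi^{\ell_n}) \geq \delta/2$ on $B_{r_n}(x_n)$ with $r_n := c\delta\sqrt{\ell_n}$. Since $r_n \to 0$, this ball lies in $\Omega$ for large $n$, and integrating yields $\int_\Omega \tilde f_B(\Psi^{\ell_n})\,\dx \geq C\delta^3\ell_n$, contradicting the $o(\ell_n)$ bound. Hence $\sup_K \tilde f_B(\Psi^\ell) \to 0$.

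Third, I convert this uniform smallness into the three stated convergences. Writing $\Psi^\ell = (S_\ell\cos\theta_\ell,\,S_\ell\sin\theta_\ell,\,R_\ell\cos\varphi_\ell,\,R_\ell\sin\varphi_\ell)$ with $S_\ell = |\Qvec^\ell|$, $R_\ell = |\Mvec^\ell|$, the potential $\tilde f_B$ depends only on $(S_\ell,\,R_\ell,\,\theta_\ell-2\varphi_\ell)$, and Lemma \ref{Global minimizer to bulk energy functional} identifies its zero set $\mathcal{A}_{\min}$ as $\{S = Q_c,\,R = M_c,\,\theta - 2\varphi \equiv 0 \pmod{2\pi}\}$. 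The block-diagonal Hessian computed there is positive-definite on the three transversal directions $(S - Q_c,\,R - M_c,\,\theta - 2\varphi)$, the single null direction of the $B$ block being tangent to $\mathcal{A}_{\min}$. Consequently, in a neighborhood of $\mathcal{A}_{\min}$ one has a pointwise lower bound
$$\tilde f_B(\Psi) \;\geq\; c_1\!\left[(S - Q_c)^2 + (R - M_c)^2 + \bigl(1 - \cos(\theta - 2\varphi)\bigr)\right];$$
this neighborhood contains $\Psi^\ell(K)$ once $\sup_K \tilde f_B(\Psi^\ell)$ is small enough, and the three claimed convergences follow at once.

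The main obstacle I anticipate is precisely this last step: because $\mathcal{A}_{\min}$ is a one-parameter family, uniform smallness of $\tilde f_B$ does not control $\varphi_\ell$ against any fixed map, but only the three gauge-invariant transversal quantities. One therefore needs the explicit Hessian positivity from Lemma \ref{Global minimizer to bulk energy functional} to translate the pointwise smallness of $\tilde f_B$ into the pointwise convergences of $|\Qvec^\ell|$ to $Q_c$, of $|\Mvec^\ell|$ to $M_c$, and of $\cos(\theta_\ell - 2\varphi_\ell)$ to $1$.
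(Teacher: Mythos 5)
Your proposal is correct and follows essentially the same route as the paper: the $\ell^{-1/2}$ interior gradient bound from Lemma~\ref{regularity 1} and the $L^\infty$ bound, together with the local Lipschitz estimate for $\tilde f_B\circ\Psi^\ell$ and the $o(\ell)$ bound on $\int_\Omega\tilde f_B(\Psi^\ell)\dx$, give $\sup_K\tilde f_B(\Psi^\ell)\to 0$ by integrating over a ball of radius proportional to $\sqrt\ell$. The only differences are cosmetic --- you phrase the ball-integration step as a contradiction whereas the paper chooses $\rho=\sqrt\ell\,\tilde f_B(\alpha,\beta)/(2C)$ directly, and your final step (using the positive-definiteness of the Hessian of $\tilde f_B$ transverse to $\mathcal{A}_{\min}$, together with the $L^\infty$ bound, to turn pointwise smallness of $\tilde f_B$ into the three transversal convergences) makes explicit a passage that the paper asserts without detail.
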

\begin{proof}
Lemma \ref{regularity 1} and Proposition \ref{Proposition: L infinity bound of solutions} give the following upper bound for the gradient:
	\begin{align} \label{gradient bound depending on l}
	\abs{\nabla \Psi^{\ell}}\leq \frac{C}{\sqrt{\ell}} \text{ on every compact subset } K \text{of }  \Omega,
	\end{align}
	for a positive constant $C$ independent of $\ell$.
	Let $K$ be a compact set in $\Omega$. Let $x_0 \in K.$ Set $\alpha:= \Qvec^{\ell}(x_0)$ and $\beta:=\Mvec^{\ell}(x_0).$ Using \eqref{gradient bound depending on l} shows that
%	\begin{align*}
$$	\abs{\Qvec^{\ell}(x) - \Qvec^{\ell}(x_0)} \leq \frac{C \rho}{\sqrt{\ell}}, \text{ and }	\abs{\Mvec^{\ell}(x) - \Mvec^{\ell}(x_0)} \leq \frac{C \rho}{\sqrt{\ell}} \text{ for } \abs{x - x_0}<\rho <\delta:= \text{ dist}(K,\partial \Omega).$$
%	\end{align*}
 Proposition \ref{Proposition: L infinity bound of solutions}, \eqref{gradient bound depending on l} and the inequalities above are enough to show that the bulk energy density, $\tilde{f}_B(\cdot)$ is locally Lipschitz and
%	\begin{align*}
$$	\abs{\tilde{f}_B (\Qvec^{\ell}(x),\Mvec^{\ell}(x))-\tilde{f}_B (\alpha,\beta) } \leq  \frac{C\rho }{\sqrt{\ell}}  \text{ for } \abs{x - x_0}<\rho <\delta:= \text{ dist}(K,\partial \Omega).$$
%	\end{align*}
That is, $-\frac{C\rho }{\sqrt{\ell}}+	\tilde{f}_B (\alpha,\beta)  \leq \tilde{f}_B (\Qvec^{\ell}(x),\Mvec^{\ell}(x)) .$ The inequality \eqref{Strong convergence in H1 equation1} and the strong convergence, $(\Psi^{\ell}) \rightarrow \Psi_0$ in $\mathbf{H}^1(\Omega)$,  imply that $\displaystyle \lim_{\ell \rightarrow 0} \frac{1}{\ell}\int_{\Omega}\tilde{f}_B (\Psi^{\ell})\dx \rightarrow 0.$ Therefore,
%	This plus  $\int_{\Omega} \tilde{f}_B(\Psi^{\ell}) \dx \leq \ell \int_{\Omega}\frac{1}{2}\abs{\nabla \Psi_0}^2 \dx,$  from \eqref{Strong convergence in H1 equation1} leads to 
%	\begin{align*}
$$	\pi \rho^2\bigg(	\tilde{f}_B (\alpha,\beta)-\frac{C\rho }{\sqrt{\ell}} \bigg) \leq \int_{B(x_0,\rho)}\tilde{f}_B (\Qvec^{\ell}(x),\Mvec^{\ell}(x))  \dx=\ell \, {\scalebox{1.2}{$\scriptscriptstyle\mathcal{O}$}}(1) \text{ as } \ell \rightarrow 0.$$
%	\end{align*}
	For a specific choice of $\rho= \frac{\sqrt{\ell} \tilde{f}_B (\alpha,\beta) }{2C},$ we obtain  
%	\begin{align*}%\label{uniform convergence on compact sets equation}
$$	 \frac{\pi \ell  \tilde{f}_B^3 (\alpha,\beta) }{8C^2}=\ell \, {\scalebox{1.2}{$\scriptscriptstyle\mathcal{O}$}}(1) \text{ as } \ell \rightarrow 0 \text{ if and only if }  \tilde{f}_B^3 (\alpha,\beta) ={\scalebox{1.2}{$\scriptscriptstyle\mathcal{O}$}}(1)  \text{ as } \ell \rightarrow 0 .$$
%	\end{align*}
	Therefore, $\tilde{f}_B \rightarrow 0$ uniformly on compact subsets of $  \Omega \text{ if and only if }  \abs{\Qvec^{\ell}}\rightarrow  Q_c$ and  $\abs{\Mvec^{\ell}}\rightarrow  M_c$, $\cos(\theta_{\ell} -2\varphi_{\ell}) \rightarrow 1$, as $\ell \rightarrow 0$ uniformly on compact subsets of $  \Omega $. 
	%Also, $\theta_{\ell} -2\varphi_{\ell} \rightarrow $ an element in $2k\pi, k\in \mathbb{Z}  $ as $\ell \rightarrow 0,$ that is $\cos(\theta_{\ell} -2\varphi_{\ell}) \rightarrow 1$, as $\ell \rightarrow 0.$
\end{proof}
%\subsection{$H^2$ bound of minimizers $\Psi^{\ell}$ of \eqref{modified energy functional} in the interior for the parameter $\mathbf{c>0}$}\label{H2 bound of minimizers}
The next result, adapted from \cite{Canevari2015}, is a crucial ingredient for the Bochner-type inequality for the ferronematic energy density.%An inequality crucial for Bochner-type inequality is established next and the proof uses the fact that for every point $\varv $ on the  vacuum manifold referred to $\mathcal{N}:=\tilde{f}_B^{-1}(0),$ the Hessian matrix $D^2\tilde{f_B}$ restricted to the normal space of $ \mathcal{N} $ at  $\varv $ is positive definite.
	
	\begin{thm} \label{Hessian restricted to normal space is positive definite}
		%	Let $\Psi^{\ell}$ be a minimizer for $\tilde{\mathcal{E}}$ from \eqref{modified energy functional}. Then $\Psi^{\ell}$ is a weak solution of the Euler-Lagrange equation $\Delta \Psi^{\ell} =\ell^{-1} D\tilde{f}_B(\Psi^{\ell} ) \text{ in } \Omega,$
%		\begin{align*}
%		\Delta \Psi^{\ell} +\frac{1}{\ell}D\tilde{f}_B(\Psi^{\ell} )=0 \text{ in } \Omega,
%		\end{align*}
	%	where 
		Let $\tilde{f}_B:\mathbb{R}^4 \rightarrow \mathbb{R}$ be the smooth function defined in \eqref{modified bulk energy functional} and $\mathcal{N}:=\tilde{f}_B^{-1}(0).$ Then it holds:
		
		\medskip
		
		$(i)$ The  set $\mathcal{N}$ is non-empty,  smooth, compact and connected submanifold of $ \mathbb{R}^4  $ without boundary.
		
		\medskip
		
		$(ii)$  There exists some positive constants $\delta_0<1 , $ $m_0$ such that, for all $\varv \in \mathcal{N}$ and all unit normal vector \indent \indent $\nu \in \mathbb{R}^4$ to $\mathcal{N}$ at the point $\varv,$
		\begin{align}\label{Hypothesis H2}
		D\tilde{f}_B(\varv+t\nu)\cdot \nu \geq m_0 t, \text{ if } 0\leq t \leq \delta_0.
		\end{align}
	\end{thm}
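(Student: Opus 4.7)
The plan is as follows. For part $(i)$, Lemma~\ref{Global minimizer to bulk energy functional} identifies $\mathcal{N}=\tilde{f}_B^{-1}(0)$ with the set of global minimizers of $f_B$, so $\mathcal{N}$ admits the explicit parametrization
$$\gamma:\varphi\in\mathbb{R}/2\pi\mathbb{Z}\longmapsto \left(Q_c\cos 2\varphi,\,Q_c\sin 2\varphi,\,M_c\cos\varphi,\,M_c\sin\varphi\right)\in\mathbb{R}^4.$$
I would first verify that $\gamma$ is smooth and that its last two components alone recover $\varphi$ modulo $2\pi$ (since $M_c>0$), which makes $\gamma$ injective on $[0,2\pi)$. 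A direct computation yields $\abs{\gamma'(\varphi)}^2=4Q_c^2+M_c^2>0$ for every $\varphi$, so $\gamma$ is an immersion; compactness of the domain then promotes it to an embedding. Hence $\mathcal{N}$ is a non-empty, smooth, connected, compact, boundaryless $1$-submanifold of $\mathbb{R}^4$, diffeomorphic to $S^1$.

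For part $(ii)$, my strategy is a Taylor expansion of the gradient about a point of $\mathcal{N}$. Since $\tilde{f}_B\ge 0$ with $\tilde{f}_B=0$ exactly on $\mathcal{N}$, every $v\in\mathcal{N}$ is a critical point, $D\tilde{f}_B(v)=0$. For any unit normal $\nu$ to $\mathcal{N}$ at $v$ and small $t\ge 0$,
\begin{align*}
D\tilde{f}_B(v+t\nu)\cdot\nu \;=\; t\,\nu^{\!\top} D^2\tilde{f}_B(v)\nu + R(t),\qquad \abs{R(t)}\le C\,t^2,
\end{align*}
where the constant $C$ is uniform in $v\in\mathcal{N}$ because $\tilde{f}_B$ is polynomial and $\mathcal{N}$ is compact. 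Thus \eqref{Hypothesis H2} reduces to producing a single $m_0>0$ such that $\nu^{\!\top} D^2\tilde{f}_B(v)\nu\ge 2m_0$ uniformly over $v\in\mathcal{N}$ and unit $\nu\in N_v\mathcal{N}$, after which one takes $\delta_0:=m_0/C$.

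The main obstacle is therefore the uniform positivity of $D^2\tilde{f}_B$ on the normal bundle. The computation inside the proof of Lemma~\ref{Global minimizer to bulk energy functional} already handles the reference point $v_0=(Q_c,0,M_c,0)$: after reordering the variables as $(Q_{11},M_1,Q_{12},M_2)$, the Hessian splits into two $2\times 2$ blocks, one strictly positive definite and the other with determinant $0$ and positive trace $Q_c^2-1+2cQ_c>0$, so its eigenvalues are $\{0,\lambda\}$ with $\lambda>0$. Using the cubic $Q_c^3-(1+c^2/2)Q_c-c/2=0$, a direct substitution shows that the kernel of the singular block is spanned by $(2Q_c,M_c)$, which is exactly the $(Q_{12},M_2)$ part of the tangent vector $\gamma'(0)=(0,2Q_c,0,M_c)$. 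Consequently the three-dimensional normal space $N_{v_0}\mathcal{N}$ lies in the strictly positive part of $D^2\tilde{f}_B(v_0)$. For a general $v=\gamma(\varphi_0)\in\mathcal{N}$, the same conclusion follows either by repeating the calculation or, more conceptually, by invoking the invariance of $\tilde{f}_B$ under the one-parameter group $\varphi\mapsto\varphi+\alpha$ (which rotates $\Qvec$ by $2\alpha$ and $\Mvec$ by $\alpha$); continuity of the smallest normal eigenvalue along $\mathcal{N}$ together with compactness then delivers the uniform lower bound $m_0>0$, and combining this with the Taylor expansion above gives \eqref{Hypothesis H2}.
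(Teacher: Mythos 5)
Your proof is correct and follows essentially the same route as the paper: parametrize $\mathcal{N}$ by $S^1$, Taylor-expand $D\tilde{f}_B$ about a point of $\mathcal{N}$ using $D\tilde{f}_B(v)=0$, and reduce \eqref{Hypothesis H2} to uniform positive-definiteness of $D^2\tilde{f}_B$ on the normal bundle. The only meaningful difference is organizational: you work at the reference point $(Q_c,0,M_c,0)$ and, after reordering to $(Q_{11},M_1,Q_{12},M_2)$, observe that the Hessian becomes block-diagonal with one strictly positive block and one rank-one positive-semidefinite block whose kernel is exactly the tangent line spanned by $(2Q_c,M_c)$; you verified both the vanishing determinant and the kernel direction against the cubic for $Q_c$, and these computations check out. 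The paper instead works at $(-Q_c,0,0,M_c)$, writes the same quadratic form as $T_1+T_2$ without explicitly noting the block structure, and eliminates the cross term in $T_2$ via the normal constraint $2Q_cp_2+M_cp_3=0$ — algebraically the identical calculation. Your rotational-invariance-plus-compactness step to get the uniform bound $m_0$ is the same idea the paper compresses into ``up to rotating the coordinate frame,'' and your explicit remainder bound $\abs{R(t)}\le Ct^2$ makes precise what the paper leaves implicit. The only nit: the theorem requires $\delta_0<1$, so one should take $\delta_0=\min(m_0/C,1/2)$ rather than $m_0/C$ outright.
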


\begin{proof}%[\textbf{Proof of (H2)}]
	The proof is divided into three steps.  The fact that $\displaystyle \min_{\Qvec,\Mvec \in \mathbb{R}^2} \tilde{f}_B(\Qvec,\Mvec)=0$ and the existence of global minimizers from Lemma \ref{Global minimizer to bulk energy functional}  implies that $\mathcal{N}$ is non-empty. A diffeomorphism $h$ from  $ \mathbb{S}^1,$ the unit circle in $\mathbb{R}^2,$ to the vacuum manifold  $\mathcal{N}$ is defined in Step 1.   Step 2 focuses on the construction of the tangent and normal spaces of $ \mathcal{N} $ at a point $\Psi^{\min} \in \mathcal{N} $. %The Hessian matrix of $\tilde{f}_B$ restricted to the normal space of $ \mathcal{N} $, key part of the Taylor series expansion of the term on the left hand side of \eqref{Hypothesis H2},  is derived in 
	The third step focuses on the derivation of the inequality \eqref{Hypothesis H2}.% is established.
		
\medskip
	
\noindent {\it Step 1 (Diffeomorphism $h:\mathbb{S}^1 \rightarrow \mathcal{N}$)}.	
%		
%	\medskip
%	
%	\noindent \textbf{Diffeomorphism ($h:\mathbb{S}^1 \rightarrow \mathcal{N}$):}
	 From Lemma \ref{Global minimizer to bulk energy functional}, for $c>0$, the potential  $\tilde{f}_B$ attains minimum at $\Psi^{\min}:=(Q_c\cos 2\varphi, Q_c\sin 2\varphi, $ $ M_c\cos \varphi, M_c\sin \varphi).$	Therefore, $\mathcal{N}:=\tilde{f}_B^{-1}(0)$ is computed to be
	%\begin{align*}
$$	\mathcal{N}:= \biggl\{\begin{pmatrix}
	Q_c(2\mathbf{n} \otimes \mathbf{n} -I)\e_1 \\ M_c \mathbf{n}
	\end{pmatrix} \text{  such that } \mathbf{n}:=\begin{pmatrix}
	\cos \varphi \\ \sin \varphi
	\end{pmatrix}  \in \mathbb{S}^1, M_c=\sqrt{1+cQ_c} ,  \e_1:=\begin{pmatrix}
	1 \\ 0
	\end{pmatrix}, I:=\begin{pmatrix}
	1 &0 \\ 0 &1
	\end{pmatrix} \biggr\}. $$
%	\end{align*}
	The map $h:\mathbb{S}^1 \rightarrow \mathcal{N}$ defined by
%	\begin{align*}
$$	h(\mathbf{n} ):=\begin{pmatrix}
	Q_c(2\mathbf{n} \otimes \mathbf{n} -I)\e_1 \\ M_c \mathbf{n}
	\end{pmatrix} =\begin{pmatrix}
	Q_c\cos 2\varphi \\ Q_c \sin 2\varphi \\	M_c\cos \varphi \\ M_c \sin \varphi
	\end{pmatrix}$$
%	\end{align*}
	is a diffeomorphism. Since $\mathbb{S}^1$ is compact and connected subset of $\mathbb{R}^2,$ the properties that $\mathcal{N}$ is compact and connected will follow from the properties of  $h.$  
	
	\medskip
	
%	Let $ \mathbb{S}^1$ denote the unit circle in $\mathbb{R}^2.$ 
%\noindent \textbf{Tangent plane to $\mathcal{N}$ at a point $\Psi^{\min}\in \mathcal{N}$:}
%	Let $\Psi^{\min} \in \mathcal{N}.$ 
\noindent	{\it Step 2 (Tangent and normal spaces of $\mathcal{N}$ at $\Psi^{\min}\in \mathcal{N}$)}. 	We compute the basis vectors of the tangent plane of $\mathcal{N}$ at $\Psi^{\min} \in \mathcal{N}$. The conventional notations for  tangent spaces are used, e.g., $T_\mathbf{n}\mathbb{S}^1$ denote the tangent space of $\mathbb{S}^1$ at $\mathbf{n} \in \mathbb{S}^1.$ The differential of $h$ at $\mathbf{n}$ is a linear map $dh(\mathbf{n}): T_\mathbf{n}\mathbb{S}^1 \rightarrow T_{h(\mathbf{n})}\mathcal{N}$, where $T_{h(\mathbf{n})}\mathcal{N}$ denote the tangent space of $\mathcal{N}$ at $h(\mathbf{n}) \in \mathcal{N}.$ For all tangent vectors $\mathbf{v} \in T_\mathbf{n}  \mathbb{S}^1,$ $dh(\mathbf{n})$ is defined as
	%\begin{align*}
$$	\dual{dh(\mathbf{n}), \mathbf{v}}= \begin{pmatrix}
	\dual{d(Q_c(2\mathbf{n} \otimes \mathbf{n} -I)\e_1), \mathbf{v}} \\ \dual{d(M_c \mathbf{n}), \mathbf{v}}
	\end{pmatrix} =  \begin{pmatrix}
	Q_c(2\mathbf{n} \otimes \mathbf{v}+2\mathbf{v} \otimes \mathbf{n} )\e_1 \\ M_c \mathbf{v}
	\end{pmatrix}. $$
%	\end{align*}
	For $\Psi^{\min} \in \mathcal{N},$ there exists $\mathbf{n} \in \mathbb{S}^1 $ such that $h(\mathbf{n})=\Psi^{\min}.$ Upto rotating the coordinate frame, we can assume without loss of generality that $\mathbf{n}=\e_2=(0,1)
%	\begin{pmatrix}
%	0\\ 1
%	\end{pmatrix}
	.
	 $ This implies $\varphi =\frac{\pi}{2}$ and consequently $  \Psi^{\min}=(-Q_c,0,0,M_c ).$
%	 $ \begin{pmatrix}
%	-Q_c \\0\\0\\ M_c 
%	\end{pmatrix}. $ 
	The basis vector of the tangent plane of $\mathcal{N}$ at $\Psi^{\min}$ is given by 
	\begin{align*}
	X:=\begin{pmatrix}
	Q_c(2\e_2 \otimes \mathbf{e_1}+2\e_1 \otimes \e_2 )\e_1 \\ M_c \e_1
	\end{pmatrix}= \begin{pmatrix}
	0 \\2Q_c\\ M_c \\0
	\end{pmatrix}. 
	\end{align*}
%	\textbf{Hessian matrix restricted to the normal space of $\mathcal{N}$:}
	Let $P \in \mathbb{R}^4$ be a normal vector of $\mathcal{N}$ at $\Psi^{\min}$. Then $P$ satisfies
	\begin{align}\label{normal}
	P:X=0 \implies \begin{pmatrix}
	p_1 \\p_2\\ p_3\\ p_4
	\end{pmatrix} : \begin{pmatrix}
	0 \\2Q_c\\ M_c \\0
	\end{pmatrix}=0 \implies 2Q_cp_2+M_c p_3=0.
	\end{align}
	
\noindent {\it Step 3 (Proof of \eqref{Hypothesis H2})}.	For any $\Psi := (	Q_{11}, Q_{12}, M_1, M_2 ),$ the Hessian matrix of $\tilde{f}_B$ at $\Psi$ is given by 
	$$D^2\tilde{f}_B(\Psi):= \begin{pmatrix}
3Q_{11}^2+Q_{12}^2-1 & 2Q_{11}Q_{12} & -cM_1 &cM_2\\
2Q_{11}Q_{12} & Q_{11}^2+3Q_{12}^2-1 & -cM_2 &-cM_1\\
-cM_1 &-cM_2& 3M_1^2+M_2^2-1-cQ_{11}& 2M_1M_2-cQ_{12}\\ cM_2 &-cM_1 &2M_1M_2-cQ_{12} &M_1^2+3M_2^2-1+cQ_{11}
	\end{pmatrix}.$$  	
	Here we have $\Psi^{\min} :=(	Q_{11}, Q_{12}, M_1, M_2 )$
	$ 	:= 	(-Q_c ,0, 0, M_c)$
	with $Q_{11}:=-Q_c,$  $Q_{12}:=0,$  $M_{1}:=0,$  $M_2:=M_c.$  A Taylor series expansion of $\tilde{f}_B$ at $\Psi^{\min}$ yields 
%	\begin{align*}
$$	D\tilde{f}_B(\Psi^{\min}+tP):P
	= 	D\tilde{f}_B(\Psi^{\min}):P+tD^2\tilde{f}_B(\Psi^{\min})P:P +r_{\tilde{f}_{B}}:P,$$
%	\end{align*}
	where $r_{\tilde{f}_{B}}$ is the remainder in the Taylor series expansion around $\Psi^{\min}.$  
	Observe that  $	D\tilde{f}_B(\Psi^{\min})=0$ as $\Psi^{\min}\in \mathcal{N}.$   By the definition of Taylor series expansion, there exists $\delta_0>0$ such that on  $B_{\delta_0}(\Psi^{\min})$, 
			\begin{align*}
		D\tilde{f}_B(\Psi^{\min}+tP):P&
		\geq \frac{t}{2}D^2\tilde{f}_B(\Psi^{\min})P:P\\&
		=t\{ ((3Q_{11}^2-1)p_1^2 +2cM_2p_1p_4 +(3M_2^2-1+cQ_{11})p_4^2) +( (Q_{11}^2-1) p_2^2-2cM_2p_2p_3\\&\quad+(M_2^2-1-cQ_{11})p_3^2)\}=:t(T_1+T_2).
			\end{align*}
	 Next consider the term $T_1$ and use $Q_{11}:=-Q_c$, $M_2:=M_c$, and $M_c^2 = 1+cQ_c$, $Q_c^3-(1+\frac{c^2}{2})Q_c-\frac{c}{2} =0 $ from Lemma \ref{Global minimizer to bulk energy functional} for calculations. For $\bar{P}:=(p_1 \,\, p_4),$
%	\begin{align*}
$$	T_1:=(3Q_{11}^2-1)p_1^2 +2cM_2p_1p_4 +(3M_2^2-1+cQ_{11})p_4^2=(3Q_c^2-1)p_1^2 +2cM_cp_1p_4 +2M_c^2p_4^2=\bar{P}A\bar{P}^{\intercal}
%\begin{pmatrix}p_1\\p_4
%	\end{pmatrix} 
	,$$
%	\end{align*}
	where $A:=\begin{pmatrix}3Q_c^2-1 & cM_c\\ cM_c &2M_c^2
	\end{pmatrix}. $ The determinant of $A,$ $\det A =M_c^2(6Q_c^2-2-c^2)=M_c^2(4+2c^2+\frac{3c}{Q_c})>0.$ Therefore, both the eigenvalues of $A$ are either negative or positive, and zero is not an eigenvalue. Moreover, the fact that trace of $A,$ Tr${(A)}=3Q_c^2-1+2M_c^2=3Q_c^2+1 +2cM_c>0, $ yields that all the eigenvalues of $A$ are positive. Therefore, $A$ is positive definite and there exists $\alpha_1>0$ such that $ %(p_1 \,\, p_4)A\begin{pmatrix}p_1\\p_4	\end{pmatrix}
	 \bar{P}A\bar{P}^{\intercal} \geq \alpha_1 (p_1^2+p_4^2).$ Now consider the second term 
%	\begin{align*}
$$	T_2:= (Q_{11}^2-1) p_2^2-2cM_2p_2p_3+(M_2^2-1-cQ_{11})p_3^2=(Q_c^2-1) p_2^2-2cM_cp_2p_3+2cQ_cp_3^2.$$
%	\end{align*}
	The estimates $-2Q_cp_2=M_cp_3$ from  \eqref{normal} and $Q_c^3-(1+\frac{c^2}{2})Q_c-\frac{c}{2} =0$ are used here to obtain
%	\begin{align*}
$$	T_2=(Q_c^2-1+ 4cQ_c)p_2^2+2cQ_cp_3^2=(\frac{c^2}{2}+\frac{c}{2Q_c}+4cQ_c)p_2^2+2cQ_cp_3^2.$$
%	\end{align*}
	Choose $\alpha_2:=\min (\alpha_1, (\frac{c^2}{2}+\frac{c}{2Q_c}+4cQ_c),2cQ_cp_3^2) >0$. Let the normal vector $P$  at $\Psi^{\min}$ is the unit normal vector i.e., $\abs{P}=1$. This plus  $0<\abs{t}<\delta_0,$ and $m_0=\frac{\alpha_2}{2}$ leads to
%	\begin{align*}
$$	D\tilde{f}_B(\Psi^{\min}+tP):P\geq  \frac{t\alpha_2}{2} \abs{P}^2 =m_0 t .$$
%	\end{align*}
	This completes the proof.	
\end{proof}
\begin{rem}
	Theorem \ref{Hessian restricted to normal space is positive definite} verifies the assumptions  H1-H2 of \cite{Canevari2015}, with regards to the ferronematic bulk potential. The advantage of the analysis in \cite{Canevari2015} is that it does not exploit the	matricial structure of the configuration space, nor the precise shape of the potential and it's zero set. Once these assumptions are verified, one can use the asymptotic analysis in \cite{Canevari2015} to recover a Bochner-type inequality for the ferronematic free energy density. \qed
	%The advantage of the analysis in \cite{Canevari2015} is that it does not exploit the	matricial structure of the configuration space, nor the precise shape of the potential and it's zero set. %The convergence of the minimizers of the three dimensional Landau-de Gennnes energy functional in the vanishing elastic constant limit  is recovered then as a specific case.

\end{rem}
\begin{defn}[Nearest point projection onto $\mathcal{N}$]\cite{RogerMoser2005} 
	For  a smooth, compact submanifold $\mathcal{N}$ of $\mathbb{R}^4,$ of dimension $ 1 $ and codimension $3$, there exists a number $\kappa>0$ such that in the $\kappa$-neighborhood $U_\kappa(\mathcal{N}):=\{ \varv  \in \mathbb{R}^4: \, \text{ dist}(\varv , \mathcal{N})<\kappa \}$ of $\mathcal{N}$, the following property holds : for all $\varv \in U_\kappa(\mathcal{N}),$ there exists a unique point $\pi(\varv) \in \mathcal{N}$ such that $$\abs{\varv-\pi(\varv)}=\text{ dist}(\varv, \mathcal{N}).$$
%	\begin{align*}
%	\abs{\varv-\pi(\varv)}=\text{ dist}(\varv, \mathcal{N}).
%	\end{align*}
	The mapping $\varv \in U_\kappa(\mathcal{N}) \rightarrow \pi(\varv),$ called the nearest point projection onto $\mathcal{N}$, is smooth.
\end{defn} 

\medskip

\noindent Set $u_1:= Q_{11},u_2:= Q_{12},u_3:= M_{1},u_4:= M_{2} $ and $\Psi:=(u_1,u_2,u_3,u_4)$. Let $e_{\ell}(\Psi(x))$ denote the energy density $e_{\ell}(\Psi(x)):= \frac{1}{2}\abs{\nabla \Psi(x)}^2 +{\ell}^{-1} \tilde{f}_B(\Psi(x)).$
Next, a Bochner-type inequality is established, in the regions where  the minimizer $\Psi^{\ell}$ of \eqref{modified energy functional} lies close to the vacuum manifold of bulk energy minimizers. 
\begin{thm}[Bochner-type inequality]\label{Bochner type inequality thm}
There exists a constant  $  C>0,$ independent of $\ell$, so that for $\Psi^{\ell}$, a global minimizer of $\tilde{\mathcal{E}}(\Psi)$ from  \eqref{modified energy functional} in the admissible space $\mathcal{A}, $ with $\mathbf{g}\in \mathcal{A}_{\min}$, such that $\text{ dist}(\Psi^{\ell}, \mathcal{N}) < \kappa_0 $, it holds that 
\begin{align} \label{Bochner type inequality}
	-\Delta e_{\ell}(\Psi^{\ell}) +\abs{\nabla^2\Psi^{\ell}}^2 \leq C\abs{\nabla \Psi^{\ell}}^4.
	\end{align}
\end{thm}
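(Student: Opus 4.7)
The plan is to derive a pointwise identity for $\Delta e_\ell(\Psi^\ell)$ directly from the Euler--Lagrange system \eqref{minimization pde}, and then exploit the structure of $\tilde{f}_B$ near the vacuum manifold $\mathcal{N}$ recorded in Theorem \ref{Hessian restricted to normal space is positive definite}. A componentwise chain-rule computation, using $\Delta(\tfrac12|\nabla u_i|^2)=|\nabla^2 u_i|^2+\nabla u_i\cdot\nabla\Delta u_i$ for each scalar component of $\Psi^\ell$ together with $\Delta\tilde{f}_B(\Psi^\ell)=\sum_\alpha D^2\tilde{f}_B(\Psi^\ell)[\partial_\alpha\Psi^\ell,\partial_\alpha\Psi^\ell]+D\tilde{f}_B(\Psi^\ell)\cdot\Delta\Psi^\ell$, and the substitution $\Delta\Psi^\ell=\ell^{-1}D\tilde{f}_B(\Psi^\ell)$, yields
\begin{equation*}
\Delta e_\ell(\Psi^\ell)=|\nabla^2\Psi^\ell|^2+\frac{2}{\ell}\sum_{\alpha=1}^{2}D^2\tilde{f}_B(\Psi^\ell)[\partial_\alpha\Psi^\ell,\partial_\alpha\Psi^\ell]+\frac{1}{\ell^2}\bigl|D\tilde{f}_B(\Psi^\ell)\bigr|^2.
\end{equation*}
The claim \eqref{Bochner type inequality} thus reduces to bounding $-\tfrac{2}{\ell}\sum_\alpha D^2\tilde{f}_B(\Psi^\ell)[\partial_\alpha\Psi^\ell,\partial_\alpha\Psi^\ell]-\tfrac{1}{\ell^2}|D\tilde{f}_B(\Psi^\ell)|^2$ from above by $C|\nabla\Psi^\ell|^4$, with the second term providing a \emph{good} negative contribution.

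Next I would pick $\kappa_0\leq\min\{\kappa,\delta_0\}$ so that both the nearest-point projection $\pi$ onto $\mathcal{N}$ and the normal-direction inequality \eqref{Hypothesis H2} apply on the tube $\{\mathrm{dist}(\cdot,\mathcal{N})<\kappa_0\}$. Setting $\nu:=\Psi^\ell-\pi(\Psi^\ell)$, which is automatically normal to $\mathcal{N}$ at $\pi(\Psi^\ell)$ by the defining property of $\pi$, two facts feed into the estimate. First, each $v\in\mathcal{N}$ is a global minimum of the non-negative $\tilde{f}_B$, so $D^2\tilde{f}_B(v)$ is positive semi-definite; smoothness then upgrades this to the Lipschitz-type lower bound $D^2\tilde{f}_B(\Psi^\ell)[\xi,\xi]\geq -C|\nu|\,|\xi|^2$ for every $\xi\in\mathbb{R}^4$. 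Second, applying Theorem \ref{Hessian restricted to normal space is positive definite} in the unit normal direction $\hat{\nu}=\nu/|\nu|$ and integrating from $0$ to $|\nu|$ gives
\begin{equation*}
\tilde{f}_B(\Psi^\ell)\geq\tfrac{m_0}{2}|\nu|^2,\qquad \bigl|D\tilde{f}_B(\Psi^\ell)\bigr|\geq m_0|\nu|,
\end{equation*}
and therefore $|D\tilde{f}_B(\Psi^\ell)|^2\geq 2m_0\,\tilde{f}_B(\Psi^\ell)$.

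Plugging the first fact into the quadratic form and invoking the second,
\begin{equation*}
-\tfrac{2}{\ell}\sum_\alpha D^2\tilde{f}_B(\Psi^\ell)[\partial_\alpha\Psi^\ell,\partial_\alpha\Psi^\ell]\leq \tfrac{C|\nu|}{\ell}|\nabla\Psi^\ell|^2\leq \tfrac{C'\sqrt{\tilde{f}_B(\Psi^\ell)}}{\ell}|\nabla\Psi^\ell|^2,
\end{equation*}
and Young's inequality $ab\leq\frac{a^2}{4\eta}+\eta b^2$ (with $a=C'\sqrt{\tilde{f}_B(\Psi^\ell)}/\ell$, $b=|\nabla\Psi^\ell|^2$) splits the right-hand side into a $\tilde{f}_B/\ell^2$-term and a $|\nabla\Psi^\ell|^4$-term. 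Choosing $\eta$ large enough that $C'^2/(4\eta)<2m_0$ lets the $\tilde{f}_B/\ell^2$-term be absorbed into the negative contribution $-|D\tilde{f}_B|^2/\ell^2\leq-2m_0\tilde{f}_B/\ell^2$, leaving exactly $C|\nabla\Psi^\ell|^4$ with $C$ independent of $\ell$. The main obstacle I anticipate is guaranteeing that the constants $m_0$ and $\delta_0$ in \eqref{Hypothesis H2} can be taken uniformly over all base points $v\in\mathcal{N}$: Theorem \ref{Hessian restricted to normal space is positive definite} is carried out at one distinguished point $\Psi^{\min}=(-Q_c,0,0,M_c)$ after a coordinate rotation, so uniformity has to be justified either via compactness of $\mathcal{N}$ and continuity of the relevant constants, or by transporting the estimate along the $SO(2)$-symmetry built into the diffeomorphism $h$. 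As noted in the remark following Theorem \ref{Hessian restricted to normal space is positive definite}, one may alternatively invoke the abstract Bochner inequality of \cite{Canevari2015} verbatim, since its hypotheses H1--H2 are now verified.
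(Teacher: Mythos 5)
Your proof follows the same skeleton as the paper's: the pointwise identity for $\Delta e_\ell(\Psi^\ell)$ from the Euler--Lagrange system, the nearest-point projection $\pi(\Psi^\ell)$, positive semi-definiteness of $D^2\tilde f_B$ on $\mathcal{N}$ upgraded to the bound $D^2\tilde f_B(\Psi^\ell)[\xi,\xi]\geq -C|\nu|\,|\xi|^2$ by Lipschitz continuity, and absorption of the resulting term into the good quantity $\ell^{-2}|D\tilde f_B(\Psi^\ell)|^2$ via Young's inequality. Your closing remark about uniformity of $m_0,\delta_0$ over $\mathcal{N}$ is also well placed and correctly resolved by compactness and the $SO(2)$-symmetry of $h$.

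There is, however, a genuine gap in the absorption step. After integrating \eqref{Hypothesis H2} you obtain $\tilde f_B(\Psi^\ell)\geq\tfrac{m_0}{2}|\nu|^2$ and $|D\tilde f_B(\Psi^\ell)|\geq m_0|\nu|$, and from these you assert \emph{``therefore $|D\tilde f_B(\Psi^\ell)|^2\geq 2m_0\,\tilde f_B(\Psi^\ell)$.''} This does not follow: the first inequality gives $|\nu|^2\leq\tfrac{2}{m_0}\tilde f_B$, which points the \emph{wrong way} for comparing $m_0^2|\nu|^2$ to $\tilde f_B$ from below. To control $\tilde f_B$ by $|\nu|^2$ from \emph{above} you need the reverse bound $\tilde f_B(\Psi^\ell)\leq\Lambda|\nu|^2$, which is true (Taylor expand $\tilde f_B$ around $\pi(\Psi^\ell)$, using $\tilde f_B=0$ and $D\tilde f_B=0$ on $\mathcal{N}$, with $\Lambda$ uniform by compactness) but was not established; only with it does $|D\tilde f_B|^2\geq m_0^2|\nu|^2\geq \tfrac{m_0^2}{\Lambda}\tilde f_B$ follow. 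Alternatively — and this is what the paper does — avoid $\tilde f_B$ as an intermediate altogether: apply Young's inequality directly to $\tfrac{C|\nu|}{\ell}|\nabla\Psi^\ell|^2\leq \delta_1\tfrac{C^2|\nu|^2}{\ell^2}+\tfrac{1}{\delta_1}|\nabla\Psi^\ell|^4$, and then absorb $\delta_1 C^2\ell^{-2}|\nu|^2$ into $-\ell^{-2}|D\tilde f_B(\Psi^\ell)|^2\leq -m_0^2\ell^{-2}|\nu|^2$ for $\delta_1$ small, which uses precisely the second of your two integrated facts and nothing else. Your detour through $\sqrt{\tilde f_B}$ introduces an unproven Łojasiewicz-type inequality that is not needed.
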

\begin{proof}The proof is divided into three steps. The first step focuses on the derivation of an inequality satisfied by  $ \Delta e_{\ell}(\Psi^{\ell}).$ Next it is established that for sufficiently small values of $\ell,$ a global minimizer $\Psi^{\ell}$ belongs to the $\kappa$-neighborhood $ U_\kappa(\mathcal{N})$ of $\mathcal{N}$, and hence we can use  the nearest point projection $\pi(\Psi^{\ell})$ of $\Psi^{\ell}$ onto $\mathcal{N}$ in the subsequent analysis.  The third step uses Theorem \ref{Hessian restricted to normal space is positive definite} to bound the distance between  $\Psi^{\ell}$ and $ \mathcal{N}$, leading to the Bochner inequality.
	
	\medskip
	\noindent
{\it Step 1 (Laplacian of $e_{\ell}(\Psi^{\ell})$)}.	Define  $ \abs{\nabla^2u_i}^2:= \sum_{j,k=1}^{2} (\frac{\partial^2u_i}{\partial x_j\partial x_k})^2$ for $ i=1,2,3,4.$  
For $\Psi^{\ell}_{x_j}:= (u_{1,x_j},u_{2,x_j},$ $u_{3,x_j},u_{4,x_j})$,   $\Delta\Psi^{\ell}_{x_j}:= (\Delta u_{1,x_j},$ $\Delta u_{2,x_j},\Delta u_{3,x_j},\Delta u_{4,x_j}),$ $\frac{1}{2}	\Delta (\abs{\nabla \Psi^{\ell}}^2)= \abs{\nabla^2\Psi^{\ell}}^2 + \sum_{j=1}^{2}\Psi^{\ell}_{x_j} \cdot \Delta \Psi^{\ell}_{x_j} .$
%	\begin{align}\label{Bochner type equqtion 1}
%\frac{1}{2}	\Delta (\abs{\nabla \Psi^{\ell}}^2)= \abs{\nabla^2\Psi^{\ell}}^2 + \sum_{j=1}^{2}\Psi^{\ell}_{x_j} \cdot \Delta \Psi^{\ell}_{x_j} .
%	\end{align}
%Recall from \eqref{minimization pde},	$\Delta u_i = {\ell}^{-1} \frac{\partial \tilde{f}_B }{\partial u_i}(\Psi^{\ell}) \text{ for all } i=1,2,3,4,$, which
%	Since $\Psi^{\ell}$ is a global minimizer of  $\tilde{\mathcal{E}}$, 
%	\begin{align}\label{minimization pde}
%	\Delta u_i = {\ell}^{-1} \frac{\partial \tilde{f}_B }{\partial u_i}(\Psi^{\ell}) \text{ for all } i=1,2,3,4,
%	\end{align}
%A use of \eqref{minimization pde} leads to $\Psi^{\ell}_{x_j} \cdot \Delta \Psi^{\ell}_{x_j} = {\ell}^{-1} \sum_{i, k=1}^{4}\frac{\partial^2 \tilde{f}_B }{\partial u_k\partial u_i}(\Psi^{\ell})\frac{\partial u_k}{\partial x_j}\frac{\partial u_i}{\partial x_j} \text{ for } j=1,2.$ 
	This combined with $\Psi^{\ell}_{x_j} \cdot \Delta \Psi^{\ell}_{x_j} = {\ell}^{-1} \sum_{i, k=1}^{4}\frac{\partial^2 \tilde{f}_B }{\partial u_k\partial u_i}(\Psi^{\ell})\frac{\partial u_k}{\partial x_j}\frac{\partial u_i}{\partial x_j} \text{ for } j=1,2$ obtained using \eqref{minimization pde},  yields 
%	\begin{align*}%\label{Bochner type equqtion 2}
	$$-\frac{1}{2}\Delta (\abs{\nabla \Psi^{\ell}}^2)+\abs{\nabla^2\Psi^{\ell}}^2= - {\ell}^{-1}  \sum_{j=1}^{2}\sum_{i, k=1}^{4}\frac{\partial^2 \tilde{f}_B }{\partial u_k\partial u_i}(\Psi^{\ell})\frac{\partial u_k}{\partial x_j}\frac{\partial u_i}{\partial x_j}=-{\ell}^{-1} \nabla\Psi^{\ell}:D^2\tilde{f}_B(\Psi^{\ell})\nabla\Psi^{\ell}.$$
%	\end{align*}
Also,	a use of \eqref{minimization pde} leads to 
%	\begin{align*}%\label{Expansion laplacian f_B}
$$	-{\ell}^{-1}\Delta \tilde{f}_B(\Psi^{\ell})+{\ell}^{-2}\abs{D\tilde{f}_B(\Psi^{\ell})}^2
	 =-{\ell}^{-1} \nabla\Psi^{\ell}:D^2\tilde{f}_B(\Psi^{\ell})\nabla\Psi^{\ell}.$$
%	\end{align*}
	The above two displayed inequalities and 
%	From \eqref{Bochner type equqtion 2},  \eqref{Expansion laplacian f_B} and
	 $e_{\ell}(\Psi^{\ell}):= \frac{1}{2}\abs{\nabla \Psi^{\ell}}^2 +{\ell}^{-1} \tilde{f}_B(\Psi^{\ell})$ implies
	\begin{align}\label{Expansion laplacian e_l}
		-\Delta e_{\ell}(\Psi^{\ell})+\abs{\nabla^2\Psi^{\ell}}^2+{{\ell}^{-2}}\abs{D\tilde{f}_B(\Psi^{\ell})}^2
	%	\sum_{i=1}^{4} \frac{1}{\ell^2} \bigg(\frac{\partial \tilde{f}_B }{\partial u_i} (\Psi)\bigg)^2 
		 =-{2{\ell}^{-1}} \nabla\Psi^{\ell}:D^2\tilde{f}_B(\Psi^{\ell})\nabla\Psi^{\ell}.
	\end{align}
	
	\medskip
	
	\noindent {\it Step 2 (Verify $\Psi^{\ell} \in U_\kappa(\mathcal{N})$)}.
	 In this step, we verify that $\Psi^{\ell} \in U_\kappa(\mathcal{N})$ i.e.  $\text{ dist}(\Psi^{\ell}, \mathcal{N})< \kappa $ for sufficiently small value of $\ell.$
	 For $\Psi^{\ell}= (S_{\ell} \cos\theta_{\ell},S_{\ell} \sin\theta_{\ell},R_{\ell} \cos\varphi_{\ell}, R_{\ell} \sin\varphi_{\ell}) \in \mathcal{A},$ choose $\Psi^{*}:= (Q_c \cos2\varphi_{\ell},Q_c \sin2\varphi_{\ell},M_c \cos\varphi_{\ell}, M_c \sin\varphi_{\ell})$ $\in \mathcal{N}.$  Then
	 \begin{align*}
	 &	\abs{\Psi^{\ell} -\Psi^{*}}^2\\&= (S_{\ell} \cos\theta_{\ell} -Q_c \cos2\varphi_{\ell})^2+(S_{\ell} \sin\theta_{\ell} -Q_c \sin2\varphi_{\ell})^2+  (R_{\ell} \cos\varphi_{\ell} -M_c \cos\varphi_{\ell})^2+(R_{\ell} \sin\varphi_{\ell} -M_c \sin\varphi_{\ell})^2\\& \leq 2(S_{\ell}  -Q_c)^2+ 2(Q_c \cos\theta_{\ell} -Q_c \cos2\varphi_{\ell})^2+ 2(Q_c \sin\theta_{\ell} -Q_c \sin2\varphi_{\ell})^2+(R_{\ell}-M_c)^2 \\&=2((S_{\ell} - Q_c)^2+ 2Q_c^2(1-\cos(\theta_{\ell}-2\phi_l))+ (R_{\ell} - M_c)^2).
	 \end{align*}
	 This together with Proposition \ref{Proposition: uniform convergence on compact sets} yields that $	\abs{\Psi^{\ell} -\Psi^{*}} < \kappa_0= \min({\kappa,\delta_0}), $ for sufficiently small value of $\ell.$ For $\pi(\Psi^{\ell})$ to be the nearest point projection of $\Psi^{\ell}$ onto  $\mathcal{N}$, it holds that $\abs{\Psi^{\ell} - \pi(\Psi^{\ell})} \leq \abs{\Psi^{\ell} - \Psi^{\min}} $ for any $\Psi_{\min} \in \mathcal{N}$. This implies that $\text{ dist}(\Psi^{\ell}, \mathcal{N})=\abs{\Psi^{\ell} - \pi(\Psi^{\ell})} \leq \abs{\Psi^{\ell} - \Psi^{*}} <  \kappa_0.$
 Since $\mathcal{N}$ is compact and $\tilde{f}_B$ is smooth, the local Lipschitz continuity  of $D^2\tilde{f}_B$ in \eqref{Expansion laplacian e_l} leads to
\begin{align*}
-{2{\ell}^{-1}} \nabla\Psi^{\ell}:D^2\tilde{f}_B(\Psi^{\ell})\nabla\Psi^{\ell}& \leq -{2{\ell}^{-1}} \nabla\Psi^{\ell}:D^2\tilde{f}_B(\pi(\Psi^{\ell}))\nabla\Psi^{\ell} + {2{\ell}^{-1}}\abs{D^2\tilde{f}_B(\Psi^{\ell})-D^2\tilde{f}_B(\pi(\Psi^{\ell}))}\abs{\nabla\Psi^{\ell}}^2\\&\leq
-{2{\ell}^{-1}} \nabla\Psi^{\ell}:D^2\tilde{f}_B(\pi(\Psi^{\ell}))\nabla\Psi^{\ell} +{2C_L{\ell}^{-1}}\text{ dist}(\Psi^{\ell}, \mathcal{N}) \abs{\nabla\Psi^{\ell}}^2,
\end{align*}
 where $\pi(\Psi^{\ell}) $ is the nearest point projection of $\Psi^\ell$ onto $\mathcal{N}.$ 	 
 \medskip
 
 \noindent {\it Step 3 (Bound of $\text{\rm dist}(\Psi^{\ell}, \mathcal{N})$)}. Since $\pi(\Psi^{\ell}) \in \mathcal{N},$ i.e. a minimizer of $\tilde{f}_B,$ $D^2\tilde{f}_B(\pi(\Psi^{\ell})) \geq 0.$ This combined with the above displayed expression, \eqref{Expansion laplacian e_l} and Young's inequality leads to 
	\begin{align}\label{Bochner type inequality eqn3}
-\Delta e_{\ell}(\Psi^{\ell})+\abs{\nabla^2\Psi^{\ell}}^2+\ell^{-2}\abs{D\tilde{f}_B(\Psi^{\ell})}^2
\leq { 4C_L^2\delta_1 \ell^{-2}}\text{ dist}^2(\Psi^{\ell}, \mathcal{N})  +\frac{1}{\delta_1}\abs{\nabla \Psi^{\ell}}^4,
\end{align}
where $\delta_1>0$ is small and will be chosen later. 

\medskip

\noindent	 %Moreover, $\varv -\pi(\varv)$ is a normal vector at any  point $\varv \in \mathcal{N}.$ 
	For the nearest point projection $\varv:=\pi(\Psi^{\ell}) \in \mathcal{N}$ of $\Psi^{\ell}$, the unit normal vector $\nu:=\frac{\Psi^{\ell}-\pi(\Psi^{\ell})}{\abs{\Psi^{\ell}-\pi(\Psi^{\ell})}} \in \mathbb{R}^4$ to $\mathcal{N}$ at the point $\pi(\Psi^{\ell})$, and $t:=\abs{\Psi^{\ell}-\pi(\Psi^{\ell})}=\text{ dist}(\Psi^{\ell}, \mathcal{N})< \kappa_0,$ \eqref{Hypothesis H2} implies
			\begin{align*}
m_0\text{ dist}(\Psi^{\ell}, \mathcal{N})	\leq	D\tilde{f}_B(\varv +t\nu )\cdot \nu =	D\tilde{f}_B(\Psi^{\ell})\cdot \nu \implies m_0^2\text{ dist}^2(\Psi^{\ell}, \mathcal{N})	\leq\abs{D\tilde{f}_B(\Psi^{\ell})\cdot \nu }^2\leq \abs{D\tilde{f}_B(\Psi^{\ell})}^2.
		\end{align*}
		Use this in \eqref{Bochner type inequality eqn3} and absorb the term in the left hand side for sufficiently small choice of $\delta_1$, to obtain 
			\begin{align*}
		-\Delta e_{\ell}(\Psi^{\ell})+\abs{\nabla^2\Psi^{\ell}}^2+\frac{1}{2l^2}\abs{D\tilde{f}_B(\Psi^{\ell})}^2
		\leq \frac{1}{\delta_1}\abs{\nabla \Psi^{\ell}}^4.
		\end{align*}
		This concludes the proof.
\end{proof}
\noindent The next theorem uses the Bochner-type inequality in \eqref{Bochner type inequality}, to bound the term  $\int_\Omega \abs{\nabla^2\Psi^{\ell}(x)}^2 \dx$ locally, independently of $\ell$. The proof uses the technique applied in \cite{Bethuel}.
\begin{thm}[$\h^2_{\text{loc}}(\Omega)$ bound for $(\Psi^{\ell})$ independent of $\ell$]\label{H2 bound locally}
	Let $\Omega \subset \mathbb{R}^2$ be a simply-connected bounded open set with smooth boundary.
Let $\Psi^{\ell}:=(\Qvec^{\ell}, \Mvec^{\ell})$ be a global minimizer of $\tilde{\mathcal{E}}$ from  \eqref{modified energy functional} in the admissible space $\mathcal{A}$,  with $\mathbf{g} \in \mathcal{A}_{\min}$ defined in \eqref{boundary condition}. Then the sequence $(\Psi^{\ell})$ is bounded in $\h^2_{\text{loc}}(\Omega)$, as $\ell \to 0$.
\end{thm}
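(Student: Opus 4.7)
The plan is to localise the Bochner-type inequality \eqref{Bochner type inequality} against a smooth cutoff and then absorb the resulting quartic gradient term using a two-dimensional interpolation inequality, the uniform $\mathbf{H}^1$-bound implicit in Proposition~\ref{Proposition: limiting functions}, and the equi-integrability supplied by the strong $\mathbf{H}^1$-convergence.

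Fix a compact set $K\subset\Omega$, pick an intermediate open set $U$ with $K\subset U$ and $\bar U\subset\Omega$, and let $\chi\in C_c^\infty(\Omega)$ satisfy $0\leq\chi\leq 1$, $\chi\equiv 1$ on $K$ and $\mathrm{supp}\,\chi\subset U$. Proposition~\ref{Proposition: uniform convergence on compact sets} guarantees that, for $\ell$ small enough, $\mathrm{dist}(\Psi^\ell,\mathcal{N})<\kappa_0$ on $\bar U$, so the Bochner bound \eqref{Bochner type inequality} is valid pointwise on $U$. Multiplying it by $\chi^2$, integrating over $\Omega$, and integrating the $-\chi^2\Delta e_\ell$ term by parts twice yields
$$\int_K|\nabla^2\Psi^\ell|^2\dx\leq\int_\Omega\chi^2|\nabla^2\Psi^\ell|^2\dx\leq\|\Delta(\chi^2)\|_{L^\infty(\Omega)}\int_\Omega e_\ell\dx+C\int_\Omega\chi^2|\nabla\Psi^\ell|^4\dx.$$
The first right-hand term is $\ell$-independent by the energy inequality \eqref{Strong convergence in H1 equation1}, so the theorem reduces to an $\ell$-independent bound on $\int_\Omega\chi^2|\nabla\Psi^\ell|^4\dx$.

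For this, apply the two-dimensional Ladyzhenskaya/Gagliardo--Nirenberg inequality componentwise to $\chi\,\partial_j u_k^\ell\in H_0^1(\Omega)$ (the interior $H^2$-regularity that gives meaning to this is an $\ell$-dependent but harmless consequence of elliptic regularity for \eqref{minimization pde}), which produces an estimate of the form
$$\int_\Omega\chi^4|\nabla\Psi^\ell|^4\dx\leq C\|\nabla\Psi^\ell\|_{L^2(U)}^2\bigl(\|\nabla\Psi^\ell\|_{L^2(U)}^2+\|\chi\nabla^2\Psi^\ell\|_{L^2(\Omega)}^2\bigr).$$
The uniform $\mathbf{H}^1$-bound keeps the prefactor bounded, and, crucially, the strong convergence $\Psi^\ell\to\Psi_0$ in $\mathbf{H}^1(\Omega)$ turns $|\nabla\Psi^\ell|^2$ into an equi-integrable family; hence $K$ may be covered by finitely many small balls $B_\rho(x_j)$ on which $\int_{B_{2\rho}(x_j)}|\nabla\Psi^\ell|^2\dx<\varepsilon$ uniformly in $\ell$, for any prescribed $\varepsilon>0$.

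The main technical obstacle is the absorption step: the cutoff powers on the two sides of Gagliardo--Nirenberg do not match, so a nested-cutoff Caccioppoli iteration is required, with the smallness just stated forcing the resulting geometric series to converge with an $\ell$-independent sum; summing the local bounds over the finite cover of $K$ then delivers the uniform $\mathbf{H}^2_{\mathrm{loc}}(\Omega)$-estimate. A cleaner alternative, in the spirit of \cite{Bethuel}, is to note that $|\nabla\Psi^\ell|^4\leq 4 e_\ell^2$ reduces \eqref{Bochner type inequality} to the scalar subsolution inequality $-\Delta e_\ell\leq C e_\ell^2$ and to invoke a standard two-dimensional small-energy $L^\infty$-regularity lemma for such subsolutions (its smallness hypothesis $\int_{B_r(x_j)}e_\ell<\varepsilon_0$ is secured by the strong $\mathbf{H}^1$-convergence together with the decay $\ell^{-1}\int_\Omega\tilde f_B(\Psi^\ell)\dx\to 0$ that follows from \eqref{Strong convergence in H1 equation1} combined with the norm convergence of Proposition~\ref{Proposition: limiting functions}) to obtain $\|e_\ell\|_{L^\infty(K)}\leq C$; since then $|\nabla\Psi^\ell|^4\leq 4\|e_\ell\|_{L^\infty(K)}^2$ is a uniform pointwise bound, the integrated Bochner inequality closes the argument at once.
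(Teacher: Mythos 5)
Your first route is the same high-level argument as the paper's: localize the Bochner inequality \eqref{Bochner type inequality} against a cutoff squared, integrate by parts to isolate $\int\chi^2\abs{\nabla\Psi^\ell}^4$, control that quartic term by a two-dimensional interpolation inequality together with the smallness of $\int_{B_R}\abs{\nabla\Psi^\ell}^2$ coming from the strong $\mathbf{H}^1$-convergence, and absorb. The point where you diverge is the precise form of the interpolation inequality, and this is where the paper's choice is streamlined. The paper does not apply Ladyzhenskaya to $\chi\,\partial_ju_k^\ell$; it instead applies the two-dimensional embedding $W^{1,1}(\Omega)\hookrightarrow L^2(\Omega)$ to the scalar function $\varphi:=\xi\abs{\nabla\Psi^\ell}^2$, using $\abs{\nabla\abs{\nabla\Psi^\ell}^2}\lesssim\abs{\nabla\Psi^\ell}\,\abs{\nabla^2\Psi^\ell}$. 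After squaring, Cauchy--Schwarz, and the small-energy hypothesis \eqref{H2 regularity locally equation0} on $\operatorname{supp}\xi$, this produces
\begin{align*}
\int_\Omega\xi^2\abs{\nabla\Psi^\ell}^4\dx\;\leq\;C\Big(1+\delta\int_\Omega\xi^2\abs{\nabla^2\Psi^\ell}^2\dx\Big),
\end{align*}
where the cutoff power is already $\xi^2$ on both sides, so the absorption into the left-hand side is immediate. Your version, applying Ladyzhenskaya to $\chi\,\partial_j u_k^\ell$, generates $\int\chi^4\abs{\nabla\Psi^\ell}^4$ instead of the $\int\chi^2\abs{\nabla\Psi^\ell}^4$ you actually need, which is what forces the nested-cutoff Caccioppoli iteration you mention; this iteration is not wrong, but it is an extra layer the paper avoids simply by choosing to apply the embedding to $\xi\abs{\nabla\Psi^\ell}^2$ rather than to a gradient component. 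Your second, alternative route --- bounding $\abs{\nabla\Psi^\ell}^4\leq 4e_\ell^2$, reading \eqref{Bochner type inequality} as $-\Delta e_\ell\leq Ce_\ell^2$, invoking a small-energy $L^\infty$ lemma for such subsolutions (with the smallness supplied, as you correctly note, by $\int_\Omega\tilde f_B(\Psi^\ell)\dx=o(\ell)$ and the $\mathbf{H}^1$-equi-integrability), and then feeding the resulting $\norm{e_\ell}_{L^\infty(K)}\leq C$ back into the integrated Bochner inequality --- is a genuinely different argument, closer to the clearing-out/$\varepsilon$-regularity scheme of \cite{Bethuel}. It buys a uniform pointwise bound on $e_\ell$ as a by-product, which the paper's direct absorption argument does not; the price is having to state and apply the subsolution regularity lemma. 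Both of your routes are correct in substance.
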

\begin{proof}
	Since $(\Psi^{\ell}) \rightarrow \Psi_0$ strongly in $\h^1(\Omega),$ given a $\delta>0$ small, choose $R$ sufficiently small so that 
	\begin{align}\label{H2 regularity locally equation0}
	\int_{B(x_0,R)} \abs{\nabla \Psi^{\ell}}^2 \dx < \delta \text{ for all }x_0 \in \Omega \text{ and for all } \ell. 
	\end{align}
	Fix a point $x_0 \in \Omega$, set $d=\text{dist}(x_0, \partial \Omega).$ Let $\xi$ be a smooth function with support in $B(x_0,r) $ with $r=\min(\frac{d}{2},R)$ such that $\xi=1$ on $B(x_0, \frac{r}{2}).$ Multiply \eqref{Bochner type inequality} by $\xi^2$ and apply integration by parts to obtain
	\begin{align} \label{H2 regularity locally equation1}
	\int_\Omega \xi^2\abs{\nabla^2\Psi^{\ell}}^2 \dx\leq \int_\Omega ( \Delta\xi^2) e_{\ell}(\Psi^{\ell}) \dx+C\int_\Omega \xi^2 \abs{\nabla \Psi^{\ell}}^4 \dx. 	 
	\end{align}
%	\begin{align} \label{H2 regularity locally equation2}
%	\int_\Omega \xi^2\abs{\nabla^2\Psi^{\ell}}^2 \dx\leq C +C\int_\Omega \xi^2 \abs{\nabla \Psi^{\ell}}^4 \dx. 	 
%	\end{align}
%	Now  compute the bound of $\int_\Omega \xi^2 \abs{\nabla \Psi^{\ell}}^4\dx$ term on the right hand side of \eqref{H2 regularity locally equation1}. 
		For $\varphi : = \xi \abs{\nabla \Psi^{\ell}}^2,$ $\nabla \varphi=  \nabla  \xi \abs{\nabla \Psi^{\ell}}^2+\xi \nabla (\abs{\nabla \Psi^{\ell}}^2).$ A use of $\abs{\nabla \abs{\nabla \Psi^{\ell}}^2 } \leq c \abs{\nabla \Psi^{\ell}} \abs{\nabla^2\Psi^{\ell}}$ and $W^{1,1}(\Omega) \subset L^2(\Omega)$ 
		 i.e., $(\int_\Omega \varphi^2 \dx)^{\frac{1}{2}} \leq C\int_\Omega (\abs{\nabla \varphi} +\abs{\varphi} ) \dx $ for all $\varphi \in W^{1,1}(\Omega),$ and \eqref{Strong convergence in H1 equation1} 
		yields 
	\begin{align}  \label{H2 regularity locally equation3}
	\int_\Omega \xi^2 \abs{\nabla \Psi^{\ell}}^4 \dx\leq C\big(1+\big(\int_\Omega \xi \abs{\nabla \Psi^{\ell}} \abs{\nabla^2 \Psi^{\ell}} \dx \big)^2 \big)	 \leq C \big(1+\delta \int_\Omega \abs{\nabla^2 \Psi^{\ell}}^2\dx \big) , 
	\end{align}
where  the Cauchy-Schwarz inequality and \eqref{H2 regularity locally equation0} is applied in the last step. The definition of $e_{\ell}(\Psi^{\ell})$, \eqref{Strong convergence in H1 equation1} and the  smoothness of $\xi$  imply that $ \int_\Omega ( \Delta\xi^2) e_{\ell}(\Psi^{\ell}) \dx  \leq C\int_\Omega \abs{\nabla \Psi_0}^2 \dx \leq C.$ Apply this and  \eqref{H2 regularity locally equation3} to   \eqref{H2 regularity locally equation1}, and then   absorb $\int_\Omega \xi^2 \abs{\nabla^2 \Psi^{\ell}}^2 \dx$ term into the left hand side  for a sufficiently small choice of $\delta>0,$  leading to the expected bound $\int_\Omega \xi^2\abs{\nabla^2 \Psi^{\ell}}^2  \dx \leq C.$
% \begin{align}  \label{H2 regularity locally equation4}
% 	\int_\Omega \xi^2\abs{\nabla^2\Psi^{\ell}}^2 \dx\leq C\big(1+\delta \int_\Omega \abs{\nabla^2 \Psi^{\ell}}^2\dx \big). 	 
% \end{align}
% A sufficiently small choice of $\delta>0,$  absorbs $\int_\Omega \xi^2 \abs{\nabla \Psi^{\ell}}^2 \dx$ into the left hand side of \eqref{H2 regularity locally equation4},  leading to the expected bound $\int_\Omega \xi^2\abs{\nabla^2 \Psi^{\ell}}^2  \dx \leq C.$
%\begin{align}\label{H2 regularity locally equation4}
%\int_\Omega \xi^2\abs{\nabla \Psi^{\ell}}^4  \dx \leq C.
%\end{align}
This concludes the proof.
\end{proof}
%\subsection{The analysis near the boundary}\label{The analysis near the boundary}
The next proposition discusses the convergence of the minimizers, $\Psi^{\ell}$, near the boundary.
\begin{prop}\label{analysis near the boundary}
	Let $\Omega \subset \mathbb{R}^2$ be a simply-connected bounded open set with smooth boundary.
Let $\Psi^{\ell}:=(\Qvec^{\ell}, \Mvec^{\ell})$ be a global minimizer of \eqref{modified energy functional} in the admissible space $\mathcal{A}$,  with $\mathbf{g} \in \mathcal{A}_{\min}$ defined in \eqref{boundary condition}. Then  (i)~ 	$\abs{\nabla \Psi^{\ell}} \leq \frac{C}{\sqrt{\ell}}$ on $\Omega$, where $C$ depends on $\mathbf{g}$ and $\Omega$; (ii) $\abs{\,\abs{\Qvec^{\ell}}- Q_c} $ = {\scalebox{1.2}{$\scriptscriptstyle\mathcal{O}$}}$(1)$, $\abs{\,\abs{\Mvec^{\ell}}- M_c} $ = {\scalebox{1.2}{$\scriptscriptstyle\mathcal{O}$}}$(1)$, and $\abs{\,\cos(\theta_{\ell} -2\varphi_{\ell})- 1} $ = {\scalebox{1.2}{$\scriptscriptstyle\mathcal{O}$}}$(1)$,  as $\ell \rightarrow 0$ 
% $\abs{\Qvec^{\ell}} \rightarrow Q_c$ and $\abs{\Mvec^{\ell}} \rightarrow M_c$ as $\ell \rightarrow 0$ 
uniformly on $\bar{ \Omega}$; (iii) $\int_{\partial \Omega} \abs{\frac{\partial\Psi^{\ell}}{\partial \nu}}^2 \ds \leq C,$ where $C$ depends on $\mathbf{g}$  and $\Omega$; (iv)  $(\Psi^{\ell})$ remains bounded in $\h^2(\Omega).$ 	
\end{prop}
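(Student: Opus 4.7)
The plan is to establish (i)--(iv) in order, relying on Proposition \ref{Proposition: uniform convergence on compact sets}, the Bochner-type inequality \eqref{Bochner type inequality}, Theorem \ref{H2 bound locally}, and boundary-regularity techniques in the spirit of \cite{Bethuel}. For (i), the interior bound \eqref{gradient bound depending on l} obtained from Lemma \ref{regularity 1} degenerates like $\text{dist}(x,\partial\Omega)^{-2}$ near $\partial\Omega$. I would pick a smooth extension $\tilde{\mathbf{g}}\in C^{\infty}(\bar{\Omega};\mathbb{R}^{4})$ of $\mathbf{g}$, set $\mathbf{w}^{\ell}:=\Psi^{\ell}-\tilde{\mathbf{g}}$ (so $\mathbf{w}^{\ell}=0$ on $\partial\Omega$), locally flatten the boundary by a smooth diffeomorphism, and extend $\mathbf{w}^{\ell}$ across the flat piece by odd reflection. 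Lemma \ref{regularity 1} applied to the extended function on the enlarged half-ball now sees the original $\partial\Omega$ as interior, so the distance factor is uniformly bounded below. Combined with the smoothness of $\tilde{\mathbf{g}}$ and the interior bound \eqref{gradient bound depending on l}, this yields $\abs{\nabla\Psi^{\ell}}\leq C/\sqrt{\ell}$ on all of $\bar{\Omega}$.

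For (ii), once (i) supplies the global gradient bound, the proof of Proposition \ref{Proposition: uniform convergence on compact sets} extends with $x_0\in\bar{\Omega}$ replacing $x_0\in K$. At $x_0\in\partial\Omega$ the claim is immediate since $\mathbf{g}\in\mathcal{A}_{\min}$ gives $\tilde{f}_B(\Psi^{\ell}(x_0))=0$; near $\partial\Omega$ one works with $B(x_0,\rho)\cap\Omega$, whose area is still of order $\rho^{2}$ by the smoothness of $\partial\Omega$. The same Lipschitz-plus-measure argument together with $\ell^{-1}\int_\Omega\tilde{f}_B(\Psi^{\ell})\dx\to 0$ (from \eqref{Strong convergence in H1 equation1}) then delivers $\tilde{f}_B(\Psi^{\ell})\to 0$ uniformly on $\bar{\Omega}$, whence the three pointwise convergences follow exactly as in Proposition \ref{Proposition: uniform convergence on compact sets}.

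For (iii), I would use a Pohozaev-type multiplier argument. Let $\mathbf{F}\in C^{\infty}(\bar{\Omega};\mathbb{R}^{2})$ satisfy $\mathbf{F}|_{\partial\Omega}=\nu$, take the inner product of \eqref{minimization pde} with $(\mathbf{F}\cdot\nabla)\Psi^{\ell}$, and integrate over $\Omega$. Two integrations by parts on the Laplacian side produce the leading boundary contribution $\tfrac{1}{2}\int_{\partial\Omega}\abs{\partial_\nu\Psi^{\ell}}^{2}\ds$; the remaining boundary pieces involve only the tangential derivative of $\mathbf{g}$ (since $\Psi^{\ell}=\mathbf{g}$ on $\partial\Omega$), while the interior remainder is controlled by $\vertiii{\nabla\Psi^{\ell}}_{0}^{2}$, uniformly bounded by \eqref{Strong convergence in H1 equation1}. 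On the nonlinear right-hand side the key identity is $D\tilde{f}_B(\Psi^{\ell})\cdot(\mathbf{F}\cdot\nabla)\Psi^{\ell}=\mathbf{F}\cdot\nabla[\tilde{f}_B(\Psi^{\ell})]$; a further integration by parts turns this into $-\int_\Omega(\mathrm{div}\,\mathbf{F})\tilde{f}_B(\Psi^{\ell})\dx$, the boundary term vanishing because $\mathbf{g}\in\mathcal{N}$ forces $\tilde{f}_B(\mathbf{g})=0$. After multiplying through by $\ell^{-1}$, the estimate $\ell^{-1}\int_\Omega\tilde{f}_B(\Psi^{\ell})\dx\leq C$ from \eqref{Strong convergence in H1 equation1} delivers $\int_{\partial\Omega}\abs{\partial_\nu\Psi^{\ell}}^{2}\ds\leq C$.

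Part (iv) is the main obstacle. The strategy is to integrate \eqref{Bochner type inequality} over $\Omega$. With the test weight $\xi\equiv 1$, the divergence theorem produces the boundary term $\int_{\partial\Omega}\partial_\nu e_{\ell}(\Psi^{\ell})\ds$; because $D\tilde{f}_B(\mathbf{g})=0$ on $\partial\Omega$, the bulk part of $\partial_\nu e_{\ell}$ vanishes there, and the remaining $\partial_\nu\bigl(\tfrac{1}{2}\abs{\nabla\Psi^{\ell}}^{2}\bigr)$ reduces, via the PDE \eqref{minimization pde} restricted to $\partial\Omega$ and one tangential integration by parts, to an expression of the form $-2\int_{\partial\Omega}\partial_\nu\Psi^{\ell}\cdot\partial_\tau^{2}\mathbf{g}\ds$ plus controlled curvature terms, bounded using (iii) and the smoothness of $\mathbf{g}$. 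The remaining $\int_\Omega\abs{\nabla\Psi^{\ell}}^{4}\dx$ is controlled by localization: by the strong $H^{1}$-convergence $\Psi^{\ell}\to\Psi^{0}$ of Proposition \ref{Proposition: limiting functions}, $\bar{\Omega}$ is covered by finitely many balls on each of which $\int\abs{\nabla\Psi^{\ell}}^{2}\dx$ is arbitrarily small uniformly in $\ell$; the 2D Gagliardo--Nirenberg inequality $\bigl(\int\abs{\nabla\Psi^{\ell}}^{4}\bigr)^{1/2}\leq C\vertiii{\nabla\Psi^{\ell}}_{0}\vertiii{\nabla\Psi^{\ell}}_{1}$ then permits absorbing the $L^{4}$-term into the Hessian term on the left on each patch. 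Interior patches reduce exactly to Theorem \ref{H2 bound locally}; boundary patches require a cutoff $\xi$ with $\partial_\nu\xi=0$ on $\partial\Omega$ so that the boundary residue continues to be controlled by (iii). Summing over the finite cover then yields the desired $\ell$-independent $\h^{2}(\Omega)$-bound on $\Psi^{\ell}$.
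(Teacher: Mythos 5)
Parts (i)--(iii) follow the Bethuel--Brezis--H\'elein blueprint that the paper itself cites (\cite{Bethuel}, Theorem~1 part~B and Proposition~3); your odd-reflection argument for (i), the global version of Proposition~\ref{Proposition: uniform convergence on compact sets} for (ii), and the Pohozaev multiplier $(\mathbf{F}\cdot\nabla)\Psi^{\ell}$ with $\mathbf{F}|_{\partial\Omega}=\nu$ for (iii) are all the right moves, matching the references the paper delegates to. One small caveat on (i): after flattening the boundary the operator is no longer the Euclidean Laplacian, so you cannot apply Lemma~\ref{regularity 1} verbatim to the reflected function; you need either the variable-coefficient analogue of that gradient estimate or a direct interior-elliptic argument near $\partial\Omega$ as in \cite{Bethuel}.

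For (iv) the overall strategy is right --- multiply \eqref{Bochner type inequality} by $\xi^2$, integrate by parts, control the boundary residue with (iii) and the vanishing of $D\tilde{f}_B(\mathbf{g})$, and absorb $\int\xi^2|\nabla\Psi^\ell|^4$ via Ladyzhenskaya and the smallness of $\int_{B(x_0,R)}|\nabla\Psi^\ell|^2$ --- but you leave a genuine gap in the curved-boundary case. When $\partial\Omega$ is not flat near $x_0$, the identity $\partial^2_{\nu\nu}u_i=-\partial^2_{\tau\tau}u_i$ on $\partial\Omega$ that drives the flat case acquires mean-curvature corrections, the mixed derivative $\partial_\nu\partial_\tau u_i$ no longer commutes cleanly, and your tangential integration by parts generates further geometric terms. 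Declaring these ``controlled curvature terms'' is an assertion, not a proof; it is precisely the issue the paper disposes of by straightening the boundary with the coordinate change $(x_1,x_2)\mapsto(x_1,x_2+h(x_1))$, replacing $\Delta$ by the divergence-form operator $L=\partial_j(a_{ij}\partial_i)$, and re-deriving a Bochner-type inequality for $L$ (cf.\ \eqref{H2 bound for not flat boundary7}) so that the curvature effects are packaged into bounded, smooth coefficients $a_{ij}$. You also open the argument with $\xi\equiv1$ (giving a global boundary integral) and then switch to localized cutoffs with $\partial_\nu\xi=0$ for the $L^4$ absorption; these are not the same computation and must be fused into one partition-of-unity argument. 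Filling the curvature gap --- either by the paper's flattening trick, or by carrying out the tubular-neighbourhood computation explicitly and bounding each curvature term by a constant times $\int_{\partial\Omega}|\partial_\nu\Psi^\ell|^2+\int_{\partial\Omega}|\partial_\tau\mathbf{g}|^2$ --- is the missing piece.
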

\begin{rem}
	The proof of $(i)$, $(ii)$ and $(iii)$ follow analogous to \cite[Theorem 1 (part B) and Proposition 3]{Bethuel}. Compared to \cite{Bethuel}, we have four variables $u_1, u_2, u_3$ and $u_4$ in the energy functional. For $x_0 \in \partial \Omega$, the  $\h^2$-bound of the minimizers $\Psi^{\ell}$ in $B(x_0, d) \cap \Omega$, for some positive $d,$ are proved in $(iv)$. The analysis  is split into two cases. First we assume that the boundary $\partial \Omega$ is flat near $x_0$ and apply the methods in \cite{Bethuel}.  When  $\partial \Omega$ is not flat near $x_0,$ the concept of 'straighten the boundary' \cite{Evance19}, which requires the smoothness of the boundary, is applied.  In the second case, we choose the change of coordinates $(x_1,x_2) \rightarrow (x_1,x_2 + h(x_1)),$ where the graph of $h$ locally represents $\partial \Omega.$ In the new coordinates, the function $\Psi^{\ell}$ becomes $\tilde{\Psi }^{\ell}$ defined in $U:=\{(x_1, x_2)|\, x_2>0\}\cap B(0,d)$ 
	and \eqref{minimization pde} becomes 
	\begin{align}\label{reformed PDE}
	L\tilde{\Psi }^{\ell} = {\ell}^{-1} \frac{\partial \tilde{f}_B(\tilde{\Psi }^{\ell} )}{\partial \tilde{\Psi }} \,\,\text{ on } U, \text{  and } \tilde{\Psi }^{\ell}=\tilde{\mathbf{g} } \,\, \text{ on } [x_2=0]\cap \partial U,
	\end{align}
	where 
	$L=\sum_{i,j=1}^{2} \frac{\partial }{\partial x_j}(a_{ij} \frac{\partial }{\partial x_i}),$ $a_{11}=1, a_{12}=h', a_{21}=h'$ and $a_{22}=(1+(h')^2). $ Then, a Bochner type inequality for the modified PDE \eqref{reformed PDE} is derived analogously to Theorem \ref{Bochner type inequality thm}. The $\mathbf{H}^2$-bounds for the additional/new terms in this inequality are established similarly to case I. A brief proof is given below. \qed
\end{rem}
\begin{proof}[\textbf{Proof of Proposition \ref{analysis near the boundary}}$(iv)$]
\noindent	{\it \textbf{Case I}: When $\partial \Omega$ is \textbf{flat} near $x_0$, i.e. $\Omega \cap B(x_0, d)=\{(x_1,x_2)| x_2>0\}\cap B(x_0, d)$,  for some positive $d.$}
	
	\medskip
	
\noindent	Let $\xi$  be a smooth function with support in $ B(x_0, r) $, $r= \min(d,R)$ such that $\xi =1$ on $ B(x_0, \frac{r}{2}) $. Multiply \eqref{Bochner type inequality} by $\xi^2$ and apply integration by parts to obtain
	\begin{align}\label{H2 bound for flat boundary1}
	\int_\Omega \xi^2\abs{\nabla^2\Psi^{\ell}}^2 \dx&\leq \int_\Omega ( \Delta\xi^2) e_{\ell}(\Psi^{\ell}) \dx+\int_{[x_2=0]}\xi^2 \frac{\partial e_{\ell}}{\partial x_2}(\Psi^{\ell}) \ds-  \int_{[x_2=0]}\frac{\partial \xi^2 }{\partial x_2}e_{\ell}(\Psi^{\ell})\ds\notag\\& \quad+C\int_\Omega \xi^2 \abs{\nabla \Psi^{\ell} }^4 \dx 
	=:T_1+T_2+T_3+T_4.	 
	\end{align}
	 A use of $\int_\Omega e_{\ell}(\Psi^{\ell}) \dx \leq \frac{1}{2} \int_\Omega \abs{\nabla \Psi_0}^2 \dx$ from \eqref{Strong convergence in H1 equation1} and the smoothness of $\xi $ leads to the following bound for $T_1:=\int_\Omega ( \Delta\xi^2) e_{\ell}(\Psi^{\ell}) \dx \leq \int_\Omega e_{\ell}(\Psi^{\ell}) \dx  \leq C. $
	Recall that $\tilde{f}_B(\Psi^{\ell})=0$ and $\frac{\partial \tilde{f}_B}{\partial u_i}(\Psi^{\ell})=0$  on $\partial \Omega$  as $\Psi^{\ell}= \mathbf{g} \in \mathcal{A}_{\min}$ on $\partial \Omega$. The smoothness of $\xi$, definition of  $e_{\ell}$, and $(iii)$ leads to   
$$	T_3: =\int_{[x_2=0]}\frac{\partial \xi^2 }{\partial x_2}e_{\ell}(\Psi^{\ell}) \ds  \leq C \int_{[x_2=0]} \abs{\nabla \Psi^{\ell}}^2\ds =  C\bigg(\int_{[x_2=0]} \abs{\frac{\partial \Psi^{\ell} }{\partial \nu}}^2 \ds+\int_{[x_2=0]} \abs{\frac{\partial \mathbf{g} }{\partial \tau}}^2 \ds \bigg) \leq C.$$
Since	$\frac{\partial \tilde{f}_B}{\partial x_2}(\Psi^{\ell}) = \sum_{i=1}^4\frac{\partial \tilde{f}_B}{\partial u_i}(\Psi^{\ell})  \frac{\partial u_i}{\partial x_2} =0$  on $\partial\Omega,$ $\frac{\partial e_{\ell}}{\partial x_2}(\Psi^{\ell})= \frac{1}{2}\frac{\partial \abs{\nabla \Psi^{\ell}}^2 }{\partial x_2}$   on $\partial\Omega.$ 
 A use of \eqref{minimization pde} leads to 
\begin{align*}
\Delta u_i ={\ell}^{-1} \frac{\partial \tilde{f}_B }{\partial u_i}(\Psi^{\ell})=0 \text{ on }\partial \Omega  \implies \frac{\partial^2 u_i}{\partial^2 x_2}=-\frac{\partial^2 u_i}{\partial^2 x_1} \text{ on }  \partial \Omega, \text{ for all } i=1,2,3,4.
\end{align*}
The Dirichlet boundary condition $\mathbf{g}:=(g_1,g_2,g_3,g_4)$, combined with the above identity yields that
\begin{align}\label{H2 bound for flat boundary2}
T_2:&=\int_{[x_2=0]}\xi^2 \frac{\partial e_{\ell}}{\partial x_2}(\Psi^{\ell}) \ds
 = \frac{1}{2}\int_{[x_2=0]} \xi^2 \frac{\partial \abs{\nabla \Psi^{\ell}}^2 }{\partial x_2}\ds
 %+ C\sum_{i=1}^{4} \int_{[x_2=0]} \xi^2 \frac{\partial \abs{\nabla u_i}^2 }{\partial x_2} \ds
%\notag\\&
=\sum_{i=1}^{4} \int_{[x_2=0]}\xi^2 \bigg(\frac{\partial g_i}{\partial x_1}\frac{\partial^2 u_i}{\partial x_2\partial x_1}- \frac{\partial u_i}{\partial x_2}\frac{\partial^2 g_i}{\partial x_1^2}\bigg)\ds.
\end{align}
Use integration by parts for the first term on the right hand side of \eqref{H2 bound for flat boundary2}. Then Holder's inequality with  
%$\int_{[x_2=0]}\abs{\frac{\partial u_i}{\partial x_2} }^2 \ds \leq C$ from 
(iii) and smoothness of $\xi$ leads to the bound $T_2 \leq C.$
The bound for $T_4$ is already established  in  Theorem \ref{H2 bound locally}.  Now combining the bounds for $T_1, T_2, T_3$ and $T_4$ 
%yields the bound $\int_\Omega \xi^2\abs{\nabla^2\Psi^{\ell}}^2 \dx \leq C.$ This 
concludes the proof for Case I.

\medskip

\noindent{\it \textbf{Case II}: When $\partial \Omega$ is \textbf{ not flat} near $x_0=0$.}

\medskip

\noindent %In this case, local coordinates, which straighten the boundary are introduced. 
 We use the similar notation $\Psi^{\ell}$ instead of $\tilde{\Psi }^{\ell}$.
  A use of  \eqref{reformed PDE},  algebraic calculations %of $\sum_{ k=1}^2  \frac{\partial \Psi }{\partial x_k}  \cdot L\big( \frac{\partial \Psi }{\partial x_k}\big)$ 
% the fact that $\frac{\partial  L{\Psi }}{\partial x_k}=L\big(\frac{\partial  {\Psi }}{\partial x_k}\big) + \big(2h''\frac{\partial^2 \Psi }{\partial x_1\partial x_2}+2h' h'' \frac{\partial^2 \Psi }{\partial x_2^2} +h''\frac{\partial \Psi }{\partial x_2} \big)
% , \,k=1,2,$ and 
% leads to
%$\sum_{ k=1}^2  \frac{\partial \Psi }{\partial x_k}  \cdot L\big( \frac{\partial \Psi }{\partial x_k}\big)= - \sum_{ k=1}^2\frac{\partial \Psi }{\partial x_k} \cdot\big(2h''\frac{\partial^2 \Psi }{\partial x_1\partial x_2}+2h' h'' \frac{\partial^2 \Psi }{\partial x_2^2} +h''\frac{\partial \Psi }{\partial x_2} \big) + {\ell}^{-1} \sum_{ k=1}^2\frac{\partial  }{\partial x_k}\big(\frac{\partial \tilde{f}_B}{\partial \Psi } \big) \cdot
% \frac{\partial \Psi }{\partial x_k}.$
% A combination of this
% and Holder's inequality in 
and   the inequality $L\big(\frac{1}{2} \abs{\nabla \Psi^{\ell}}^2\big) \geq  \sum_{ k=1}^2  \frac{\partial \Psi^{\ell} }{\partial x_k}  \cdot L\big( \frac{\partial \Psi^{\ell} }{\partial x_k}\big)
    +\alpha_3 \abs{\nabla^2\Psi^{\ell}}^2$ implies that
$$ L\big(\frac{1}{2} \abs{\nabla \Psi^{\ell}}^2\big)\geq -C\big(\abs{\nabla^2 \Psi^{\ell}}\abs{\nabla \Psi^{\ell}}+ \abs{\nabla \Psi^{\ell}}^2 \big)+\alpha_3 \abs{\nabla^2\Psi}^2+ {\ell}^{-1} \nabla\Psi^{\ell}:D^2\tilde{f}_B(\Psi^{\ell})\nabla\Psi^{\ell},$$
where $\alpha_3>0$ the ellipticity constant. A use of \eqref{reformed PDE} leads to 
$$- L( \tilde{f}_B(\Psi^{\ell})) =  -{\ell}^{-1}\sum_{i=1}^{4}  \bigg(\frac{\partial \tilde{f}_B }{\partial u_i} (\Psi^{\ell})\bigg)^2-  \sum_{j,k=1}^{4}\frac{\partial^2 \tilde{f}_B }{\partial u_j \partial u_k}\bigg(\frac{\partial u_j }{\partial x_1}\frac{\partial u_k }{\partial x_1}+2h'\frac{\partial u_j }{\partial x_1}\frac{\partial u_k }{\partial x_2}+(1+(h')^2)\frac{\partial u_j }{\partial x_2}\frac{\partial u_k }{\partial x_2}\bigg).$$
 The two inequalities above  yield that
 \begin{align}\label{H2 bound for not flat boundary7}
- L(e_{\ell}(\Psi^{\ell}))&+\alpha_3 \abs{D^2\Psi^{\ell}}^2 +\sum_{i=1}^{4} {\ell}^{-2} \bigg(\frac{\partial \tilde{f}_B }{\partial u_i} (\Psi^{\ell})\bigg)^2  \leq  C\abs{\nabla \Psi^{\ell}}\big(\abs{\nabla^2 \Psi^{\ell}}+ \abs{\nabla \Psi^{\ell}} \big)\notag\\&\quad-  {\ell}^{-1} \sum_{j,k=1}^{4}\frac{\partial^2 \tilde{f}_B }{\partial u_j \partial u_k}\bigg(2\frac{\partial u_j }{\partial x_1}\frac{\partial u_k }{\partial x_1}  +2h'\frac{\partial u_j }{\partial x_1}\frac{\partial u_k }{\partial x_2}+(2+(h')^2)\frac{\partial u_j }{\partial x_2}\frac{\partial u_k }{\partial x_2}\bigg).
\end{align}
Follow the steps of Theorem \eqref{Bochner type inequality thm} for the second term on the right hand side of \eqref{H2 bound for not flat boundary7}, and utilize  $\sum_{i=1}^{4} {\ell}^{-2} \big(\frac{\partial \tilde{f}_B }{\partial u_i} (\Psi^{\ell})\big)^2 \geq  0$ to obtain
%\begin{align}\label{H2 bound for not flat boundary8}
$$- L(e_{\ell}(\Psi^{\ell}))+\alpha_3 \abs{\nabla^2\Psi^{\ell}}^2 \leq \frac{1}{\delta_2}\abs{\nabla \Psi^{\ell}}^4+ \frac{1}{\delta_2}\abs{\nabla \Psi^{\ell}}^2 $$% \lesssim e_{\ell}^2(\Psi^{\ell}(x)),
%\end{align}
for some sufficiently small $\delta_2>0.$ 
%This implies
%\begin{align}\label{H2 bound for not flat boundary8}
%- L(e_{\ell}(\Psi))+\alpha_3 \abs{\nabla^2\Psi}^2 \leq \frac{1}{\delta_2}\abs{\nabla \Psi}^4+ \frac{1}{\delta_2}\abs{\nabla \Psi}^2.% \lesssim e_{\ell}^2(\Psi(x)).
%\end{align}
Let $\xi$  be a smooth function with support in $ B(x_0, r) $ with $r= \min(d,R)$ such that $\xi =1$ on $ B(x_0, \frac{r}{2}) $. Multiply the above displayed inequality by $\xi^2$, apply integration and \eqref{Strong convergence in H1 equation1} to obtain
%\begin{align*}%\label{H2 bound for not flat boundary9}
$$\alpha_3 \int_{U} \xi^2 \abs{\nabla^2\Psi^{\ell}}^2  \dx\leq \int_{U} \xi^2 L(e_{\ell}(\Psi^{\ell}))\dx +\frac{1}{\delta_2} \int_{U} \xi^2 \abs{\nabla \Psi^{\ell}}^4 \dx +C .$$
%\end{align*}
The second term on the right hand side of above displayed inequality can be bounded similarly to Theorem \ref{H2 bound locally}.
% Next it is established that 
%\begin{align}\label{H2 bound for not flat boundary10}
%\int_{U} \xi^2 L(e_{\ell}(\Psi)) \dx \text{ remains bounded}.
%\end{align}
A  use of integration by parts leads to 
\begin{align*}%\label{H2 bound for not flat boundary11}
\int_{U} \xi^2 L(e_{\ell}(\Psi^{\ell})) \dx&= \int_{U}  e_{\ell}(\Psi^{\ell})L(\xi^2)\dx + \int_{[x_2=0]} \big(2a_{12}\frac{\partial \xi^2}{\partial x_1}+ \xi^2 \frac{\partial a_{12}}{\partial x_1}   + a_{22} \frac{\partial \xi^2 }{\partial x_2}\big) e_{\ell}(\Psi^{\ell})\ds  \notag\\&\quad- \int_{[x_2=0]}a_{22}\xi^2 \frac{\partial e_{\ell}(\Psi^{\ell})}{\partial x_2} \ds =:T_5+T_6+T_7.
\end{align*}
The functions $a_{ij}$ for $ i,j=1,2$, and  $\xi$ are smooth and bounded. This together with \eqref{Strong convergence in H1 equation1} (resp.  $\tilde{f}_B(\Psi^{\ell}) =0$ on $\partial \Omega$ and $(iii)$) leads to the bound for  $T_5$ (resp. $T_6$).  
%Along with the smoothness of $a_{12}, a_{22}$ and $\xi$. The terms in $T_6$ are bounded using the facts that $a_{12}, a_{22}$ and $\xi$ are smooth bounded functions, $\tilde{f}_B(\Psi) =0$ on $\partial \Omega$ and $\int_{[x_2=0]} \abs{\nabla\Psi}^2\ds=\int_{[x_2=0]} \abs{\frac{\partial \mathbf{\tilde{g}}}{\partial \tau}}^2\ds+\int_{[x_2=0]}  \abs{\frac{\partial \Psi}{\partial \nu}}^2 \ds\leq C$ from $(iii)$. 
The fact  that $\frac{\partial \tilde{f}_B(\mathbf{\tilde{g}})}{\partial x_2}=0$ as $\mathbf{\tilde{g}} \in \mathcal{A}_{\min}$, applied to \eqref{reformed PDE} yields $L \Psi^{\ell} = {\ell}^{-1} \frac{\partial \tilde{f}_B}{\partial {\Psi }}=0$ on $\partial\Omega.$
% that is $\frac{\partial  }{\partial x_2}\big(a_{22} \frac{\partial \Psi^{\ell}}{\partial x_2}\big) =-\frac{\partial  }{\partial x_1}\big(a_{11} \frac{\partial \Psi^{\ell}}{\partial x_1}\big)-\frac{\partial  }{\partial x_2}\big(a_{12} \frac{\partial \Psi^{\ell}}{\partial x_1}\big)-\frac{\partial  }{\partial x_1}\big(a_{21} \frac{\partial \Psi^{\ell}}{\partial x_2}\big).$
Use this to replace the $\frac{\partial^2  \Psi^{\ell}}{\partial x_2^2}$ term in %the expansion of
% $\frac{\partial  \abs{\nabla \Psi^{\ell}}^2}{\partial x_2}
%= \frac{\partial  \Psi^{\ell}}{\partial x_1}\frac{\partial^2  \Psi^{\ell}}{\partial x_2 \partial x_1} + \frac{\partial  \Psi^{\ell}}{\partial x_2}\frac{\partial^2  \Psi^{\ell}}{\partial x_2^2}$. 
 $T_7=-\frac{1}{2}\int_{[x_2=0]}a_{22}\xi^2 \frac{\partial  \abs{\nabla \Psi^{\ell}}^2}{\partial x_2} \ds$. The bound for $T_7$  is obtained using similar ideas as for $T_2,$ and  employs several integration by parts, $(iii)$ and smoothness of $\xi,a_{22}$. This completes the proof. 
\end{proof}
%\begin{rem}\label{remark on c=0}
%	For $ c=0$,
	 %the energy functional $\mathcal{E}(\Qvec,\Mvec)$ can be written as
%	\begin{align*}%\label{energy functional c=0}
%	\mathcal{E}=& \frac{1}{2}\int_\Omega (\abs{\nabla Q_{11}}^2+\abs{\nabla Q_{12}}^2+\frac{1}{2\ell }(Q_{11}^2 +Q_{12}^2 -1)^2) \dx+  \frac{1}{2}\int_\Omega (\abs{\nabla M_{1}}^2+\abs{\nabla M_{2}}^2+\frac{1}{2\ell }( M_1^2 +M_2^2 - 1)^2) \dx. 
%	\end{align*}
	%This case is directly comparable with the Ginzburg-Landau energy functional and the analysis will follow similar lines in \cite{Bethuel}
%\end{rem}
%\subsection{The coupling parameter  $c<0$}
%\begin{rem}\label{The coupling parameter  c<0}
%	Note that Propositions \ref{Proposition: limiting functions}, \ref{Proposition: L infinity bound of solutions}, \ref{Proposition: uniform convergence on compact sets} will remain uninfluenced by the sign change of the parameter $c.$ Whereas Lemma \ref{Hessian restricted to normal space is positive definite} need to be rigorously checked.
%\end{rem} 

\section{Finite element analysis } \label{Finite element analysis}
This section is devoted to the finite element approximation of the solution  of  \eqref{minimization pde} and convergence analysis  in bounded, convex domain with polygonal boundary. We assume the boundary condition $\mathbf{g} \in \h^{\frac{3}{2}}(\partial\Omega)$ for the analysis in this section.
Section \ref{Weak and finite element formulations} presents the weak and finite element formulations  of the non-linear system \eqref{continuous nonlinear ferro strong form}.  The local existence, uniqueness of the discrete solutions and error analysis with $h-\ell$ dependency are main results of this section, and are stated in Section \ref{Main results}. Some auxiliary results required for the convergence analysis are presented in Section \ref{Auxiliary results}. This is followed by the proofs of the main results in Section \ref{A priori estimates}.
\begin{lem} [Regularity result]\label{regularity}
	Let $\Omega$ be a convex, bounded domain in $\mathbb{R}^2$ with polygonal boundary. Then for $\mathbf{g} \in \mathbf{H}^{\frac{3}{2}}(\partial \Omega),$ any solution $\Psi^{\ell}$ of \eqref{minimization pde}, i.e., $\Delta \Psi^{\ell} =\ell^{-1} D\tilde{f}_B(\Psi^{\ell} ) \text{ in } \Omega, \text{  and } \Psi^{\ell}=\mathbf{g} \text{  on } \partial\Omega, $ %solution of \eqref{continuous nonlinear ferro strong form} 
	belongs to  $\h^{2}(\Omega)$.
\end{lem}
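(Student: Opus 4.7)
The plan is to reduce the nonlinear boundary value problem to a scalar Poisson problem on each component and then invoke classical elliptic regularity on a convex polygonal domain. The key observation is that the coupling in \eqref{minimization pde} appears only through lower-order terms: the principal part is the (componentwise) Laplacian. So the system decouples at the level of regularity theory once we show that the right-hand side $\ell^{-1}D\tilde f_B(\Psi^\ell)$ is in $\mathbf{L}^2(\Omega)$.

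First, I would control the nonlinearity. By Proposition \ref{Proposition: L infinity bound of solutions}, the solution satisfies $|\Qvec^\ell|\le Q_c$ and $|\Mvec^\ell|\le M_c$, so $\Psi^\ell \in \mathbf{L}^\infty(\Omega)$. Since $D\tilde f_B$ from \eqref{definition of Df} is a polynomial of degree at most three in its argument, the composition $D\tilde f_B(\Psi^\ell)$ is in $\mathbf{L}^\infty(\Omega)\subset \mathbf{L}^2(\Omega)$, with a bound depending on $Q_c$, $M_c$, and $c$ but not on $\ell$ (up to the prefactor $\ell^{-1}$).

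Next, I would remove the non-homogeneous boundary datum by a lift. For $\mathbf{g}\in \h^{3/2}(\partial\Omega)$ on a Lipschitz (polygonal) domain, standard trace theory provides $\mathbf{G}\in \h^2(\Omega)$ with $\mathbf{G}|_{\partial\Omega}=\mathbf{g}$ and $\vertiii{\mathbf{G}}_2\le C\vertiii{\mathbf{g}}_{3/2,\partial\Omega}$. Set $\mathbf{W}:=\Psi^\ell-\mathbf{G}\in \mathbf{H}^1_0(\Omega)$, which solves
\begin{equation*}
-\Delta \mathbf{W} \;=\; -\ell^{-1} D\tilde f_B(\Psi^\ell) - \Delta\mathbf{G} \;=:\; \mathbf{F} \in \mathbf{L}^2(\Omega), \qquad \mathbf{W}|_{\partial\Omega}=\mathbf{0}.
\end{equation*}
Each scalar component $W_i$ now solves a homogeneous Dirichlet Poisson problem $-\Delta W_i = F_i\in L^2(\Omega)$ on $\Omega$.

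Finally, I would apply the classical regularity result of Grisvard for the Dirichlet Laplacian on a convex bounded polygonal domain: since all interior angles are strictly less than $\pi$, the solution of $-\Delta u=f\in L^2(\Omega)$ with $u\in H^1_0(\Omega)$ belongs to $H^2(\Omega)$, with $\norm{u}_2\le C\norm{f}_0$. Applying this componentwise to $\mathbf{W}$ gives $\mathbf{W}\in \mathbf{H}^2(\Omega)$, and therefore $\Psi^\ell=\mathbf{W}+\mathbf{G}\in \mathbf{H}^2(\Omega)$, completing the proof.

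The only genuinely delicate issue is the $H^2$ regularity up to the boundary in the presence of corners; this is precisely where convexity is essential, since on a non-convex polygon one would pick up singular functions at reentrant corners and lose $H^2$ regularity. Everything else is a routine lifting plus the a priori $\mathbf{L}^\infty$ bound from Proposition \ref{Proposition: L infinity bound of solutions}.
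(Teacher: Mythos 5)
Your overall architecture (reduce to a scalar Poisson problem via a lift, then invoke Grisvard's $H^2$-regularity for the Dirichlet Laplacian on a convex polygon) is sound and matches the paper's intent. The gap is in the first step: you justify $D\tilde f_B(\Psi^\ell)\in\mathbf{L}^2(\Omega)$ by invoking the $\mathbf{L}^\infty$ bound of Proposition \ref{Proposition: L infinity bound of solutions}, but that proposition is proved under the hypothesis $\mathbf{g}\in\mathcal{A}_{\min}$ (boundary data of the specific form $(Q_c\cos 2\varphi_b,Q_c\sin2\varphi_b,M_c\cos\varphi_b,M_c\sin\varphi_b)$, also stated in the smooth-domain setting of Section~2). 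Lemma \ref{regularity} asserts the conclusion for \emph{any} weak solution of \eqref{minimization pde} with the much weaker assumption $\mathbf{g}\in\mathbf{H}^{3/2}(\partial\Omega)$ on a convex polygon, so the maximum principle bound is not available a priori, and your argument does not cover the generality claimed by the lemma.

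The paper avoids this by exploiting the 2D Sobolev embedding directly: since $\Psi^\ell\in\mathbf{H}^1(\Omega)$ and $H^1(\Omega)\hookrightarrow L^p(\Omega)$ for every $p<\infty$ in two dimensions, and since every component of $D\tilde f_B$ in \eqref{definition of Df} is a polynomial of degree at most three, H\"older's inequality gives $D\tilde f_B(\Psi^\ell)\in\mathbf{L}^2(\Omega)$ (e.g.\ $\Psi^\ell\in\mathbf{L}^6$ suffices for the cubic terms) without any $L^\infty$ information or restriction on $\mathbf{g}$. Replacing your Proposition \ref{Proposition: L infinity bound of solutions} step by this embedding argument makes the proof valid in the stated generality; the remainder of your proof (lifting $\mathbf{g}$ by $\mathbf{G}\in\mathbf{H}^2(\Omega)$, decoupling into scalar Poisson problems, and applying Grisvard's convexity result componentwise) is correct and is essentially the paper's argument.
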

\begin{proof}%[\textbf{Proof of Lemma \ref{regularity}}]

%	\end{align*}
  The Sobolev embedding result $H^1(\Omega) \hookrightarrow L^p(\Omega) $ for all $p \geq 1$, and the H\"older's inequality yields that $D\tilde{f}_B(\Psi^{\ell}) $ defined in \eqref{definition of Df} belongs to $\mathbf{L}^2(\Omega).$	 
%For $F \in \mathbf{L}^2(\Omega)$ and $\mathbf{g} \in \mathbf{H}^{\frac{3}{2}}(\partial\Omega)$, the elliptic regularity result \cite{grisvard} implies that  there exists a $\Psi \in \mathbf{H}^2(\Omega)$ such that $-\Delta \Psi =F \text{ in } \Omega \text{ and }\Psi = \mathbf{g} \text{ on } \partial \Omega. $
The elliptic regularity result \cite{grisvard}  with a bootstrapping argument \cite{Evance19} implies that $\Psi^\ell\in \mathbf{H}^{2}(\Omega)$. 
\end{proof}
\medskip

\noindent The finite element analysis of this section holds for any regular solution \cite{keller} $\Psi^{\ell}$ of the  Euler-Lagrange PDEs (see the weak formulation in \eqref{continuous nonlinear ferro})  with the uniform bound 
	\begin{align}\label{H2 bound for minimizers}
	\vertiii{\Psi^{\ell}}_2 < C,
	\end{align}
where	the constant $C>0$ is independent of $\ell$. 
	In particular, we  established this property in Proposition \ref{analysis near the boundary}$(iv)$ for the global minimizers $\Psi^\ell$ of $\tilde{\mathcal{E}}$ in the admissible space $\mathcal{A}.$
\subsection{Weak and finite element formulations}\label{Weak and finite element formulations}
 The weak formulation of the non-linear system in \eqref{minimization pde}  seeks $\Psi^\ell:= (u_1,u_2,u_3,u_4 ) \in {\mathcal{A}}$ such that for all $ \Phi:=(\varphi_1, \varphi_2,\varphi_3, \varphi_4) \in  \V $,
\begin{align}\label{continuous nonlinear ferro}
N(\Psi^\ell; \Phi):= A(\Psi^\ell, \Phi)+B_{1}(\Psi^\ell, \Phi)+B_2( \Psi^\ell,\Psi^\ell,\Phi)+ B_{3}(\Psi^\ell,\Psi^\ell,\Psi^\ell, \Phi)= 0,
\end{align}
where for ${\Xi}:= (\xi_1,\xi_2,\xi_3,\xi_4 ), \boldsymbol{\eta}:= (\eta_1,\eta_2,\eta_3,\eta_4 ),$ and $\Theta:=(\theta_1, \theta_2,\theta_3,\theta_4) \in \X$,	
\begin{align*}
 &A(\Theta, \Phi):= \sum_{i=1}^{4}\int_{\Omega}  \nabla \theta_i \cdot \nabla \varphi_i \dx, 
\quad  B_{1}(\Theta, \Phi):=- {\ell}^{-1}\sum_{i=1}^{4}\int_\Omega \theta_i \varphi_i \dx 
 ,\\&
B_2(  \boldsymbol{\eta},\Theta,\Phi):= \frac{{c{\ell}^{-1}}}{2}\bigg(\int_\Omega (\eta_4 \theta_4- \eta_3 \theta_3) \varphi_1\dx -\int_\Omega (\eta_3 \theta_4 +\eta_4 \theta_3 )\varphi_2 \dx-\int_\Omega (\eta_1 \theta_3 +\eta_3 \theta_1 )\varphi_3\dx\\&\quad\quad\quad\quad\quad\quad\quad\quad\quad-\int_\Omega(\eta_2 \theta_4 +\eta_4 \theta_2 )\varphi_3\dx-\int_\Omega (\eta_2\theta_3 +\eta_3\theta_2 )\varphi_4\dx +\int_\Omega (\eta_1 \theta_4+\eta_4\theta_1 )\varphi_4\dx \bigg),
\end{align*}
for  $\bar{\xi}_{ij}= (\xi_i, \xi_j),\bar{\eta}_{ij}= (\eta_i, \eta_j),\bar{\theta}_{ij}= (\theta_i, \theta_j), $  $\bar{\varphi}_{ij}= (\varphi_i, \varphi_j) \in (H^1(\Omega))^2$ with $(i,j)=(1,2) \text{ or } (3,4)$,
\begin{align*}
&B_3({\Xi},  \boldsymbol{\eta}, \Theta, \Phi):=\frac{1}{3\ell}\int_\Omega \left((\bar{\xi}_{12} \cdot \bar{\eta}_{12} )(\bar{\theta}_{12} \cdot \bar{\varphi}_{12})+(\bar{\xi}_{12} \cdot \bar{\theta}_{12})(\bar{\eta}_{12}  \cdot \bar{\varphi}_{12}) +(\bar{\eta}_{12}  \cdot \bar{\theta}_{12})(\bar{\xi}_{12} \cdot \bar{\varphi}_{12})\right)\dx \\& \quad\quad\quad\quad\quad\quad\quad+\frac{1}{3\ell}\int_\Omega \left((\bar{\xi}_{34} \cdot \bar{\eta}_{34} )(\bar{\theta}_{34} \cdot \bar{\varphi}_{34})+(\bar{\xi}_{34} \cdot \bar{\theta}_{34})(\bar{\eta}_{34}  \cdot \bar{\varphi}_{34}) +(\bar{\eta}_{34}  \cdot \bar{\theta}_{34})(\bar{\xi}_{34} \cdot \bar{\varphi}_{34})\right)\dx.
\end{align*}
Note that the trilinear form $B_2(\cdot,\cdot,\cdot)$ and the quadrilinear form $B_3(\cdot,\cdot,\cdot,\cdot)$ are symmetric in any two variables.
The superscript $\ell$ is suppressed in $\Psi^\ell$ from now on for brevity of notations. With the notations 
\begin{align*}
&a(\theta,\varphi):=\int_{\Omega}  \nabla \theta \cdot \nabla \varphi \dx, \,\,b_1(\theta,\varphi):=- {\ell}^{-1}\int_\Omega \theta \varphi \dx,\,\,b_2(\eta ,\theta,\varphi):={c{\ell}^{-1}} \int_\Omega \eta \theta \varphi \dx, \\& \text{ and  }b_{3}(\xi,\eta,\theta,\varphi):= {\ell}^{-1}\int_\Omega  \xi\eta\theta \varphi \dx \,\,\text{ 	for } \xi,\eta,\theta,\varphi \in X,
\end{align*}
 the terms in \eqref{continuous nonlinear ferro} can be expressed as
\begin{align*}
 &A(\Psi, \Phi):=\sum_{i=1}^{4} a(u_i, \varphi_i) , \quad B_{1}(\Psi, \Phi):=\sum_{i=1}^{4}b_1(u_i, \varphi_i),
% + a(u_2, \varphi_2) + a(u_3, \varphi_3) +a(u_4, \varphi_4) , \\&
%B_{1}(\Psi, \Phi):=b_1(u_1, \varphi_1) + b_1(u_2, \varphi_2) + b_1(u_3, \varphi_3) + b_1(u_4, \varphi_4) ,
\\&
B_2( \Psi,\Psi,\Phi):=-\frac{1}{2}b_2(u_3,u_3, \varphi_1)+\frac{1}{2}b_2(u_4,u_4, \varphi_1)-b_2(u_3,u_4, \varphi_2)-b_2(u_1,u_3, \varphi_3)\\& \quad\quad\quad\quad\quad\quad\quad\,\,-b_2(u_2,u_4, \varphi_3)-b_2(u_2,u_3, \varphi_4)+b_2(u_1,u_4, \varphi_4).
\end{align*}
The scalar product expansions of the terms in  $B_3(\cdot,\cdot,\cdot,\cdot)$, for example,   $${\ell^{-1}}\int_\Omega (\bar{\xi}_{12} \cdot \bar{\eta}_{12} )(\bar{\theta}_{12} \cdot \bar{\varphi}_{12})=b_3(\xi_1, \eta_1, \theta_1, \varphi_1)+b_3(\xi_2, \eta_2, \theta_1, \varphi_1)+b_3(\xi_1, \eta_1, \theta_2, \varphi_2)+b_3(\xi_2, \eta_2, \theta_2, \varphi_2),$$ leads to
\begin{align*}
&B_{3}(\Psi,  \Psi,\Psi, \Phi): =b_{3}(u_1,u_1,u_1,\varphi_1)+b_{3}(u_2,u_2,u_1,\varphi_1) + b_{3}(u_1,u_1,u_2,\varphi_2)+b_{3}(u_2,u_2,u_2,\varphi_2)\\&\quad\quad\quad\quad\quad\quad\quad+ b_{3}(u_3,u_3,u_3,\varphi_3)+ b_{3}(u_4,u_4,u_3,\varphi_3) +  b_{3}(u_3,u_3,u_4,\varphi_4)+ b_{3}(u_4,u_4,u_4,\varphi_4) .
\end{align*}
\begin{rem}
	The linear terms of the system \eqref{continuous nonlinear ferro} are denoted by  $A(\cdot, \cdot)$ and $B_1(\cdot, \cdot)$, and the terms with quadratic non-linearity (resp.   cubic non-linearity) are denoted by $B_{2}(\cdot,\cdot, \cdot)$ (resp. $B_{3}(\cdot,\cdot, \cdot, \cdot)$). The representation of the forms $A, B_i,i=1,2,3$ in terms of $a, b_i, i=1,2,3$ are noteworthy and eases the understanding of the properties (e.g. coercivity, boundedness) over the complicated vectorized formulations and the analysis. \qed
\end{rem} 
\noindent Let $\mathcal{T}$ be a shape regular triangulation \cite{ciarlet} of a convex polygonal domain $\Omega\subset \mathbb{R}^2$  into triangles. The mesh discretization parameter is $h= \max_{T \in \mathcal{T}} h_T,$ where $h_T= diam(T) $. 
%$\mathcal{E}_h^{i} $( resp. $  \mathcal{E}_h^{\partial}$) denote the interior (resp. boundary) edges of $\mathcal{T}$ and  $\mathcal{E}:=\mathcal{E}_h^{i} \cup  \mathcal{E}_h^{\partial}$. 
%The length of the edge $E$ is denoted by $h_E.$
Let $P_1(T)$ denote polynomials of degree at most one on $T.$ Define the finite element subspace of $\X$ by $\X_h:=(X_h)^4 $ with $$X_h:= \{v \in C^0(\overline{\Omega})|\,\, v|_T \in P_1(T) \text{ for all } T \in \mathcal{T} \}$$ equipped with the $H^1$ norm. 
The space $\X_h $ is equipped with the product norm $\vertii{\Phi_h}:=\sum_{j=1}^{4} \norm{\varphi_{j}}_1  $ for all $\Phi_h=(\varphi_{1},\varphi_{2}, \varphi_{3},\varphi_{4}) \in \X_h.$  Define $\V_h:=(V_h)^4 $ with $$V_h:= \{v \in C^0(\overline{\Omega})|\,\, v|_T \in P_1(T) \text{ for all } T \in \mathcal{T} \text{ and } v|_{\partial\Omega}=0\} \subset H^1_0(\Omega).$$

\medskip

\noindent The discrete non-linear problem corresponding to \eqref{continuous nonlinear ferro} seeks ${\Psi}_h:= (u_{1,h},u_{2,h},u_{3,h},u_{4,h} )\!\in\!\X_h$ such that ${\Psi}_h=\mathbf{g}_h$ on $\partial \Omega$ and for all $ \Phi_h \in  \V_h, $
\begin{align}\label{discrete nonlinear problem ferro}
N( {\Psi}_h; \Phi_h):= A( {\Psi}_h, \Phi_h)+B_{1}( {\Psi}_h, \Phi_h)+B_{2}(  {\Psi}_h, {\Psi}_h,\Phi_h) + B_{3}( {\Psi}_h, {\Psi}_h, {\Psi}_h, \Phi_h)= 0 ,
\end{align}
 where
%  $\mathbf{g}_h$ denotes an approximation of $\mathbf{g}$ (the choice of which will be specified later) on $\X_h|_{\partial\Omega}.$
$\mathbf{g}_h:={\rm I}_h\mathbf{g}$ be the Lagrange $P_1$ interpolation of $\mathbf{g}$ along $\partial \Omega$. 
\subsection{Main results}\label{Main results}
The main results of this section are presented now. This includes the energy norm error estimate, a best approximation result in $\X_h$, and the $\mathbf{L}^2 $ norm  error estimate. The proofs are provided in Section \ref{A priori estimates} and the  hidden constants in $"\lesssim"$ are detailed there.  The conforming finite element analysis for ferronematics and the imperative $h-\ell $ dependency  are not investigated earlier as far as we are aware. The methodology explored here is  non-identical to the analysis of Landau-de Gennes model  for nematic liquid crystals in \cite{FiniteelementanalysisLDG, DGFEM}, the non-linearity is different for ferronematics, and lifting technique is utilized to deal with the non-homogeneous boundary condition. 
\begin{thm}[Energy norm error estimate]\label{energy  norm error estimate ferro}
	Let ${\Psi}$ be a regular solution of \eqref{continuous nonlinear ferro} such that \eqref{H2 bound for minimizers} holds. 
	For
	a given fixed $\ell>0$,   a sufficiently small discretization parameter chosen as $h=O(\ell^{1+\varsigma})$ with $\varsigma>0,$ there exists a unique solution ${\Psi}_{h}$ to the discrete problem \eqref{discrete nonlinear problem ferro}  that approximates ${\Psi}$ such that	
	\begin{align*}
	\vertii{{\Psi}-{\Psi}_h} \lesssim  h.
	\end{align*}
\end{thm}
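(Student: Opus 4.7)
The plan is to follow a Banach contraction argument in the spirit of Brezzi--Rappaz--Raviart, centred at the Lagrange interpolant $I_h\Psi$. Since the boundary datum is non-homogeneous, write $\Psi_h = I_h\mathbf{g} + \tilde{\Psi}_h$ with $\tilde{\Psi}_h\in\mathbf{V}_h$, and target an error decomposition $\vertii{\Psi-\Psi_h}\le\vertii{\Psi-I_h\Psi}+\vertii{I_h\Psi-\Psi_h}$. The first piece is controlled by $\vertii{\Psi-I_h\Psi}\lesssim h\,\vertiii{\Psi}_2$, which is $\mathcal{O}(h)$ by the $\ell$-independent $H^2$-bound \eqref{H2 bound for minimizers}. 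The aim is to produce the second piece as the fixed point of a suitable affine map on a small $\mathbf{V}_h$-ball of radius $R=\mathcal{O}(h)$.

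First I would compute the Fréchet derivative of $N$ at $\Psi$: $DN(\Psi)(\Theta,\Phi) = A(\Theta,\Phi)+B_1(\Theta,\Phi)+2B_2(\Psi,\Theta,\Phi)+3B_3(\Psi,\Psi,\Theta,\Phi)$. Regularity of $\Psi$ means $DN(\Psi):\mathbf{V}\to\mathbf{V}^\ast$ is an isomorphism, so there is an inf-sup constant $\beta=\beta(\ell)>0$. Using the bound $\vertii{I_h\Psi-\Psi}\lesssim h$, the Sobolev embeddings $H^1\hookrightarrow L^p$ for $p<\infty$, and $\vertiii{I_h\Psi}_{0,\infty}\lesssim 1$, a perturbation argument will transfer this inf-sup condition to $DN(I_h\Psi)(\cdot,\cdot)$ on $\mathbf{V}_h\times\mathbf{V}_h$ with a constant $\gtrsim \beta$, provided a product of the form $\beta^{-1}h\ell^{-1}$ is small enough. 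Then I would define $\mu:\mathbf{V}_h\to\mathbf{V}_h$ by $\mu(\tilde\eta_h)=\tilde\xi_h$, where $\tilde\xi_h$ solves
\begin{equation*}
DN(I_h\Psi)(\tilde\xi_h,\Phi_h) = DN(I_h\Psi)(\tilde\eta_h,\Phi_h)-N(I_h\Psi+\tilde\eta_h;\Phi_h), \qquad \forall\,\Phi_h\in\mathbf{V}_h,
\end{equation*}
so that fixed points of $\mu$ correspond exactly to solutions $\Psi_h = I_h\mathbf{g}+\tilde\Psi_h$ of \eqref{discrete nonlinear problem ferro}.

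The two key verifications are that $\mu$ maps the ball $\mathbb{B}_R=\{\tilde\eta_h\in\mathbf{V}_h:\vertii{\tilde\eta_h}\le R\}$ into itself and is a contraction there. For the self-map, evaluate at $\tilde\eta_h=0$: using $N(\Psi;\Phi_h)=0$ and a Taylor expansion of $N$ at $I_h\Psi$, the residual on the right-hand side is controlled by $\vertii{I_h\Psi-\Psi}$ plus higher-order terms involving $\ell^{-1}$-weighted $L^p$ norms of $I_h\Psi-\Psi$, which together give $\vertii{\mu(0)}\lesssim \beta^{-1}h$. For $\tilde\eta_h\in\mathbb{B}_R$, the quadratic/cubic Taylor remainders of $N$ about $I_h\Psi$ add $\mathcal{O}(\beta^{-1}\ell^{-1}(R^2+R^3))$ to this bound, so choosing $R\sim\beta^{-1}h$ closes the self-map property as soon as $\beta^{-2}\ell^{-1}h$ is sufficiently small. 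For the contraction, $\mu(\tilde\eta_h^1)-\mu(\tilde\eta_h^2)$ solves a problem whose right-hand side, after the cancellation of the linear part, involves only second- and third-order differences in $\tilde\eta_h^i$, producing $\vertii{\mu(\tilde\eta_h^1)-\mu(\tilde\eta_h^2)}\lesssim \beta^{-1}\ell^{-1}R\,\vertii{\tilde\eta_h^1-\tilde\eta_h^2}$, again a contraction under the same smallness hypothesis. Banach's theorem then delivers a unique $\tilde\Psi_h\in\mathbb{B}_R$, proving existence, local uniqueness, and $\vertii{I_h\Psi-\Psi_h}\le R\lesssim h$; combining with the interpolation estimate yields the claim.

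The main obstacle is the careful $\ell$-bookkeeping: the trilinear and quadrilinear forms $B_2,B_3$ carry $\ell^{-1}$ factors, the continuous inf-sup constant $\beta^{-1}$ itself blows up as $\ell\to 0$, and controlling the Taylor remainders of $N$ requires $L^p$ bounds with powers of $\ell^{-1}$. All of these combine into a smallness requirement of the form $h\,\ell^{-\alpha}\ll 1$ for an explicit $\alpha>0$ that encodes the worst product of negative powers of $\ell$ arising in the self-map and contraction steps, and the hypothesis $h=\mathcal{O}(\ell^{1+\varsigma})$ with $\varsigma>0$ is precisely what absorbs this. A secondary technical point is the handling of the non-homogeneous boundary data through $I_h\mathbf{g}$: the interpolation error on $\partial\Omega$ enters the residual $N(I_h\Psi;\cdot)$ and must also be controlled by $h$ via the assumed trace regularity $\mathbf{g}\in\mathbf{H}^{3/2}(\partial\Omega)$ and the regularity lemma \ref{regularity}.
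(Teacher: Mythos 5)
Your proposal follows essentially the same route as the paper: a Brouwer/Banach fixed-point argument for a frozen-Jacobian Newton map, centred at the Lagrange interpolant, driven by a discrete inf-sup condition for the linearization, producing a unique solution in a $\V_h$-ball of radius $O(h)$; and the mesh restriction $h = O(\ell^{1+\varsigma})$ arises by absorbing the $\ell^{-1}$ weights of $B_1,B_2,B_3$ exactly as you describe. The one organizational difference is that the paper first introduces an $\h^2(\Omega)$ lifting $\Psi_{\mathbf{g}}$ of the boundary data via the trace theorem and reformulates as a homogeneous-Dirichlet problem for $\widetilde{\Psi}=\Psi-\Psi_{\mathbf{g}}$ and $\widetilde{\Psi}_h=\Psi_h-\textrm{I}_h\Psi_{\mathbf{g}}$ (see \eqref{reduced continuous nonlinear ferro}, \eqref{reduced discrete nonllinear problem}, Theorem~\ref{reduced energy norm error estimate}), whereas you center directly at $\textrm{I}_h\Psi$; both are valid since $\textrm{I}_h\Psi|_{\partial\Omega}=\mathbf{g}_h$, and your version is a harmless streamlining.

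The genuine gap in your plan is the discrete inf-sup condition. You claim that ``a perturbation argument'' will transfer the continuous inf-sup \eqref{continuous inf sup} for $DN(\Psi)$ on $\V\times\V$ to $DN(\textrm{I}_h\Psi)$ on $\V_h\times\V_h$, citing $\vertii{\textrm{I}_h\Psi-\Psi}\lesssim h$, Sobolev embeddings, and an $L^\infty$ bound on $\textrm{I}_h\Psi$. Those ingredients only handle the change of linearization point from $\Psi$ to $\textrm{I}_h\Psi$ --- that is indeed a small-perturbation argument and is what the paper does in Theorem~\ref{discrete inf sup}$(ii)$. They do not handle the prior and harder step of passing from the continuous test space $\V$ to the discrete space $\V_h$: given $\Theta_h\in\V_h$, the continuous inf-sup supplies a good $\Phi\in\V$, and there is no a priori reason that some $\Phi_h\in\V_h$ achieves a comparable bound, because $DN(\Psi)$ is not coercive. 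The paper's mechanism (Lemma~\ref{linear equation for discrete inf-sup ferro} together with Theorem~\ref{discrete inf sup}$(i)$) is an auxiliary-problem/duality argument exploiting that the lower-order forms $B_1,B_2,B_3$ define $\mathbf{L}^2$ functionals when their fixed arguments lie in $\mathbf{H}^2$: one solves $A(\boldsymbol\zeta,\cdot)=B_L(\Theta_h,\cdot)$ to obtain $\boldsymbol\zeta\in\h^2(\Omega)\cap\V$ with $\vertiii{\boldsymbol\zeta}_2\lesssim\ell^{-1}$, tests with the coercivity test function for $A$ on $\Theta_h+\textrm{I}_h\boldsymbol\zeta$, and pays only the interpolation error $h\vertiii{\boldsymbol\zeta}_2=O(h\ell^{-1})$ --- which is precisely where the first smallness threshold $h\lesssim\ell$ enters. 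Without this (or an equivalent coercive-plus-compact / Fortin-type approximation argument), the discrete inf-sup is not established, and your fixed-point map $\mu$ is not known to be well defined; you should make this step explicit.
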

\begin{thm}[Best approximation result]\label{thmbestapproximationComforming}
	Let $\Psi$ be a regular solution of the non-linear system
	\eqref{continuous nonlinear ferro}  such that \eqref{H2 bound for minimizers} holds. 
	For a given fixed $\ell>0,$ a sufficiently small discretization parameter chosen as $h=O(\ell^{1+\varsigma} )$ with $\varsigma > 0$,  
	the unique discrete solution $\Psi_{h} $ of \eqref{discrete nonlinear problem ferro} that approximates $\Psi $ satisfies the best-approximation property 
	\begin{align*}
	\vertiii{\Psi- \Psi_{h }}_1\lesssim (1+\ell^{-1})\big( \min_{\Psi^*_{h } \in \X_h}\vertiii{\Psi- \Psi^*_{h }}_{1} + \vertiii{\mathbf{g} - \mathbf{g}_h}_{\frac{1}{2}, \partial \Omega}\big),
	\end{align*}
where	$\mathbf{g}_h$ denotes the Lagrange $P_1$ interpolation of $\mathbf{g}$. 
%along $\partial \Omega$. 
%	where $\mathbf{g}_h \in \X_{h}|_{\partial \Omega}$ denotes an approximation to $\mathbf{g}.$
\end{thm}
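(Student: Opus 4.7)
The plan is to reduce the problem to a linear best-approximation question for the Fr\'echet derivative $DN(\Psi)$ at the regular solution, absorb the boundary mismatch via discrete liftings, and close the argument with a Strang-type lemma on $\V_h$. Theorem \ref{energy  norm error estimate ferro} already supplies the existence of $\Psi_h$ together with $\vertii{\Psi-\Psi_h}\lesssim h$, so $\Psi_h$ lies in a neighbourhood of $\Psi$ where the linearisation is invertible.

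\textbf{Step 1 (Lifting).} First I would pick a continuous lifting $\Psi^L\in\mathbf{H}^1(\Omega)$ of $\mathbf{g}$ and a discrete lifting $\Psi^L_h\in\X_h$ of $\mathbf{g}_h$ satisfying $\vertiii{\Psi^L-\Psi^L_h}_1\lesssim \vertiii{\mathbf{g}-\mathbf{g}_h}_{\frac{1}{2},\partial\Omega}$ (e.g. via the standard trace right-inverse composed with the Scott--Zhang operator). Then $\Psi-\Psi^L\in\V$, $\Psi_h-\Psi^L_h\in\V_h$, and for any $\Psi^*_h\in\X_h$ the quantity $\Psi_h-\Psi^*_h-(\Psi^L_h-\Psi^{*,L}_h)$ can be arranged to live in $\V_h$, so every difference that needs to be estimated either belongs to $\V_h$ or differs from such an element by a lifting term controlled in Step 1.

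\textbf{Step 2 (Linearisation and Galerkin orthogonality).} I would introduce
\[
\mathcal{L}(\Theta,\Phi):=A(\Theta,\Phi)+B_1(\Theta,\Phi)+2B_2(\Psi,\Theta,\Phi)+3B_3(\Psi,\Psi,\Theta,\Phi),
\]
the Fr\'echet derivative $DN(\Psi)$. From $N(\Psi;\Phi_h)=0=N(\Psi_h;\Phi_h)$ for all $\Phi_h\in\V_h$, together with the symmetry and multilinearity of $B_2$, $B_3$, I obtain
\[
\mathcal{L}(\Psi-\Psi_h,\Phi_h)=\mathcal{R}(\Psi-\Psi_h,\Phi_h),
\]
where $\mathcal{R}$ collects the quadratic-in-$(\Psi-\Psi_h)$ remainders of $B_2$ and $B_3$. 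The $\mathbf{H}^2$ bound \eqref{H2 bound for minimizers} for $\Psi$, the $L^\infty$ bound from Proposition \ref{Proposition: L infinity bound of solutions}, and the energy norm estimate $\vertii{\Psi-\Psi_h}\lesssim h$ of Theorem \ref{energy  norm error estimate ferro} make $\mathcal{R}$ a higher-order perturbation of order $\ell^{-1}h\,\vertiii{\Psi-\Psi_h}_1$, which can be absorbed once $h=O(\ell^{1+\varsigma})$.

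\textbf{Step 3 (Discrete inf-sup and Strang-type estimate).} Since $\Psi$ is regular, $\mathcal{L}$ satisfies a continuous inf-sup condition on $\V$. I would write a G\aa{}rding decomposition $\mathcal{L}(\Theta,\Phi)=\mathcal{L}_+(\Theta,\Phi)-\ell^{-1}(\Theta,\Phi)$ with $\mathcal{L}_+$ coercive, and use an Aubin--Nitsche duality on the compact $L^2$-remainder to transfer the inf-sup to $\V_h$ provided $h\le h_0(\ell)=O(\ell^{1+\varsigma})$. The boundedness $|\mathcal{L}(\Theta,\Phi)|\lesssim(1+\ell^{-1})\vertiii{\Theta}_1\vertiii{\Phi}_1$ follows from $\Psi\in\mathbf{L}^\infty(\Omega)$. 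For an arbitrary $\Psi^*_h\in\X_h$, inserting $\Psi^*_h$ via $\mathcal{L}(\Psi^*_h-\Psi_h,\Phi_h)=\mathcal{L}(\Psi-\Psi_h,\Phi_h)-\mathcal{L}(\Psi-\Psi^*_h,\Phi_h)$ on admissible test vectors in $\V_h$ and applying the discrete inf-sup yields
\[
\vertiii{\Psi^*_h-\Psi_h}_1 \lesssim (1+\ell^{-1})\bigl(\vertiii{\Psi-\Psi^*_h}_1 + \vertiii{\mathbf{g}-\mathbf{g}_h}_{\frac{1}{2},\partial\Omega}\bigr),
\]
after using Step 1 to convert the boundary mismatch of $\Psi^*_h-\Psi_h$ into the lifting bound. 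The triangle inequality $\vertiii{\Psi-\Psi_h}_1\le\vertiii{\Psi-\Psi^*_h}_1+\vertiii{\Psi^*_h-\Psi_h}_1$ and minimisation over $\Psi^*_h\in\X_h$ deliver the stated estimate.

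\textbf{Main obstacle.} The key technical point is the passage from the continuous inf-sup for $\mathcal{L}$ to its discrete counterpart on $\V_h$. Because $B_1$ contributes a negative mass term of size $\ell^{-1}$, $\mathcal{L}$ is only G\aa{}rding-coercive, and the duality step needed to absorb the $\mathbf{L}^2$-remainder is what forces the quantitative threshold $h=O(\ell^{1+\varsigma})$. Tracking how the $\ell^{-1}$ factors from $B_1$, $B_2$, and $B_3$ propagate through the inf-sup constant and the boundedness constant of $\mathcal{L}$ is precisely what produces the $(1+\ell^{-1})$ prefactor in the final best-approximation bound.
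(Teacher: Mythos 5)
Your overall architecture — Taylor-expand $N$ about $\Psi$, use consistency and the discrete solution equation to obtain Galerkin orthogonality for $DN(\Psi)$ up to quadratic remainders, invoke a discrete inf-sup on $\V_h$, absorb the higher-order terms using $\vertii{\Psi-\Psi_h}\lesssim h$ and $h=O(\ell^{1+\varsigma})$, then handle the boundary mismatch — matches the paper's Step~1. The differences lie in Step~3.

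\textbf{Discrete inf-sup.} You propose to re-derive the discrete inf-sup for $DN(\Psi)$ via a G\aa{}rding decomposition $\mathcal{L}=\mathcal{L}_+-\ell^{-1}(\cdot,\cdot)$ with $\mathcal{L}_+$ coercive, and Aubin--Nitsche duality. As written this has a gap: with $\mathcal{L}_+=A+2B_2(\Psi,\cdot,\cdot)+3B_3(\Psi,\Psi,\cdot,\cdot)$ the trilinear term $B_2(\Psi,\Theta,\Theta)$ carries the coupling constant $c$ with indefinite sign and scales like $\ell^{-1}$, so $\mathcal{L}_+$ is \emph{not} coercive independently of $\ell$. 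To make the G\aa{}rding split work you must move \emph{all} the $\ell^{-1}$-scaled lower-order terms (not just $B_1$) into the compact remainder, e.g.\ $\mathcal{L}_+:=A$ and a compact perturbation of size $C\ell^{-1}(1+\vertiii{\Psi}_\infty+\vertiii{\Psi}_\infty^2)\vertiii{\cdot}_0\vertiii{\cdot}_0$; only $B_3(\Psi,\Psi,\Theta,\Theta)\ge 0$ can stay on the coercive side. More importantly, the paper already establishes this discrete inf-sup as a standalone result (Theorem~\ref{discrete inf sup} and Remark~\ref{discrete inf-sup condition for psi}), via the auxiliary $\mathbf{H}^2$-regular linear problem in Lemma~\ref{linear equation for discrete inf-sup ferro} and the interpolation estimate — a Schatz-type argument close in spirit to what you describe. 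Re-deriving it here is redundant: you may simply invoke Remark~\ref{discrete inf-sup condition for psi}, which is exactly what the paper does.

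\textbf{Boundary mismatch.} You handle the nonhomogeneous Dirichlet data by taking continuous and discrete liftings $\Psi^L,\Psi_h^L,\Psi_h^{*,L}$ so that $\Psi_h-\Psi_h^*-(\Psi_h^L-\Psi_h^{*,L})\in\V_h$, then bound the lifting term by $\vertiii{\mathbf{g}_h-\Psi_h^*|_{\partial\Omega}}_{\frac12,\partial\Omega}\lesssim\vertiii{\mathbf{g}-\mathbf{g}_h}_{\frac12,\partial\Omega}+\vertiii{\Psi-\Psi_h^*}_1$ via triangle and trace inequalities. This works and is a genuine alternative to the paper's Step~2. The paper instead first proves best approximation only over $\widehat{\X}_h=\{\Theta_h\in\X_h:\Theta_h|_{\partial\Omega}=\mathbf{g}_h\}$ (so that $\widehat{\Psi}_h-\Psi_h\in\V_h$ requires no lifting at all), and then extends to all of $\X_h$ by choosing $\widehat{\Psi}_h=\boldsymbol{\eta}_h+R_h\Psi$, where $R_h\Psi$ is the $\mathbf{H}^1$-elliptic projection and $\boldsymbol{\eta}_h$ a small correction with $\boldsymbol{\eta}_h|_{\partial\Omega}=\mathbf{g}_h-(R_h\Psi)|_{\partial\Omega}$. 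The payoff of the elliptic-projection route is the clean trace bound $\vertiii{\mathbf{g}-(R_h\Psi)|_{\partial\Omega}}_{\frac12,\partial\Omega}\leq\vertiii{\Psi-R_h\Psi}_1=\min_{\Psi_h^*\in\X_h}\vertiii{\Psi-\Psi_h^*}_1$, which delivers the stated minimum directly; your lifting route reaches the same conclusion after one extra triangle/trace step. Both approaches are legitimate; fix the G\aa{}rding claim (or cite the existing discrete inf-sup) and the rest of your plan closes.
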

\begin{thm}[$\mathbf{L}^2 $ norm  error estimate]\label{L2_error_estimate}
	Let $\Psi$ be a regular solution of the non-linear system
	\eqref{continuous nonlinear ferro}  such that \eqref{H2 bound for minimizers} holds.   For a given fixed $\ell>0 $ and a sufficiently small discretization parameter chosen as $h=O(\ell^{1+\varsigma} )$ for  $\varsigma > 0$, the  unique discrete solution $\Psi_h$ that approximates $\Psi $  satisfies 
	\begin{align*}
	\vertiii{\Psi-\Psi_h}_{0}\lesssim   (1+\ell^{-1} )\big(h^2(1+\ell^{-1} ) +\vertiii{\mathbf{g}-\mathbf{g}_h}_{-\frac{1}{2}, \partial \Omega} \big),
	\end{align*}
	where	$\mathbf{g}_h$ denotes the Lagrange $P_1$ interpolation of $\mathbf{g}$. 
	%	where $\mathbf{g}_h \in \X_{h}|_{\partial \Omega}$ denotes an approximation to $\mathbf{g}.$
\end{thm}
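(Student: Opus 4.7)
The plan is an Aubin--Nitsche duality argument tailored to the cubic non-linearity of $N(\cdot;\cdot)$ and to the non-homogeneous boundary datum; it will re-use the energy-norm estimate from Theorem~\ref{energy  norm error estimate ferro} and the dual problem associated with the Fr\'echet derivative $DN(\Psi)$. Set $e := \Psi-\Psi_h$ and consider the linearised dual problem: find $\Xi \in \V$ such that for every $\Phi \in \V$,
$$A(\Phi,\Xi)+B_1(\Phi,\Xi)+2B_2(\Psi,\Phi,\Xi)+3B_3(\Psi,\Psi,\Phi,\Xi)=(e,\Phi),$$
where the factors $2$ and $3$ come from symmetry of $B_2$ and $B_3$ in their first two and first three arguments, respectively, and encode the linearisation of the non-linearity at $\Psi$. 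Because $\Psi$ is a regular solution in the sense of Keller, this linearised operator is an isomorphism on $\V$; the $\mathbf{L}^\infty$ control on $\Psi$ inherited from $\vertiii{\Psi}_2<C$, Sobolev embedding on the convex polygonal $\Omega$, and an $\ell$-tracked variant of Lemma~\ref{regularity} then yield $\Xi \in \mathbf{H}^2(\Omega)$ together with the regularity bound
$$\vertiii{\Xi}_2 \lesssim (1+\ell^{-1})\vertiii{e}_0.$$

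Next I would split $e = e_0 + e_b$ with $e_0 \in \V$ and $e_b$ a harmonic-type lifting of $\mathbf{g}-\mathbf{g}_h$, test the dual equation with $e_0$, and convert the remaining boundary contribution through an integration-by-parts into the duality pairing $\langle \mathbf{g}-\mathbf{g}_h,\partial_\nu \Xi\rangle_{\partial\Omega}$, which is bounded by $\vertiii{\mathbf{g}-\mathbf{g}_h}_{-\frac{1}{2},\partial\Omega}\,\vertiii{\Xi}_2$ by definition of the negative-order norm together with the $\mathbf{H}^2$ regularity of $\Xi$. The interior contribution is rewritten by subtracting $N(\Psi;I_h\Xi)-N(\Psi_h;I_h\Xi)=0$ (valid since $I_h\Xi\in\V_h$), producing a Galerkin-type identity in which the principal piece involves $\Xi-I_h\Xi$ and the residual collects the non-linear remainders of the form $B_2(e,e,\Xi)$, $B_3(\Psi,e,e,\Xi)$ and $B_3(e,e,e,\Xi)$ arising from the symmetric expansions of $B_2(\Psi,\Psi,\cdot)-B_2(\Psi_h,\Psi_h,\cdot)$ and $B_3(\Psi,\Psi,\Psi,\cdot)-B_3(\Psi_h,\Psi_h,\Psi_h,\cdot)$.

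The principal piece is bounded, via Cauchy--Schwarz, continuity of the multilinear forms, and standard $P_1$ interpolation, by $\vertii{e}\,\vertiii{\Xi-I_h\Xi}_1 \lesssim h^2(1+\ell^{-1})\vertiii{\Xi}_2$, with the extra $(1+\ell^{-1})$ accounting for the $\ell$-dependence concealed in the energy estimate of Theorem~\ref{energy  norm error estimate ferro}. The quadratic/cubic remainders are handled by H\"older's inequality, the 2D Sobolev embedding $H^1(\Omega) \hookrightarrow L^p(\Omega)$ for every $p<\infty$, the uniform $\mathbf{H}^2$ bound on $\Psi$ from \eqref{H2 bound for minimizers}, and once more the energy-norm estimate $\vertii{e}\lesssim h$, producing at worst a term of order $\ell^{-1}h^2\,\vertiii{e}_0$.

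Gathering the three contributions yields
$$\vertiii{e}_0^2 \lesssim \big(h^2(1+\ell^{-1})+\vertiii{\mathbf{g}-\mathbf{g}_h}_{-\frac{1}{2},\partial\Omega}\big)\vertiii{\Xi}_2,$$
and substituting the regularity bound on $\vertiii{\Xi}_2$ and dividing by $\vertiii{e}_0$ delivers the stated estimate. The main obstacle will be the compounding of $\ell^{-1}$ factors from the linearised bulk Hessian, entering both the dual-problem regularity estimate and the energy-norm error; this is precisely what forces the smallness condition $h=O(\ell^{1+\varsigma})$ to make all absorption and Galerkin-orthogonality steps admissible, and it is also the technical reason the boundary contribution must be carefully tracked in the $\mathbf{H}^{-1/2}$ (rather than $\mathbf{H}^{1/2}$) norm in order to recover precisely the asserted right-hand side.
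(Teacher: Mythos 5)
Your proposal mirrors the paper's own Aubin--Nitsche argument: same linearised dual problem, same $\mathbf{H}^2$ regularity bound carrying a $(1+\ell^{-1})$ factor, same treatment of the non-homogeneous boundary via an $\mathbf{H}^{-1/2}$ duality pairing after lifting, and the same Galerkin-orthogonality splitting into an interpolation piece tested against $\boldsymbol{\chi}-{\rm I}_h\boldsymbol{\chi}$ plus quadratic and cubic remainders in $e$. The only small inaccuracy is your attribution of the extra $(1+\ell^{-1})$ in the principal-piece bound: it comes from the $\ell^{-1}$ factors in the continuity constants of $B_1,B_2,B_3$ (i.e., boundedness of $DN(\Psi)$), not from any $\ell$-dependence hidden in the energy-norm estimate of Theorem~\ref{energy  norm error estimate ferro}, whose constant is in fact independent of $\ell$.
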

\begin{rem}
Since,  the data approximation term $\vertiii{\mathbf{g}-\mathbf{g}_h}_{-\frac{1}{2}, \partial \Omega}  \lesssim \vertiii{\mathbf{g}-\mathbf{g}_h}_{0, \partial \Omega} \lesssim h^{\frac{3}{2}}$ for $\mathbf{g} \in \h^{\frac{3}{2}}(\partial\Omega),$ the $\mathbf{L}^2$ norm error estimate in Theorem \ref{L2_error_estimate} does not exhibit the optimal order convergence rate. In Section \ref{Nitsche's method}, we provide an analysis which leads to optimal order of convergence in $\mathbf{L}^2$ norm using Nitsche's method \cite{Nitsche1971}. In case of higher regularity,   $\mathbf{g} \in \h^{2}(\partial\Omega),$ we obtain the improved data approximation error $ \vertiii{\mathbf{g}-\mathbf{g}_h}_{0, \partial \Omega} \lesssim h^{2},$ which leads to the optimal convergence rate in  $\mathbf{L}^2$ norm for conforming FEM.  
\end{rem}
\subsection{Auxiliary results}\label{Auxiliary results}
This section presents some results that are useful for the analysis and establishes the discrete inf-sup condition   for a perturbed bilinear form.
 The next lemma states the boundedness and coercivity results frequently employed in the analysis. 
The proofs are skipped and are a consequence of  Holder's inequality,  and the Sobolev embedding results $H^{1}(\Omega) \hookrightarrow L^{3}(\Omega)$, $H^{1}(\Omega) \hookrightarrow L^{4}(\Omega)$    and $H^{2}(\Omega) \hookrightarrow L^{\infty}(\Omega)$ for $\Omega \subset \mathbb{R}^2$.
\begin{lem}[Boundedness and coercivity]\cite{ DGFEM, AposterioriRMAMNN} \label{boundedness_all ferro}\\
	$(i)$  For $\theta$, $\varphi \in X$, 	and  $\xi \in V$, there exists a constant $\alpha_0 >0$ such that 
	\begin{align*}
	a(\theta,\varphi)\leq \norm{\theta}_1 \norm{\varphi}_1, \,	b_1(\theta,\varphi)\leq \ell^{-1} \norm{\theta}_0 \norm{\varphi}_0, \text{ 	and } \alpha_0\norm{\xi}_1^2 \leq  a(\xi,\xi) .
	\end{align*} 
	For all $\Theta$, $\Phi \in \X$, and   $\Xi \in \V, $ it holds that
	$$A(\Theta,\Phi)\leq \vertiiii{\Theta}_1 \vertiiii{\Phi}_1,\, B_{1}(\Theta,\Phi)\leq  \ell^{-1} \vertiiii{\Theta}_0 \vertiiii{\Phi}_0,\text{ 	and } \alpha_0\vertiii{\Xi}_1^2  \leq A(\Xi,\Xi).$$	
	$(ii)$ For $\eta, \theta, \varphi \in X$, it holds that $b_2(\eta,\theta,\varphi)\lesssim  \ell^{-1} \abs{c}\norm{\eta}_1 \norm{\theta}_1 \norm{\varphi}_1.$
 For $ \eta \in L^{\infty}(\Omega)$ (resp. $ \eta \in H^{2}(\Omega)$), $\theta$, $\varphi\in  X$,  
	% and
	\begin{align*}
	&b_{2}(\eta,\theta, \varphi)
	\lesssim \ell^{-1}\abs{c} \norm{\eta}_{\infty} \norm{\theta}_0 \norm{\varphi}_0\,\,	(\text{resp. } 	b_{2}(\eta,\theta, \varphi)
	\lesssim \ell^{-1}\abs{c} \norm{\eta}_{2} \norm{\theta}_0 \norm{\varphi}_0).
	\end{align*}
For $\boldsymbol \eta,\Theta$, $\Phi \in \X$, it holds that	$B_2(\boldsymbol \eta,\Theta,\Phi)\lesssim  \ell^{-1} \abs{c}\vertiiii{\boldsymbol \eta}_1 \vertiiii{\Theta}_1 \vertiiii{\Phi}_1.$
	For	$ \boldsymbol \eta \in \mathbf{L}^{\infty}(\Omega)$ (resp. $\boldsymbol \eta \in \mathbf{H}^{2}(\Omega) $), $\Theta$, $\Phi\in  \X$,
	\begin{align*}
		B_{2}(\boldsymbol \eta,\Theta, \Phi)
		\lesssim \ell^{-1} \abs{c} \vertiiii{\boldsymbol \eta}_{\infty} \vertiiii{\Theta}_0 \vertiiii{\Phi}_0\,\,(\text{resp. } 	B_{2}(\boldsymbol \eta,\Theta, \Phi)
		\lesssim \ell^{-1}\abs{c}	\vertiiii{\boldsymbol \eta}_{2} \vertiiii{\Theta}_0 \vertiiii{\Phi}_0).
	\end{align*}
	$(iii)$ 	For $ \xi, \eta ,\theta,\varphi \in X$, it holds that $b_{3}(\xi, \eta ,\theta,\varphi) \lesssim \ell^{-1}\norm{ \xi}_1\norm{\eta }_1\norm{\theta}_1 \norm{\varphi}_1.$ For $\xi, \eta \in L^{\infty}(\Omega)$ (resp. $\xi, \eta \in H^{2}(\Omega)$), $\theta$, $\varphi\in  X$, 
	\begin{align*}
 b_{3}(\xi, \eta ,\theta,\varphi) \lesssim \ell^{-1}\norm{ \xi}_{\infty}\norm{\eta }_{\infty}\norm{\theta}_0 \norm{\varphi}_0 \,\,(\text{resp. } 	b_{3}(\xi, \eta ,\theta,\varphi) \lesssim  \ell^{-1}\norm{ \xi}_{2} \norm{ \eta }_{2} \norm{\theta}_0 \norm{\varphi}_0).
	\end{align*}
	For $\Xi, \boldsymbol{\eta}, \Theta, \Phi \in \X$, it holds that $	B_{3}({\Xi},\boldsymbol \eta,\Theta,\Phi)\lesssim \ell^{-1} \vertiiii{\Xi}_1\vertiiii{\boldsymbol \eta}_1 \vertiiii{\Theta}_1 \vertiiii{\Phi}_1.$	For $\Xi , \boldsymbol \eta \in \mathbf{L}^{\infty}(\Omega)$ (resp. $\Xi ,\boldsymbol \eta \in \mathbf{H}^{2}(\Omega)$), $\Theta$, $\Phi\in  \X$,
	\begin{align*}
	B_{3}({\Xi},\boldsymbol \eta,\Theta,\Phi)\lesssim \ell^{-1} \vertiiii{\Xi}_{\infty}\vertiiii{\boldsymbol \eta}_{\infty} \vertiiii{\Theta}_0 \vertiiii{\Phi}_0\,\, (\text{resp. } B_{3}(\Xi,\boldsymbol \eta,\Theta, \Phi)
	\lesssim \ell^{-1}
	\vertiiii{\Xi}_{2} \vertiiii{\boldsymbol \eta}_{2} \vertiiii{\Theta}_0 \vertiiii{\Phi}_0),
	\end{align*}
	where $"\lesssim"$ absorbs the constants in Sobolev embedding results.
\end{lem}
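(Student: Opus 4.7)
The plan is to establish each bound by a routine combination of H\"older's inequality with the two-dimensional Sobolev embeddings $H^1(\Omega)\hookrightarrow L^p(\Omega)$ for $p\in[1,\infty)$ (with explicit use of $p=3,4$) and $H^2(\Omega)\hookrightarrow L^\infty(\Omega)$, and then to propagate the scalar-level bounds to the vector forms $A,B_1,B_2,B_3$ via the product-space norm. There is no deep content; the only conceptual care is (a) distinguishing which factors are placed in which $L^p$ slot of H\"older and (b) noting that the coercivity constant $\alpha_0$ arises from the Poincar\'e–Friedrichs inequality on $V=H^1_0(\Omega)$, so the lower bound $\alpha_0\vertiii{\Xi}_1^2\le A(\Xi,\Xi)$ genuinely requires $\Xi\in\V$, not merely $\X$.

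I would first dispatch (i): $a(\theta,\varphi)\le\|\nabla\theta\|_0\|\nabla\varphi\|_0\le\|\theta\|_1\|\varphi\|_1$ is Cauchy--Schwarz, $b_1(\theta,\varphi)\le\ell^{-1}\|\theta\|_0\|\varphi\|_0$ is Cauchy--Schwarz with the prefactor, and coercivity follows because for $\xi\in V$ one has $\|\xi\|_1^2\le(1+C_P^2)\|\nabla\xi\|_0^2=(1+C_P^2)a(\xi,\xi)$, giving $\alpha_0=(1+C_P^2)^{-1}$. For (ii), I would write
\begin{equation*}
b_2(\eta,\theta,\varphi)=c\ell^{-1}\!\int_\Omega\eta\theta\varphi\dx\le c\ell^{-1}\|\eta\|_{L^3}\|\theta\|_{L^3}\|\varphi\|_{L^3}\lesssim\ell^{-1}|c|\,\|\eta\|_1\|\theta\|_1\|\varphi\|_1
\end{equation*}
via H\"older with exponents $(3,3,3)$ and the embedding $H^1\hookrightarrow L^3$. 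The variant with $\eta\in L^\infty$ is H\"older with exponents $(\infty,2,2)$; the variant with $\eta\in H^2$ piggybacks on the $L^\infty$ case using $H^2\hookrightarrow L^\infty$. For (iii), the scalar quartic form $b_3$ is handled identically, only now using H\"older with four factors of exponent $4$ and $H^1\hookrightarrow L^4$; the $L^\infty$ and $H^2$ variants peel off two factors into $L^\infty$ and leave $(2,2)$ for the remaining ones.

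To pass from $a,b_1,b_2,b_3$ to $A,B_1,B_2,B_3$ I would simply expand each vector form as a finite sum of scalar forms as already done in the excerpt preceding the lemma, bound each summand by the scalar estimate just established, and then apply Cauchy--Schwarz over the four components to collapse $\sum_{i=1}^4\|\cdot\|_s\|\cdot\|_s\le\vertiii{\cdot}_s\vertiii{\cdot}_s$. For the coercivity of $A$, summing $\alpha_0\|\xi_i\|_1^2\le a(\xi_i,\xi_i)$ over $i=1,\dots,4$ directly gives $\alpha_0\vertiii{\Xi}_1^2\le A(\Xi,\Xi)$ for $\Xi\in\V$.

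I do not expect any real obstacle: the proof is a bookkeeping exercise tracking H\"older exponents. The only place to pause is choosing the H\"older split consistent with the claimed dependence on $\vertiii{\cdot}_0$ versus $\vertiii{\cdot}_1$ in each variant — in particular, the statements where two arguments sit in $L^\infty$ or $H^2$ and the other two in $L^2$ require the $(\infty,\infty,2,2)$ split for $b_3$, whereas the three-argument form $b_2$ needs $(\infty,2,2)$. Once these splits are fixed, all eight inequalities in the lemma follow in two lines each, and the hidden constants in ``$\lesssim$'' are exactly the Sobolev embedding constants and, in the coercivity statement, the Poincar\'e constant of $\Omega$.
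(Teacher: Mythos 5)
Your proof is correct and takes essentially the same approach as the paper, which skips the proof and merely cites H\"older's inequality together with the Sobolev embeddings $H^1(\Omega)\hookrightarrow L^3(\Omega)$, $H^1(\Omega)\hookrightarrow L^4(\Omega)$ and $H^2(\Omega)\hookrightarrow L^\infty(\Omega)$, which is exactly the exponent bookkeeping you spell out. Your remark that the coercivity lower bound $\alpha_0\vertiii{\Xi}_1^2\le A(\Xi,\Xi)$ relies on Poincar\'e and hence genuinely requires $\Xi\in\V$ rather than $\X$ is a useful clarification that the paper leaves implicit.
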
 
\begin{lem}[Interpolation estimate]\label{Interpolation estimate}\cite{ciarlet, ErnGuermond}
	For ${ v} \in H^{2}(\Omega)$, there exists  ${{\rm{I}}_h v} \in X_h$ such that
	\begin{align*}
	\norm{{v}-{\rm{I}}_{h}v }_{0} +h	\norm{{v}-{\rm{I}}_{h}v }_{1} \leq C_I h^{2}  \abs{{ v}}_{H^{2}(\Omega)}, \text{ 	and }	\norm{{v}-{\rm{I}}_{h}v }_{L^{\infty}(\Omega)} \leq C_I h  \abs{{ v}}_{H^{2}(\Omega)},
	\end{align*}	
	where $C_I$  is a positive constant independent of $h$.
\end{lem}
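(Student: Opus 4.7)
The plan is to invoke the classical Bramble--Hilbert/affine-scaling machinery for the nodal Lagrange interpolant on a shape-regular triangulation in two dimensions, exactly as laid out in the cited references. Since $\Omega \subset \mathbb{R}^2$, the Sobolev embedding $H^2(\Omega) \hookrightarrow C^0(\overline{\Omega})$ guarantees that the values of $v$ at the vertices of $\mathcal{T}$ are well defined, so the piecewise-linear interpolant ${\rm I}_h v \in X_h$ obtained by prescribing $({\rm I}_h v)(z) = v(z)$ at every vertex $z$ is well defined.

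First I would argue element-wise. Fix a reference triangle $\hat T$ with the associated nodal interpolation operator $\hat{\rm I}\colon H^2(\hat T) \to P_1(\hat T)$. Since $\hat{\rm I}$ is a bounded linear map from $H^2(\hat T)$ into $H^2(\hat T)$ that fixes every polynomial of degree $\le 1$, the Bramble--Hilbert lemma yields $\|\hat v - \hat{\rm I}\hat v\|_{H^m(\hat T)} \le \hat C \,|\hat v|_{H^2(\hat T)}$ for $m = 0, 1$, and the embedding $H^2(\hat T) \hookrightarrow L^\infty(\hat T)$ (valid in 2D) gives the analogous estimate with $L^\infty(\hat T)$ on the left. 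Next, for each $T \in \mathcal{T}$, pull back via the affine diffeomorphism $F_T\colon \hat T \to T$: shape regularity allows one to control the Jacobians and their inverses by powers of $h_T$, and standard scaling identities in 2D give $|v|_{H^m(T)} \simeq h_T^{1-m}\,|\hat v|_{H^m(\hat T)}$ for Sobolev semi-norms and $\|v\|_{L^\infty(T)} = \|\hat v\|_{L^\infty(\hat T)}$. Combining the reference-element bound with these scalings produces the local estimates
\begin{equation*}
\|v - {\rm I}_h v\|_{L^2(T)} + h_T \,|v - {\rm I}_h v|_{H^1(T)} \le C\, h_T^{2}\,|v|_{H^2(T)},\qquad \|v - {\rm I}_h v\|_{L^\infty(T)} \le C\, h_T\,|v|_{H^2(T)},
\end{equation*}
with $C$ depending only on the shape-regularity constant of $\mathcal{T}$.

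Finally, I would sum the squared $L^2$ and $H^1$ estimates over $T \in \mathcal{T}$, use $h_T \le h$, and extract the square root to obtain the stated global bound; for the $L^\infty$ bound I would simply take the maximum over $T$. The constant $C_I$ then depends only on $\Omega$ and the shape-regularity of the family $\{\mathcal{T}\}$, hence is independent of $h$. There is no real obstacle in this argument — the only technical point worth care is verifying the embedding $H^2(\hat T) \hookrightarrow C^0(\hat T)$ so that nodal values are meaningful in 2D; everything else is a direct application of Bramble--Hilbert together with affine scaling, and the conclusion coincides with the statements in \cite{ciarlet, ErnGuermond}.
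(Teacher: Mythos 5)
Your argument is correct and is exactly the standard Bramble--Hilbert plus affine-scaling proof that the paper delegates to the cited references \cite{ciarlet, ErnGuermond}; the paper itself gives no proof, so there is nothing to compare against beyond noting that your local estimates and the summation step are precisely what those sources contain. The only cosmetic remark is that the stated bound involves the full $H^1$-norm $\norm{\cdot}_1$, so after summing you should combine the $L^2$ and $H^1$-seminorm local bounds (both of order $h_T$ or better) to obtain the full-norm estimate; this is immediate and does not constitute a gap.
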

\begin{lem}[Properties of bilinear, trilinear, and quadrilinear forms] \label{Properties of bilinear trilinear and quadrilinear forms}The following bounds  hold. \\
$(i)$ For $\boldsymbol \eta \in \mathbf{H}^2(\Omega)$ and for all $\Phi_h \in \X_h,$ $$A( \boldsymbol \eta-{\rm I}_h \boldsymbol \eta, \Phi_h)\lesssim h \vertiii{\boldsymbol \eta}_2 \vertiii{\Phi_h}_1, \text{ 	and }B_1(\boldsymbol \eta -{\rm I}_h \boldsymbol \eta, \Phi_h)\lesssim \ell^{-1} h^2 \vertiii{\boldsymbol \eta}_2 \vertiii{\Phi_h}_1.$$
		$(ii)$ For $\boldsymbol \xi,\boldsymbol \eta \in \mathbf{H}^2(\Omega)$ and for all $\Theta_h,\Phi_h \in \X_h,$
		\begin{align*}
 &B_2( \boldsymbol\eta-{\rm{I}}_h \boldsymbol\eta ,\Theta_h, \Phi_h )\lesssim \ell^{-1}h \abs{c}   \vertiii{\boldsymbol\eta}_2 \vertiii{\Theta_h}_1\vertiii{\Phi_h}_1,\\&
	 B_2(\boldsymbol \xi,  \boldsymbol \eta,\Phi_h )- B_2( {\rm{I}}_h\boldsymbol \xi,{\rm{I}}_h\boldsymbol \eta, \Phi_h ) \lesssim \ell^{-1}h^2 \abs{c}  \vertiii{\boldsymbol \xi}_2\vertiii{\boldsymbol \eta}_2 \vertiii{\Phi_h}_1.
		\end{align*}
	$(iii)$	For $\boldsymbol \xi,\boldsymbol \eta \in \mathbf{H}^2(\Omega)$ and for all $\Theta_h,\Phi_h \in \X_h,$
	\begin{align*}
	&B_3( \boldsymbol \xi , \boldsymbol \eta, \Theta_h, \Phi_h)-B_3({\rm I}_h \boldsymbol \xi , {\rm I}_h \boldsymbol \eta, \Theta_h, \Phi_h) \lesssim \ell^{-1}h  \vertiii{\boldsymbol \xi}_2 \vertiii{\boldsymbol \eta}_2\vertiii{\Theta_h}_1\vertiii{\Phi_h}_1,\\&
 B_3(\boldsymbol \xi, \boldsymbol \eta,\boldsymbol \eta, \Phi_h )-	B_3({\rm I}_h \boldsymbol \xi, {\rm I}_h \boldsymbol \eta,{\rm I}_h \boldsymbol \eta, \Phi_h )  \lesssim  \ell^{-1}h^2\vertiii{\boldsymbol \xi}_2\vertiii{\boldsymbol \eta}_2^2\vertiii{\Phi_h}_1.
		\end{align*}
		For $\boldsymbol \eta \in \mathbf{H}^1(\Omega)$ and for all $\Theta_h,\Phi_h \in \X_h,$
			\begin{align*}
&	2B_3( \boldsymbol \eta, \boldsymbol \eta, \boldsymbol \eta, \Phi_h ) - 3B_3( \boldsymbol \eta,\boldsymbol \eta,\Theta_{h}, \Phi_h )+B_3(\Theta_{h}, \Theta_{h},\Theta_{h}, \Phi_h )\\&\quad \lesssim \ell^{-1}\vertii{  \Theta_{h}-\boldsymbol \eta}^2(\vertii{ \Theta_{h}-\boldsymbol \eta}
%	\\&\qquad\qquad\qquad\qquad\qquad\qquad\qquad\qquad\qquad\qquad\qquad\qquad\qquad\qquad\qquad
	+\vertiii{ \boldsymbol \eta}_1)\vertiii{\Phi_h}_1.
	\end{align*}
	For $\boldsymbol \eta \in \mathbf{H}^1(\Omega)$ and for all $\Theta_1,\Theta_2,\Phi_h \in \X_h,$
	\begin{align*}
	&	3B_3(\boldsymbol \eta,\boldsymbol \eta, \Theta_1 , \Phi_{h} ) - B_3(\Theta_1,\Theta_1,\Theta_1,\Phi_{h})-3B_3(\boldsymbol \eta,\boldsymbol \eta, \Theta_2 , \Phi_{h} )+B_3(\Theta_2,\Theta_2,\Theta_2,\Phi_{h}) \\&\lesssim \vertiii{\Theta_2-\Theta_1}_1\big(\vertiii{\Theta_1-\boldsymbol \eta}_1^2+\vertiii{\Theta_2-\boldsymbol \eta}_1^2 +(\vertiii{\Theta_1-\boldsymbol \eta}_1+\vertiii{\Theta_2-\boldsymbol \eta}_1)\vertiii{\boldsymbol \eta}_1\big) \vertiii{\Phi_{h}}_1,
	\end{align*}
	where  "$\lesssim$" depends on  $C_I$, measure of the domain,  and the constants in Sobolev embedding results. 
\end{lem}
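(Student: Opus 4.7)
My overall strategy is to reduce every inequality in the lemma to a direct application of (a) the boundedness results in Lemma~\ref{boundedness_all ferro}, (b) the interpolation error bounds in Lemma~\ref{Interpolation estimate}, (c) the Sobolev embedding $H^2(\Omega)\hookrightarrow L^\infty(\Omega)$, which together with (b) yields the $h$-uniform bound $\vertiii{I_h\boldsymbol{\xi}}_\infty \lesssim \vertiii{\boldsymbol{\xi}}_2$, and (d) the symmetry of $B_2$ (resp.\ $B_3$) in any two of its first three (resp.\ four) arguments. The residue-type estimates will be obtained by inserting and subtracting $I_h$ in each argument separately, while the cubic identities in (iii) require a Taylor-type algebraic rearrangement.

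For (i), I would apply the boundedness $A(\boldsymbol{\eta}-I_h\boldsymbol{\eta},\Phi_h)\leq \vertiii{\boldsymbol{\eta}-I_h\boldsymbol{\eta}}_1\vertiii{\Phi_h}_1$ together with $\vertiii{\boldsymbol{\eta}-I_h\boldsymbol{\eta}}_1\lesssim h\vertiii{\boldsymbol{\eta}}_2$, and for $B_1$ the $\mathbf{L}^2$-boundedness combined with $\vertiii{\boldsymbol{\eta}-I_h\boldsymbol{\eta}}_0\lesssim h^2\vertiii{\boldsymbol{\eta}}_2$, which accounts for the extra factor of $h$. For the first bound in (ii), the general $\mathbf{H}^1$-boundedness of $B_2$ from Lemma~\ref{boundedness_all ferro}(ii) combined with $\vertiii{\boldsymbol{\eta}-I_h\boldsymbol{\eta}}_1\lesssim h\vertiii{\boldsymbol{\eta}}_2$ suffices. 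For the second bound in (ii), I would split
\begin{equation*}
B_2(\boldsymbol{\xi},\boldsymbol{\eta},\Phi_h)-B_2(I_h\boldsymbol{\xi},I_h\boldsymbol{\eta},\Phi_h)=B_2(\boldsymbol{\xi}-I_h\boldsymbol{\xi},\boldsymbol{\eta},\Phi_h)+B_2(I_h\boldsymbol{\xi},\boldsymbol{\eta}-I_h\boldsymbol{\eta},\Phi_h),
\end{equation*}
and use the $\mathbf{H}^2$-version of the boundedness of $B_2$, placing (via symmetry) the $\mathbf{H}^2$-regular factor on the slot that is not the interpolation error. The first piece is then controlled by $\ell^{-1}|c|\vertiii{\boldsymbol{\eta}}_2\vertiii{\boldsymbol{\xi}-I_h\boldsymbol{\xi}}_0\vertiii{\Phi_h}_0\lesssim \ell^{-1}|c|h^2\vertiii{\boldsymbol{\xi}}_2\vertiii{\boldsymbol{\eta}}_2\vertiii{\Phi_h}_1$; for the second piece I use $\vertiii{I_h\boldsymbol{\xi}}_\infty\lesssim\vertiii{\boldsymbol{\xi}}_2$ and obtain the same bound. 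The first two estimates of (iii) follow identically, with a two-term (respectively three-term) insert-and-subtract splitting and the two $\mathbf{H}^2$-slots assigned to the non-interpolation-error factors.

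The cubic identities in (iii) are the main obstacle, because they are not simply boundedness-plus-interpolation but require an algebraic rearrangement exploiting the symmetry of $B_3$. Writing $\mathbf{d}:=\Theta_h-\boldsymbol{\eta}$ and expanding $\Theta_h=\boldsymbol{\eta}+\mathbf{d}$ inside $B_3(\Theta_h,\Theta_h,\Theta_h,\Phi_h)$ by multilinearity, a short cancellation of the terms linear in $\mathbf{d}$ gives the binomial-type identity
\begin{equation*}
2B_3(\boldsymbol{\eta},\boldsymbol{\eta},\boldsymbol{\eta},\Phi_h)-3B_3(\boldsymbol{\eta},\boldsymbol{\eta},\Theta_h,\Phi_h)+B_3(\Theta_h,\Theta_h,\Theta_h,\Phi_h)=3B_3(\boldsymbol{\eta},\mathbf{d},\mathbf{d},\Phi_h)+B_3(\mathbf{d},\mathbf{d},\mathbf{d},\Phi_h).
\end{equation*}
Applying Lemma~\ref{boundedness_all ferro}(iii) then bounds the right-hand side by $\ell^{-1}\vertiii{\mathbf{d}}_1^2(\vertiii{\boldsymbol{\eta}}_1+\vertiii{\mathbf{d}}_1)\vertiii{\Phi_h}_1$, and passing to the equivalent product norm $\vertii{\cdot}$ yields the first cubic estimate. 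For the last inequality, I apply the same identity separately with $\mathbf{d}_i:=\Theta_i-\boldsymbol{\eta}$ for $i=1,2$ and subtract, which reduces the left-hand side to
\begin{equation*}
-3B_3(\boldsymbol{\eta},\mathbf{d}_1-\mathbf{d}_2,\mathbf{d}_1+\mathbf{d}_2,\Phi_h)-\bigl[B_3(\mathbf{d}_1,\mathbf{d}_1,\mathbf{d}_1,\Phi_h)-B_3(\mathbf{d}_2,\mathbf{d}_2,\mathbf{d}_2,\Phi_h)\bigr].
\end{equation*}
The second bracket telescopes by multilinearity as $B_3(\mathbf{d}_1-\mathbf{d}_2,\mathbf{d}_1,\mathbf{d}_1,\Phi_h)+B_3(\mathbf{d}_2,\mathbf{d}_1-\mathbf{d}_2,\mathbf{d}_1,\Phi_h)+B_3(\mathbf{d}_2,\mathbf{d}_2,\mathbf{d}_1-\mathbf{d}_2,\Phi_h)$, each term exhibiting $\mathbf{d}_1-\mathbf{d}_2=\Theta_1-\Theta_2$ as one argument, and the first term similarly factorises $\Theta_1-\Theta_2$. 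Finally, Lemma~\ref{boundedness_all ferro}(iii) and a Young-type absorption $\vertiii{\mathbf{d}_1}_1\vertiii{\mathbf{d}_2}_1\leq \tfrac12(\vertiii{\mathbf{d}_1}_1^2+\vertiii{\mathbf{d}_2}_1^2)$ consolidate the remaining factors into the stated form.
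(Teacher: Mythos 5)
Your proof is correct and follows essentially the same route as the paper's: interpolation estimates combined with the $\mathbf{L}^\infty$/$\mathbf{H}^2$-slot boundedness of Lemma~\ref{boundedness_all ferro} for all the residue-type estimates, and a symmetric binomial expansion of $B_3$ around $\mathbf{d}:=\Theta_h-\boldsymbol{\eta}$ for the two cubic inequalities in $(iii)$. The paper reaches the identity $2B_3(\boldsymbol{\eta},\boldsymbol{\eta},\boldsymbol{\eta},\Phi_h)-3B_3(\boldsymbol{\eta},\boldsymbol{\eta},\Theta_h,\Phi_h)+B_3(\Theta_h,\Theta_h,\Theta_h,\Phi_h)=B_3(\mathbf{d},\mathbf{d},\mathbf{d},\Phi_h)+3B_3(\mathbf{d},\mathbf{d},\boldsymbol{\eta},\Phi_h)$ by stepwise regrouping rather than your direct multilinear expansion, and for the fourth inequality it regroups the difference in one step rather than telescoping two instances of this identity and applying Young's inequality to the cross term as you do, but these are cosmetic variants of the same algebra yielding identical bounds.
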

\begin{proof}
	$(i)$ Lemmas \ref{boundedness_all ferro}$(i)$ and \ref{Interpolation estimate}   yield 
\begin{align*}
&	A(\boldsymbol\eta-{\rm{I}}_h \boldsymbol\eta, \Phi_h) \leq \vertiiii{\boldsymbol\eta-{\rm{I}}_h \boldsymbol\eta}_1 \vertiiii{ \Phi_h}_1 \lesssim h\vertiiii{\boldsymbol\eta}_{2}\vertiii{\Phi_h}_1 .\\&
		B_1(\boldsymbol\eta-{\rm{I}}_h \boldsymbol\eta, \Phi_h) \leq \ell^{-1}\vertiiii{\boldsymbol\eta-{\rm{I}}_h \boldsymbol\eta}_0 \vertiiii{ \Phi_h}_0 \lesssim  \ell^{-1}h^2 \vertiiii{\boldsymbol\eta}_{2}\vertiii{\Phi_h}_1 .
\end{align*}
$(ii)$ Lemmas \ref{boundedness_all ferro}$(ii)$ and \ref{Interpolation estimate} imply
$$	B_2( \boldsymbol\eta-{\rm{I}}_h \boldsymbol\eta,\Theta_h, \Phi_h ) \leq \ell^{-1}\abs{c}\vertiiii{\boldsymbol\eta-{\rm{I}}_h \boldsymbol\eta}_1\vertiiii{\Theta_h}_1 \vertiiii{ \Phi_h}_1 \lesssim  \ell^{-1}h\abs{c} \vertiiii{\boldsymbol\eta}_{2}\vertiiii{\Theta_h}_1\vertiii{\Phi_h}_1 .$$
Lemma \ref{Interpolation estimate} and the Sobolev embedding result  $\mathbf{H}^2(\Omega) \hookrightarrow \mathbf{L}^{\infty}(\Omega)$  lead to $\vertiii{\textrm{I}_{h}\boldsymbol\xi}_{\infty} \leq \vertiii{\textrm{I}_{h}\boldsymbol\xi-\boldsymbol\xi}_{\infty}+\vertiii{\boldsymbol\xi}_{\infty}\lesssim  \vertiii{\boldsymbol\xi}_2$. This plus the linearity of $B_2(\cdot,\cdot,\cdot)$ in first and second variables, Lemma \ref{boundedness_all ferro}$(ii)$ and Lemma  \ref{Interpolation estimate} show 
\begin{align*}%\label{ball to ball equation10 ferro}
&  B_2(\boldsymbol\xi,  \boldsymbol\eta,\Phi_h )-B_2( {\rm{I}}_h \boldsymbol\xi,\textrm{I}_{h}\boldsymbol\eta, \Phi_h )= B_2(\boldsymbol\xi-{\rm{I}}_h \boldsymbol\xi , \boldsymbol\eta,\Phi_{h} ) + B_2(\textrm{I}_{h}\boldsymbol\xi , \boldsymbol\eta-\textrm{I}_{h}\boldsymbol\eta,\Phi_{h} ) \\& \lesssim  \ell^{-1}\abs{c} (\vertiii{\boldsymbol\xi-{\rm{I}}_h\boldsymbol\xi}_0\vertiii{\boldsymbol\eta}_2 +\vertiii{\textrm{I}_{h}\boldsymbol\xi}_{\infty}\vertiii{\boldsymbol\eta-\textrm{I}_{h}\boldsymbol\eta}_0)\vertiii{\Phi_h}_0 \lesssim  \ell^{-1}h^2\abs{c}\vertiiii{\boldsymbol\xi}_{2}\vertiiii{\boldsymbol\eta}_{2}\vertiii{\Phi_h}_1 .
\end{align*}
$(iii)$ {\it Proof of 1st inequality}.	The linearity of $B_3(\cdot,\cdot,\cdot,\cdot)$ in first two variables,  Lemma \ref{boundedness_all ferro}$(iii)$, and Lemma  \ref{Interpolation estimate} with  $\vertiii{\textrm{I}_{h}\boldsymbol\eta}_{1} \lesssim\vertiii{\boldsymbol\eta}_2$ lead to 
	\begin{align*}
&	B_3( \boldsymbol \xi , \boldsymbol \eta, \Theta_h, \Phi_h)-B_3({\rm I}_h \boldsymbol \xi , {\rm I}_h \boldsymbol \eta, \Theta_h, \Phi_h) = B_3( \boldsymbol \xi, \boldsymbol \eta-{\rm{I}}_h\boldsymbol \eta, \Theta_h, \Phi_h)+B_3( \boldsymbol \xi-{\rm{I}}_h \boldsymbol \xi, {\rm{I}}_h\boldsymbol \eta, \Theta_h, \Phi_h)\\&
\lesssim \ell^{-1}(	\vertiii{\boldsymbol \xi}_1 	\vertiii{\boldsymbol \eta-{\rm{I}}_h\boldsymbol \eta}_1 +	\vertiii{\boldsymbol \xi-{\rm{I}}_h \boldsymbol \xi}_1 	\vertiii{{\rm{I}}_h\boldsymbol \eta}_1)\vertiii{\Theta_h}_1\vertiii{\Phi_h}_1 \lesssim  \ell^{-1}h \vertiii{\boldsymbol \xi}_2 \vertiii{\boldsymbol \eta}_2 \vertiii{\Theta_h}_1\vertiii{\Phi_h}_1.
	\end{align*}
{\it Proof of 2nd inequality}. We utilize the linearity of $B_3(\cdot,\cdot,\cdot,\cdot)$ in first three variables, Lemma \ref{boundedness_all ferro}$(iii)$, Lemma  \ref{Interpolation estimate} with  $\vertiii{\textrm{I}_{h}\boldsymbol\xi}_{\infty} \lesssim \vertiii{\boldsymbol\xi}_2$, $\vertiii{\textrm{I}_{h}\boldsymbol\eta}_{\infty} \lesssim\vertiii{\boldsymbol\eta}_2$, and $\mathbf{H}^2(\Omega) \hookrightarrow \mathbf{L}^{\infty}(\Omega)$  to prove the second inequality in $(iii).$
\begin{align*}
&	 B_3(\boldsymbol{\xi},\boldsymbol{\eta}, \boldsymbol{\eta}, \Phi_h ) -B_3(\textrm{I}_{h}\boldsymbol{\xi}, {\rm{I}}_h\boldsymbol{\eta}, {\rm{I}}_h\boldsymbol{\eta},\Phi_h )\notag \\&= B_3(\boldsymbol{\xi}-\textrm{I}_{h}\boldsymbol{\xi},\boldsymbol{\eta}, \boldsymbol{\eta}, \Phi_h ) +B_3(\textrm{I}_{h}\boldsymbol{\xi},\boldsymbol{\eta}-{\rm{I}}_h\boldsymbol{\eta},\boldsymbol{\eta} , \Phi_{h} ) + B_3({\rm{I}}_h\boldsymbol{\xi},{\rm{I}}_h\boldsymbol{\eta},\boldsymbol{\eta}-{\rm{I}}_h\boldsymbol{\eta},\Phi_{h})
\\& \lesssim  \ell^{-1} (\vertiii{\boldsymbol\xi-{\rm{I}}_h\boldsymbol\xi}_0\vertiii{\boldsymbol\eta}_{2}^2 +\vertiii{\textrm{I}_{h}\boldsymbol\xi}_{\infty}\vertiii{\boldsymbol\eta-\textrm{I}_{h}\boldsymbol\eta}_0\vertiii{\boldsymbol\eta}_{\infty}+\vertiii{\textrm{I}_{h}\boldsymbol\xi}_{\infty}\vertiii{\textrm{I}_{h}\boldsymbol\eta}_{\infty}\vertiii{\boldsymbol\eta-\textrm{I}_{h}\boldsymbol\eta}_0)\vertiii{\Phi_h}_0 \\& 
\lesssim  \ell^{-1}h^2\vertiii{\boldsymbol{\xi}}_{2}\vertiii{\boldsymbol{\eta}}_{2}^2\vertiii{\Phi_h}_1 . \notag
\end{align*}
{\it Proof of 3rd inequality}. The linearity (resp. symmetry) of $B_3(\cdot,\cdot,\cdot,\cdot)$ in first three (resp. first and third, or second and third) variables  and  re-grouping of terms shows
\begin{align*}
 &2B_3( \boldsymbol \eta, \boldsymbol \eta, \boldsymbol \eta, \Phi_h ) - 3B_3( \boldsymbol \eta, \boldsymbol \eta,\Theta_{h}, \Phi_h )+B_3(\Theta_{h}, \Theta_{h},\Theta_{h}, \Phi_h )\notag\\&=
- 2B_3( \boldsymbol \eta, \boldsymbol \eta,\Theta_{h}- \boldsymbol \eta, \Phi_h ) +B_3( \boldsymbol \eta,\Theta_{h}-\boldsymbol \eta,\Theta_{h}, \Phi_h )+ B_3( \Theta_{h}-\boldsymbol \eta, \Theta_{h},\Theta_{h}, \Phi_h )\\&
 = (B_3( \Theta_{h}-\boldsymbol \eta, \Theta_{h},\Theta_{h}, \Phi_h )-B_3( \Theta_{h}-\boldsymbol \eta, \boldsymbol \eta,\boldsymbol \eta, \Phi_h ))+B_3(\boldsymbol \eta, \Theta_{h}-\boldsymbol \eta, \Theta_{h}-\boldsymbol \eta, \Phi_h ).
\end{align*}
This plus the identity 
\begin{align*}
B_3(\cdot , \Theta_{h},\Theta_{h}, \Phi_h )-B_3(\cdot, \boldsymbol \eta,\boldsymbol \eta, \Phi_h )= B_3( \cdot, \Theta_{h}-\boldsymbol \eta,\Theta_{h}-\boldsymbol \eta, \Phi_h ) + 2B_3(\cdot, \Theta_{h}-\boldsymbol \eta,\boldsymbol \eta, \Phi_h ),
\end{align*}
for the first term in the above displayed equation, and Lemma \ref{boundedness_all ferro}$(iii)$ allow
\begin{align}\label{boundedness B3 iii}
&2B_3( \boldsymbol \eta, \boldsymbol \eta, \boldsymbol \eta, \Phi_h ) - 3B_3( \boldsymbol \eta, \boldsymbol \eta,\Theta_{h}, \Phi_h )+B_3(\Theta_{h}, \Theta_{h},\Theta_{h}, \Phi_h )\notag\\&=
B_3( \Theta_{h}-\boldsymbol \eta, \Theta_{h}-\boldsymbol \eta,\Theta_{h}-\boldsymbol \eta, \Phi_h )+ 3B_3( \Theta_{h}-\boldsymbol \eta, \Theta_{h}-\boldsymbol \eta,\boldsymbol \eta, \Phi_h )\\&\lesssim \ell^{-1} \vertiii{\Theta_{h}-\boldsymbol \eta}_1^2(\vertiii{\Theta_{h}-\boldsymbol \eta}_1 + \vertiii{\boldsymbol \eta}_1)\vertiii{\Phi_h}_1.\notag
\end{align}
 {\it Proof of 4th inequality}. Add and subtract the term $2B_3(\boldsymbol \eta,\boldsymbol \eta, \boldsymbol \eta , \Phi_{h} )$, and then re-arrange the terms following the re-grouping of \eqref{boundedness B3 iii} to obtain 
 \begin{align}\label{Boundedness B3 equn2}
&3B_3(\boldsymbol \eta,\boldsymbol \eta, \Theta_1 , \Phi_{h} ) - B_3(\Theta_1,\Theta_1,\Theta_1,\Phi_{h})-3B_3(\boldsymbol \eta,\boldsymbol \eta, \Theta_2 , \Phi_{h} )+B_3(\Theta_2,\Theta_2,\Theta_2,\Phi_{h}) 
\notag \\&=( 2B_3(\boldsymbol \eta,\boldsymbol \eta, \boldsymbol \eta , \Phi_{h} ) -3B_3(\boldsymbol \eta,\boldsymbol \eta, \Theta_2 , \Phi_{h} ) + B_3(\Theta_2,\Theta_2,\Theta_2	,\Phi_{h}))\notag\\& 
\quad-(2 B_3(\boldsymbol \eta,\boldsymbol \eta, \boldsymbol \eta , \Phi_{h} )-3B_3(\boldsymbol \eta,\boldsymbol \eta, \Theta_1 , \Phi_{h} ) + B_3(\Theta_1,\Theta_1,\Theta_1,\Phi_{h}))\notag\\& =(B_3(\Theta_2-\boldsymbol \eta,\Theta_2-\boldsymbol \eta, \Theta_2-\boldsymbol \eta , \Phi_{h} )-B_3(\Theta_1-\boldsymbol \eta,\Theta_1-\boldsymbol \eta, \Theta_1-\boldsymbol \eta , \Phi_{h} ))\notag\\&\quad+(3B_3(\Theta_2-\boldsymbol \eta,\Theta_2-\boldsymbol \eta, \boldsymbol \eta , \Phi_{h} )-3B_3(\Theta_1-\boldsymbol \eta,\Theta_1-\boldsymbol \eta, \boldsymbol \eta , \Phi_{h} )).
\end{align}
Re-arrange the first term of \eqref{Boundedness B3 equn2} as 
\begin{align}\label{Boundedness B3 equn3}
&B_3(\Theta_2-\boldsymbol \eta,\Theta_2-\boldsymbol \eta, \Theta_2-\boldsymbol \eta , \Phi_{h} )-B_3(\Theta_1-\boldsymbol \eta,\Theta_1-\boldsymbol \eta, \Theta_1-\boldsymbol \eta , \Phi_{h} )\notag\\&= B_3(\Theta_2-\boldsymbol \eta,\Theta_2-\boldsymbol \eta, \Theta_2-\Theta_1 , \Phi_{h} )+(B_3(\Theta_2-\boldsymbol \eta,\Theta_2-\boldsymbol \eta, \Theta_1 -\boldsymbol \eta , \Phi_{h} )-B_3(\Theta_1-\boldsymbol \eta,\Theta_1-\boldsymbol \eta, \Theta_1-\boldsymbol \eta , \Phi_{h} ) ).
\end{align}
Next we apply the identity 
\begin{align*}
B_3(\Theta_2-\boldsymbol \eta,\Theta_2-\boldsymbol \eta, \cdot , \Phi_{h} )-B_3(\Theta_1-\boldsymbol \eta,\Theta_1-\boldsymbol \eta, \cdot , \Phi_{h} )=B_3(\Theta_2-\Theta_1,(\Theta_1+\Theta_2)-2\boldsymbol \eta, \cdot , \Phi_{h} )
\end{align*}
 to the second terms of \eqref{Boundedness B3 equn2} and \eqref{Boundedness B3 equn3}, and Lemma \ref{boundedness_all ferro}$(iii)$ to obtain 
\begin{align*}
&3B_3(\boldsymbol \eta,\boldsymbol \eta, \Theta_1 , \Phi_{h} ) - B_3(\Theta_1,\Theta_1,\Theta_1,\Phi_{h})-3B_3(\boldsymbol \eta,\boldsymbol \eta, \Theta_2 , \Phi_{h} )+B_3(\Theta_2,\Theta_2,\Theta_2,\Phi_{h}) 
\\&=B_3(\Theta_2-\boldsymbol \eta,\Theta_2-\boldsymbol \eta, \Theta_2-\Theta_1 , \Phi_{h} )+ B_3(\Theta_2-\Theta_1,(\Theta_1+\Theta_2)-2\boldsymbol \eta, \Theta_1-\boldsymbol \eta  , \Phi_{h} )\\& \quad + 3 B_3(\Theta_2-\Theta_1,(\Theta_1+\Theta_2)-2\boldsymbol \eta, \boldsymbol \eta  , \Phi_{h} )\\& \lesssim \vertiii{\Theta_2-\Theta_1}_1(\vertiii{\Theta_1-\boldsymbol \eta}_1^2+\vertiii{\Theta_2-\boldsymbol \eta}_1^2 +(\vertiii{\Theta_1-\boldsymbol \eta}_1+\vertiii{\Theta_2-\boldsymbol \eta}_1)\vertiii{\boldsymbol \eta}_1) \vertiii{\Phi_{h}}_1,
\end{align*}
where % Lemma \ref{boundedness_all ferro}$(iii)$ and 
the triangle inequality $\vertiii{(\Theta_1+\Theta_2)-2\boldsymbol \eta}_1 \leq \vertiii{\Theta_1-\boldsymbol \eta}_1+\vertiii{\Theta_2-\boldsymbol \eta}_1$ is applied for the last step. 
 This completes the proof.
 \end{proof}
\noindent\textbf{Modified weak formulation}.
The nonlinear system \eqref{continuous nonlinear ferro} is equipped with non-homogeneous boundary conditions. The analysis in this paper is based on reformulation of \eqref{continuous nonlinear ferro} using lifting technique  \cite{ErnGuermond}
that reduces the problem to a system of nonlinear PDEs with homogeneous boundary conditions. 

\medskip

\noindent For $\mathbf{g} \in \h^{\frac{3}{2}}(\partial\Omega),$ trace theorem \cite[Page 41]{Lions_Magenes} shows the existence of a $\Psi_{\mathbf{g}} \in \h^{2}(\Omega)$  such that $\Psi_{\mathbf{g}}= \mathbf{g}$ on $\partial \Omega$ and
\begin{align}\label{bound for Psig}
\vertiii{\Psi_{\mathbf{g}}}_{2} \lesssim \vertiii{\mathbf{g}}_{\frac{3}{2}, \partial\Omega}.
\end{align}
Set $\widetilde{\Psi}:={\Psi}-\Psi_{\mathbf{g}} \in \V$. A substitution of  ${\Psi}=\widetilde{\Psi} + {\Psi}_{\mathbf{g}}$ in \eqref{continuous nonlinear ferro} leads to a new non-linear system  given by: find $\widetilde{\Psi} \in \V$ such that  for all $\Phi \in  \V,$
\begin{align}\label{reduced continuous nonlinear ferro}
\widetilde{N}(\widetilde{\Psi}; \Phi): = N(\widetilde{\Psi}; \Phi)+2B_2({\Psi}_{\mathbf{g}}, \widetilde{\Psi},\Phi)+3B_3({\Psi}_{\mathbf{g}}, \widetilde{\Psi},\widetilde{\Psi},\Phi)+  3B_3({\Psi}_{\mathbf{g}},{\Psi}_{\mathbf{g}},\widetilde{\Psi}, \Phi)+ N({\Psi}_{\mathbf{g}}; \Phi)=0 .
\end{align}
\noindent The regular solutions ${\Psi}$ of \eqref{continuous nonlinear ferro} such that \eqref{H2 bound for minimizers} holds  are approximated. 
The solution ${\Psi}$ is regular  \cite{keller} implies that the Fr\'echet derivative $DN({\Psi}) \in \mathcal{L}(\V;\V^*)$ of $N(\cdot)$ at ${\Psi}$  is an isomorphism. That is, the inf-sup condition \cite{Ern} holds for the Fr\'echet derivative at  ${\Psi}$, 
\begin{align}\label{continuous inf sup}
0< \beta := \inf_{\substack{\Theta \in \V \\  \vertiii{\Theta}_1=1}} \sup_{\substack{\Phi\in \V \\  \vertiii{\Phi}_1=1}}\dual{DN({{\Psi}})\Theta, \Phi},
% =\inf_{\substack{\Theta \in \V \\  \vertiii{\Theta}_1=1}} \sup_{\substack{\Phi \in \V \\  \vertiii{\Phi}_1=1}}\dual{D\widetilde{N}(\widetilde{\Psi})\Theta, \Phi},
\end{align}
where $\dual{DN({{\Psi}})\Theta, \Phi}:=A(\Theta, \Phi)+B_1(\Theta, \Phi)+2B_2(\Psi,\Theta, \Phi)+3B_3(\Psi,\Psi,\Theta, \Phi)$. 
Note that 
\begin{align}\label{reformed bilinear form}
\dual{D\widetilde{N}(\widetilde{\Psi})\Theta, \Phi}:= \dual{DN(\widetilde{\Psi})\Theta, \Phi}+2B_2({\Psi}_{\mathbf{g}}, \Theta ,\Phi) + 3B_3({\Psi}_{\mathbf{g}}, {\Psi}_{\mathbf{g}}, \Theta, \Phi)+6B_3( {\Psi}_{\mathbf{g}},\widetilde{\Psi}, \Theta, \Phi).
\end{align}
 Algebraic manipulations with $\widetilde{\Psi}:={\Psi}-\Psi_{\mathbf{g}}$ leads to $\dual{DN({{\Psi}})\Theta, \Phi}=\dual{D\widetilde{N}(\widetilde{\Psi})\Theta, \Phi}$ and hence the inf-sup condition
 \begin{align}\label{reduced continuous inf sup}
 \displaystyle 0< \beta  =\inf_{\substack{\Theta \in \V \\  \vertiii{\Theta}_1=1}} \sup_{\substack{\Phi \in \V \\  \vertiii{\Phi}_1=1}}\dual{D\widetilde{N}(\widetilde{\Psi})\Theta, \Phi}
 \end{align} 
  holds.  Let $\mathbf{g}_h:={\rm I}_h\mathbf{g}$ be the Lagrange $P_1$ interpolation of $\mathbf{g}$ along $\partial \Omega$. 
 For $\widetilde{\Psi}_h:= {\Psi}_h-\textrm{I}_h {\Psi}_{\mathbf{g}} \in \V_h$,  \eqref{discrete nonlinear problem ferro} yields that $\widetilde{\Psi}_h$ solves 
\begin{align}\label{reduced discrete nonllinear problem}
\widetilde{N}(\widetilde{\Psi}_h; \Phi_h):= &N(\widetilde{\Psi}_h; \Phi_h)+2B_2(\textrm{I}_h {\Psi}_{\mathbf{g}},\widetilde{\Psi}_h, \Phi_h)+ 3B_3( \textrm{I}_h {\Psi}_{\mathbf{g}},\widetilde{\Psi}_h, \widetilde{\Psi}_h, \Phi_h)+ 3B_3(\textrm{I}_h {\Psi}_{\mathbf{g}}, \textrm{I}_h {\Psi}_{\mathbf{g}},\widetilde{\Psi}_h, \Phi_h)  \notag\\&+N(\textrm{I}_h {\Psi}_{\mathbf{g}}; \Phi_h) =0 \text{ for all } \Phi_h \in \V_h.
\end{align}
We first prove that $\widetilde{\Psi}_h$ approximates the solution $\widetilde{\Psi}$ of \eqref{reduced continuous nonlinear ferro} and this leads to the existence of the discrete solution ${\Psi}_h$ that approximates ${\Psi}$. The next lemma is crucial for the analysis.

\begin{lem}[Wellposedness of a linear system] \label{linear equation for discrete inf-sup ferro}For a given $\Theta_{h} \in \X_h$ with $\vertii{\Theta_{h}}=1$, and %$\Phi \in \V,$
	\begin{align*}
	B_L(\Theta_{h},\Phi):&= B_1(\Theta_{h},\Phi) +2 B_2(\widetilde{\Psi}, \Theta_h, \Phi)+2 B_2({\Psi}_{\mathbf{g}}, \Theta_h, \Phi)+3B_3(\widetilde{\Psi},\widetilde{\Psi},\Theta_{h},\Phi)+ 3B_3({\Psi}_{\mathbf{g}}, {\Psi}_{\mathbf{g}}, \Theta_h, \Phi)\\&\quad+6B_3({\Psi}_{\mathbf{g}},\widetilde{\Psi}, \Theta_h, \Phi),
	\end{align*} there exists $\boldsymbol{\zeta} \in \mathbf{H}^{2}(\Omega)\cap \V $ that solves the linear system 
	\begin{align}		
	& \quad	 \quad  \quad A(\boldsymbol\zeta,\Phi)=B_L(\Theta_{h},\Phi) \text{ for all }  \Phi \in \V,\label{linear equation1 }\\ &
		\text{ with }\quad	\vertiii{\boldsymbol \zeta}_{2}\lesssim \ell^{-1}(1+\vertiiii{\widetilde{\Psi}}_{2}+	\vertiiii{\widetilde{\Psi}}_{2}^2+\vertiiii{\mathbf{g}}_{{\frac{3}{2}}, \partial \Omega} +\vertiiii{\mathbf{g}}_{{\frac{3}{2}}, \partial \Omega}^2 ).\label{2.4.4.0 Ferro}
	\end{align}
	Here $'\lesssim'$ absorbs $\abs{c}$, the constants from elliptic regularity and Sobolev embedding results.
\end{lem}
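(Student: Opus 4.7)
The plan is to reduce the problem to a Poisson-type equation $-\Delta \boldsymbol\zeta = \mathbf{F}$ with zero Dirichlet data, where $\mathbf{F}\in \mathbf{L}^2(\Omega)$ is the Riesz representative of the functional $\Phi\mapsto B_L(\Theta_h,\Phi)$, and then invoke elliptic regularity on the convex polygonal domain $\Omega$ to deduce the $\mathbf{H}^2$-bound.

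The first step is to show that $\Phi\mapsto B_L(\Theta_h,\Phi)$ extends to a bounded linear functional on $\mathbf{L}^2(\Omega)$. Here the key is to apply the sharper boundedness estimates of Lemma~\ref{boundedness_all ferro}(ii)--(iii) that put $\mathbf{H}^2$-norms on the fixed slots and $\mathbf{L}^2$-norms on the remaining slots. Term by term, using $\vertiii{\Theta_h}_0\le\vertii{\Theta_h}=1$, $\vertiii{\Psi_{\mathbf{g}}}_2\lesssim\vertiii{\mathbf{g}}_{\frac{3}{2},\partial\Omega}$ from \eqref{bound for Psig}, and absorbing $|c|$ into the implicit constant, one obtains
\begin{align*}
|B_1(\Theta_h,\Phi)|&\lesssim \ell^{-1}\vertiii{\Phi}_0,\\
|B_2(\widetilde{\Psi},\Theta_h,\Phi)|+|B_2(\Psi_{\mathbf{g}},\Theta_h,\Phi)|&\lesssim \ell^{-1}\bigl(\vertiii{\widetilde{\Psi}}_2+\vertiii{\mathbf{g}}_{\frac{3}{2},\partial\Omega}\bigr)\vertiii{\Phi}_0,\\
|B_3(\widetilde{\Psi},\widetilde{\Psi},\Theta_h,\Phi)|+|B_3(\Psi_{\mathbf{g}},\Psi_{\mathbf{g}},\Theta_h,\Phi)|+|B_3(\Psi_{\mathbf{g}},\widetilde{\Psi},\Theta_h,\Phi)|&\lesssim \ell^{-1}\bigl(\vertiii{\widetilde{\Psi}}_2^2+\vertiii{\mathbf{g}}_{\frac{3}{2},\partial\Omega}^2\bigr)\vertiii{\Phi}_0,
\end{align*}
where the mixed cross term $\vertiii{\widetilde{\Psi}}_2\vertiii{\mathbf{g}}_{\frac{3}{2},\partial\Omega}$ is absorbed via $2ab\le a^2+b^2$. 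Summing yields $|B_L(\Theta_h,\Phi)|\lesssim M(\ell,\widetilde\Psi,\mathbf{g})\vertiii{\Phi}_0$, where $M(\ell,\widetilde\Psi,\mathbf{g}):=\ell^{-1}(1+\vertiii{\widetilde\Psi}_2+\vertiii{\widetilde\Psi}_2^2+\vertiii{\mathbf{g}}_{\frac{3}{2},\partial\Omega}+\vertiii{\mathbf{g}}_{\frac{3}{2},\partial\Omega}^2)$. Since $\V$ is dense in $\mathbf{L}^2(\Omega)$, the Riesz representation theorem produces $\mathbf{F}\in\mathbf{L}^2(\Omega)$ with $B_L(\Theta_h,\Phi)=(\mathbf{F},\Phi)$ for all $\Phi\in\V$ and $\vertiii{\mathbf{F}}_0\lesssim M(\ell,\widetilde\Psi,\mathbf{g})$.

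Next, coercivity and continuity of $A(\cdot,\cdot)$ on $\V$ (Lemma~\ref{boundedness_all ferro}(i)) together with the Lax--Milgram theorem yield a unique $\boldsymbol\zeta\in\V$ satisfying $A(\boldsymbol\zeta,\Phi)=(\mathbf{F},\Phi)$ for every $\Phi\in\V$, i.e.\ a weak solution of the componentwise Poisson problem $-\Delta\boldsymbol\zeta=\mathbf{F}$ in $\Omega$ with $\boldsymbol\zeta=\mathbf{0}$ on $\partial\Omega$. Because $\Omega$ is a convex polygonal domain and $\mathbf{F}\in\mathbf{L}^2(\Omega)$, the elliptic regularity theorem of Grisvard \cite{grisvard} gives $\boldsymbol\zeta\in\mathbf{H}^2(\Omega)\cap\V$ together with the a priori estimate $\vertiii{\boldsymbol\zeta}_2\lesssim\vertiii{\mathbf{F}}_0$. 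Combining this with the bound on $\vertiii{\mathbf{F}}_0$ from the previous step gives exactly \eqref{2.4.4.0 Ferro}.

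The main technical point—which is not really an obstacle but demands care—is the choice of which boundedness estimate from Lemma~\ref{boundedness_all ferro} to invoke. Using the ``$H^1\times H^1\times H^1$'' bounds would only yield a functional on $\V$ (giving an $\mathbf{H}^{-1}$ right-hand side) and hence $\boldsymbol\zeta\in\V$ with no $\mathbf{H}^2$-regularity. The proof therefore hinges on assigning the $\mathbf{H}^2$-slot to $\widetilde{\Psi}$ and $\Psi_{\mathbf{g}}$ (legitimate by \eqref{H2 bound for minimizers} and \eqref{bound for Psig}) and the $\mathbf{L}^2$-slot to the test function $\Phi$, so that the source term can be identified with an $\mathbf{L}^2$-function and the convex-polygonal elliptic regularity applies.
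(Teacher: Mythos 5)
Your proof is correct and takes essentially the same route as the paper: both argue that each summand of $B_L(\Theta_h,\cdot)$ extends to a bounded functional on $\mathbf{L}^2(\Omega)$ via the sharper $\mathbf{H}^2$-slot/$\mathbf{L}^2$-slot estimates of Lemma~\ref{boundedness_all ferro}$(i)$--$(iii)$, and then invoke Grisvard's elliptic regularity on the convex polygonal domain to obtain $\boldsymbol\zeta\in\mathbf{H}^2(\Omega)\cap\V$ with the stated bound. You spell out the Riesz-representation and Lax--Milgram steps that the paper leaves implicit (it passes directly to $\vertiii{\boldsymbol\zeta}_2\le C_{\rm reg}\vertiii{B_L(\Theta_h,\cdot)}_{\mathbf{L}^2}$), but the essential device---assigning the $\mathbf{H}^2$-slot to $\widetilde\Psi$ and $\Psi_{\mathbf{g}}$ and the $\mathbf{L}^2$-slot to the test function so that the right-hand side lands in $\mathbf{L}^2$---is identical.
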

\begin{proof}
For $\widetilde{\Psi}, {\Psi}_{\mathbf{g}} \in \mathbf{H}^2(\Omega)$ and $\Theta_h \in \X_h,$ Lemma \ref{boundedness_all ferro}$(i)$-$(iii)$  yields that $ B_3(\widetilde{\Psi},\widetilde{\Psi},\Theta_{h},\cdot),$  $B_3({\Psi}_{\mathbf{g}}, {\Psi}_{\mathbf{g}}, \Theta_h,  \cdot), $ $B_3({\Psi}_{\mathbf{g}},\widetilde{\Psi},  \Theta_h,  \cdot)$, $  B_2(\widetilde{\Psi}, \Theta_h, \cdot),$ $  B_2({\Psi}_{\mathbf{g}}, \Theta_h, \cdot),$  $B_1(\Theta_{h},\cdot) \in \mathbf{L}^2.$  This and an elliptic regularity result \cite{grisvard} implies that there exists a unique solution $\boldsymbol\zeta \in \V \cap \mathbf{H}^2(\Omega)$ of \eqref{linear equation1 } and a constant $C_{\rm reg}>0$ such that  
\begin{align*}
\vertiii{\boldsymbol\zeta}_2 \leq C_{\rm reg} \vertiii{B_L(\Theta_{h},\cdot)}_{\mathbf{L}^2}\lesssim  \ell^{-1}(1+\vertiiii{\widetilde{\Psi}}_{2}+	\vertiiii{\widetilde{\Psi}}_{2}^2+\vertiiii{\mathbf{g}}_{{\frac{3}{2}}, \partial \Omega} +\vertiiii{\mathbf{g}}_{{\frac{3}{2}}, \partial \Omega}^2 ).
\end{align*}
This completes the proof.
\end{proof}

\medskip

\noindent Now, for $\Theta_h,\Phi_h \in \V_h,$ the discrete inf-sup conditions are established for the bilinear form $\dual{D\widetilde{N}(\widetilde{\Psi})\Theta_h,\Phi_h}$ from  \eqref{reformed bilinear form}, and the perturbed form 
\begin{align} \label{definition of perturbed bilinear form}
\dual{D\widetilde{N}({\rm{I}}_h\widetilde{\Psi})\Theta_h, \Phi_h}= &  \dual{D{N}({\rm{I}}_h\widetilde{\Psi})\Theta_h, \Phi_h} +2 B_2({\rm{I}}_h{\Psi}_{\mathbf{g}}, \Theta_h, \Phi_h)+ 3B_3({\rm{I}}_h{\Psi}_{\mathbf{g}}, {\rm{I}}_h{\Psi}_{\mathbf{g}}, \Theta_h, \Phi_h) \notag \\&+6B_3( {\rm{I}}_h{\Psi}_{\mathbf{g}},{\rm{I}}_h\widetilde{\Psi}, \Theta_h, \Phi_h), 
\end{align}
where $\dual{D{N}({\rm{I}}_h\widetilde{\Psi})\Theta_h, \Phi_h}=A(\Theta_h, \Phi_h)+  B_1(\Theta_h, \Phi_h)+2 B_2({\rm{I}}_h\widetilde{\Psi}, \Theta_h, \Phi_h)+ 3B_3({\rm{I}}_h\widetilde{\Psi}, {\rm{I}}_h\widetilde{\Psi},\Theta_h, \Phi_h ). $
\begin{thm}[Discrete inf-sup conditions]\label{discrete inf sup}
Let ${\Psi}$ be a regular solution of \eqref{continuous nonlinear ferro} such that \eqref{H2 bound for minimizers} holds and $\widetilde{\Psi}$ solves the non-linear system
\eqref{reduced continuous nonlinear ferro}.  For a given fixed $\ell>0$,  a sufficiently small discretization parameter chosen such that $h=O(\ell)$,  the  discrete inf-sup conditions stated below hold:
	\begin{align*}
(i)\,\, 0< \frac{\beta}{2}\leq \inf_{\substack{\Theta_h \in \V_h \\  \vertii{\Theta_h}=1}} \sup_{\substack{\Phi_h \in \V_h \\  \vertii{\Phi_h}=1}}\dual{D\widetilde{N}(\widetilde{\Psi})\Theta_h, \Phi_h}, \,\, (ii)\,\,	0< \frac{\beta}{4}\leq \inf_{\substack{\Theta_h \in \V_h \\  \vertii{\Theta_h}=1}} \sup_{\substack{\Phi_h \in \V_h \\  \vertii{\Phi_h}=1}}\dual{D\widetilde{N}({\rm{I}}_h\widetilde{\Psi})\Theta_h, \Phi_h}.
	\end{align*}
\end{thm}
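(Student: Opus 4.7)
The plan is to transfer the continuous inf-sup condition \eqref{reduced continuous inf sup} to the discrete spaces by using Lemma \ref{linear equation for discrete inf-sup ferro} to re-express the perturbed form $\langle D\tilde{N}(\tilde{\Psi})\cdot,\cdot\rangle$ as the coercive principal form $A(\cdot,\cdot)$ against a shifted argument with a controllable $\mathbf{H}^2$ remainder, and then pass to $\V_h$ via interpolation. For part $(i)$, fix $\Theta_h\in\V_h$ with $\vertii{\Theta_h}=1$. Lemma \ref{linear equation for discrete inf-sup ferro} delivers a unique $\boldsymbol{\zeta}\in\mathbf{H}^2(\Omega)\cap\V$ with $A(\boldsymbol{\zeta},\Phi)=B_L(\Theta_h,\Phi)$ for every $\Phi\in\V$ and $\vertiii{\boldsymbol{\zeta}}_2\lesssim \ell^{-1}M$, where $M:=M(\tilde{\Psi},\mathbf{g})$ is $h$-independent. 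Substitution produces the key identity $\langle D\tilde{N}(\tilde{\Psi})\Theta_h,\Phi\rangle=A(\Theta_h+\boldsymbol{\zeta},\Phi)$ for all $\Phi\in\V$, and combining \eqref{reduced continuous inf sup} with the boundedness of $A$ from Lemma \ref{boundedness_all ferro}$(i)$ yields $\beta\leq \vertii{\Theta_h+\boldsymbol{\zeta}}$.

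Since $\boldsymbol{\zeta}\in\mathbf{H}^2$, the discrete test function $\Phi_h^{\star}:=\Theta_h+{\rm I}_h\boldsymbol{\zeta}\in\V_h$ is well defined. Writing $\Phi_h^{\star}=(\Theta_h+\boldsymbol{\zeta})-(\boldsymbol{\zeta}-{\rm I}_h\boldsymbol{\zeta})$, the coercivity of $A$ and Lemma \ref{Interpolation estimate} furnish
\[
\langle D\tilde{N}(\tilde{\Psi})\Theta_h,\Phi_h^{\star}\rangle \geq \alpha_0\vertii{\Theta_h+\boldsymbol{\zeta}}^2 - Ch\ell^{-1}M\,\vertii{\Theta_h+\boldsymbol{\zeta}},\quad \vertii{\Phi_h^{\star}}\leq \vertii{\Theta_h+\boldsymbol{\zeta}}+Ch\ell^{-1}M.
\]
Choosing $h=O(\ell)$ small enough that $Ch\ell^{-1}M$ is a small fraction of $\beta$ (and hence of $\vertii{\Theta_h+\boldsymbol{\zeta}}\geq\beta$), dividing by $\vertii{\Phi_h^{\star}}$ yields the claimed lower bound $\beta/2$.

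Part $(ii)$ follows from $(i)$ by a perturbation argument. The difference $\langle D\tilde{N}(\tilde{\Psi})\Theta_h,\Phi_h\rangle-\langle D\tilde{N}({\rm I}_h\tilde{\Psi})\Theta_h,\Phi_h\rangle$ is a finite sum of terms of the form $B_2(\tilde{\Psi}-{\rm I}_h\tilde{\Psi},\Theta_h,\Phi_h)$, $B_3(\tilde{\Psi},\tilde{\Psi},\Theta_h,\Phi_h)-B_3({\rm I}_h\tilde{\Psi},{\rm I}_h\tilde{\Psi},\Theta_h,\Phi_h)$, and analogues involving $\Psi_{\mathbf{g}}-{\rm I}_h\Psi_{\mathbf{g}}$. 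The boundedness of $B_2,B_3$ in Lemma \ref{boundedness_all ferro}$(ii)$-$(iii)$ combined with the interpolation estimates of Lemma \ref{Properties of bilinear trilinear and quadrilinear forms}$(ii)$-$(iii)$ (exploiting $\vertiii{\tilde{\Psi}}_2$ and $\vertiii{\Psi_\mathbf{g}}_2$ bounded) controls this difference by $Ch\ell^{-1}\vertii{\Theta_h}\vertii{\Phi_h}$. For $h=O(\ell)$ small enough that this perturbation is at most $\beta/4$, the $\beta/2$ bound from $(i)$ degrades to $\beta/2-\beta/4=\beta/4$.

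The main obstacle is the careful bookkeeping in the perturbation step of $(ii)$: every cubic or quadratic difference must yield a factor of $h$ after the interpolation properties of Lemma \ref{Properties of bilinear trilinear and quadrilinear forms} are invoked, with the regularity $\mathbf{H}^2$ of $\tilde{\Psi}$ and $\Psi_\mathbf{g}$ exploited to land the error estimates in the right norm while offsetting the $\ell^{-1}$ singularity inherited from $B_L$. A secondary subtlety is tracking the exact interplay between the coercivity constant $\alpha_0$ and the interpolation factor $Ch\ell^{-1}M$ so that the clean numerical constants $\beta/2$ and $\beta/4$ are recovered for a suitably small hidden constant in the condition $h=O(\ell)$.
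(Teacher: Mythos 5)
Your proposal is correct and follows essentially the same route as the paper: Lemma \ref{linear equation for discrete inf-sup ferro} converts $\langle D\widetilde{N}(\widetilde{\Psi})\Theta_h,\cdot\rangle$ into $A(\Theta_h+\boldsymbol{\zeta},\cdot)$, the continuous inf-sup condition \eqref{reduced continuous inf sup} gives $\beta\leq\vertiii{\Theta_h+\boldsymbol{\zeta}}_1$, interpolation of $\boldsymbol{\zeta}\in\mathbf{H}^2$ transfers this to $\V_h$ with an $O(h\ell^{-1})$ remainder, and part $(ii)$ is the same perturbation argument via Lemma \ref{Properties of bilinear trilinear and quadrilinear forms}. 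The only stylistic difference is that you test directly with the unnormalised $\Phi_h^{\star}=\Theta_h+{\rm I}_h\boldsymbol{\zeta}$ and invoke coercivity of $A$ on $\Theta_h+\boldsymbol{\zeta}$, while the paper first normalises a discrete test function for $\Theta_h+{\rm I}_h\boldsymbol{\zeta}$; both routes recover the discrete inf-sup constant only up to a fixed multiplicative factor involving $\alpha_0$ (so strictly speaking neither proof yields the literal $\beta/2$ and $\beta/4$), a harmless imprecision shared with the paper's own presentation.
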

\begin{proof}[Proof of $(i)$]
	For $\Theta_h \in \V_h \subset \V$  
	with $\vertii{\Theta_h}=1$, 
	the continuous inf-sup condition in \eqref{reduced continuous inf sup} yields that there exists $\Phi\in \V$ with $\vertii{\Phi}=1$ such that 
	\begin{align*}%\label{discrete infsup 1}
	\beta \vertii{\Theta_h}& \leq \dual{D\widetilde{N}(\widetilde{\Psi})\Theta_h, \Phi} =A(\Theta_h, \Phi)+ B_L(\Theta_h, \Phi).
	\end{align*}
	The linear problem in \eqref{linear equation1 },  Lemma \ref{boundedness_all ferro}$(i)$   and a triangle inequality show
	\begin{align} \label{inf sup equation1 ferro}
	\beta=\beta\vertii{\Theta_h}	\leq A(\Theta_h +\boldsymbol\zeta, \Phi) \leq \vertiiii{\Theta_h +\boldsymbol\zeta}_1 \leq \vertiiii{\Theta_h +{\rm I}_h\boldsymbol\zeta}_1 +\vertiiii{{\rm I}_h\boldsymbol\zeta-\boldsymbol\zeta}_1.
	\end{align}
	The coercivity of $A(\cdot, \cdot)$ stated in Lemma \ref{boundedness_all ferro}$(i)$ yields that for $\Theta_h+ {\rm I}_h \boldsymbol\zeta \in \V_h \subset \V,$ there exists $\Phi_h \in \V_h$ with $\vertii{\Phi_h}=1$ such that $ \alpha_0	\vertii{\Theta_h+ {\rm I}_h \boldsymbol\zeta }   \leq A(\Theta_h+ {\rm I}_h \boldsymbol\zeta, \Phi_h) 	. $
	The definition of $\dual{D\widetilde{N}(\widetilde{\Psi})\cdot, \cdot}$ in  \eqref{reformed bilinear form}, \eqref{linear equation1 } and Lemma \ref{Properties of bilinear trilinear and quadrilinear forms}$(i)$ imply
	\begin{align*}%\label{inf sup equation2 ferro}
	&\vertii{\Theta_h+ {\rm I}_h \boldsymbol\zeta } 	\lesssim %A(\Theta_h, \Phi_h) + A({\rm I}_h \boldsymbol\zeta, \Phi_h)=
	\dual{D\widetilde{N}(\widetilde{\Psi})\Theta_h, \Phi_h}+A({\rm{I}}_h \boldsymbol\zeta -\boldsymbol\zeta, \Phi_h) \lesssim	\dual{D\widetilde{N}(\widetilde{\Psi})\Theta_h, \Phi_h}+h\vertiii{\boldsymbol\zeta}_2. 
%	\\&+ 2\big(B_2({\Psi}_{\mathbf{g}}-{\rm{I}}_h{\Psi}_{\mathbf{g}}, \Theta_h, \Phi_h)+ B_2(\widetilde{\Psi}-{\rm{I}}_h\widetilde{\Psi}, \Theta_h, \Phi_h) \big)\notag\\& \quad +3\big(B_3(\widetilde{\Psi}, \widetilde{\Psi}, \Theta_h, \Phi_h)-B_3({\rm{I}}_h\widetilde{\Psi}, {\rm{I}}_h\widetilde{\Psi}, \Theta_h, \Phi_h)\big)+ 3\big(B_3({\Psi}_{\mathbf{g}}, {\Psi}_{\mathbf{g}}, \Theta_h, \Phi_h)-B_3({\rm{I}}_h{\Psi}_{\mathbf{g}}, {\rm{I}}_h{\Psi}_{\mathbf{g}}, \Theta_h, \Phi_h)\big)\notag\\& \quad+ 6\big(B_3({\Psi}_{\mathbf{g}},\widetilde{\Psi},  \Theta_h, \Phi_h)-B_3( {\rm{I}}_h{\Psi}_{\mathbf{g}},{\rm{I}}_h\widetilde{\Psi}, \Theta_h, \Phi_h)\big)
%	=\dual{D\widetilde{N}({\rm I}_h\widetilde{\Psi})\Theta_h, \Phi_h}+\sum_{i=1}^{5}T_i.%+T_2+T_3+T_4 +T_5.
	\end{align*}
	This combined with \eqref{inf sup equation1 ferro},  Lemma \ref{Interpolation estimate} and \eqref{2.4.4.0 Ferro} leads to $$\beta \leq C_1(	\dual{D\widetilde{N}(\widetilde{\Psi})\Theta_h, \Phi_h}+  \ell^{-1}h),$$
	where the positive constant $C_1$ depends on $\abs{c}$,  $\vertiii{\widetilde{\Psi}}_2,\vertiii{\mathbf{g}}_{\frac{3}{2},\partial \Omega},$ $ C_{\rm reg},C_I, \alpha_0$ and the constants in Sobolev embedding results. For a sufficiently small choice of the discretization parameter  $h< h_0:= \frac{\beta \ell}{2C_1},$ the assertion holds. 
%	\begin{align*}%\label{Discrete inf-sup for best approximation}
%	\frac{\beta }{2} \vertiii{\Theta_h}_1\leq \dual{D\widetilde{N}(\widetilde{\Psi})\Theta_h, \Phi_h}.
%	\end{align*}
%	This completes the proof of the first part.\\
\end{proof}
\begin{proof}[Proof of $(ii)$] The definition of $\dual{D\widetilde{N}(\widetilde{\Psi})\Theta_h, \Phi_h}$ (resp.  $\dual{D\widetilde{N}({\rm I}_h\widetilde{\Psi})\Theta_h, \Phi_h}$) in \eqref{reduced continuous inf sup} (resp. \eqref{definition of perturbed bilinear form}) and a re-arrangement of terms allows
		\begin{align}\label{inf sup equation2 ferro2}
	&\dual{D\widetilde{N}({\rm I}_h\widetilde{\Psi})\Theta_h, \Phi_h}=	\dual{D\widetilde{N}(\widetilde{\Psi})\Theta_h, \Phi_h}
- 2\big(B_2({\Psi}_{\mathbf{g}}-{\rm{I}}_h{\Psi}_{\mathbf{g}}, \Theta_h, \Phi_h)+ B_2(\widetilde{\Psi}-{\rm{I}}_h\widetilde{\Psi}, \Theta_h, \Phi_h) \big)\notag\\& \quad -3\big(B_3(\widetilde{\Psi}, \widetilde{\Psi}, \Theta_h, \Phi_h)-B_3({\rm{I}}_h\widetilde{\Psi}, {\rm{I}}_h\widetilde{\Psi}, \Theta_h, \Phi_h)\big)-3\big(B_3({\Psi}_{\mathbf{g}}, {\Psi}_{\mathbf{g}}, \Theta_h, \Phi_h)-B_3({\rm{I}}_h{\Psi}_{\mathbf{g}}, {\rm{I}}_h{\Psi}_{\mathbf{g}}, \Theta_h, \Phi_h)\big)\notag\\& \quad- 6\big(B_3({\Psi}_{\mathbf{g}},\widetilde{\Psi},  \Theta_h, \Phi_h)-B_3( {\rm{I}}_h{\Psi}_{\mathbf{g}},{\rm{I}}_h\widetilde{\Psi}, \Theta_h, \Phi_h)\big)
		=\dual{D\widetilde{N}(\widetilde{\Psi})\Theta_h, \Phi_h}-\sum_{i=1}^{4}T_i.%+T_2+T_3+T_4 +T_5.
	\end{align}
		Lemma \ref{Properties of bilinear trilinear and quadrilinear forms}$(ii)$  with $\boldsymbol{\eta}:={\Psi}_{\mathbf{g}} $ (resp. $\boldsymbol{\eta}:=\widetilde{\Psi} $) for the first term (resp. second term) in $T_1$,  and  \eqref{bound for Psig}  implies  
	\begin{align*}
	&\frac{1}{2}T_1:=	B_2({\Psi}_{\mathbf{g}}- {\rm{I}}_h{\Psi}_{\mathbf{g}}, \Theta_h, \Phi_h) +B_2(\widetilde{\Psi}-{\rm{I}}_h\widetilde{\Psi}, \Theta_h, \Phi_h)  \lesssim  \ell^{-1}h\vertiii{\Theta_h}_1\vertiiii{ \Phi_h}_1.
	\end{align*}
	The term $T_2$ (resp. $T_3$) is estimated using Lemma \ref{Properties of bilinear trilinear and quadrilinear forms}$(iii)$ for $\boldsymbol{\xi}:=\boldsymbol{\eta}:=\widetilde{\Psi} $ (resp. $\boldsymbol{\xi}:=\boldsymbol{\eta}:={\Psi}_{\mathbf{g}}$) and \eqref{bound for Psig} below.
	\begin{align*}
	& \frac{1}{3}T_2:=	B_{3}(\widetilde{\Psi}, \widetilde{\Psi}, \Theta_h, \Phi_h)-B_{3}({\rm I}_h\widetilde{\Psi}, {\rm I}_h\widetilde{\Psi}, \Theta_h, \Phi_h)
	%	= 	B_{3}(\widetilde{\Psi}-{\rm I}_h\widetilde{\Psi}, \widetilde{\Psi}, \Theta_h, \Phi_h)+	B_{3}({\rm I}_h\widetilde{\Psi}, \widetilde{\Psi}-{\rm I}_h\widetilde{\Psi}, \Theta_h, \Phi_h) 
	\lesssim 	\ell^{-1}h\vertiii{\Theta_h}_1\vertiiii{ \Phi_h}_1.
	\\&
(\text{resp. }	\frac{1}{3}T_3:=B_3({\Psi}_{\mathbf{g}}, {\Psi}_{\mathbf{g}}, \Theta_h, \Phi_h)-B_3({\rm{I}}_h{\Psi}_{\mathbf{g}}, {\rm{I}}_h{\Psi}_{\mathbf{g}}, \Theta_h, \Phi_h)
	%	=	B_{3}({\Psi}_{\mathbf{g}}-{\rm I}_h{\Psi}_{\mathbf{g}}, {\Psi}_{\mathbf{g}}, \Theta_h, \Phi_h)+	B_{3}({\rm I}_h{\Psi}_{\mathbf{g}}, {\Psi}_{\mathbf{g}}-{\rm I}_h{\Psi}_{\mathbf{g}}, \Theta_h, \Phi_h)
	\lesssim	\ell^{-1}h\vertiii{\Theta_h}_1\vertiiii{ \Phi_h}_1.)
	\end{align*}
	Apply	Lemma \ref{Properties of bilinear trilinear and quadrilinear forms}$(iii)$ with  $\boldsymbol{\xi}:= {\Psi}_{\mathbf{g}},$ $ \boldsymbol{\eta}:=\widetilde{\Psi}$ and utilize \eqref{bound for Psig} to obtain
	\begin{align*}
	\frac{1}{6}T_4:=	 	B_3({\Psi}_{\mathbf{g}},\widetilde{\Psi},  \Theta_h, \Phi_h)-B_3( {\rm{I}}_h{\Psi}_{\mathbf{g}},{\rm{I}}_h\widetilde{\Psi}, \Theta_h, \Phi_h) \lesssim  \ell^{-1}h\vertiii{\Theta_h}_1\vertiiii{ \Phi_h}_1.
	\end{align*}
	%	The linearity of $B_3(\cdot,\cdot,\cdot,\cdot)$ in first and second variables, a re-arrangement of terms along with Lemma \ref{Interpolation estimate} and  $\vertiii{{\Psi}_{\mathbf{g}}}_{2}\leq \vertiii{\mathbf{g}}_{\frac{3}{2}}$ yields
	%$$	B_3(\widetilde{\Psi}, {\Psi}_{\mathbf{g}}, \Theta_h, \Phi_h)-B_3({\rm{I}}_h\widetilde{\Psi}, {\rm{I}}_h{\Psi}_{\mathbf{g}}, \Theta_h, \Phi_h) = B_3(\widetilde{\Psi}, {\Psi}_{\mathbf{g}}-{\rm{I}}_h{\Psi}_{\mathbf{g}}, \Theta_h, \Phi_h)+B_3(\widetilde{\Psi}-{\rm{I}}_h\widetilde{\Psi}, {\rm{I}}_h{\Psi}_{\mathbf{g}}, \Theta_h, \Phi_h) \lesssim  \ell^{-1}h.$$
	The above displayed estimates for $T_1, \cdots, T_4$ substituted in  \eqref{inf sup equation2 ferro2} and the discrete inf-sup condition in $(i)$ leads to $$\sup_{\substack{\Phi_h \in \V_h \\  \vertii{\Phi_h}=1}}\dual{D\widetilde{N}({\rm I}_h\widetilde{\Psi})\Theta_h, \Phi_h}\geq 	\sup_{\substack{\Phi_h \in \V_h \\  \vertii{\Phi_h}=1}}\dual{D\widetilde{N}(\widetilde{\Psi})\Theta_h, \Phi_h} -C_2  \ell^{-1}h\vertiii{\Theta_h}_1\geq  \big(\frac{\beta}{2}-  C_2 \ell^{-1}h \big)\vertiii{\Theta_h}_1,$$ where the positive constant $C_2$ depends on $\abs{c}$, $\vertiii{\widetilde{\Psi}}_2,\vertiii{\mathbf{g}}_{\frac{3}{2},\partial \Omega},$ $C_I, $ and the constants in Sobolev embedding results.
	For a sufficiently small choice of the discretization parameter $h< h_{2}:=\min(h_0,h_1)$ with $h_1<\frac{\beta\ell}{4C_2}$, the proof follows.
\end{proof}
\begin{rem}[A discrete inf-sup condition]\label{discrete inf-sup condition for psi}
	The discrete inf-sup condition established in Theorem \ref{discrete inf sup}$(i)$ is equivalent to $$\displaystyle 0< \frac{\beta}{2}\leq \inf_{\substack{\Theta_h \in \V_h \\  \vertii{\Theta_h}=1}} \sup_{\substack{\Phi_h \in \V_h \\  \vertii{\Phi_h}=1}}\dual{D{N}({\Psi})\Theta_h, \Phi_h}$$ and follows from the identity $\dual{D\widetilde{N}(\widetilde{\Psi})\Theta_h, \Phi_h}=\dual{D{N}({\Psi})\Theta_h, \Phi_h}$ for all $\Theta_h, \Phi_h \in \V_h.$ \qed
\end{rem}
\subsection{Proof of main results}  \label{A priori estimates}
The proofs of the results stated in Section \ref{Main results} are presented here.  The next theorem establishes the existence and uniqueness of the discrete solution that approximates the solution $\widetilde{\Psi}$ of \eqref{reduced continuous nonlinear ferro} and is an application of Brouwer's fixed point theorem.  This  result is required to prove Theorem \ref{energy  norm error estimate ferro}.
%The existence and uniqueness of discrete solution that approximates the solution ${\Psi}$ of \eqref{continuous nonlinear ferro}  is a straightforward application of Theorem \ref{reduced energy norm error estimate} and triangle inequality.
\begin{thm}[Energy norm error estimate to approximate $\widetilde{\Psi}$]\label{reduced energy norm error estimate}
	Let ${\Psi}$ be a regular solution of \eqref{continuous nonlinear ferro} such that \eqref{H2 bound for minimizers} holds and $\widetilde{\Psi}$ solves the non-linear system
	\eqref{reduced continuous nonlinear ferro}.  
	 For
	a given fixed $\ell>0$,   a sufficiently small discretization parameter chosen as $h=O(\ell^{1+\varsigma})$ with $\varsigma>0,$  there exists a unique solution ${\widetilde{\Psi}}_{h}$ to the discrete problem \eqref{reduced discrete nonllinear problem}  that approximates ${\widetilde{\Psi}}$ such that	$$	\vertii{{\widetilde{\Psi}}-{\widetilde{\Psi}}_h} \lesssim  h,$$
	where the constant suppressed in  $'\lesssim'$ is independent of $h$ and $\ell$.
\end{thm}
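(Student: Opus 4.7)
The plan is to set up a Brouwer fixed-point argument on a small ball centred at ${\rm I}_h\widetilde{\Psi}$, using the linearization of $\widetilde{N}$ at ${\rm I}_h\widetilde{\Psi}$. For $\Theta_h \in \V_h$, define $\mu(\Theta_h) \in \V_h$ as the unique solution of
$$\dual{D\widetilde{N}({\rm I}_h\widetilde{\Psi})\mu(\Theta_h),\Phi_h} = \dual{D\widetilde{N}({\rm I}_h\widetilde{\Psi})\Theta_h,\Phi_h} - \widetilde{N}(\Theta_h;\Phi_h), \qquad \forall\,\Phi_h \in \V_h.$$
Well-posedness of $\mu$ is immediate from the perturbed discrete inf-sup condition in Theorem \ref{discrete inf sup}$(ii)$. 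A fixed point of $\mu$ satisfies $\widetilde{N}(\Theta_h;\Phi_h)=0$ for all $\Phi_h \in \V_h$, hence is exactly a solution of \eqref{reduced discrete nonllinear problem}. I will therefore localise the fixed-point search to the ball $\mathbb{B}_R := \{\Theta_h \in \V_h : \vertii{\Theta_h - {\rm I}_h\widetilde{\Psi}} \leq R\}$ for $R$ of order $h$ to be chosen.

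To show $\mu(\mathbb{B}_R)\subset \mathbb{B}_R$, I would apply Theorem \ref{discrete inf sup}$(ii)$ to $\mu(\Theta_h) - {\rm I}_h\widetilde{\Psi}$ and split
$$\dual{D\widetilde{N}({\rm I}_h\widetilde{\Psi})(\mu(\Theta_h) - {\rm I}_h\widetilde{\Psi}),\Phi_h} = -\widetilde{N}({\rm I}_h\widetilde{\Psi};\Phi_h) + \bigl[\dual{D\widetilde{N}({\rm I}_h\widetilde{\Psi})(\Theta_h - {\rm I}_h\widetilde{\Psi}),\Phi_h} - (\widetilde{N}(\Theta_h;\Phi_h) - \widetilde{N}({\rm I}_h\widetilde{\Psi};\Phi_h))\bigr].$$
The consistency term $\widetilde{N}({\rm I}_h\widetilde{\Psi};\Phi_h) = \widetilde{N}({\rm I}_h\widetilde{\Psi};\Phi_h) - \widetilde{N}(\widetilde{\Psi};\Phi_h)$ is expanded as a sum of differences of $A$, $B_1$, $B_2$, $B_3$ evaluated at ${\rm I}_h\widetilde{\Psi}$ and $\widetilde{\Psi}$; by Lemma \ref{Properties of bilinear trilinear and quadrilinear forms}$(i)$--$(iii)$ together with the bounds \eqref{H2 bound for minimizers} and \eqref{bound for Psig}, this is $O(h + \ell^{-1}h^2)$, which is $\lesssim h$ as soon as $h = O(\ell)$. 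The bracketed Taylor-type remainder is a second- and third-order perturbation in $\Theta_h - {\rm I}_h\widetilde{\Psi}$; using the fourth bound in Lemma \ref{Properties of bilinear trilinear and quadrilinear forms}$(iii)$ with $\boldsymbol\eta := {\rm I}_h\widetilde{\Psi}$, $\Theta_1 := {\rm I}_h\widetilde{\Psi}$, $\Theta_2 := \Theta_h$, and analogous estimates for $B_2$, it is controlled by $C\ell^{-1} R \vertii{\Theta_h - {\rm I}_h\widetilde{\Psi}} \leq C\ell^{-1}R^2$. Choosing $R = C_\star h$ and exploiting $h = O(\ell^{1+\varsigma})$ makes $\ell^{-1}R^2 = O(h\,\ell^{\varsigma})$, which is $\leq R/2$ for $\ell$ small enough, so $\mu$ maps $\mathbb{B}_R$ into itself.

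Continuity of $\mu$ on $\mathbb{B}_R$ follows from the same Lipschitz-type inequalities in Lemma \ref{Properties of bilinear trilinear and quadrilinear forms}, applied to differences $\mu(\Theta_h^{(1)}) - \mu(\Theta_h^{(2)})$. Brouwer's fixed-point theorem on the compact convex set $\mathbb{B}_R$ (finite-dimensional) yields a discrete solution $\widetilde{\Psi}_h \in \mathbb{B}_R$, and the triangle inequality with Lemma \ref{Interpolation estimate} gives $\vertii{\widetilde{\Psi} - \widetilde{\Psi}_h} \leq \vertii{\widetilde{\Psi} - {\rm I}_h\widetilde{\Psi}} + \vertii{{\rm I}_h\widetilde{\Psi} - \widetilde{\Psi}_h} \lesssim h$. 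For uniqueness in $\mathbb{B}_R$, subtract the equations satisfied by two candidates $\widetilde{\Psi}_h^{(1)}, \widetilde{\Psi}_h^{(2)}$; the difference $E_h$ satisfies $\dual{D\widetilde{N}({\rm I}_h\widetilde{\Psi})E_h,\Phi_h} = \mathcal{R}(E_h,\Phi_h)$, where Lemma \ref{Properties of bilinear trilinear and quadrilinear forms}$(iii)$ bounds $\mathcal{R}$ by $C\ell^{-1}h\vertii{E_h}\vertii{\Phi_h}$. The inf-sup then yields $\vertii{E_h}\lesssim \ell^{\varsigma}\vertii{E_h}$, forcing $E_h = 0$ for $\ell$ small.

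The main obstacle is the careful bookkeeping of the $\ell$- and $h$-dependencies across the four forms $A, B_1, B_2, B_3$: the cubic term $B_3$ produces Taylor remainders of size $\ell^{-1}R^2$, and the hypothesis $h = O(\ell^{1+\varsigma})$ is exactly what one needs to reduce these to $o(R)$ and to also absorb the analogous $\ell^{-1}h$ factors in the uniqueness step. The nonlinear rearrangement identities collected in the third and fourth inequalities of Lemma \ref{Properties of bilinear trilinear and quadrilinear forms}$(iii)$ are the key algebraic device that allows these estimates to be obtained with clean powers of $\vertii{\Theta_h-{\rm I}_h\widetilde{\Psi}}$ rather than crude bounds.
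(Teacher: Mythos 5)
Your proposal is correct and follows essentially the same route as the paper: the frozen-Jacobian Newton-type map $\mu$ you define coincides, upon expansion, with the paper's explicit $\mu_h$ in \eqref{mu nonlinear map ferro}, and the ball-to-ball argument, Brouwer fixed-point step, and uniqueness argument match the paper's Steps 2--4 of Theorem \ref{reduced energy norm error estimate}. The only things to watch are that the smallness conditions you invoke should be phrased in terms of $h$ (with $\ell$ fixed, as in the theorem statement) rather than ``$\ell$ small enough,'' and the ``bookkeeping'' you acknowledge---expanding the consistency term $\widetilde{N}({\rm I}_h\widetilde{\Psi};\Phi_h)$ into the pieces controlled by Lemma \ref{Properties of bilinear trilinear and quadrilinear forms}, including the ${\rm I}_h\Psi_{\mathbf g}$ versus $\Psi_{\mathbf g}$ contributions---constitutes the bulk of the paper's Step~2 (terms $T_1,\dots,T_{10}$) and must be carried out to verify the claimed $\mathcal{O}(h+\ell^{-1}h^2)$ bound.
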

\begin{proof}
	The proof is divided into four steps.\\
\noindent{\it 	Step 1 (Non-linear map)}. 
%	The key idea of the proof is to construct a non-linear map $\mu_{h} $ such that any fixed point of $\mu_{h}$ is a solution of the discrete non-linear problem \eqref{reduced discrete nonllinear problem} and then establish the existence of a unique fixed point
%	of $\mu_{h}$ using Brouwer’s fixed point theorem.
	 For $\Theta_{h} \in \V_h$, define the non-linear map $\mu_{h}:\V_h \rightarrow \V_h$ by
	\begin{align}\label{mu nonlinear map ferro}
	&\dual{D\widetilde{N}({{\rm I}_{h}\widetilde{\Psi}}) \mu_{h}(\Theta_{h}),\Phi_{h} }:
	= 3B_3(\textrm{I}_{h}\widetilde{\Psi}, \textrm{I}_{h}\widetilde{\Psi},\Theta_{h},\Phi_{h})+ 6B_3(\textrm{I}_{h}{\Psi}_{\mathbf{g}},\textrm{I}_{h}\widetilde{\Psi}, \Theta_{h},\Phi_{h}) - B_3(\Theta_{h},\Theta_{h}, \Theta_{h},\Phi_{h})\notag\\& \quad-3B_3(\textrm{I}_{h}{\Psi}_{\mathbf{g}},\Theta_{h},\Theta_{h},\Phi_{h})+2 B_2(\textrm{I}_{h}\widetilde{\Psi}, \Theta_{h},\Phi_{h}) -  B_2(\Theta_{h}, \Theta_{h},\Phi_{h}) - N(\textrm{I}_{h}{\Psi}_{\mathbf{g}};\Phi_{h}) \text{ for all }\Phi_{h} \in \V_h.
	\end{align}
The map  $\mu_{h}$ is well-defined follows from Theorem \ref{discrete inf sup}$(ii)$ and any fixed point of $\mu_{h}$ is a solution of the discrete non-linear problem \eqref{reduced discrete nonllinear problem}.
% The proof is divided into three steps. 
 %Next we prove that for sufficiently small value of the discretization parameter $h$, $\mu_{h}$ maps the ball $\mathbb{B}_R(\Ihpsi)$ to itself and also it is a contraction map.	
	\medskip 
	
\noindent{\it 	Step 2 (Mapping of ball to ball)}. Define $\mathbb{B}_R(\textrm{I}_h\widetilde{\Psi}):= \{\Phi_h \in \V_h: \vertii{\textrm{I}_h\widetilde{\Psi}-\Phi_{h}} \leq R\}.$ This step establishes that  there exists a positive constant $R(h)$ such that $\Theta_{h} \in \mathbb{B}_{R(h)}({\rm I}_{h}\widetilde{\Psi})$ implies $\mu_{h}(\Theta_{h}) \in \mathbb{B}_{R(h)}({\rm I}_{h}\widetilde{\Psi})$ for all   $\Theta_{h} \in \V_{h} .$
%$$	\vertii{\Theta_{h} - {\rm I}_{h}\widetilde{\Psi} } \leq R(h) \implies \vertii{\mu_{h}(\Theta_{h}) - {\rm I}_{h}\widetilde{\Psi}} \leq R(h) \text{ for all } \Theta_{h} \in \V_{h} .$$	
\medskip

\noindent	%Let $\Theta_{h} \in \mathbb{B}_{R(h)}({\rm I}_{h}\widetilde{\Psi}).$  
	Theorem \ref{discrete inf sup}$(ii)$ and the linearity of $ \dual{D\widetilde{N}({{\rm I}_{h}\widetilde{\Psi}}) \cdot ,\cdot }$ yields that there exists a $\Phi_{h}\in \V_{h}$ with $\vertii{\Phi_{h}}=1$ such that  
$$	\frac{\beta}{4}\vertii{{{\rm I}_{h}\widetilde{\Psi}}- \mu_{h}(\Theta_{h})} \leq
 %\dual{D\widetilde{N}({{\rm I}_{h}\widetilde{\Psi}}) ({{\rm I}_{h}\widetilde{\Psi}}- \mu_{h}(\Theta_{h})),\Phi_{h} }= 
 \dual{D\widetilde{N}({{\rm I}_{h}\widetilde{\Psi}}) {{\rm I}_{h}\widetilde{\Psi}},\Phi_{h} }-\dual{D\widetilde{N}({{\rm I}_{h}\widetilde{\Psi}})  \mu_{h}(\Theta_{h}),\Phi_{h} }.$$
The definition of the linearized operator $\dual{D\widetilde{N}({{\rm I}_{h}\widetilde{\Psi}}),\cdot}$ in \eqref{definition of perturbed bilinear form}, the non-linear map in \eqref{mu nonlinear map ferro}, the consistency $\widetilde{N}(\widetilde{\Psi}; \Phi_h)	=0$,   and a  re-arrangement of the terms leads to
	\begin{align}\label{ball to ball equation1 ferro1}
	&	\vertii{{{\rm I}_{h}\widetilde{\Psi}}- \mu_{h}(\Theta_{h})} \lesssim \big(A({\rm I}_{h}\widetilde{\Psi}-\widetilde{\Psi}, \Phi_h)+ B_1({\rm I}_{h}\widetilde{\Psi}-\widetilde{\Psi}, \Phi_h) \big)+ \big(B_2({\rm{I}}_h\widetilde{\Psi},{\rm{I}}_h\widetilde{\Psi}, \Phi_{h} ) - B_2(\widetilde{\Psi},\widetilde{\Psi},\Phi_{h})\big)\notag\\&\quad+B_2(\textrm{I}_{h}\widetilde{\Psi}-\bTheta_{h},\textrm{I}_{h}\widetilde{\Psi}-\bTheta_{h} , \Phi_{h} ) +\big(B_3({\rm{I}}_h\widetilde{\Psi}, {\rm{I}}_h\widetilde{\Psi},{\rm I}_{h}\widetilde{\Psi}, \Phi_h )- B_3(\widetilde{\Psi}, \widetilde{\Psi},\widetilde{\Psi}, \Phi_h )\big)  +\big(	2B_3({\rm{I}}_h\widetilde{\Psi}, {\rm{I}}_h\widetilde{\Psi},{\rm I}_{h}\widetilde{\Psi}, \Phi_h ) \notag\\&\quad- 3B_3({\rm{I}}_h\widetilde{\Psi}, {\rm{I}}_h\widetilde{\Psi},\Theta_{h}, \Phi_h )+B_3(\Theta_{h}, \Theta_{h},\Theta_{h}, \Phi_h )\big)
	+2\big( B_2( {\rm{I}}_h{\Psi}_{\mathbf{g}},\textrm{I}_{h}\widetilde{\Psi}, \Phi_h )- B_2({\Psi}_{\mathbf{g}}, \widetilde{\Psi}, \Phi_h )\big)
	 \notag \\&\quad+3\big(B_3( {\rm{I}}_h{\Psi}_{\mathbf{g}},{\rm{I}}_h\widetilde{\Psi}, {\rm{I}}_h\widetilde{\Psi}, \Phi_h)-B_3({\Psi}_{\mathbf{g}}, \widetilde{\Psi},\widetilde{\Psi},\Phi_h) \big)+3\big( B_3({\rm{I}}_h{\Psi}_{\mathbf{g}}, {\rm{I}}_h{\Psi}_{\mathbf{g}}, {\rm{I}}_h\widetilde{\Psi}, \Phi_h)-B_3({\Psi}_{\mathbf{g}},{\Psi}_{\mathbf{g}},\widetilde{\Psi}, \Phi_h)\big)\notag \\&\quad+3B_3(\textrm{I}_{h}{\Psi}_{\mathbf{g}},{\rm{I}}_h\widetilde{\Psi}-\Theta_{h}, {\rm{I}}_h\widetilde{\Psi}-\Theta_{h}, \Phi_h )
	 +\big( N(\textrm{I}_{h}{\Psi}_{\mathbf{g}};\Phi_{h}) - N({\Psi}_{\mathbf{g}};\Phi_{h}) \big)
	 =: \sum_{i=1}^{10}T_i.%+T_2+T_3+T_4+T_5+T_6+T_7+T_8+T_9+T_{10}.
	\end{align}
	Here the term $T_3$ (resp. $T_9$) is a re-grouping of terms as
	\begin{align*}
&	B_2( {\rm{I}}_h\widetilde{\Psi}, {\rm{I}}_h\widetilde{\Psi}, \Phi_h ) -2B_2({\rm{I}}_h\widetilde{\Psi}, \Theta_{h}, \Phi_h ) +B_2(\Theta_h,\Theta_h, \Phi_h )\\&= B_2( {\rm{I}}_h\widetilde{\Psi} , {\rm{I}}_h\widetilde{\Psi}-\Theta_h, \Phi_h )-B_2({\rm{I}}_h\widetilde{\Psi}-\Theta_h,\Theta_h, \Phi_h )=B_2({\rm{I}}_h\widetilde{\Psi}-\Theta_h,{\rm{I}}_h\widetilde{\Psi}-\Theta_h, \Phi_h )\\&
(\text{resp. } B_3(\textrm{I}_{h}\Psi_{\mathbf{g}}, {\rm{I}}_h\widetilde{\Psi}, {\rm{I}}_h\widetilde{\Psi}, \Phi_h ) -2B_3({\rm{I}}_h\Psi_{\mathbf{g}},{\rm{I}}_h\widetilde{\Psi}, \Theta_{h}, \Phi_h ) +B_3(\textrm{I}_{h}\Psi_{\mathbf{g}},\Theta_h,\Theta_h, \Phi_h )\\&=B_3(\textrm{I}_{h}{\Psi}_{\mathbf{g}},{\rm{I}}_h\widetilde{\Psi},{\rm{I}}_h\widetilde{\Psi}-\Theta_{h},  \Phi_h )-B_3(\textrm{I}_{h}{\Psi}_{\mathbf{g}},{\rm{I}}_h\widetilde{\Psi}-\Theta_{h}, \Theta_{h}, \Phi_h )=B_3(\textrm{I}_{h}{\Psi}_{\mathbf{g}},{\rm{I}}_h\widetilde{\Psi}-\Theta_{h}, {\rm{I}}_h\widetilde{\Psi}-\Theta_{h}, \Phi_h ))
	\end{align*}
% (resp. $3(B(\textrm{I}_{h}\Psi_{\mathbf{g}}, {\rm{I}}_h\widetilde{\Psi}, {\rm{I}}_h\widetilde{\Psi}, \Phi_h ) -2B({\rm{I}}_h\Psi_{\mathbf{g}},{\rm{I}}_h\widetilde{\Psi}, \Theta_{h}, \Phi_h ) +B(\textrm{I}_{h}\Psi_{\mathbf{g}},\Theta_h,\Theta_h, \Phi_h ))=B_3(\textrm{I}_{h}{\Psi}_{\mathbf{g}},{\rm{I}}_h\widetilde{\Psi}-\Theta_{h}, {\rm{I}}_h\widetilde{\Psi}-\Theta_{h}, \Phi_h )$) 
 and  achieved by utilizing the linearity and symmetry of $B_2(\cdot,\cdot,\cdot)$ (resp. $B_3(\cdot,\cdot,\cdot,\cdot)$) in first two variables (resp. second and third variables).
%The definition of $\dual{D{N}({\rm{I}}_h\widetilde{\Psi})\cdot , \cdot}$, \eqref{continuous nonlinear ferro} and re-arrangement of the terms in $T_1$ leads to 
%	\begin{align*}%\label{ball to ball equation1 ferro2}
%T_1:=&\big(A({\rm I}_{h}\widetilde{\Psi}-\widetilde{\Psi}, \Phi_h)+ B_1({\rm I}_{h}\widetilde{\Psi}-\widetilde{\Psi}, \Phi_h) \big)+ \big(B_2({\rm{I}}_h\widetilde{\Psi},{\rm{I}}_h\widetilde{\Psi}, \Phi_{h} ) + B_2(\widetilde{\Psi},\widetilde{\Psi},\Phi_{h})\big)\notag\\&+B_2(\textrm{I}_{h}\widetilde{\Psi}-\bTheta_{h},\textrm{I}_{h}\widetilde{\Psi}-\bTheta_{h} , \Phi_{h} ) +\big(B_3({\rm{I}}_h\widetilde{\Psi}, {\rm{I}}_h\widetilde{\Psi},{\rm I}_{h}\widetilde{\Psi}, \Phi_h )- B_3(\widetilde{\Psi}, \widetilde{\Psi},\widetilde{\Psi}, \Phi_h )\big) \notag\\& +\big(	2B_3({\rm{I}}_h\widetilde{\Psi}, {\rm{I}}_h\widetilde{\Psi},{\rm I}_{h}\widetilde{\Psi}, \Phi_h ) - 3B_3({\rm{I}}_h\widetilde{\Psi}, {\rm{I}}_h\widetilde{\Psi},\Theta_{h}, \Phi_h )+B_3(\Theta_{h}, \Theta_{h},\Theta_{h}, \Phi_h )\big).
%\end{align*}
Lemma \ref{Properties of bilinear trilinear and quadrilinear forms}$(i)$ (resp. $(ii)$) for  $\boldsymbol{\eta}:=\widetilde{\Psi} $ (resp. $\boldsymbol{\xi}:=\boldsymbol{\eta}:=\widetilde{\Psi} $) shows
%	A use of Lemmas \ref{boundedness_all ferro}$(i)$, \ref{Interpolation estimate} and $\vertiiii{\Phi_{h}}_1=1$ yields
	\begin{align*}%\label{ball to ball equation2 ferro}
&T_1:=A({\rm I}_{h}\widetilde{\Psi}-\widetilde{\Psi}, \Phi_h)+ B_1({\rm I}_{h}\widetilde{\Psi}-\widetilde{\Psi}, \Phi_h) \lesssim h+ \ell^{-1}h^2.\\&
(\text{resp. }T_2:=B_2({\rm{I}}_h\widetilde{\Psi},{\rm{I}}_h\widetilde{\Psi}, \Phi_{h} ) - B_2(\widetilde{\Psi},\widetilde{\Psi},\Phi_{h})\lesssim \ell^{-1}h^2.)
	\end{align*}
	For $\e:= {\rm I}_{h}\widetilde{\Psi}-\Theta_{h},$ Lemma   \ref{boundedness_all ferro}$(ii)$ with $\vertiiii{\Phi_{h}}_1=1$  shows
	$$T_3:=B_2(\textrm{I}_{h}\widetilde{\Psi}-\bTheta_{h},\textrm{I}_{h}\widetilde{\Psi}-\bTheta_{h} , \Phi_{h} )  \lesssim \ell^{-1} \vertii{\textrm{I}_{h}\widetilde{\Psi}-\bTheta_{h}}^2=\ell^{-1}\vertii{{\e}}^2 .$$
		The estimate for $T_4$ (resp. $T_5$) follows from the second (resp. third) inequality of Lemma \ref{Properties of bilinear trilinear and quadrilinear forms}$(iii)$ for $\boldsymbol{\xi}:=\boldsymbol{\eta}:=\widetilde{\Psi} $ (resp. $\boldsymbol{\eta}:= {\rm{I}}_h\widetilde{\Psi}$), and Lemma \ref{Interpolation estimate}. 
	\begin{align*}
&	T_4:= B_3({\rm{I}}_h\widetilde{\Psi}, {\rm{I}}_h\widetilde{\Psi},{\rm I}_{h}\widetilde{\Psi}, \Phi_h )- B_3(\widetilde{\Psi}, \widetilde{\Psi},\widetilde{\Psi}, \Phi_h ) \lesssim \ell^{-1}h^2.\\&
(\text{resp. } T_5:= 	2B_3({\rm{I}}_h\widetilde{\Psi}, {\rm{I}}_h\widetilde{\Psi},{\rm I}_{h}\widetilde{\Psi}, \Phi_h ) - 3B_3({\rm{I}}_h\widetilde{\Psi}, {\rm{I}}_h\widetilde{\Psi},\Theta_{h}, \Phi_h )+B_3(\Theta_{h}, \Theta_{h},\Theta_{h}, \Phi_h )\lesssim \ell^{-1}\vertii{\e}^2(\vertii{\e}+1).)
	\end{align*}
% The estimates of the following two expressions follows technique used for	$T_3,T_4$ in \cite[Theorem 5.1]{DGFEM}.
%	\begin{align*}%\label{ball to ball equation3 ferro}
%	&B_3({\rm{I}}_h\widetilde{\Psi}, {\rm{I}}_h\widetilde{\Psi},{\rm I}_{h}\widetilde{\Psi}, \Phi_h ) - B_3(\widetilde{\Psi}, \widetilde{\Psi},\widetilde{\Psi}, \Phi_h ) \lesssim  \ell^{-1}h^2,\\&
%		2B_3({\rm{I}}_h\widetilde{\Psi}, {\rm{I}}_h\widetilde{\Psi},{\rm I}_{h}\widetilde{\Psi}, \Phi_h ) - 3B_3({\rm{I}}_h\widetilde{\Psi}, {\rm{I}}_h\widetilde{\Psi},\Theta_{h}, \Phi_h )+B_3(\Theta_{h}, \Theta_{h},\Theta_{h}, \Phi_h ) \lesssim \ell^{-1}\vertii{\e}^2(\vertii{\e}+\vertiii{ {\rm{I}}_h\widetilde{\Psi}}_1).
%	\end{align*}
%A combination of the above displayed five estimates and \eqref{ball to ball equation1 ferro2} leads to
%Therefore, we obtain 
%$$T_1\lesssim h+ \ell^{-1}h^2+ \ell^{-1}\vertii{\e}^2(\vertii{\e}+1).$$ 
Lemma \ref{Properties of bilinear trilinear and quadrilinear forms}$(ii)$ (resp. $(iii)$) with $\boldsymbol{\xi}:={\Psi}_{\mathbf{g}}$, $\boldsymbol{\eta}:=\widetilde{\Psi}$ and \eqref{bound for Psig} shows
\begin{align*}%\label{ball to ball equation10 ferro}
&\frac{1}{2}T_6:= B_2( {\rm{I}}_h{\Psi}_{\mathbf{g}},\textrm{I}_{h}\widetilde{\Psi}, \Phi_h )- B_2({\Psi}_{\mathbf{g}},  \widetilde{\Psi},\Phi_h )
%&= B_2(\Ihpsi_{\mathbf{g}}-\Psi_{\mathbf{g}} , \textrm{I}_{h}\widetilde{\Psi},\Phi_{h} ) + B_2(\Psi_{\mathbf{g}} , \textrm{I}_{h}\widetilde{\Psi}-\widetilde{\Psi},\Phi_{h} )
% \\& \leq   \vertiii{\Ihpsi_{\mathbf{g}}-\Psi_{\mathbf{g}}}_0\vertiii{\textrm{I}_{h}\widetilde{\Psi}}_{\infty} +\vertiii{\Psi_{\mathbf{g}}}_2\vertiii{\textrm{I}_{h}\widetilde{\Psi}-\widetilde{\Psi}}_0
 \lesssim  \ell^{-1}h^2 .
%\end{align*}
%	The linearity of $B_3$ in first three variables,
%	re-arrangement of terms and  Lemmas \ref{boundedness_all ferro}$(ii)$ and  \ref{Interpolation estimate}   yields 
%	\begin{align*}%\label{ball to ball equation5 ferro}
\\&	(\text{resp. }\frac{1}{3}T_7:=	B_3(\textrm{I}_{h}{\Psi}_{\mathbf{g}}, {\rm{I}}_h\widetilde{\Psi}, {\rm{I}}_h\widetilde{\Psi},\Phi_h )- B_3({\Psi}_{\mathbf{g}},\widetilde{\Psi}, \widetilde{\Psi}, \Phi_h ) 
%	\notag \\&= 3(B_3(\textrm{I}_{h}{\Psi}_{\mathbf{g}}-{\Psi}_{\mathbf{g}},{\rm{I}}_h\widetilde{\Psi}, {\rm{I}}_h\widetilde{\Psi}, \Phi_h ) +B_3({\Psi}_{\mathbf{g}},{\rm{I}}_h\widetilde{\Psi}-\widetilde{\Psi},{\rm{I}}_h\widetilde{\Psi} , \Phi_{h} ) + B_3({\Psi}_{\mathbf{g}},\widetilde{\Psi},{\rm{I}}_h\widetilde{\Psi}-\widetilde{\Psi},\Phi_{h}) \\&
%	\lesssim  \vertiii{\textrm{I}_{h}\widetilde{\Psi}}_{\infty}^2\vertiii{\Ihpsi_{\mathbf{g}}-\Psi_{\mathbf{g}}}_0+\vertiii{\textrm{I}_{h}\widetilde{\Psi}}_{\infty}\vertiii{{\Psi}_{\mathbf{g}}}_{2}\vertiii{\textrm{I}_{h}\widetilde{\Psi}-\widetilde{\Psi}}_0+\vertiii{\widetilde{\Psi}}_{2}\vertiii{{\Psi}_{\mathbf{g}}}_{2} \vertiii{\textrm{I}_{h}\widetilde{\Psi}-\widetilde{\Psi}}_0
	\lesssim  \ell^{-1}h^2.)
	\end{align*}
	Apply	Lemma \ref{Properties of bilinear trilinear and quadrilinear forms}$(iii)$ with $\boldsymbol{\xi}:=\widetilde{\Psi}$, $\boldsymbol{\eta}:={\Psi}_{\mathbf{g}}$ and \eqref{bound for Psig} to obtain
	\begin{align*}%\label{ball to ball equation7 ferro}
	\frac{1}{3}T_8:&=B_3({\rm{I}}_h{\Psi}_{\mathbf{g}}, {\rm{I}}_h{\Psi}_{\mathbf{g}},\textrm{I}_{h}\widetilde{\Psi}, \Phi_h )- B_3({\Psi}_{\mathbf{g}}, {\Psi}_{\mathbf{g}},\widetilde{\Psi}, \Phi_h )
	 \lesssim  \ell^{-1}h^2.
	\end{align*}
 Lemmas \ref{boundedness_all ferro}(iii), \ref{Interpolation estimate} and \eqref{bound for Psig} leads to
	\begin{align*}%\label{ball to ball equation6 ferro}
	\frac{1}{3}T_9:
	%&=	3(B_3(\textrm{I}_{h}{\Psi}_{\mathbf{g}}, {\rm{I}}_h\widetilde{\Psi}, {\rm{I}}_h\widetilde{\Psi},\Phi_h ) -2B_3({\rm{I}}_h{\Psi}_{\mathbf{g}},{\rm{I}}_h\widetilde{\Psi}, \Theta_{h}, \Phi_h ) +B_3(\textrm{I}_{h}{\Psi}_{\mathbf{g}}, \Theta_h,\Theta_h,\Phi_h ))\notag \\&
	=B_3(\textrm{I}_{h}{\Psi}_{\mathbf{g}},{\rm{I}}_h\widetilde{\Psi}-\Theta_{h},{\rm{I}}_h\widetilde{\Psi}-\Theta_{h}, \Phi_h )\lesssim  \ell^{-1}\vertii{\e}^2.
	\end{align*}
	The definition of $N(\cdot;\cdot)$ in \eqref{continuous nonlinear ferro}, a re-arrangement of terms, Lemma \ref{Properties of bilinear trilinear and quadrilinear forms}$(i)$-$(iii)$ with $\boldsymbol{\xi}:=\boldsymbol{\eta}:={\Psi}_{\mathbf{g}}$, and \eqref{bound for Psig}  yields  
	%four terms, which are similar to some of the re-grouped terms obtained in $T_1.$ The proof follows similar line for the terms corresponding to ${\Psi}_{\mathbf{g}}$ here and  apply Lemmas \ref{boundedness_all ferro}, \ref{Interpolation estimate} to obtain
	\begin{align*}%\label{ball to ball equation11 ferro}
		T_{10}&:=	 N(\textrm{I}_{h}{\Psi}_{\mathbf{g}};\Phi_{h}) - N({\Psi}_{\mathbf{g}};\Phi_{h})
	=\big(A(\textrm{I}_{h}{\Psi}_{\mathbf{g}}-{\Psi}_{\mathbf{g}},\Phi_{h})+ B_1(\textrm{I}_{h}{\Psi}_{\mathbf{g}}-{\Psi}_{\mathbf{g}},\Phi_{h})\big)+\big(B_2(\textrm{I}_{h}{\Psi}_{\mathbf{g}},\textrm{I}_{h}{\Psi}_{\mathbf{g}},\Phi_{h})\notag\\&\quad-B_2({\Psi}_{\mathbf{g}},{\Psi}_{\mathbf{g}},\Phi_{h})\big)+\big(B_3(\textrm{I}_{h}{\Psi}_{\mathbf{g}},\textrm{I}_{h}{\Psi}_{\mathbf{g}},\textrm{I}_{h}{\Psi}_{\mathbf{g}},\Phi_{h})-B_3({\Psi}_{\mathbf{g}},{\Psi}_{\mathbf{g}},{\Psi}_{\mathbf{g}},\Phi_{h})\big)  %\notag\\&
	\lesssim h+\ell^{-1}h^2.
	\end{align*}
	Substitute the estimates for $T_1, \ldots, T_{10}$ in \eqref{ball to ball equation1 ferro1} and utilize  $\vertiii{\e}_1 \leq R(h)$ to obtain %A use of discrete inf sup condition in Lemma \ref{discrete inf sup} yields that there exists a $\Phi_{h}\in \V_{h}$ with $\vertii{\Phi_{h}}=1$ such that  
	\begin{align*}%\label{ball to ball equation12 ferro}
	\vertii{{{\rm I}_{h}\widetilde{\Psi}}- \mu_{h}(\Theta_{h})} \leq C_3( h+ \ell^{-1}h^2+ \ell^{-1}R(h)^2(R(h)+1)), 
	\end{align*}
	where the constant $C_3$ is independent of $h$ and $\ell.$ 
		Assume $h\leq \ell^{1+\varsigma}$ with $\varsigma>0$ so that $\ell^{-1}h\leq h^{\frac{\varsigma}{1+\varsigma}}.$ Choose $R(h) = 2C_3 h.$ 
	For $h< h_{4}:=\min({h_3,h_2})
	$ with $h_{3}^{\frac{\varsigma}{1+\varsigma}}< \frac{1}{2(1+4C^3_2)^2}< \frac{1}{2}$, 
	\begin{align*}
	\vertiii{\textrm{I}_{h}\widetilde{\Psi} -\mu_{h}(\Theta_h)}_1
	%\leq & C_3 h+C_3h^{1+\varsigma}+ 4 C_3^3h^{2\varsigma}(2C_3 h+1)  \\&
	 \leq  C_3 h\left(1+ h^{\frac{\varsigma}{1+\varsigma}}(1+4C_3^2)+8C_3^3hh^{\frac{\varsigma}{1+\varsigma}} \right)\leq { C_3 h\bigg(1+\frac{1}{2}+\frac{1}{2}\frac{8C_3^3 h}{(1+4C_3^2)^2}\bigg)} .
	%\\&\leq Ch(1+\frac{1}{2}(\frac{1}{4}+1))\leq 2Ch=R(h).
	\end{align*}
	Since $h<h_{3}< \frac{1}{2^{\frac{1+\varsigma}{\varsigma}}}<1$ and $\frac{8C_3^3}{(1+4C_3^2)^2}<1,$ $\vertiii{\textrm{I}_{h}\widetilde{\Psi} -\mu_{h}(\Theta_{h})}_1\leq 2C_3 h=R(h).$ 
	\medskip
	
	\noindent	{\it 	Step 3 ($\mu_h$ is a contraction)}.  
%	In this step, we prove that for $\Theta_1,  \Theta_2 \in \mathbb{B}_{R(h)}({\rm I}_{h}\widetilde{\Psi})$,
%$$	\vertii{\mu_{h}(\Theta_1)-\mu_{h}(\Theta_2)}\lesssim  h^{\frac{\varsigma}{1+\varsigma}}\vertii{\Theta_1-\Theta_2}.$$
Let $\Theta_{1}, \Theta_{2} \in \mathbb{B}_{R(h)}({\rm I}_{h}\widetilde{\Psi})$ and set  $\e_1:= \textrm{I}_{h}\widetilde{\Psi}-\Theta_1,$ $\e_2:= \textrm{I}_{h}\widetilde{\Psi}-\Theta_2,$ and $\e_3:= \Theta_1-\Theta_2.$	
The linearity of $\dual{D\widetilde{N}({{\rm I}_{h}\widetilde{\Psi}}) \cdot,\cdot},$ \eqref{mu nonlinear map ferro},  and a re-arrangement of terms yields that for all $\Phi_h\in \V_h,$
	\begin{align}\label{contraction equation1 ferro}
	&\dual{D\widetilde{N}({{\rm I}_{h}\widetilde{\Psi}}) (\mu_{h}(\Theta_1)- \mu_{h}(\Theta_2)),\Phi_{h} }=\dual{D\widetilde{N}({{\rm I}_{h}\widetilde{\Psi}}) \mu_{h}(\Theta_1) ,\Phi_h } -\dual{D\widetilde{N}({{\rm I}_{h}\widetilde{\Psi}}) \mu_{h}(\Theta_2),\Phi_{h} }\notag\\&
%	=(2 B_2(\textrm{I}_{h}\widetilde{\Psi},\Theta_{1}, \Phi_{h} ) - B_2(\Theta_1,\Theta_1, \Phi_{h})-2 B_2(\textrm{I}_{h}\widetilde{\Psi},\Theta_{2}, \Phi_{h} )+ B_2(\Theta_2,\Theta_2, \Phi_{h}))+(6B_3(\textrm{I}_{h}{\Psi}_{\mathbf{g}} ,\textrm{I}_{h}\widetilde{\Psi},\Theta_{1}, \Phi_{h} ) \notag\\&\quad -3B_3(\textrm{I}_{h}{\Psi}_{\mathbf{g}},\Theta_1,\Theta_1, \Phi_{h})-6B_3(\textrm{I}_{h}{\Psi}_{\mathbf{g}} ,\textrm{I}_{h}\widetilde{\Psi},\Theta_{2}, \Phi_{h} )+3B( \textrm{I}_{h}{\Psi}_{\mathbf{g}},\Theta_2,\Theta_2,\Phi_{h}))+
%	(3B_3(\textrm{I}_{h}\widetilde{\Psi},\textrm{I}_{h}\widetilde{\Psi}, \Theta_1 , \Phi_{h} )\notag\\&	\quad- B_3(\Theta_1,\Theta_1, \Theta_1,\Phi_{h}) -3B(\textrm{I}_{h}\widetilde{\Psi},\textrm{I}_{h}\widetilde{\Psi}, \Theta_2 , \Phi_{h} )+B_3(\Theta_2,\Theta_2,\Theta_2,\Phi_{h}))\notag\\&
= \big(B_2(\e_1,\e_3, \Phi_{h}) + B_2(\e_2,\e_3, \Phi_{h}) \big)+3\big( B_3(\textrm{I}_{h}{\Psi}_{\mathbf{g}},\e_1,\e_3 ,\Phi_{h})+B_3(\textrm{I}_{h}{\Psi}_{\mathbf{g}},\e_2, \e_3,\Phi_{h})	\big)\notag\\&\quad+\big(3B_3(\textrm{I}_{h}\widetilde{\Psi},\textrm{I}_{h}\widetilde{\Psi}, \Theta_1 , \Phi_{h} )
- B_3(\Theta_1,\Theta_1, \Theta_1,\Phi_{h})-3B_3(\textrm{I}_{h}\widetilde{\Psi},\textrm{I}_{h}\widetilde{\Psi}, \Theta_2 , \Phi_{h} )+B_3(\Theta_2,\Theta_2,\Theta_2,\Phi_{h})\big)
	\notag\\&=: T_1'+T_2'+T_3'. 
	\end{align}
 %Since $ \Theta_1,  \Theta_2 \in \mathbb{B}_{R(h)}({\rm I}_{h}\widetilde{\Psi})$,
 Note that $\vertii{\e_1} \leq 2C_3 h$, $\vertii{\e_2} \leq 2C_3 h$. 
 This combined with Lemma \ref{boundedness_all ferro}$(ii)$-$(iii)$ and \eqref{bound for Psig} implies 
 \begin{align*}
&T_1'
%= B_2(\textrm{I}_{h}\widetilde{\Psi}-\Theta_1,\Theta_1-\Theta_2, \Phi_{h}) + B_2(\textrm{I}_{h}\widetilde{\Psi}-\Theta_2,\Theta_1-\Theta_2, \Phi_{h})  
\lesssim  \ell^{-1}h\vertii{\e_3} \vertii{\Phi_h}
 \text{ 	and }
T_2' 
%:=3(B_3(\Theta_2- \Theta_{1}, \Theta_2-\textrm{I}_{h}\widetilde{\Psi},\textrm{I}_{h}{\Psi}_{\mathbf{g}},\Phi_{h})+ B_3(\Theta_1-\textrm{I}_{h}\widetilde{\Psi},\Theta_2- \Theta_{1} ,\textrm{I}_{h}{\Psi}_{\mathbf{g}},\Phi_{h}))
 \lesssim  \ell^{-1}h\vertii{{\e_3}}\vertii{\Phi_h}.
 \end{align*}
 The estimation of the term $T_3'$  utilizes fourth inequality of Lemma \ref{Properties of bilinear trilinear and quadrilinear forms}$(iii)$ with $\boldsymbol{\eta}:=\textrm{I}_{h}\widetilde{\Psi},$   Lemma \ref{Interpolation estimate}, and  $ \vertii{\e_1} \leq 2C_3 h$,  $\vertii{\e_2} \leq 2C_3 h$.
%Lemma \ref{Properties of bilinear trilinear and quadrilinear forms}$(iii)$ and $\vertii{\e_1} \leq 2C_3 h$, $\vertii{\e_2} \leq 2C_3 h$ yields
	\begin{align*}%\label{contraction equation2 ferro}
T_3'\lesssim \ell^{-1}\vertiii{\e_3}_1(\vertiii{\e_1}_1^2+\vertiii{\e_2}_1^2 +(\vertiii{\e_1}_1+\vertiii{\e_2}_1)\vertiii{\textrm{I}_{h}\widetilde{\Psi}}_1) \vertiii{\Phi_{h}}_1	\lesssim  \ell^{-1}h(1+ h)\vertii{\e_3} \vertii{\Phi_h}.
	\end{align*}
	Substitute the above displayed three estimates for $T_1' ,\cdots ,T_3' $ in \eqref{contraction equation1 ferro}. This plus the discrete inf-sup condition in Lemma \ref{discrete inf sup}$(ii)$ implies that there exists a $\Phi_{h} \in \V_h$ with $\vertii{\Phi_{h}}=1$ such that 
	\begin{align*}%\label{contraction equation5 ferro}
	\vertii{\mu_{h}(\Theta_1)- \mu_{h}(\Theta_2)} \leq  4\beta^{-1}\dual{D N({{\rm I}_{h}\widetilde{\Psi}}) (\mu_{h}(\Theta_1)- \mu_{h}(\Theta_2)),\Phi_{h} }
	\lesssim  \ell^{-1}h(1+ h) \vertii{\Theta_1-\Theta_2}.
	\end{align*}
 The assumption $h\leq \ell^{1+\varsigma}$ allows %$\ell^{-1}h\leq h^{\frac{\varsigma}{1+\varsigma}}$.
	$$	\vertii{\mu_{h}(\Theta_1)-\mu_{h}(\Theta_2)}\lesssim  h^{\frac{\varsigma}{1+\varsigma}}\vertii{\Theta_1-\Theta_2},$$
	 where 	the hidden constant in $"\lesssim"$ depends on $\vertiii{\widetilde{\Psi}}_{2}$, $\vertiii{\mathbf{g}}_{\frac{3}{2}}$, $\beta$, $C_I$, $C_3$.
	 % and the constants in Sobolev embedding results.
	\medskip
	
\noindent {\it Step 4 (Existence and uniqueness)}. The nonlinear map $\mu_h$ is well-defined, continuous and maps a closed convex subset $\mathbb{B}_R(\textrm{I}_{h}\widetilde{\Psi})$ of a Hilbert space $\V_{h}$ to itself. Therefore, Brouwer's fixed point theorem \cite{KesavaTopicsFunctinal} and {\it Step 3} imply the existence and uniqueness of the fixed point, say $\widetilde{\Psi}_{h}$ in the ball $\mathbb{B}_R(\textrm{I}_{h}\widetilde{\Psi})$. A triangle inequality, $\vertii{\textrm{I}_{h}\widetilde{\Psi}- \widetilde{\Psi}_h}\leq 2C_3  h$ and 
	Lemma \ref{Interpolation estimate} show  $	\vertii{{\widetilde{\Psi}}-{\widetilde{\Psi}}_h} \leq (C_I + 2C_3) h$. This concludes the proof.
\end{proof}
\begin{proof}[\textbf{Proof of  Theorem  \ref{energy  norm error estimate ferro}}]
	Recall that $\widetilde{\Psi}_h= {\Psi}_h-\textrm{I}_h {\Psi}_{\mathbf{g}}$, where ${\Psi}_h$ satisfies the discrete non-linear system \eqref{discrete nonlinear problem ferro}.
	 % that approximates $\Psi $.  
	   Theorem \ref{reduced energy norm error estimate} shows the existence and local uniqueness of the discrete solution ${\Psi}_{h}$. Moreover, $\widetilde{\Psi}_{h} \in \mathbb{B}_R(\textrm{I}_{h}\widetilde{\Psi}) $ yields that ${\Psi}_{h} \in \mathbb{B}_R(\textrm{I}_{h}{\Psi}). $ This, Lemma \ref{Interpolation estimate} and triangle inequality lead to 
	\[	\vertii{{\Psi}-{\Psi}_h}  \lesssim \vertii{{\Psi}-\textrm{I}_{h}{\Psi}} + \vertii{\textrm{I}_{h}{\Psi}-{\Psi}_h} \lesssim   h. \qedhere
	\]
%	\begin{align*}
%	\vertii{{\Psi}-{\Psi}_h}  \lesssim \vertii{{\Psi}-\textrm{I}_{h}{\Psi}} + \vertii{\textrm{I}_{h}{\Psi}-{\Psi}_h} \lesssim   h. 
%	\end{align*}
\end{proof}
%\begin{rem}
%	In \cite{Gunzburger1992}, the authors discussed the finite element approximation of Stokes and Navier-Stokes equations with inhomogeneous essential boundary conditions. Lagrange multipliers are used to enforce the inhomogeneous boundary condition. The existence and uniqueness of discrete solutions are established through a discrete-infsup condition whereas the error estimates are obtained by a judicious  choice of finite element spaces for which the Lagrange multiplier calculation along the boundary uncouples from that of the primary interior variables. The non-linearity appeared in the model problem considered in this paper is different. Our method utilizes the lifting technique which modifies the inhomogeneous system to a homogeneous essential boundary valued problem. The existence and uniqueness of discrete solutions are guaranteed by a fixed point argument. An optimal order energy norm estimate is obtained with respect to the smoothness of the boundary data which is approximated by interpolation. However, once we have the energy norm error estimate, the best approximation result and the $L^2$ norm error estimate established next will work for any approximation of the boundary data on the finite element space. 
%\end{rem}
The best approximation result presented in Theorem \ref{thmbestapproximationComforming} is established next. The technique used in \cite[Theorem 3.3]{DGFEM} yields a  best approximation result in $\widehat{\X}_h := \{\Theta_h \in \X_h |\, \Theta_h=\mathbf{g}_h  \text{ 	on } \partial \Omega \} \subset \X_h.$ In this article, we use elliptic projection of $\Psi$ onto $\X_h$ to establish the best approximation result in $\X_h$.
\begin{proof}[\textbf{Proof of Theorem  \ref{thmbestapproximationComforming}}] 
{\it Step 1 (Best approximation on $\widehat{\X}_h := \{\Theta_h \in \X_h |\, \Theta_h=\mathbf{g}_h  \text{ 	on } \partial \Omega \}$)}. Set $\e:=\Psi -\Psi_h.$ The Taylor series expansion of ${N}(\cdot; \cdot)$ around $\Psi$ imply  
%and \eqref{continuous nonlinear ferro}, \eqref{discrete nonlinear problem ferro} 
\begin{align*}
{N}(\Psi_{h }; \Phi_{h})&= {N}(\Psi; \Phi_{h}) - \dual{{DN}(\Psi)\e, \Phi_{h}} +\frac{1}{2}\dual{{ D}^2N(\Psi)(\e)\e, \Phi_{h}}-\frac{1}{6}\dual{{ D}^3N(\Psi)(\e)(\e)\e, \Phi_{h}}.
%\\&
%=  \dual{DN(\Psi)\e, \Phi_{h}} -\frac{1}{2}\dual{{D}^2N(\Psi)(\e)\e, \Phi_{h}}+\frac{1}{6}\dual{{D}^3N(\Psi)(\e)(\e)\e, \Phi_{h}}.
\end{align*}
%A use of \eqref{continuous nonlinear ferro} and \eqref{discrete nonlinear problem ferro} leads to 
%\begin{align*}
%0=  \dual{DN(\Psi)\e, \Phi_{h}} -\frac{1}{2}\dual{{D}^2N(\Psi)(\e)\e, \Phi_{h}}+\frac{1}{6}\dual{{D}^3N(\Psi)(\e)(\e)\e, \Phi_{h}}.
%\end{align*}
For $\widehat{\Psi}_h \in \widehat{\X}_h$, use $\e= (\Psi-\widehat{\Psi}_h)+ (\widehat{\Psi}_h - \Psi_h)$,  the linearity of $\dual{DN(\Psi)\cdot, \cdot} $, and  \eqref{continuous nonlinear ferro}, \eqref{discrete nonlinear problem ferro} to obtain 
\begin{align}\label{4.30}
\dual{DN(\Psi)(\widehat{\Psi}_h - \Psi_h), \Phi_{h}}=\dual{DN(\Psi)(\widehat{\Psi}_h-\Psi), \Phi_{h}}+\frac{1}{2}\dual{{D}^2N(\Psi)(\e)\e, \Phi_{h}}-\frac{1}{6}\dual{{D}^3N(\Psi)(\e)(\e)\e, \Phi_{h}}.
\end{align}
 The identities $\dual{DN(\Psi)\e, \Phi_{h}}=A(\e,\Phi_{h})+B_1(\e,\Phi_{h})+2B_2(\Psi,\e,\Phi_{h})+3B_3(\Psi,\Psi,\e,\Phi_{h}),$ $\dual{{D}^2N(\Psi)(\e)\e, \Phi_{h}} = 2B_2(\e,\e, \Phi_{h})+6 B_3(\e,\e, \Psi, \Phi_{h})$, $\dual{{D}^3N(\Psi)(\e)(\e)\e, \Phi_{h}}=6B_3(\e,\e, \e, \Phi_{h}),$ and Lemma \ref{boundedness_all ferro}  leads to
	\begin{align}\label{4.31}
	\vertiii{DN(\Psi)}_{\mathbf{L}^2}\lesssim (1+\ell^{-1}),\, \vertiii{{ D}^2N(\Psi)}_{\mathbf{L}^2}\lesssim \ell^{-1} \text{ and } \vertiii{{ D}^3N(\Psi)}_{\mathbf{L}^2}\lesssim \ell^{-1}.
	\end{align}
	A triangle inequality yields 
	$\vertiii{\e}_1\leq \vertiii{\Psi- \widehat{\Psi}_h}_1+ \vertiii{\widehat{\Psi}_h - \Psi_h}_1$. 	Since $(\widehat{\Psi}_h-{\Psi}_h )|_{\partial \Omega}=0, $ the discrete inf-sup condition from Remark \ref{discrete inf-sup condition for psi} with $\Theta_h:=\widehat{\Psi}_h-{\Psi}_h \in \V_h \subset \V$, \eqref{4.30} and \eqref{4.31} yields
%	\begin{align*}%\label{4.32}
%	\frac{\beta}{2}\vertiii{\widehat{\Psi}_h-{\Psi}_h}_1 \leq \dual{DN(\Psi)(\widehat{\Psi}_h-{\Psi}_h), \Phi_h}\lesssim (1+\ell^{-1})\vertiii{\widehat{\Psi}_h-\Psi}_1+\ell^{-1} \vertiii{\e}_1^2+\ell^{-1} \vertiii{\e}_1^3.
%	\end{align*}
%	The triangle inequality and the above displayed estimate leads to 
	\begin{align}\label{5.19}
	\vertiii{\e}_1&\lesssim \vertiii{\Psi- \widehat{\Psi}_h}_1+\dual{DN(\Psi)(\widehat{\Psi}_h-{\Psi}_h), \Phi_h} \lesssim (1+\ell^{-1})\vertiii{\Psi- \widehat{\Psi}_h}_1+\ell^{-1}\vertiii{\e}_1^2(1+\vertiii{\e}_1). 
	\end{align}
	For a sufficiently small choice of the discretization parameter $h=O(\ell^{1+\varsigma}) $ with $\varsigma>0$, use  $\vertiii{\e}_1\lesssim h$ from Theorem \ref{energy  norm error estimate ferro}, $\ell^{-1}h\leq h^{\frac{\varsigma}{1+\varsigma}},$ and $h<1$ (this holds for $h<h_4$) in \eqref{5.19} to obtain 
	\begin{align*}
 C_4\vertiii{\e}_1  \leq (1+\ell^{-1})\vertiii{\Psi-\widehat{\Psi}_h}_1 +h^{\frac{\varsigma}{1+\varsigma}}\vertiii{\e}_1,
	\end{align*}
	where the constant $C_4$ depends on  $\abs{c}, \vertiii{\Psi}_2$, $C_I, C_3$ and $\beta$.
A sufficiently small choice of $h < h_6:=\min(h_5, h_4)$ with $h_5^{\frac{\varsigma}{1+\varsigma}}=\frac{C_4}{2}$ leads to 
	\begin{align}\label{Best approximation step 2}
	\vertiii{\Psi - \Psi_{h}}_1 \lesssim (1+\ell^{-1})\vertiii{\Psi-\widehat{\Psi}_h}_1.
	\end{align}
\noindent		{\it Step 2 (Best approximation on $\X_h$).} Let $R_h\Psi$ be the elliptic projection ($\mathbf{H}^1(\Omega)$ projection) of $\Psi$ onto $\X_h$ defined by 
\begin{align*}
A(\Psi -R_h\Psi, \Phi_h )+(\Psi -R_h\Psi, \Phi_h)=0 \text{ for all } \Phi_h \in \X_h.
\end{align*}
%Note that the $\mathbf{H}^1(\Omega)$-inner product is written in terms of $A(\cdot,\cdot)+ (\cdot,\cdot)$.
 Then it holds
\begin{align}\label{elliptic projection}
\vertiii{\Psi - R_h\Psi}_1= \inf_{ {\Psi}^*_h \in \X_h}\vertiii{\Psi -{\Psi}^*_h}_1.
\end{align}
 Let $\mathbf{g}_h^*= R_h \Psi|_{\partial \Omega}.$ Choose $\boldsymbol{\eta}_h \in \X_h$ such that $\boldsymbol{\eta}_h|_{\partial \Omega}=\mathbf{g}_h-\mathbf{g}_h^* $ and $\vertiii{\boldsymbol{\eta}_h}_1\leq \vertiii{\mathbf{g}_h-\mathbf{g}_h^*}_{\frac{1}{2},\partial \Omega} $. For $\widehat{\Psi}_h= \boldsymbol{\eta}_h+ R_h\Psi \in \widehat{\X}_h$, this plus a triangle inequality  lead to
\begin{align*}
\vertiii{\Psi - \widehat{\Psi}_h}_1 \leq \vertiii{\Psi - R_h\Psi}_1 + \vertiii{R_h\Psi-\widehat{\Psi}_h }_1 
\leq \vertiii{\Psi - R_h\Psi}_1 + \vertiii{\mathbf{g}_h-\mathbf{g}_h^*}_{\frac{1}{2},\partial \Omega}.
\end{align*}
A triangle inequality $\vertiii{\mathbf{g}_h-\mathbf{g}_h^*}_{\frac{1}{2},\partial \Omega} \leq \vertiii{\mathbf{g}-\mathbf{g}_h}_{\frac{1}{2},\partial \Omega}+\vertiii{\mathbf{g}-\mathbf{g}_h^*}_{\frac{1}{2},\partial \Omega}$, trace inequality
\begin{align}\label{a trace inequality for boundary term}
\vertiii{\mathbf{g}-\mathbf{g}_h^*}_{\frac{1}{2},\partial \Omega} \leq \vertiii{{\Psi} -R_h\Psi}_1, 
\end{align} and the definition of elliptic projection in \eqref{elliptic projection} show  
\begin{align*}
\vertiii{\Psi - \widehat{\Psi}_h}_1 \leq \inf_{ {\Psi}^*_h \in \X_h}\vertiii{\Psi -{\Psi}^*_h}_1 + \vertiii{\mathbf{g}-\mathbf{g}_h}_{\frac{1}{2},\partial \Omega} .
\end{align*}
This combined with the best approximation result \eqref{Best approximation step 2} obtained in {\it Step 1} leads to the desired estimate. 
\end{proof}
\begin{lem}[Estimate for boundary term]\cite{Gunzburger1992} \label{An estimate for elliptic projection} Let $\widehat{\Psi} \in \X$ with $\widehat{\Psi} |_{\partial \Omega}=\widehat{\mathbf{g}}$ be given. Let $R_h\widehat{\Psi} \in \X_h$ be the $\mathbf{H}^1(\Omega)$-projection of $\widehat{\Psi}$ onto $\X_h$ and $\widehat{\mathbf{g}}_h: =( R_h\widehat{\Psi})|_{\partial \Omega}.$ Then 
\begin{align*}
\vertiii{\widehat{\mathbf{g}} - \widehat{\mathbf{g}}_h }_{0, \partial \Omega} \leq Ch^\frac{1}{2} \vertiii{\widehat{\Psi} -R_h\widehat{\Psi}}_1.
\end{align*}
	
\end{lem}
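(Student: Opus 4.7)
The plan is to combine a multiplicative trace inequality with an Aubin--Nitsche duality argument tailored to the $\mathbf{H}^1$-projection $R_h$. Write $\mathbf{e}:=\widehat{\Psi}-R_h\widehat{\Psi}\in \mathbf{X}$ and note that $\mathbf{e}|_{\partial\Omega}=\widehat{\mathbf{g}}-\widehat{\mathbf{g}}_h$. The first ingredient is the standard scalar multiplicative trace inequality $\|w\|_{L^2(\partial\Omega)}^2\le C\,\|w\|_0\|w\|_1$ for $w\in H^1(\Omega)$, applied component-wise to $\mathbf{e}$, which yields
\begin{align*}
\vertiii{\widehat{\mathbf{g}}-\widehat{\mathbf{g}}_h}_{0,\partial\Omega}^{2}\;\le\; C\,\vertiii{\mathbf{e}}_{0}\,\vertiii{\mathbf{e}}_{1}.
\end{align*}
Consequently, it suffices to prove the $\mathbf{L}^{2}$--$\mathbf{H}^{1}$ superconvergence estimate $\vertiii{\mathbf{e}}_{0}\le Ch\,\vertiii{\mathbf{e}}_{1}$, after which substitution gives the claimed $h^{1/2}$ bound.

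To establish this $\mathbf{L}^{2}$ bound, I would use duality as follows. Since $R_h$ is the $\mathbf{H}^{1}$-projection onto all of $\mathbf{X}_h$ (no boundary constraint), one has the Galerkin orthogonality $A(\mathbf{e},\Phi_h)+(\mathbf{e},\Phi_h)=0$ for every $\Phi_h\in\mathbf{X}_h$. For each component $e_i$ of $\mathbf{e}$, introduce the auxiliary problem with \emph{homogeneous Neumann} boundary data,
\begin{align*}
-\Delta \phi_i + \phi_i = e_i \text{ in }\Omega,\qquad \partial_\nu \phi_i = 0 \text{ on }\partial\Omega.
\end{align*}
Because $\Omega$ is a convex polygonal domain, elliptic regularity gives $\phi_i\in H^{2}(\Omega)$ with $\|\phi_i\|_{2}\le C\|e_i\|_{0}$. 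Testing this equation with $e_i$ and integrating by parts (the Neumann condition kills the boundary term) produces $\|e_i\|_0^2=a(e_i,\phi_i)+(e_i,\phi_i)$; summing over components yields $\vertiii{\mathbf{e}}_{0}^{2}=A(\mathbf{e},\boldsymbol{\phi})+(\mathbf{e},\boldsymbol{\phi})$. Subtracting the Lagrange interpolant $\mathbf{I}_h\boldsymbol{\phi}\in\mathbf{X}_h$ via the Galerkin orthogonality and applying Cauchy--Schwarz together with Lemma~\ref{Interpolation estimate} gives
\begin{align*}
\vertiii{\mathbf{e}}_{0}^{2}\;=\;A(\mathbf{e},\boldsymbol{\phi}-\mathbf{I}_h\boldsymbol{\phi})+(\mathbf{e},\boldsymbol{\phi}-\mathbf{I}_h\boldsymbol{\phi})\;\le\;\vertiii{\mathbf{e}}_{1}\vertiii{\boldsymbol{\phi}-\mathbf{I}_h\boldsymbol{\phi}}_{1}\;\le\;Ch\,\vertiii{\mathbf{e}}_{1}\vertiii{\boldsymbol{\phi}}_{2}\;\le\;Ch\,\vertiii{\mathbf{e}}_{1}\vertiii{\mathbf{e}}_{0},
\end{align*}
and cancelling one power of $\vertiii{\mathbf{e}}_{0}$ delivers the desired $\vertiii{\mathbf{e}}_{0}\le Ch\,\vertiii{\mathbf{e}}_{1}$.

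Inserting this into the trace inequality produces $\vertiii{\widehat{\mathbf{g}}-\widehat{\mathbf{g}}_h}_{0,\partial\Omega}\le C(h\,\vertiii{\mathbf{e}}_{1})^{1/2}\vertiii{\mathbf{e}}_{1}^{1/2}=Ch^{1/2}\vertiii{\mathbf{e}}_{1}$, which is exactly the stated bound. The main technical point to verify carefully is the $\mathbf{H}^{2}$-regularity of the Neumann auxiliary problem on the convex polygon; this is standard but relies crucially on convexity, and the Neumann (rather than Dirichlet) boundary condition is essential so that the duality argument is compatible with the unconstrained $\mathbf{H}^{1}$-projection $R_h$. Everything else is a routine combination of the multiplicative trace inequality, Galerkin orthogonality, and linear interpolation estimates.
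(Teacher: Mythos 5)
The paper does not reproduce a proof for this lemma; it simply attributes the result to the cited reference (Gunzburger, 1992), so there is nothing in the manuscript to compare against line-by-line. That said, your argument is correct and is essentially the canonical proof of this boundary superconvergence estimate. The decisive observations are exactly the ones you flag: (a) the multiplicative trace inequality reduces everything to showing $\vertiii{\mathbf{e}}_{0}\le Ch\,\vertiii{\mathbf{e}}_{1}$ for $\mathbf{e}=\widehat{\Psi}-R_h\widehat{\Psi}$, and (b) since $R_h$ is the $\mathbf{H}^1$-projection onto the \emph{unconstrained} space $\mathbf{X}_h$, the Galerkin orthogonality $A(\mathbf{e},\Phi_h)+(\mathbf{e},\Phi_h)=0$ tests against functions that do not vanish on $\partial\Omega$, so the dual problem must carry a homogeneous Neumann condition to avoid a spurious boundary integral when you pair the dual solution with $\mathbf{e}$. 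You correctly identify that $H^2$-regularity for the Neumann problem $-\Delta\phi_i+\phi_i=e_i$, $\partial_\nu\phi_i=0$ on a convex polygon is what makes the duality close; this is a classical fact (the corner singular exponents $\pi/\omega_j$ all exceed $1$ on convex polygons). The step $A(\mathbf{e},\boldsymbol{\phi}-\mathbf{I}_h\boldsymbol{\phi})+(\mathbf{e},\boldsymbol{\phi}-\mathbf{I}_h\boldsymbol{\phi})\le \vertiii{\mathbf{e}}_1\vertiii{\boldsymbol{\phi}-\mathbf{I}_h\boldsymbol{\phi}}_1$ is exact Cauchy--Schwarz in the $\mathbf{H}^1$ inner product, and Lemma~\ref{Interpolation estimate} supplies the $O(h)$ interpolation bound. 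The cancellation and insertion into the trace inequality deliver precisely the stated $h^{1/2}$ rate. In short, your proof is a complete and faithful reconstruction of the result that the paper cites without proof, and the choice of the Neumann auxiliary problem is the genuine subtlety you handled correctly.
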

\begin{rem}[Best approximation on $\X_h$] A triangle inequality and an inverse inequality yield 	\begin{align*}
	\vertiii{\mathbf{g}-\mathbf{g}_h}_{\frac{1}{2},\partial \Omega} \leq \vertiii{\mathbf{g}-{\mathbf{g}}^*_h}_{\frac{1}{2},\partial \Omega}+ \vertiii{\mathbf{g}^*_h-\mathbf{g}_h}_{\frac{1}{2},\partial \Omega}\lesssim \vertiii{\mathbf{g}-\mathbf{g}^*_h}_{\frac{1}{2},\partial \Omega}+h^{-\frac{1}{2}} \vertiii{\mathbf{g}_h-\mathbf{g}^*_h}_{0,\partial \Omega}.
	\end{align*}
The trace inequality \eqref{a trace inequality for boundary term}, a triangle inequality 
and Lemma \ref{An estimate for elliptic projection} 
	applied  to the above inequality  yields 
\begin{align*}
\vertiii{\mathbf{g}-\mathbf{g}_h}_{\frac{1}{2},\partial \Omega} \lesssim \vertiii{ \Psi-R_h\Psi}_1+ h^{-\frac{1}{2}}\vertiii{\mathbf{g}-\mathbf{g}_h}_{0,\partial \Omega}.
\end{align*}
This combined with the best approximation result obtained in Theorem \ref{thmbestapproximationComforming} and \eqref{elliptic projection} leads to 
\[\pushQED{\qed} 
\vertiii{\Psi- \Psi_{h }}_1\lesssim (1+\ell^{-1})( \min_{\Psi^*_{h } \in \X_h}\vertiii{\Psi- \Psi^*_{h }}_{1} + h^{-\frac{1}{2}} \vertiii{\mathbf{g}-\mathbf{g}_h}_{0,\partial \Omega}). \qedhere
\popQED
\]
\end{rem}

The proof of $\mathbf{L}^2$ norm error in Theorem \ref{L2_error_estimate} is established next.
\begin{proof}[\textbf{Proof of  Theorem  \ref{L2_error_estimate}}] 
		Set $G=\Psi -\Psi_h \in \mathbf{L}^2(\Omega)$. Consider the well-posed dual linear problem that  
	seeks $\boldsymbol{\chi} \in  \mathbf{H}^2(\Omega) \cap \V$ such that
	\begin{align}\label{dualweak}
 A(\Phi, \boldsymbol{\chi})+ B_1(\Phi, \boldsymbol{\chi})+ 2B_2(\Psi,\Phi, \boldsymbol{\chi})+ 3B_3(\Psi,\Psi,\Phi, \boldsymbol{\chi})=(G, \Phi) \text{ for all }  \Phi \in \V
	\end{align}
 and satisfies 
 \begin{align}\label{regularity of dual problem}
 \vertiii{\boldsymbol{\chi}}_2 \lesssim (1+\ell^{-1})\vertiii{G}_0,
 \end{align}
	where the hidden constant in $"\lesssim"$ depends on $\abs{c},\vertiii{\Psi}_2$, $\beta$ and the constants in Sobolev embedding results.  	
For $G:=(w_1,w_2,w_3,w_4)$ and $\boldsymbol{\chi}:= (\chi_1,\chi_2,\chi_3,\chi_4)$, the strong form of the dual linear problem \eqref{dualweak} is defined as
\begin{equation}\label{Strong form of the dual problem}
\left.
\begin{split}
&	-\Delta \chi_1 + \ell^{-1}\big(-\chi_1 - \frac{c}{2}(u_3\chi_3- u_4\chi_4)+ \frac{1}{3}((3u_1^2 + u_2^2)\chi_1+2u_1u_2\chi_2)\big)=w_1,\\&
-\Delta \chi_2 + \ell^{-1}\big(-\chi_2 - \frac{c}{2}(u_3\chi_4+ u_4\chi_3)+ \frac{1}{3}((u_1^2 + 3u_2^2)\chi_2+2u_1u_2\chi_1)\big)=w_2,\\&
-\Delta \chi_3 + \ell^{-1}\big(-\chi_3 - \frac{c}{2}(u_3\chi_1+ u_1\chi_3+u_2\chi_4+ u_4\chi_2)+ \frac{1}{3}((3u_3^2 + u_4^2)\chi_3+2u_3u_4\chi_4)\big)=w_3,\\&
-\Delta \chi_4 + \ell^{-1}\big(-\chi_4 - \frac{c}{2}(u_2\chi_3+ u_3\chi_2-u_1\chi_4- u_4\chi_1)+ \frac{1}{3}((u_3^2 + 3u_4^2)\chi_4+2u_3u_4\chi_3)\big)=w_4.
\end{split}
\right\}
\end{equation}
	Let $\Psi^* \in \X $ (resp. $\Psi_h^* \in \X_h $) be extension of $\mathbf{g}$ (resp. $\mathbf{g}_h$)  such that $\Psi^*|_{\partial \Omega} =\mathbf{g}$ (resp.  $\Psi^*_h|_{\partial \Omega} =\mathbf{g}_h $).  Let $\Psi^0:= \Psi - \Psi^* ,$ and $\Psi^0_h:= \Psi_h - \Psi_h^* .$
	Set $\Phi=\Psi^0 -\Psi^0_h \in \V $ in \eqref{dualweak} to obtain
	\begin{align} \label{L2 estimate1}
(\Psi-\Psi_h, \Psi^0 -\Psi_h^0)	=\dual{{DN}(\Psi)(\Psi^0 -\Psi_h^0), \boldsymbol{\chi}}.
	\end{align}
	Test \eqref{Strong form of the dual problem}  with $\Psi_h^*-\Psi^*$ and use integration by parts to obtain 
	\begin{align} \label{L2 estimate2}
	(\Psi-\Psi_h, \Psi_h^*-\Psi^*)	=\dual{{DN}(\Psi)(\Psi_h^*-\Psi^*), \boldsymbol{\chi}}+ \int_{\partial \Omega} (\mathbf{g} -\mathbf{g}_h)\cdot \frac{\partial \boldsymbol{\chi}}{\partial \nu} \ds.
	\end{align}
	The trace inequality   $\vertiii{\frac{\partial \boldsymbol{\chi}}{\partial \nu}}_{\frac{1}{2}, \partial \Omega} \lesssim \vertiii{\boldsymbol{\chi}}_2$ leads to 
	$$ \int_{\partial \Omega} (\mathbf{g}-\mathbf{g}_h)\cdot \frac{\partial \boldsymbol{\chi}}{\partial \nu} \ds \leq \vertiii{\mathbf{g}-\mathbf{g}_h}_{-\frac{1}{2}, \partial \Omega}\vertiii{\frac{\partial \boldsymbol{\chi}}{\partial \nu}}_{\frac{1}{2}, \partial \Omega}\lesssim \vertiii{\mathbf{g}-\mathbf{g}_h}_{-\frac{1}{2},  \partial \Omega} \vertiii{\boldsymbol{\chi}}_2.$$ 
This and $\Psi^0 -\Psi^0_h= (\Psi -\Psi_h)+(\Psi_h^*-\Psi^*)$ applied to \eqref{L2 estimate1}  leads to 
	\begin{align} \label{L2 error equn1}
\vertiii{\Psi-\Psi_h}^2_0	\leq  \dual{{DN}(\Psi)(\Psi -\Psi_h), \boldsymbol{\chi}}+ \vertiii{\mathbf{g}-\mathbf{g}_h}_{-\frac{1}{2},  \partial \Omega} \vertiii{\boldsymbol{\chi}}_2.
\end{align}
A term  $\dual{{DN}(\Psi)(\Psi -\Psi_h), \textrm{I}_{h}\boldsymbol{\chi}}$ with $\textrm{I}_{h}\boldsymbol{\chi} \in \V_h \subset \V$ is added and subtracted to the first term on the right hand side of \eqref{L2 error equn1}, \eqref{continuous nonlinear ferro}-\eqref{discrete nonlinear problem ferro}  are utilized and simple manipulations are performed to arrive at
	\begin{align*}
	&\dual{{DN}(\Psi)(\Psi -\Psi_h), \boldsymbol{\chi}}= \dual{{DN}(\Psi)(\Psi -\Psi_h), \boldsymbol{\chi}- \textrm{I}_{h}\boldsymbol{\chi}}+\dual{{DN}(\Psi)(\Psi -\Psi_h), \textrm{I}_{h}\boldsymbol{\chi}} + N(\Psi_h, {\rm I}_{h}\boldsymbol{\chi}) -N(\Psi,{\rm I}_h \boldsymbol{\chi}) \notag \\&
	= \dual{{DN}(\Psi)(\Psi -\Psi_h), \boldsymbol{\chi}- \textrm{I}_{h}\boldsymbol{\chi}}+B_2({\Psi}_h-{\Psi}, {\Psi}_h- {\Psi}, \textrm{I}_{h}\boldsymbol{\chi} )
	+  (2B_3(\Psi,\Psi, \Psi, {\rm I}_{h}\boldsymbol{\chi})
	-3B_3(\Psi, \Psi, \Psi_h, {\rm I}_{h}\boldsymbol{\chi})   \notag\\&\quad+B_3(\Psi_h,\Psi_h, \Psi_h , {\rm I}_{h}\boldsymbol{\chi}) ) := T_1+ T_2+T_3.
	\end{align*}
	Here the term $T_2$  is a re-grouping of $$2B_2({\Psi}, {\Psi}- {\Psi}_h,  \textrm{I}_{h}\boldsymbol{\chi} )+B_2({\Psi}_h, {\Psi}_h,  \textrm{I}_{h}\boldsymbol{\chi} )-B_2({\Psi}, {\Psi},  \textrm{I}_{h}\boldsymbol{\chi} )= B_2(  {\Psi}_h-{\Psi}, {\Psi}_h, \textrm{I}_{h}\boldsymbol{\chi} ) -  B_2({\Psi},  {\Psi}_h-{\Psi},  \textrm{I}_{h}\boldsymbol{\chi} )$$ and obtained by applying the linearity and symmetry of $B_2(\cdot,\cdot,\cdot)$ in first two variables.
The definition of $\dual{{DN}(\Psi)\cdot,\cdot }$, Lemmas \ref{boundedness_all ferro}$(i)$-$(iii)$, \ref{Interpolation estimate} and Theorem \ref{energy  norm error estimate ferro} show 
	\begin{align*}
T_1:&=\dual{{DN}(\Psi)(\Psi -\Psi_h), \boldsymbol{\chi}- \textrm{I}_{h}\boldsymbol{\chi}}\\&	
= A(\Psi -\Psi_h, \boldsymbol{\chi}- \textrm{I}_{h}\boldsymbol{\chi})+B_1(\Psi -\Psi_h, \boldsymbol{\chi}- \textrm{I}_{h}\boldsymbol{\chi}) +2B_2(\Psi,\Psi -\Psi_h, \boldsymbol{\chi}- \textrm{I}_{h}\boldsymbol{\chi})+3B_3(\Psi,\Psi,\Psi -\Psi_h, \boldsymbol{\chi}- \textrm{I}_{h}\boldsymbol{\chi})
\\&	 \lesssim (1+\ell^{-1})\vertiii{\Psi -\Psi_h}_1 \vertiii{ \boldsymbol{\chi}- \textrm{I}_{h}\boldsymbol{\chi}}_1 \lesssim (1+\ell^{-1})h^2 \vertiii{\boldsymbol{\chi}}_2.	\end{align*}
Lemmas \ref{boundedness_all ferro}$(ii)$,  \ref{Interpolation estimate} and Theorem \ref{energy  norm error estimate ferro} yield 
	\begin{align*}
&		T_2:=B_2({\Psi}_h-{\Psi}, {\Psi}_h- {\Psi}, \textrm{I}_{h}\boldsymbol{\chi} )\lesssim \ell^{-1}\vertii{{\Psi}_h- {\Psi}}^2  \vertiii{\textrm{I}_{h}\boldsymbol{\chi}}_1 \lesssim  \ell^{-1}h^2 \vertiii{\boldsymbol{\chi}}_2.
	\end{align*}
		Set $\e:= \Psi - \Psi_{h }$. Utilize  the third inequality of Lemma \ref{Properties of bilinear trilinear and quadrilinear forms}$(iii)$ for  $\boldsymbol{\eta}:=\Psi$, $\Theta_h:=\Psi_h$, $\Phi_h:={\rm I}_{h}\boldsymbol{\chi}$, and then apply Lemma \ref{Interpolation estimate}, Theorem \ref{energy  norm error estimate ferro} to estimate $T_3$.
	\begin{align*}
	T_3& :=2B_3(\Psi, \Psi, \Psi, {\rm I}_{h}\boldsymbol{\chi})  -3B_3(\Psi,\Psi, \Psi_h, {\rm I}_{h}\boldsymbol{\chi}) + B_3(\Psi_h,\Psi_h, \Psi_h , {\rm I}_{h}\boldsymbol{\chi})\\& \lesssim \ell^{-1} \vertiii{\e}^2_1(\vertiii{\e}_1+\vertiii{\Psi }_1)\vertiii{\textrm{I}_{h}\boldsymbol{\chi}}_2 \lesssim \ell^{-1} h^2 (h+1) \vertiii{\boldsymbol{\chi}}_2.
	\end{align*}
	The estimates of $T_1,$ $T_2$ and $T_3$ in \eqref{L2 error equn1}  and  \eqref{regularity of dual problem} yield 
	\begin{align*}
	\vertiii{ \Psi -\Psi_h}_0 \lesssim (1+\ell^{-1} ) \big(h^2(1+\ell^{-1} ) +\vertiii{\mathbf{g}-\mathbf{g}_h }_{-\frac{1}{2}, \partial \Omega}\big) ,
	\end{align*} 
	where the constants suppressed in $"\lesssim"$ depends on  $\abs{c}, \vertiii{\Psi}_2$, $\beta$,  $C_I,  C_3$  and the constants in Sobolev embedding results.  	
	This completes the proof. 
\end{proof}	
\begin{rem}
	Integration by parts in \eqref{L2 estimate2} leads to the boundary term $\int_{\partial \Omega} (\mathbf{g} -\mathbf{g}_h)\cdot \frac{\partial \boldsymbol{\chi}}{\partial \nu} \ds$, which gives a sub-optimal convergence rate in the $\mathbf{L}^2$ norm. An optimal convergence rate $\mathcal{O}(h^2)$ in $\mathbf{L}^2$ norm is obtained using Nitsche's method discussed below.\qed
\end{rem}
\subsection{Nitsche's method}\label{Nitsche's method}
Let $\mathcal{E}$ (resp. $\mathcal{E}(\Omega)$ or $\mathcal{E}(\partial \Omega)$) denote the set of all (resp. interior or boundary) edges in $\mathcal{T}.$
For Nitsche's method,  the finite element space $\X_h$ associated with the triangulation $\mathcal{T}$ of the convex polygonal domain $\Omega \subset \mathbb{R}^2$ into triangles 
%The finite element subspace of $\h^1(\Omega)$ 
%$\X_h:=X_h \times X_h \times X_h \times X_h$, where $$X_h:= \{v \in H^1(\Omega)|\,\, v|_T \in P_1(T) \text{ for all } T \in \mathcal{T} \}$$ 
is endowed with the mesh dependent norm defined by  
 $\vertiii{\Phi_h}_h:= \norm{\varphi_{1}}_h + \norm{\varphi_{2}}_h+\norm{\varphi_{3}}_h + \norm{\varphi_{4}}_h $ for all $\Phi_h=(\varphi_{1},\varphi_{2}, \varphi_{3},\varphi_{4}) \in \X_h$,  where for all $v \in X_h,$ 
\begin{align*}
\norm{v}^2_h:=\int_{\Omega} \abs{\nabla v}^2 \dx + \sum_{ E \in \mathcal{E}(\partial \Omega)} \frac{\sigma}{h_E}\int_{E}  v^2 \ds.
\end{align*}
Here $\sigma>0$ is the penalty parameter and $h_E$ denote length of an edge $E$. Let $\nu_T$ denotes the unit outward normal along $ \partial T$ of $T \in \mathcal{T}$. The jump $[\varphi]_E$ of piecewise $H^1$ function $\varphi,$ i.e, $\varphi \in H^1(\mathcal{T}):=\{ v \in L^2(\Omega)|\,\, v \in H^1(T) \text{ for all } T \in \mathcal{T}  \},$ across $E \in \mathcal{E}$ is defined by 
\begin{equation*} 
[\varphi]_E(x):=
\begin{cases} 
 v|_{T_+}(x)-v|_{T_-}(x)& \text{ for } x \in E= \partial T_+ \cap \partial T_- \in \mathcal{E}(\Omega), \\
v(x) & \text{ for } x \in E \in \mathcal{E}(\partial\Omega), 
\end{cases} 
\end{equation*} 
where for the interior edge $E= \partial T_+ \cap \partial T_- \in \mathcal{E}(\Omega)$ with unit normal $\nu_E$ of fixed orientation,  the adjacent triangles $T_{\pm } \in \mathcal{T} $ are in an order such that $\nu_E =\nu_{T_+}|_E=-\nu_{T_-}|_E$.
 The discrete formulation for Nitsche's method that corresponds to \eqref{continuous nonlinear ferro} seeks $ \Psi_{h}\!\in\!\X_h$ such that for all $ \Phi_{h} \in  \X_h,$
\begin{align}\label{discrete nonlinear problem ferro Nitsche's method}
N_h(\Psi_{h}; \Phi_h):= A_h(\Psi_{h}, \Phi_h)+B_1(\Psi_{h}, \Phi_h)+B_{2}( \Psi_{h},\Psi_{h},\Phi_h)+ B_3(\Psi_{h},\Psi_{h},\Psi_{h}, \Phi_h)-L_h(\Phi_h) = 0.
\end{align}
For $\Theta=(\theta_1,\theta_2,\theta_3,\theta_4)$ and $\Phi=(\varphi_1,\varphi_2,\varphi_3,\varphi_4) \in \X,$ $A_h(\Theta, \Phi):= \sum_{i=1}^{4}a_h(\theta_i, \varphi_i), \text{  and } L_h(\Phi) = \sum_{i=1}^{4}L^i_h(\varphi_i).$ Note that the forms $B_i, i=1,2,3$ are defined in Section \ref{Weak and finite element formulations}.
%\begin{align*}
%&A_h(\Theta, \Phi):= \sum_{i=1}^{4}a_h(\theta_i, \varphi_i), \text{  and } L_h(\Phi) = \sum_{i=1}^{4}L^i_h(\varphi_i).
%%+ a_h(\theta_2, \varphi_2) +a_h(\theta_3, \varphi_3) +a_h(\theta_4, \varphi_4)  \text{  and }\\&
%%L_h(\Phi) = L^1_h(\varphi_1)+ L^2_h(\varphi_2)+ L^3_h(\varphi_3)+L^4_h(\varphi_4).
%\end{align*}
For 
$\theta, \varphi \in H^1(\Omega)$, $\mathbf{g}=(g_1, g_2,g_3, g_4)$, 
\begin{align*}
&a_h(\theta,\varphi):= \int_\Omega \nabla \theta \cdot \nabla \varphi \dx-  \int_{\partial\Omega}\frac{\partial \theta}{\partial \nu} \varphi \ds  -\int_{\partial\Omega}\theta\frac{\partial \varphi}{\partial \nu} \ds + \sum_{E \in  \mathcal{E}(\partial \Omega)} \frac{\sigma}{h_E}\int_{ E} \theta \varphi \ds ,	
\notag\\&
\text{  and }\,\,
L^i_h(\varphi):=
-  \int_{\partial\Omega}g_i \frac{\partial \varphi}{\partial \nu}\ds + \sum_{E \in  \mathcal{E}(\partial \Omega)} \frac{\sigma}{h_E} \int_{ E}g_i \varphi \ds \text{ for } 1\leq i \leq 4,
\end{align*}
where $\nu$ denotes the outward unit normal associated to $\partial \Omega.$ For all $\Theta_h, \Phi_h \in \X_h$,  define the discrete bilinear form in this case as
\begin{align*}%\label{bilinear form for Nitsche}
\dual{{DN}_h(\Psi)\Theta_h, \Phi_h}: = A_h(\Theta_h,\Phi_h)+B_1(\Theta_h,\Phi_h)+2B_2(\Psi,\Theta_h,\Phi_h)+3B_3(\Psi,\Psi,\Theta_h,\Phi_h)
\end{align*}
and the perturbed bilinear form  as  $$\dual{{ DN}_h(\textrm{I}_h\Psi)\Theta_h, \Phi_h} := A_h(\Theta_h,\Phi_h)+B_1(\Theta_h,\Phi_h) +2B_2(\textrm{I}_h\Psi,\Theta_h,\Phi_h)+3B_3(\textrm{I}_h\Psi,\textrm{I}_h\Psi,\Theta_h,\Phi_h).$$
\begin{thm}[Existence, uniqueness and error estimates]\label{error estimates in Nitsche method for ferronematics}
	Let $\Psi$ be a regular solution of the non-linear system
	\eqref{continuous nonlinear ferro} such that \eqref{H2 bound for minimizers} holds.  For a given fixed $\ell>0,$ a sufficiently large $\sigma$ and a sufficiently small discretization parameter chosen as $h=O(\ell^{1+\varsigma} )$ for any $\varsigma > 0$, there exists a unique solution $\Psi_h$ of the discrete non-linear problem \eqref{discrete nonlinear problem ferro Nitsche's method} that approximates $\Psi $ such that 
	\begin{align*}
	(i)\,	\vertiii{\Psi-\Psi_h}_h\lesssim h, \,\text{  and } \,\,(ii)\:	\vertiii{\Psi-\Psi_h}_{0}\lesssim h^2(1+ (1+\ell^{-1} )^2).
	\end{align*}
\end{thm}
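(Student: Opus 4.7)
The plan is to adapt the conforming FEM analysis from Section \ref{A priori estimates} to Nitsche's formulation \eqref{discrete nonlinear problem ferro Nitsche's method}, where the essential boundary condition is enforced weakly. The main structural advantage is that consistency now holds directly: for the regular solution $\Psi$ of \eqref{continuous nonlinear ferro} with \eqref{H2 bound for minimizers}, integration by parts against the strong form $-\Delta \Psi + \ell^{-1}D\tilde f_B(\Psi) = 0$ and the identity $\Psi = \mathbf{g}$ on $\partial\Omega$ give $N_h(\Psi;\Phi_h) = 0$ for every $\Phi_h \in \X_h$. Thus, no lifting of boundary data is required and the awkward trace term $\int_{\partial\Omega}(\mathbf{g}-\mathbf{g}_h)\cdot \partial_\nu\boldsymbol\chi\,\ds$ that produced the sub-optimal rate in Theorem \ref{L2_error_estimate} disappears.

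First I would record the standard properties of $a_h(\cdot,\cdot)$: for $\sigma$ sufficiently large, the discrete coercivity $\alpha_0 \norm{v_h}_h^2 \leq a_h(v_h,v_h)$ and boundedness $a_h(\theta_h,\varphi_h) \lesssim \norm{\theta_h}_h \norm{\varphi_h}_h$ on $X_h$ follow from the discrete trace inequality and a Cauchy--Schwarz argument; these extend to $A_h(\cdot,\cdot)$ componentwise in the $\vertiii{\cdot}_h$-norm. The forms $B_1,B_2,B_3$ are unchanged, so Lemma \ref{boundedness_all ferro} applies verbatim after replacing $\vertiii{\cdot}_1$ by $\vertiii{\cdot}_h$ (using $\vertiii{\cdot}_1 \lesssim \vertiii{\cdot}_h$ on $\X_h$). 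Next I would prove the continuous inf-sup condition for the symmetric bilinear form $\dual{DN_h(\Psi)\cdot,\cdot}$ at the regular solution, obtained from \eqref{continuous inf sup} via an auxiliary lifting step analogous to Lemma \ref{linear equation for discrete inf-sup ferro}: given $\Theta_h \in \X_h$ one solves an auxiliary Poisson problem with $\mathbf{H}^2$ regularity and uses $A_h$'s Galerkin orthogonality on the interpolant to transfer the estimate. The resulting inf-sup constant is perturbed by an $O(\ell^{-1}h)$ term, so for $h = O(\ell^{1+\varsigma})$ it remains bounded below by $\beta/4$. The analogous perturbed inf-sup bound for $\dual{DN_h(\textrm{I}_h\Psi)\cdot,\cdot}$ follows from Lemma \ref{Properties of bilinear trilinear and quadrilinear forms}.

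For existence and uniqueness, I would repeat the Brouwer fixed-point argument of Theorem \ref{reduced energy norm error estimate} but now on the full space $\X_h$ (no boundary constraint), mapping a ball $\mathbb{B}_{R(h)}(\textrm{I}_h\Psi)$ to itself. The key quantities to bound are $N_h(\textrm{I}_h\Psi;\Phi_h) - N_h(\Psi;\Phi_h)$ (using consistency and Lemma \ref{Properties of bilinear trilinear and quadrilinear forms} together with interpolation in the Nitsche boundary terms $\sum_E \sigma h_E^{-1}\int_E (\textrm{I}_h\Psi - \Psi)\Phi_h\,\ds = 0$ since $\Psi = \mathbf{g}$ and $\textrm{I}_h\Psi$ matches $\mathbf{g}_h$ on boundary edges, which collapses the troublesome term), followed by the cubic/quadratic perturbation estimates that give contraction under $h = O(\ell^{1+\varsigma})$. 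This yields part $(i)$ via $\vertiii{\Psi - \Psi_h}_h \leq \vertiii{\Psi - \textrm{I}_h\Psi}_h + \vertiii{\textrm{I}_h\Psi - \Psi_h}_h \lesssim h$ and standard interpolation in the $\vertiii{\cdot}_h$-norm.

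For part $(ii)$, the Aubin--Nitsche duality mirrors the proof of Theorem \ref{L2_error_estimate}: set $G = \Psi - \Psi_h$, solve the adjoint problem \eqref{dualweak} with regularity \eqref{regularity of dual problem}, and use consistency $N_h(\Psi;\textrm{I}_h\boldsymbol\chi) = N_h(\Psi_h;\textrm{I}_h\boldsymbol\chi) = 0$ to get
\begin{align*}
\vertiii{G}_0^2 = \dual{DN_h(\Psi)(\Psi-\Psi_h),\boldsymbol\chi - \textrm{I}_h\boldsymbol\chi} + B_2(\Psi_h-\Psi,\Psi_h-\Psi,\textrm{I}_h\boldsymbol\chi) + \text{cubic remainder},
\end{align*}
where the boundary contributions in $A_h(\Psi-\Psi_h,\boldsymbol\chi - \textrm{I}_h\boldsymbol\chi)$ are controlled by $h_E^{1/2}\norm{\boldsymbol\chi}_2$ via edge trace inequalities, so that everything is bounded by $h^2(1+(1+\ell^{-1})^2)\vertiii{\boldsymbol\chi}_2$. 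Combining with \eqref{regularity of dual problem} and cancelling $\vertiii{G}_0$ gives the claimed rate. The main obstacle is bookkeeping the $\ell^{-1}$ dependencies through the various perturbation estimates and ensuring the penalty parameter $\sigma$ is chosen large enough (independent of $\ell$) that coercivity and the discrete inf-sup constant survive all perturbations uniformly for $h = O(\ell^{1+\varsigma})$.
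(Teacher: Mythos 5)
Your plan mirrors the paper's: a Brouwer fixed-point argument on $\X_h$ with the nonlinear map $\mu_h:\X_h\to\X_h$ for existence, uniqueness and the energy estimate in $(i)$, and an Aubin--Nitsche duality argument via \eqref{Strong form of the dual problem} for $(ii)$, exploiting that the exact solution is consistent with $N_h$ so that no lifting of the boundary datum is needed. Two items deserve correction. First, the claim that $\sum_{E\in\mathcal{E}(\partial\Omega)}\sigma h_E^{-1}\int_E(\textrm{I}_h\Psi - \Psi)\cdot\Phi_h\,\ds = 0$ is false: on $\partial\Omega$ one has $\textrm{I}_h\Psi - \Psi = \mathbf{g}_h - \mathbf{g}$, which does not vanish for a generic $\mathbf{g}\in\h^{3/2}(\partial\Omega)$. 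The argument does not collapse because this contribution is still of order $h$: the Cauchy--Schwarz bound $\big(\sum_E h_E^{-1}\norm{\mathbf{g}-\mathbf{g}_h}_{0,E}^2\big)^{1/2}\vertiii{\Phi_h}_h \lesssim h\,\vertiii{\mathbf{g}}_{\frac{3}{2},\partial\Omega}\vertiii{\Phi_h}_h$ is compatible with the ball-mapping and contraction steps, but the cancellation you asserted is not available.

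Second, in the $\mathbf{L}^2$ duality you take $G:=\Psi - \Psi_h$, whereas the paper sets $G := \textrm{I}_h\Psi - \Psi_h$ and controls $\vertiii{\Psi-\Psi_h}_0$ by a final triangle inequality. Both choices produce the crucial identity $\vertiii{G}_0^2 = \dual{DN_h(\Psi)G,\boldsymbol\chi}$ because $\boldsymbol\chi = 0$ on $\partial\Omega$ kills the normal-derivative and penalty terms in $A_h(\cdot,\boldsymbol\chi)$; however, the paper's choice carries an extra term $T_1 = \dual{DN_h(\Psi)(\textrm{I}_h\Psi - \Psi),\boldsymbol\chi}$ whose $A_h$-part simplifies exactly to $-\int_\Omega(\textrm{I}_h\Psi -\Psi)\cdot\Delta\boldsymbol\chi\,\dx$ and so requires only an $\mathbf{L}^2$ interpolation error and \eqref{regularity of dual problem} -- no edge-trace estimate. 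Your route avoids $T_1$, but you must then control $\int_{\partial\Omega}(\Psi-\Psi_h)\cdot\partial_\nu(\boldsymbol\chi - \textrm{I}_h\boldsymbol\chi)\,\ds$ (the other boundary contributions vanish since $\boldsymbol\chi - \textrm{I}_h\boldsymbol\chi = 0$ on $\partial\Omega$) by combining the $h^{3/2}$ penalty bound from $(i)$ with a scaled trace inequality yielding $\norm{\partial_\nu(\boldsymbol\chi -\textrm{I}_h\boldsymbol\chi)}_{0,\partial\Omega}\lesssim h^{1/2}\vertiii{\boldsymbol\chi}_2$; this recovers $h^2$ but is genuinely more work than the paper's $T_1$ cancellation, and that extra reasoning is missing from your sketch.
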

The proof of Theorem \ref{error estimates in Nitsche method for ferronematics}$(i)$ follows similar methodology of Theorem \ref{energy  norm error estimate ferro} with the choice of the non-linear map $\mu_h:\X_h \rightarrow \X_h$  defined by: for $\Theta_h,\Phi_h \in \X_h$,
	\begin{align*}%\label{2.5.1}
	\dual{{DN}_h({{\rm I}_h\Psi}) \mu_h(\Theta_h),\Phi_h }
	&= 2B_2( \textrm{I}_h\Psi,\Theta_h,\Phi_h)+3B_3(\textrm{I}_h\Psi, \textrm{I}_h\Psi,\Theta_h,\Phi_h) - B_2(\Theta_h, \Theta_h,\Phi_h)\notag\\&\quad- B_3(\Theta_h,\Theta_h, \Theta_h,\Phi_h) +L_h(\Phi_h).
	\end{align*}
\begin{proof}[\textbf{Proof of Theorem  \ref{error estimates in Nitsche method for ferronematics}$(ii)$}]
Set $G:= \textrm{I}_h\Psi -\Psi_h \in \mathbf{L}^2(\Omega).$	Multiply \eqref{Strong form of the dual problem} by $\Phi_h=\textrm{I}_h\Psi -\Psi_h$, use integration by parts, and then add and subtract an intermediate term as
		\begin{align} \label{L2 error estimate FerroNitsche1}
&\vertiii{\textrm{I}_h\Psi -\Psi_h}_0^2=	\dual{{DN}_h(\Psi)\textrm{I}_h\Psi -\Psi_h, \boldsymbol{\chi}}\notag\\&=	\dual{{DN}_h(\Psi)\textrm{I}_h\Psi -\Psi, \boldsymbol{\chi}}+	\dual{{DN}_h(\Psi)\Psi -\Psi_h, \boldsymbol{\chi}-\textrm{I}_h\boldsymbol{\chi}}+	\dual{{DN}_h(\Psi)\Psi -\Psi_h, \textrm{I}_h\boldsymbol{\chi}}=:T_1+T_2+T_3.
	\end{align}
Set $B_{L_1}(\cdot, \cdot):=B_1(\cdot, \cdot) +2B_2(\Psi,\cdot, \cdot)+3B_3(\Psi,\Psi,\cdot, \cdot),$ then  the terms 
\begin{align*}
&T_1:= A_h(\textrm{I}_h\Psi -\Psi, \boldsymbol{\chi})+B_{L_1}(\textrm{I}_h\Psi -\Psi, \boldsymbol{\chi}), \,\, T_2:= A_h(\Psi -\Psi_h,\boldsymbol{\chi}-\textrm{I}_h\boldsymbol{\chi})+B_{L_1}(\Psi -\Psi_h, \boldsymbol{\chi}-\textrm{I}_h\boldsymbol{\chi}).%, 
%\\& \text{and } T_3:= A_h(\Psi -\Psi_h, \textrm{I}_h\boldsymbol{\chi})+B_{L_1}(\Psi -\Psi_h, \textrm{I}_h\boldsymbol{\chi}).
\end{align*}
		A use of definition of $A_h(\cdot, \cdot)$, $\boldsymbol{\chi} = 0$ on $\partial \Omega$,  integration by parts and cancellation of terms yields $$A_h({\rm I}_h\Psi -\Psi, \boldsymbol{\chi})=  \int_\Omega \nabla ({\rm I}_h\Psi -\Psi) \cdot \nabla \boldsymbol{\chi} \dx -   \int_{\partial\Omega} \frac{\partial \boldsymbol{\chi}}{\partial \nu}\cdot ({\rm I}_h\Psi -\Psi )\ds = - \int_\Omega({\rm I}_h\Psi -\Psi) \cdot \Delta \boldsymbol{\chi} \dx .$$
%	\begin{align*}
%	&A_h({\rm I}_h\Psi -\Psi, \boldsymbol{\chi})=  \int_\Omega \nabla ({\rm I}_h\Psi -\Psi) \cdot \nabla \boldsymbol{\chi} \dx -   \int_{\partial\Omega} \frac{\partial \boldsymbol{\chi}}{\partial \nu}\cdot ({\rm I}_h\Psi -\Psi )\ds = - \int_\Omega({\rm I}_h\Psi -\Psi) \cdot \Delta \boldsymbol{\chi} \dx .
%	\end{align*}
This plus H\"older's inequality and  Lemma \ref{Interpolation estimate} show $$A_h({\rm I}_h\Psi -\Psi, \boldsymbol{\chi}) \leq \vertiii{{\rm I}_h\Psi -\Psi}_0 \vertiii{\boldsymbol{\chi}}_2 \lesssim h^2  \vertiii{\boldsymbol{\chi}}_2.$$ 
	The remaining $B_i, i=1,2,3$ terms in $T_1$ are estimated using Lemma \ref{boundedness_all ferro}$(i)$-$(iii)$,  and  Lemma \ref{Interpolation estimate}. 
	%The term $T_2$ (except $A_h(\cdot,\cdot)$ part) is estimated following the similar lines used for $T_1$ in Theorem \ref{L2_error_estimate}. %We have $A_h(\cdot,\cdot)$ term here instead of  $A(\cdot,\cdot)$ (in the expansion of $T_1$  in Theorem \ref{L2_error_estimate}) and is  computed utilizing 
The fact that $\boldsymbol{\chi}-{\rm I}_h\boldsymbol{\chi}=0$ on $\partial\Omega,$	Lemma \ref{Interpolation estimate} and 
	Theorem \ref{error estimates in Nitsche method for ferronematics}$(i)$ leads to the estimate of $A_h(\cdot,\cdot)$ term in $T_2$ as $A_h(\Psi-\Psi_h,\boldsymbol{\chi}-{\rm I}_h\boldsymbol{\chi}) \lesssim h\vertiii{\boldsymbol{\chi}}_2 \vertiii{\Psi-\Psi_h}_h \lesssim h^2\vertiii{\boldsymbol{\chi}}_2 .$    The term $T_3$ with $\eqref{discrete nonlinear problem ferro Nitsche's method}$ and $N_h(\Psi, \textrm{I}_h\boldsymbol{\chi})=0$ reads
	\begin{align*}
	&T_3:=	\dual{{DN}_h(\Psi)\Psi -\Psi_h, \textrm{I}_h\boldsymbol{\chi}}=	\dual{{ DN}_h(\Psi)\Psi -\Psi_h, \textrm{I}_h\boldsymbol{\chi}} + {N}_h(\Psi_h, \textrm{I}_h\boldsymbol{\chi}) -{N}_h(\Psi,\textrm{I}_h \boldsymbol{\chi})	\\&\quad =B_2({\Psi}_h-{\Psi}, {\Psi}_h- {\Psi}, \textrm{I}_{h}\boldsymbol{\chi} )+ (2B_3(\Psi,\Psi, \Psi, {\rm I}_{h}\boldsymbol{\chi})
		% + B_3(\Psi ,\Psi , \Psi, {\rm I}_{h}\boldsymbol{\chi}-\boldsymbol{\chi}))
		-3B_3(\Psi, \Psi, \Psi_h, {\rm I}_{h}\boldsymbol{\chi})  +B_3(\Psi_h,\Psi_h, \Psi_h , {\rm I}_{h}\boldsymbol{\chi}) ).
	\end{align*}
The term $T_3$ and the rest of The terms in $T_2$  are  directly comparable to the terms of Theorem \ref{L2_error_estimate}. The rest of the proof utilizes similar  methodology of Theorem \ref{L2_error_estimate} and hence is skipped here.
% Then the regularity result $\vertiii{\boldsymbol{\chi}}_2 \lesssim \vertiii{ \Psi -\Psi_h}_0 $, 
% %leads to $\vertiii{ \textrm{I}_h\Psi -\Psi_h}_0  \lesssim h^2 (1+\ell^{-1} )^2.$
%%		\begin{align*}
%%	\vertiii{ \textrm{I}_h\Psi -\Psi_h}_0  \lesssim h^2 (1+\ell^{-1} )^2.
%%	\end{align*} 
%%This plus 
%a triangle inequality and Lemma \ref{Interpolation estimate} show $\vertiii{ \Psi -\Psi_h}_0 \leq \vertiii{\Psi- \textrm{I}_h\Psi }_0+\vertiii{ \textrm{I}_h\Psi -\Psi_h}_0  \lesssim h^2(1+ (1+\ell^{-1} )^2).$
%	This concludes the proof.
\end{proof}	
\begin{rem}
	The boundary term in $T_1$ of Theorem  \ref{error estimates in Nitsche method for ferronematics}$(ii)$ appearing due to the integration by parts gets cancelled with the  boundary term in  $A_h({\rm I}_h\Psi -\Psi, \boldsymbol{\chi})$, whereas  similar type of boundary term in \eqref{L2 error equn1} of Theorem \ref{L2_error_estimate} leads to the sub-optimal convergence rate $\mathcal{O}(h^{\frac{3}{2}})$ in $\mathbf{L}^2$ norm. \qed
\end{rem}
%================================Nitsche ends=================================================
%=============================================================================================
\section{Numerical experiments}\label{Ferronematic numerical experiments}
This section reports on numerical experiments 
for the benchmark problem \cite{Ferronematics_2D} for dilute ferronematic suspensions, on  a re-scaled two-dimensional square domain $\Omega=(0,1)\times (0,1)$ with a uniform refinement strategy. Numerical solutions approximate the regular solutions of \eqref{continuous nonlinear ferro} for a fixed value of the parameters $\ell$ and $c.$ 
The discrete solution landscapes of \eqref{continuous nonlinear ferro}, for various parameter ($\ell, c$) values, the associated computational errors and convergence rates are explored for conforming FEM. Let $e_i$ and $h_i$ be the error and the mesh parameter at $i$-th level, respectively. The $i$-th level experimental order of convergence is defined by $\displaystyle \alpha_i:=\log(e_{i}/e_{i+1})/\log(h_{i}/h_{i+1})$ for $ i=1, \ldots , n-1$ and $n $ is the final iteration considered in numerical experiments.  Newton's method is applied to approximate the solutions of \eqref{continuous nonlinear ferro}. For detailed construction of the initial conditions/profiles, we refer to \cite{Hanetal2021Ferro2D,MultistabilityApalachong,DGFEM}. 
\begin{table}[H]
	\centering
	\begin{tabular}{c c c c c} 
		\hline
		Solution       &  $x=0$ &  $x=1$  & $y=0$&  $y=1$  \\ [0.5ex] 
		\hline
		$Q_{11}$     & -1  & -1 & 1  & 1 \\
		$Q_{12}$     & 0 & 0   & 0 & 0   \\
		$M_1$  &0 & 0 &-1  & 1   \\
		$M_2$     & 1 &-1 &0  &0\\
		\hline
	\end{tabular}
	\caption{ Tangential boundary conditions for solution components $Q_{11}, Q_{12}, M_1, M_2.$
	}
	\label{table:Tangential boundary conditions ferro}
\end{table}\noindent The Dirichlet tangent boundary conditions \cite{Ferronematics_2D,MultistabilityApalachong,Slavinec_2015, Tsakonas} are detailed in Table \ref{table:Tangential boundary conditions ferro}. The natural mismatch in the tangent boundary conditions of the director $\mathbf{n}$ and magnetization vector leads to the corner defects. 
We construct a Lipschitz continuous boundary condition $\mathbf{g}$ using the tangential boundary condition in Table \ref{table:Tangential boundary conditions ferro} and trapezoidal shape functions, \cite{MultistabilityApalachong} $\textit{T}_d:[0,1]\rightarrow {\mathbb{R}}$  defined as
\begin{equation*} 
\mathbf{g}=
\begin{cases} 
(\textit{T}_{d}(x),0, -\textit{T}_{d}(x),0 ) & \text{on}\,\,\,\, y=0, \\
(\textit{T}_{d}(x),0, \textit{T}_{d}(x),0)  & \text{on}\,\,\,\,  y=1, \\
(- \textit{T}_d(y),0, 0,\textit{T}_d(y) ) & \text{on}\,\,\,\, x=0 , \\
(- \textit{T}_d(y),0,0, -\textit{T}_d(y))  & \text{on}\,\,\,\, x=1, 
\end{cases} \,\, \text{  and } \,\,
\textit{T}_d(t)=
\begin{cases} 
t/d, & 0 \leq t \leq d, \\
1, &  d \leq  t \leq 1- d,  \\
(1-t)/d, & 1- d \leq t \leq 1,
\end{cases}
\end{equation*}  
where the parameter $d=3 \sqrt{\ell}$, is the size of mismatch region. 
\noindent For small choices of the parameter $\ell,$ the numerical results are divided into three categories according to the positive, negative and zero value of the coupling parameter $c$. 
\begin{figure}[H]
	\centering
	\begin{minipage}[t]{0.31\linewidth}
		\subfloat[$\Qvec_{D1}$ and $\Mvec$ profile ]{\includegraphics[height=1.8cm, width=4.5cm]{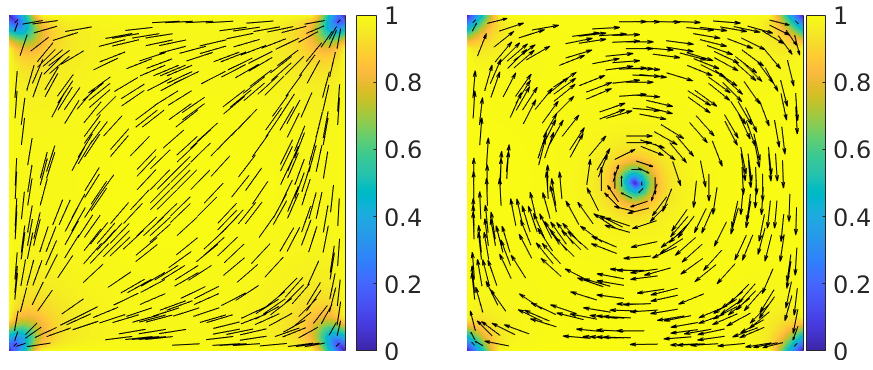}}\label{UncoupledD1}
		\vspace{-0.2 cm}\\ 
		\subfloat[$\Qvec_{R4}$ and $\Mvec$ profile]{\includegraphics[height=1.8cm, width=4.5cm]{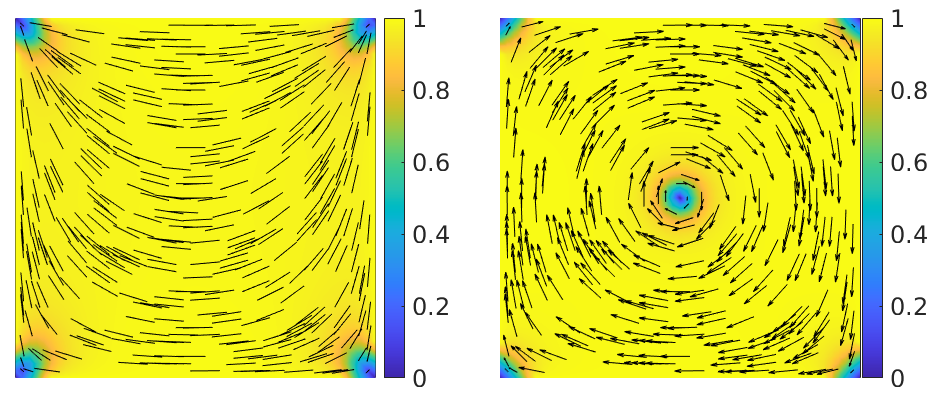}}\label{UncoupledR4}
	\end{minipage}
	\begin{minipage}[t]{0.31\textwidth}				
		\subfloat[]{\includegraphics[width=4.9cm,height=4.3cm]{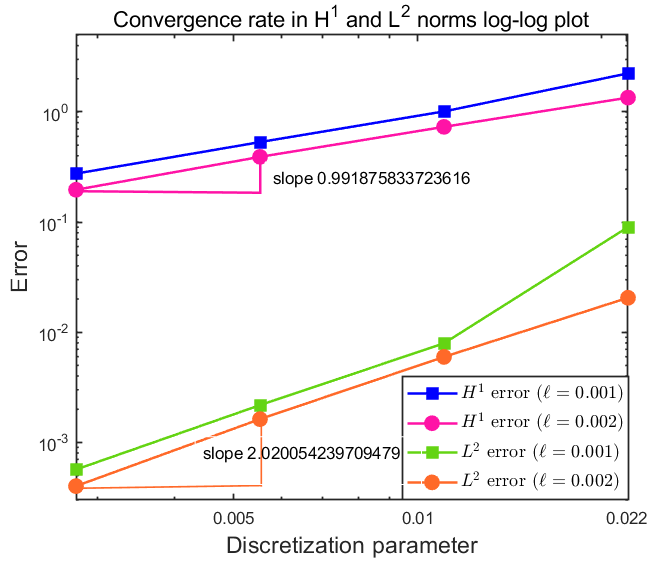}} 
	\end{minipage}
	\hspace{0.3 cm}
	\begin{minipage}[t]{0.31\linewidth}
		\subfloat[]{\includegraphics[width=4.9cm,height=4.3cm]{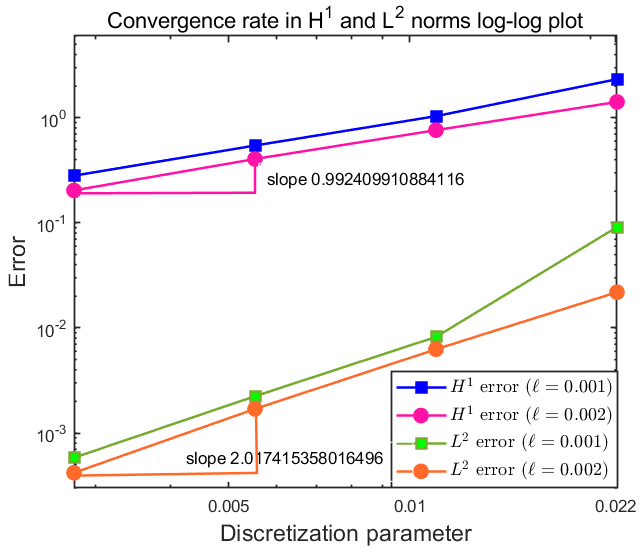}}
	\end{minipage}	
	\caption{Nematic $\Qvec$ and magnetic $\Mvec$  configurations: (a) D1 diagonal nematic and uncoupled magnetic profile and (b) R4 rotated nematic and  uncoupled magnetic profile for the parameter values  $\ell =0.001, c=0; $ Energy and  $\mathbf{L}^2$ norm error versus discretization parameter $h$ plots for the discrete solutions (c) $\Psi_h=(\Qvec_{D1},\Mvec)$, (d) $\Psi_h=(\Qvec_{R4},\Mvec)$ for two sets of parameter values $\ell=0.001, c=0$ and  $\ell=0.002, c=0$.}
	\label{Uncoupled system}
\end{figure}
\medskip

\noindent \textbf{Case I: the coupling parameter $\mathbf{c=0}$} 

\medskip

\noindent The two-dimensional planar bistable nematic device \cite{MultistabilityApalachong} (uncoupled system i.e., $c=0$) exhibits two sets of equilibrium configurations- 1) diagonally stable: the nematic directors roughly align along one of the square diagonals and there are two classes of diagonal solutions: D1 and D2, one for each square diagonal; 2) rotated states: here, the nematic director rotates by $\pi$ radians between a pair of opposite parallel edges, and there are $4$ classes of rotated solutions labelled by R1, R2, R3 and R4 respectively, related to each other by $\frac{\pi}{2}$ radians. The diagonal and rotated solutions are distinguished by the locations of the splay vertices; a splay vertex being a vertex such that the nematic director splays around the vertex and a bend vertex being such that the nematic director bends around the vertex in question. Each diagonal solution has a pair of diagonally opposite
splay vertices and each rotated solution has a pair of adjacent splay vertices, connected by a square edge. In Figure \ref{Uncoupled system}(a) and Figure \ref{Uncoupled system}(b), the discrete solutions,  $\Psi_h= (\Qvec_{D1}, \mathbf{M})$ and $\Psi_h= (\Qvec_{R4}, \mathbf{M})$, are plotted for $\ell=0.001, c=0$. Here $\Qvec_{D1}$ (resp. $\Qvec_{R4}$) is the D1 diagonal (resp. R4 rotated) solution with defects at vertices, and the corresponding nematic director $\mathbf{n}=(\cos\theta,\sin\theta)$ where $\theta = \frac{1}{2} \textrm{atan}\frac{Q_{D1,12}}{Q_{D1,11}}$ and $Q_{D1, 11} = \frac{|\Qvec_{D1}|}{\sqrt{2}} \cos 2 \theta, Q_{D1, 12} = \frac{|\Qvec_{D1}|}{\sqrt{2}}\sin 2\theta$ are the two independent components of $\Qvec_{D1}$. Analogous remarks apply to $\Qvec_{R4}$. $\Mvec$ labels the uncoupled magnetic profile with a $+1$-degree vortex at the square center consistent with topologically non-trivial boundary conditions.  The magnetization vector $\Mvec$ has a direction whereas the nematic director field, $\mathbf{n}$, is plotted without a direction since $\mathbf{n}$ and $-\mathbf{n}$ are physically equivalent.  Figure \ref{Uncoupled system}(c) (resp. Figure \ref{Uncoupled system}(d)) demonstrate the convergence history of the discrete solutions, computed using piecewise polynomials of degree $1$,  associated with D1 ( resp. R4) nematic solutions, in energy and $\mathbf{L}^2$ norms for the parameter values $\ell=0.001, c=0$ and $\ell=0.002, c=0$.  The order $1 $ convergence in energy norm and order $2 $ convergence in $\mathbf{L}^2$- norm are obtained for both sets of parameter values. The color bars
for nematic and magnetic profiles plot the values of $ s= \sqrt{Q_{11}^2+ Q_{12}^2}$  and $\abs{\mathbf{M}}=\sqrt{M_{1}^2+ M_{2}^2},$ respectively. The lines and arrows depict $\mathbf{n}$ and $\mathbf{M}$ respectively. Note that all subsequent discrete solution profiles,  $\Psi_h$,  have the nematic director field plot on the left and and magnetization
vector plot on the right.
\begin{figure}[H]
	\begin{minipage}[t]{0.31\linewidth}
		\subfloat[$\Qvec_{D1}$ and $\Mvec_{D1}$ profile]{\includegraphics[width=\textwidth]{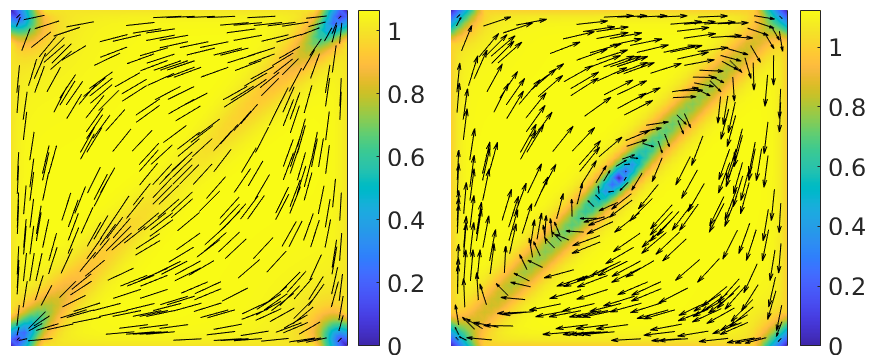}}
		\\
		\subfloat[$\Qvec_{D2}$ and $\Mvec_{D2}$ profile]{\includegraphics[width=\textwidth]{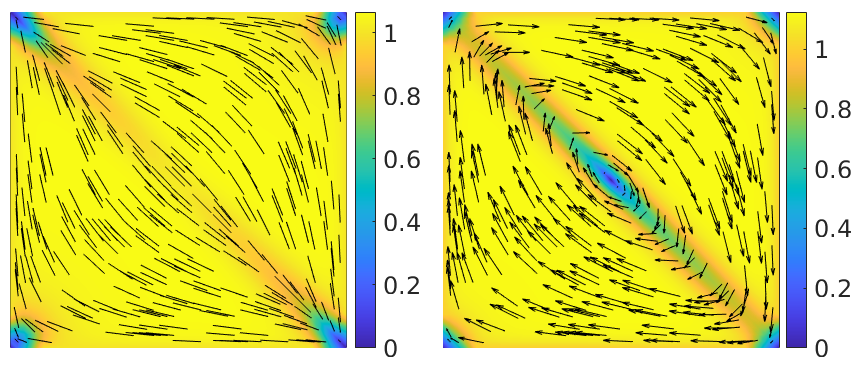}}	\end{minipage}\hspace{0.4 cm}
	\begin{minipage}[t]{0.31\linewidth}
		\subfloat[$\Qvec_{R1}$ and $\Mvec_{R1}$ profile]{\includegraphics[width=\textwidth]{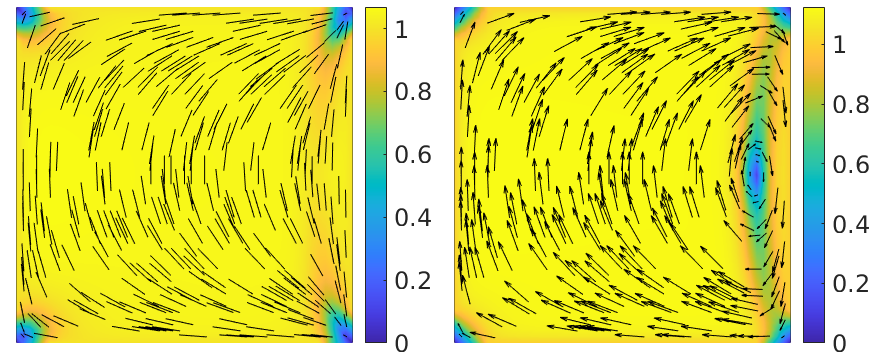}}
		\\			
		\subfloat[$\Qvec_{R2}$ and $\Mvec_{R2}$ profile]{\includegraphics[width=\textwidth]{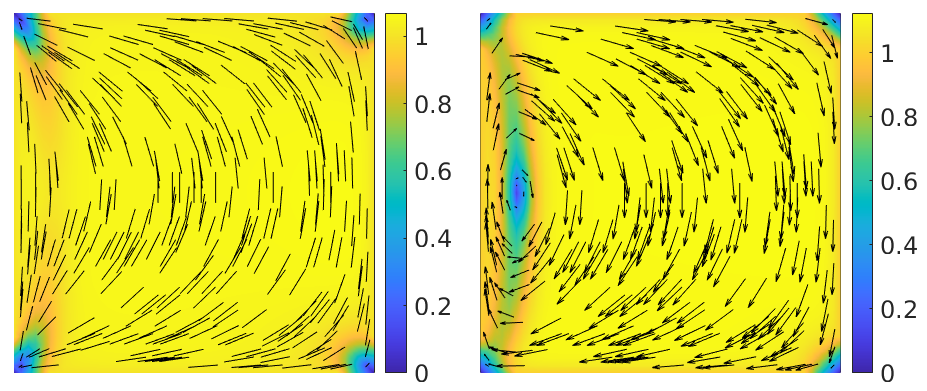}} 
	\end{minipage}\hspace{0.4 cm}
	\begin{minipage}[t]{0.31\linewidth}
		%	\centering
		\subfloat[$\Qvec_{R3}$ and $\Mvec_{R3}$ profile]{\includegraphics[width=\textwidth]{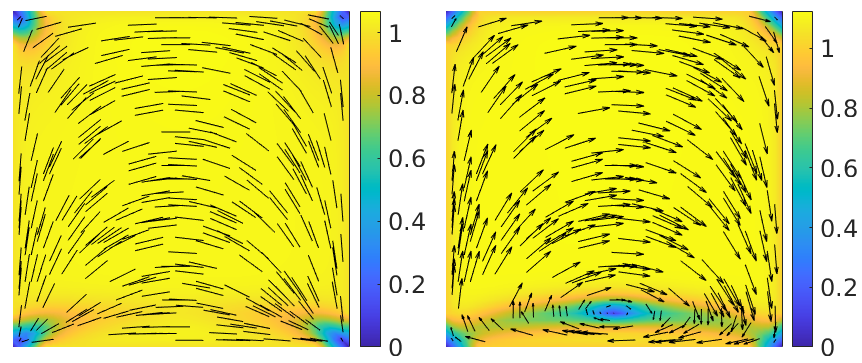}}
		\\
		\subfloat[$\Qvec_{R4}$ and $\Mvec_{R4}$ profile]{\includegraphics[width=\textwidth]{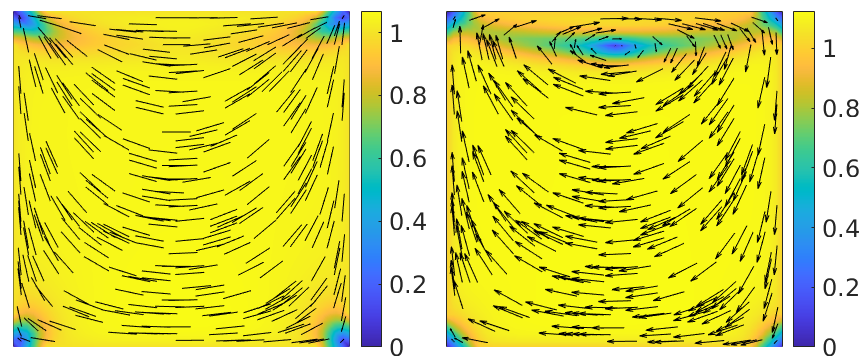}}	\end{minipage}
	\caption{Nematic $\Qvec$ and magnetic $\Mvec$ configurations for $\ell =0.001$ and $c=0.25.$ Left column: solution profiles $\Psi_h=(\Qvec_{D1},\Mvec_{D1})$ (top) and $\Psi_h=(\Qvec_{D2},\Mvec_{D2})$ (bottom) corresponding to diagonal D1 and D2 nematic stable solutions, respectively; Middle column: solution profiles $\Psi_h=(\Qvec_{R1},\Mvec_{R1})$ (top) and $\Psi_h=(\Qvec_{R2},\Mvec_{R2})$ (bottom) corresponding to rotated R1 and R2 nematic stable solutions, respectively; Right column: solution profiles $\Psi_h=(\Qvec_{R3},\Mvec_{R3})$ (top) and $\Psi_h=(\Qvec_{R4},\Mvec_{R4})$ (bottom) corresponding to rotated R3 and R4 nematic stable solutions, respectively.}
	\label{Coulped_systeM_a_positive}
\end{figure}
\noindent \textbf{Case II: the coupling parameter $\mathbf{c>0}$}

\medskip

\noindent Figure~\ref{Coulped_systeM_a_positive} plots the numerically computed stable solution profiles for parameter values $\ell=0.001,  c=0.25$, represented by $\Psi_h=(\Qvec_{D1},\Mvec_{D1}),$ $\Psi_h=(\Qvec_{D2},\Mvec_{D2}),$ $\Psi_h=(\Qvec_{R1},\Mvec_{R1}),$ $\Psi_h=(\Qvec_{R2},\Mvec_{R2}),$ $\Psi_h=(\Qvec_{R3},\Mvec_{R3}),$ $\Psi_h=(\Qvec_{R4},\Mvec_{R4}),$ corresponding to D1, D2 diagonal and the R1, R2, R3, R4 rotated stable nematic equillibria, respectively. 
For small $\ell$, and $c>0,$ the coupling energy favors the co-alignment of $\mathbf{n}$ and $\Mvec$, i.e., $\mathbf{n}\cdot \Mvec= \pm 1.$ In this case, the nematic profiles (both diagonal and rotated) do not exhibit any interior vortices whereas the magnetic profiles develop an interior line of reduced $|\mathbf{M}|$, analogous to a domain wall, smeared out along the square diagonals/ near one of the square edges. The magnetic  profiles,  $\Mvec_{D1}, \Mvec_{D2}$ (resp. $\Mvec_{R1}, \Mvec_{R2}$, $\Mvec_{R3}, \Mvec_{R4}$) exhibit $\pi$-walls \cite{Hanetal2021Ferro2D} along the square diagonals  $y=x$ for D1 and along $y=-x+1$ for D2 (resp. along the square edges $x=1$ for R1, $x=0$ for R2, $y=0$ for R3, $y=1$ for R4) nematic solutions. The domain walls are created to ensure the compatibility between the angle constraint in \eqref{angle constraint for c>0}, the condition necessary to be minimizer for $c>0$, and the tangent boundary conditions. Recall that for $\mathbf{n}=(\cos \theta, \sin \theta)$, $Q_{11}=s\cos 2\theta$ and $Q_{12}=s\sin 2\theta.$ For a stable stationary point $\Psi=(s\cos 2\theta,s\sin 2\theta, \abs{\Mvec}\cos \varphi,\abs{\Mvec}\sin \varphi)$ with magnetization angle $\varphi$,  \eqref{angle constraint for c>0} implies that $$\theta \approx  \varphi +k\pi,\, k \in \mathbb{Z}$$ almost  everywhere in the domain interior , for sufficiently small values of $\ell$. 
\begin{figure}[H]
	\centering
	\subfloat[]{\includegraphics[width=4cm,height=3.3cm]{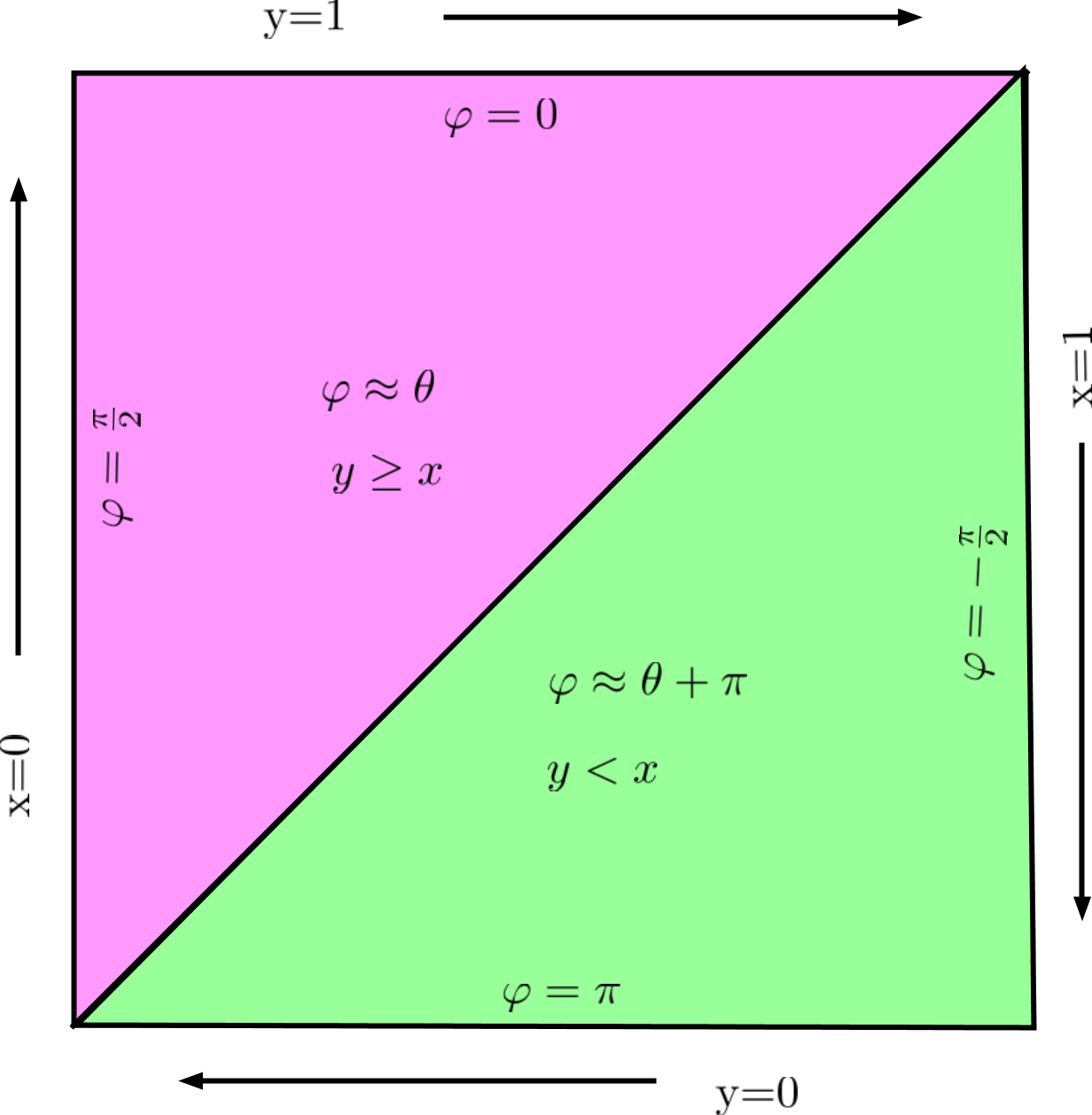}} \hspace{2 cm}
	\subfloat[]{\includegraphics[width=4cm,height=3.3cm]{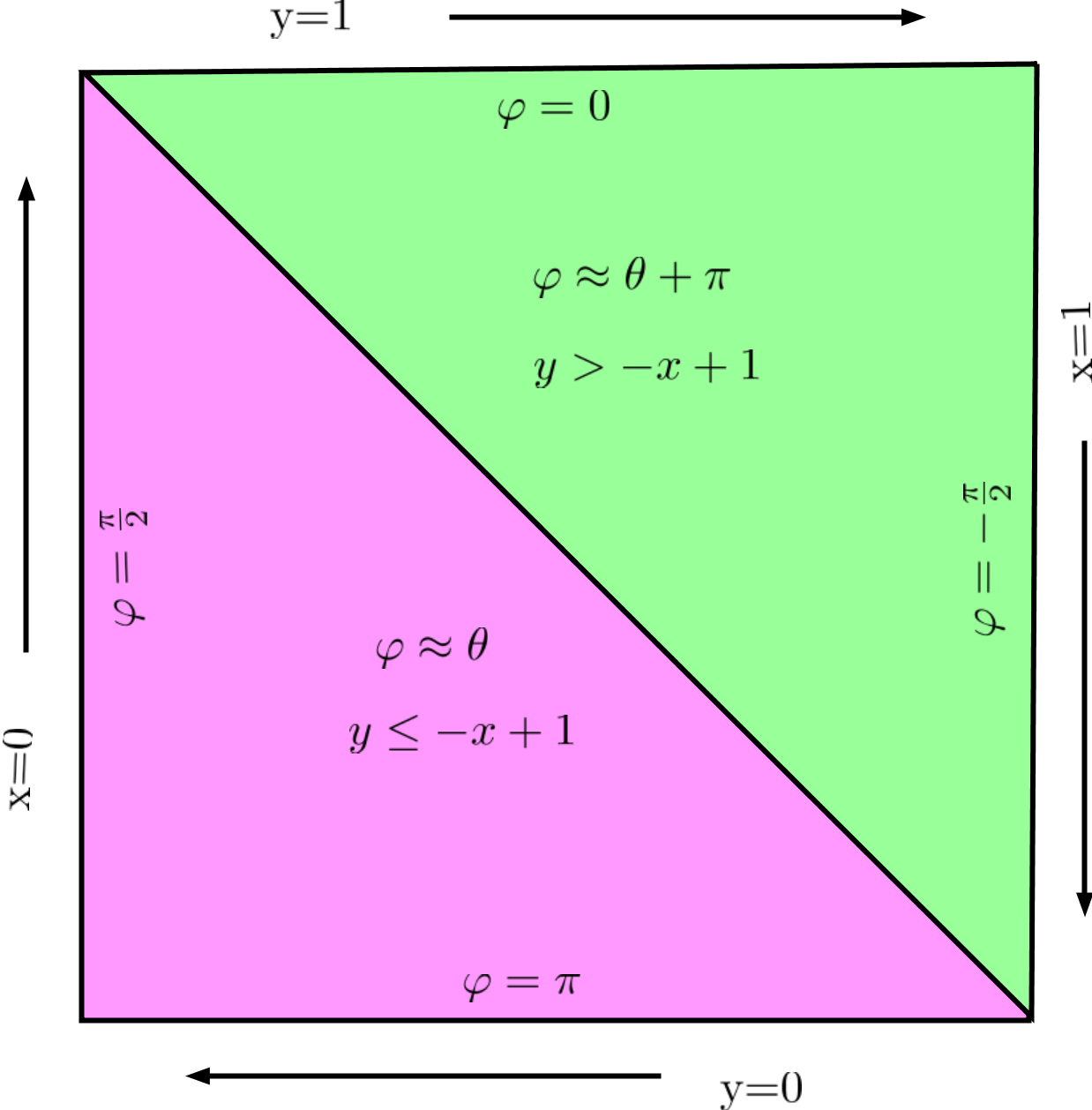}}
	\caption{Domain wall formation of magnetic profiles (a) $\Mvec_{D1}$ and (b) $\Mvec_{D2}$.}
	\label{Domain wall for D1 D2}
\end{figure}
\begin{figure}[H]
	\centering
	\subfloat[]{\includegraphics[width=6.3cm,height=5cm]{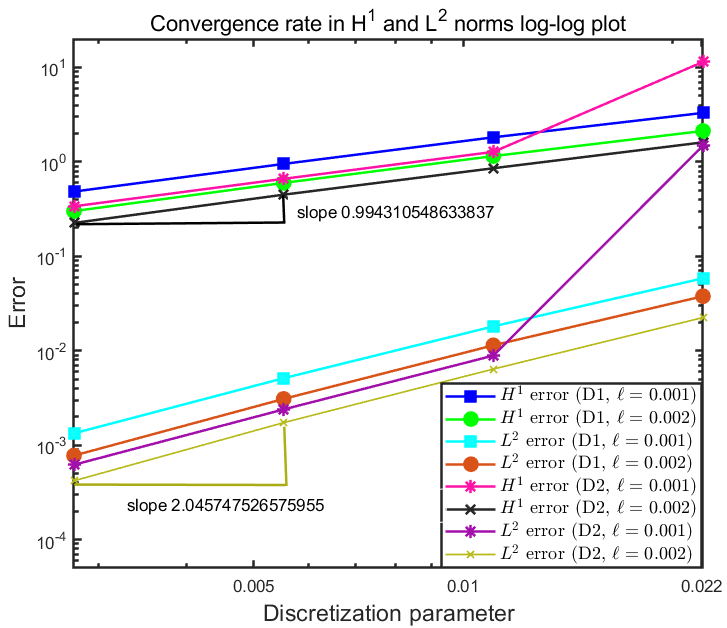}\label{D1}} \hspace{0.6 cm}
	\subfloat[]{\includegraphics[width=6.3cm,height=5cm]{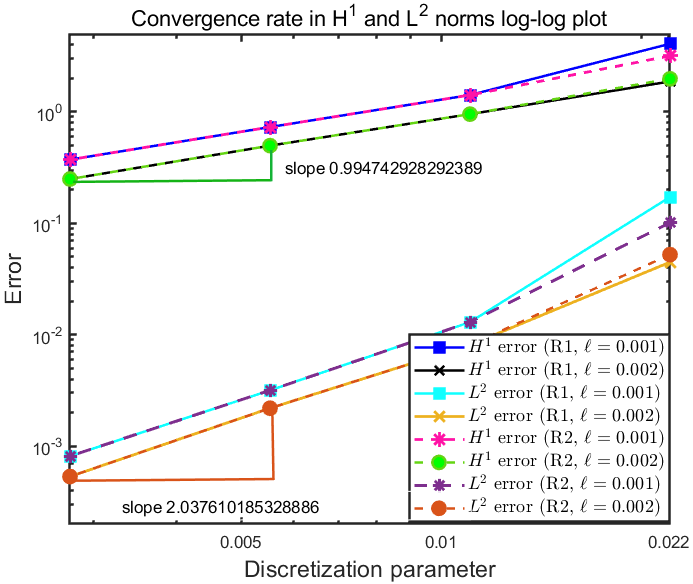}\label{R1}} \\
	\subfloat[]{\includegraphics[width=6.3cm,height=5cm]{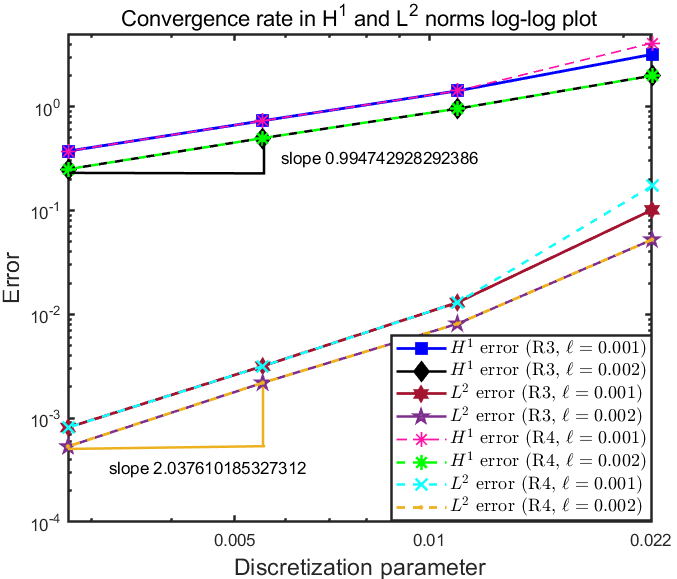}\label{R3}} 
	\caption{Energy and  $\mathbf{L}^2$ norm error versus discretization parameter $h$ plots for the discrete solutions (a) $\Psi_h=(\Qvec_{D1},\Mvec_{D1})$ and  $\Psi_h=(\Qvec_{D2},\Mvec_{D2})$, (b) $\Psi_h=(\Qvec_{R1},\Mvec_{R1})$ and $\Psi_h=(\Qvec_{R2},\Mvec_{R2})$, (c) $\Psi_h=(\Qvec_{R3},\Mvec_{R3})$ and  $\Psi_h=(\Qvec_{R4},\Mvec_{R4}),$ for the two sets of parameter values  $\ell =0.001 ,c=0.25,$ and $\ell =0.002, c=0.25,$}
	\label{convergence rates for c positive}
\end{figure}
%\\
%For a stable stationary point $\Psi=(s\cos 2\theta,s\sin 2\theta, \abs{\Mvec}\cos \varphi,\abs{\Mvec}\sin \varphi)$ with magnetization angle $\varphi$,  \eqref{angle constraint for c>0} requires that
% $$\theta= \varphi +k\pi,\, k\in \mathbb{Z}$$
%  in the domain interior, which is reflected  in the magnetic profiles of the discrete solutions as is illustrated in Figure \ref{Domain wall for D1 D2}. 
\noindent The tangent boundary condition for D1 ( resp. D2) nematic profile is coded in the boundary conditions: $\theta=0$ along $y=0,$  $\theta=0$ along $y=1,$  $\theta=\frac{\pi}{2}$ along $x=0,$  $\theta=\frac{\pi}{2}$ along $x=1$ (resp. $\theta=\pi$ along $y=0,$  $\theta=\pi$ along $y=1,$  $\theta=\frac{\pi}{2}$ along $x=0,$  $\theta=\frac{\pi}{2}$ along $x=1$). Figure \eqref{Domain wall for D1 D2} shows that $\theta \approx \varphi $ for  $y\geq x$ (resp.   $y\leq -x+1$) and  $\theta \approx \varphi +\pi $ for  $y< x$ (resp.   $y > -x+1$) in $\Mvec_{D1}$ (resp. $\Mvec_{D2}$) profile. The domain walls for $\Mvec_{R1}, \Mvec_{R2}$, $\Mvec_{R3}, \Mvec_{R4}$ can be interpreted similarly. Figure \ref{convergence rates for c positive} illustrates the numerical errors and orders of convergence,  computed using piecewise polynomials of degree $1$,  for the discrete solutions (a) $\Psi_h=(\Qvec_{D1},\Mvec_{D1})$ and $\Psi_h=(\Qvec_{D2},\Mvec_{D2})$, (b) $\Psi_h=(\Qvec_{R1},\Mvec_{R1})$ and $\Psi_h=(\Qvec_{R2},\Mvec_{R2})$, (c) $\Psi_h=(\Qvec_{R3},\Mvec_{R3})$ and  $\Psi_h=(\Qvec_{R4},\Mvec_{R4})$, respectively, in energy and $\mathbf{L}^2$ norms for the parameter values $\ell=0.001, c=0.25$ and $\ell=0.002, c=0.25$. The convergence rates obtained in energy and $\mathbf{L}^2$ norms are of  $\mathcal{O}(h)$ and $\mathcal{O}(h^2)$, respectively.
\begin{figure}[H]
	\centering
	\begin{minipage}[t]{0.31\linewidth}
		%	\centering
		\subfloat[$\Qvec_{D1}^1$ and $\Mvec^1_{D1}$ profile]{\includegraphics[height=1.9cm, width=4.6cm]{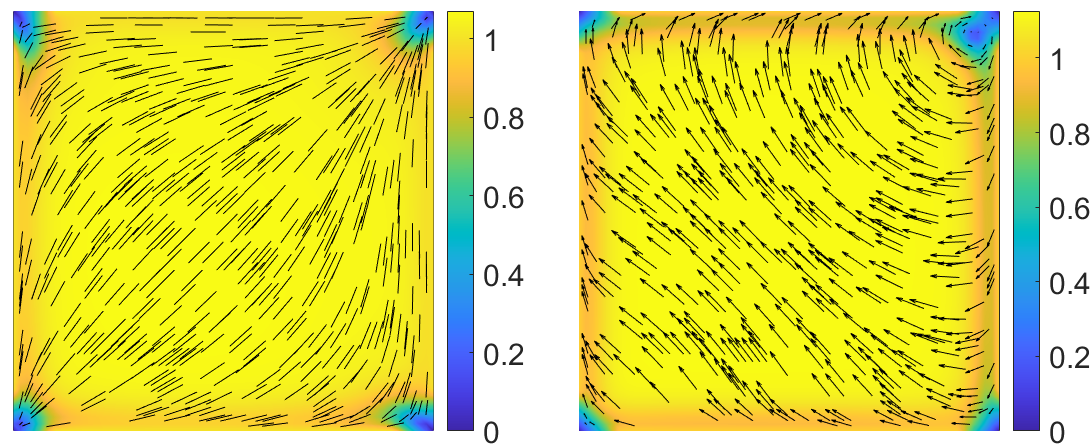}}
	\\
		\subfloat[$\Qvec_{D1}^2$ and $\Mvec^2_{D1}$ profile]{\includegraphics[height=1.9cm, width=4.6cm]{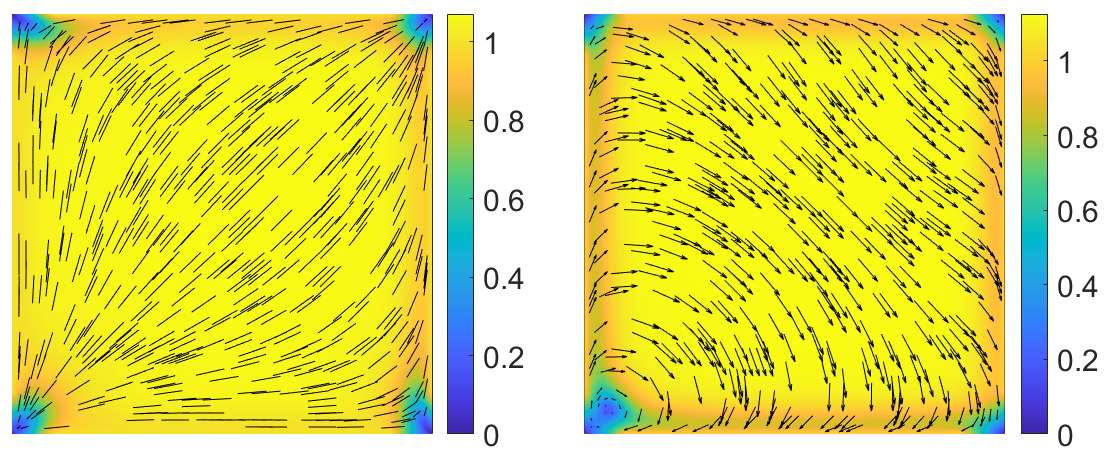}}
	\end{minipage}\hspace{0.4 cm}
	\begin{minipage}[t]{0.31\textwidth}
		\subfloat[$\Qvec^1_{D2}$ and $\Mvec^1_{D2}$ profile]{\includegraphics[height=1.9cm, width=4.6cm]{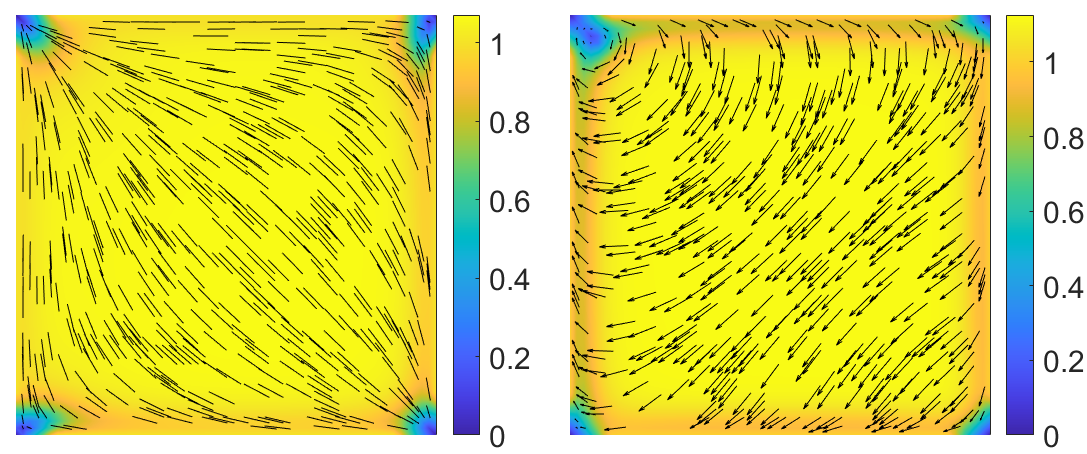}\label{Fig3cD2a}} 
		\\			
		\subfloat[$\Qvec^2_{D2}$ and $\Mvec^2_{D2}$ profile]{\includegraphics[height=1.9cm, width=4.6cm]{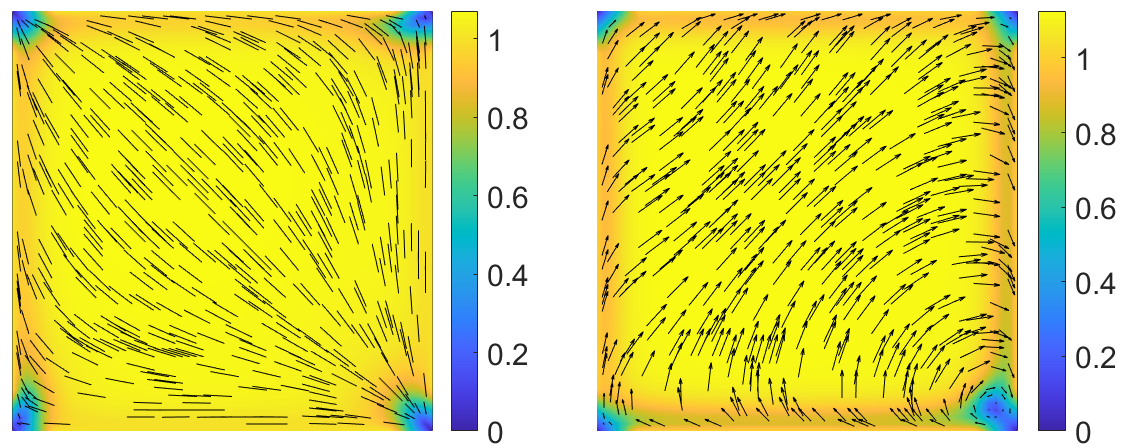}\label{Fig3cD2b}} 				
	\end{minipage} \hspace{0.2 cm}
	\begin{minipage}[t]{0.31\linewidth}
		\centering	
		\subfloat[$\Qvec^1_{R1}$ and $\Mvec^1_{R1}$ profile]{\includegraphics[height=1.9cm, width=4.6cm]{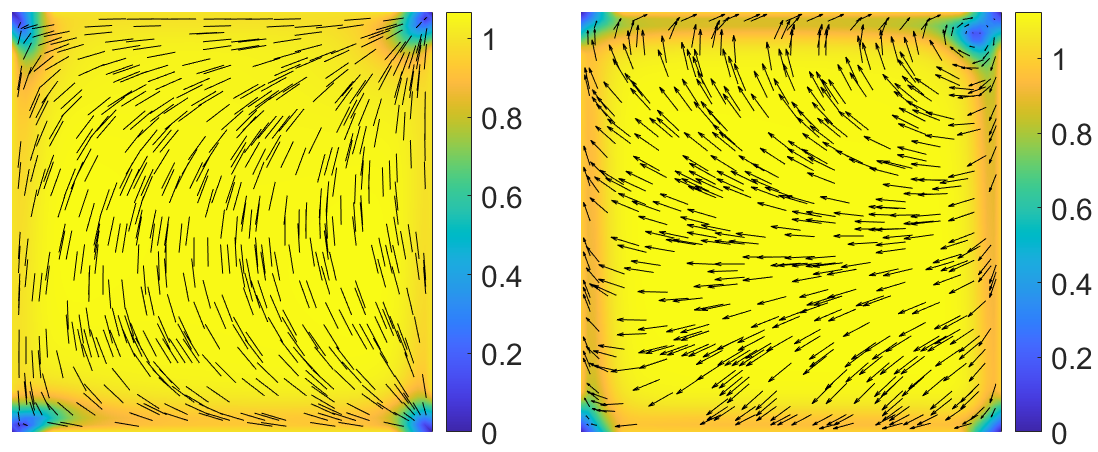}\label{Fig3dR1a}}
\\
		\subfloat[$\Qvec^2_{R1}$ and $\Mvec^2_{R1}$ profile]{\includegraphics[height=1.9cm, width=4.6cm]{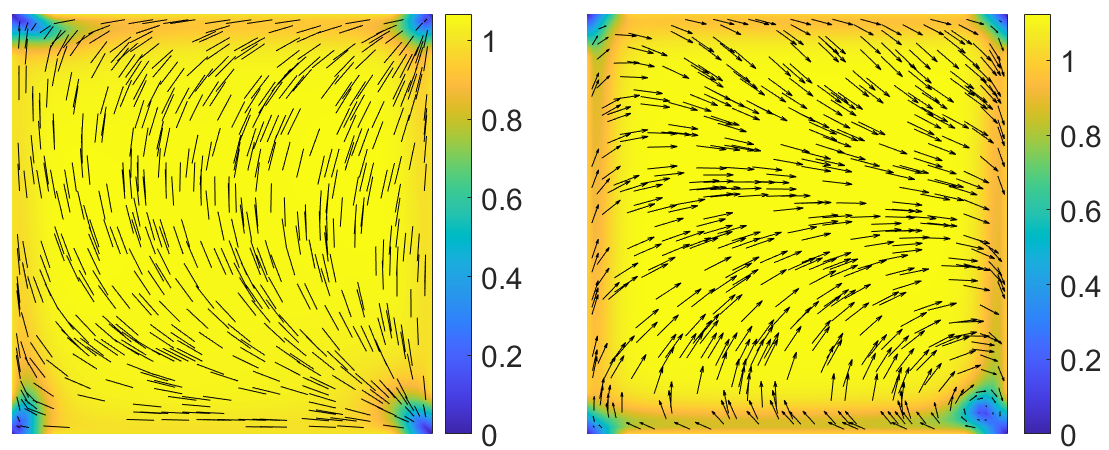}\label{Fig3dR2b}}	
		\vspace{0.3 cm}
	\end{minipage}
	\caption{Nematic $\Qvec$ and magnetic $\Mvec$ configurations for $\ell =0.001$ and $c=-0.25.$ Left column: two solution profiles $\Psi_h=(\Qvec^1_{D1},\Mvec^1_{D1})$ and $\Psi_h=(\Qvec^2_{D1},\Mvec^2_{D1})$ corresponding to diagonal D1 nematic stable solution; Middle column: two solution profiles $\Psi_h=(\Qvec^1_{D2},\Mvec^1_{D2})$ and $\Psi_h=(\Qvec^2_{D2},\Mvec^2_{D2})$ corresponding to diagonal D2 nematic stable solution; Right column: two solution profiles $\Psi_h=(\Qvec^1_{R1},\Mvec^1_{R1})$ and $\Psi_h=(\Qvec^2_{R1},\Mvec^2_{R2})$ corresponding to rotated R1 nematic stable solution.}
\label{Figure_c_negative_couplingPart1}
\end{figure}
\begin{figure}[H]
	\centering
	\begin{minipage}[t]{0.31\linewidth}
	%	\centering
	\subfloat[$\Qvec^1_{R2}$ and $\Mvec^1_{R2}$ profile]{\includegraphics[height=1.95cm, width=4.7cm]{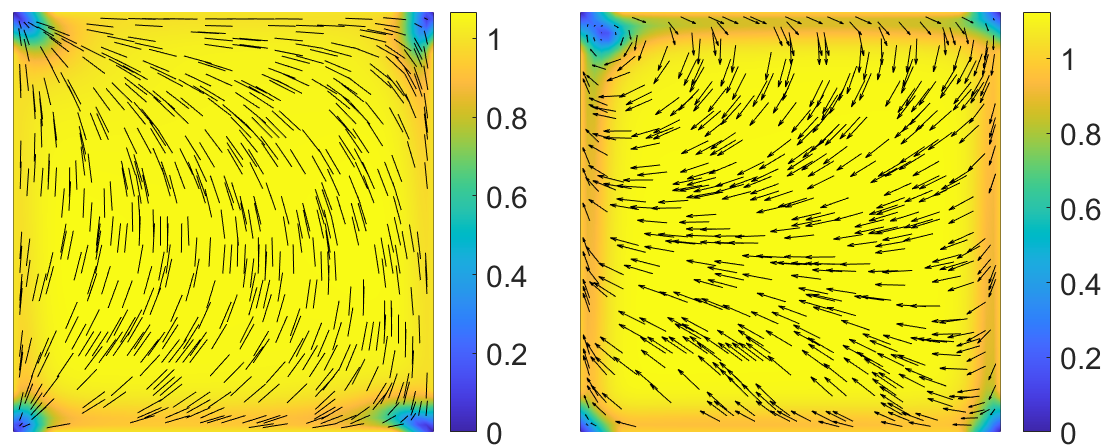}}
	\\
	\subfloat[$\Qvec^2_{R2}$ and $\Mvec^2_{R2}$ profile]{\includegraphics[height=1.95cm, width=4.6cm]{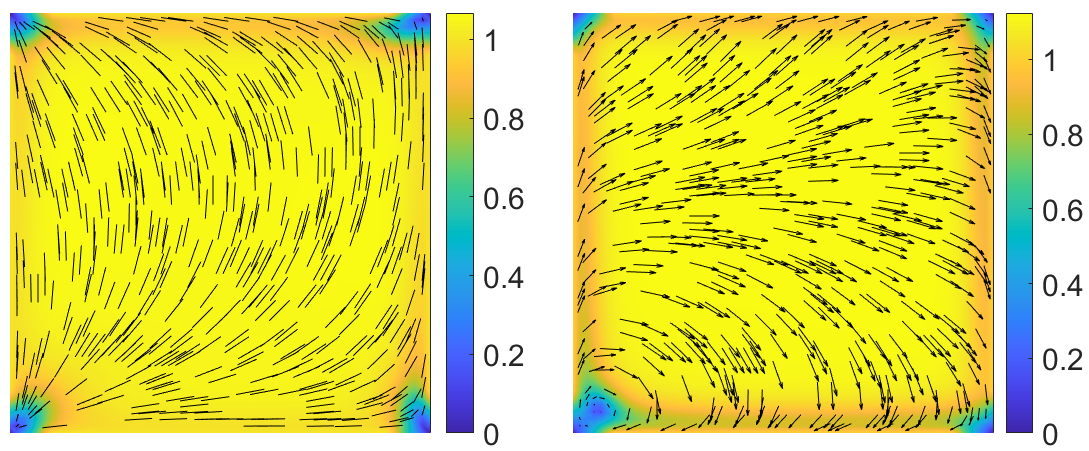}}
\end{minipage}\hspace{0.4 cm}
\begin{minipage}[t]{0.31\textwidth}
	\subfloat[$\Qvec^1_{R3}$ and $\Mvec^1_{R3}$ profile]{\includegraphics[height=1.9cm, width=4.6cm]{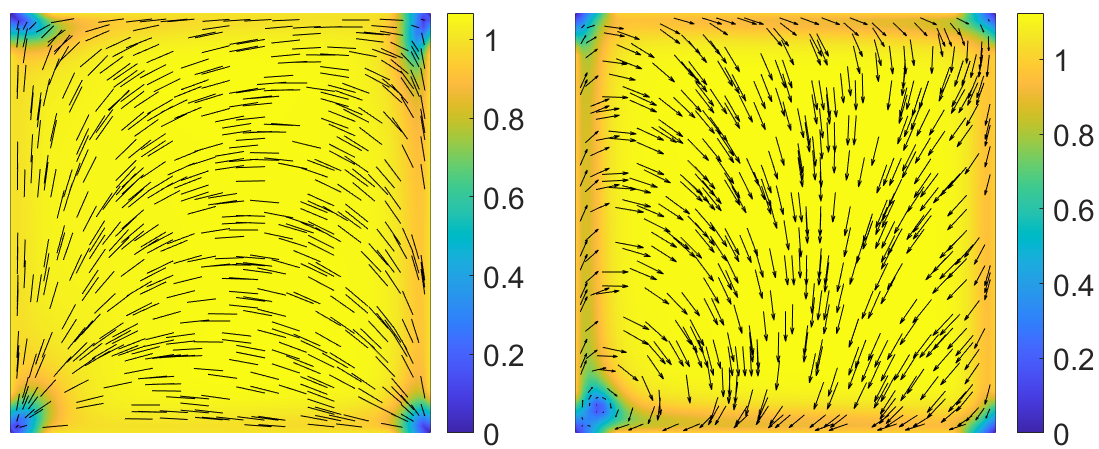}\label{Fig3cD2a1}} 
	\\			
	\subfloat[$\Qvec^2_{R3}$ and $\Mvec^2_{R3}$ profile]{\includegraphics[height=1.9cm, width=4.6cm]{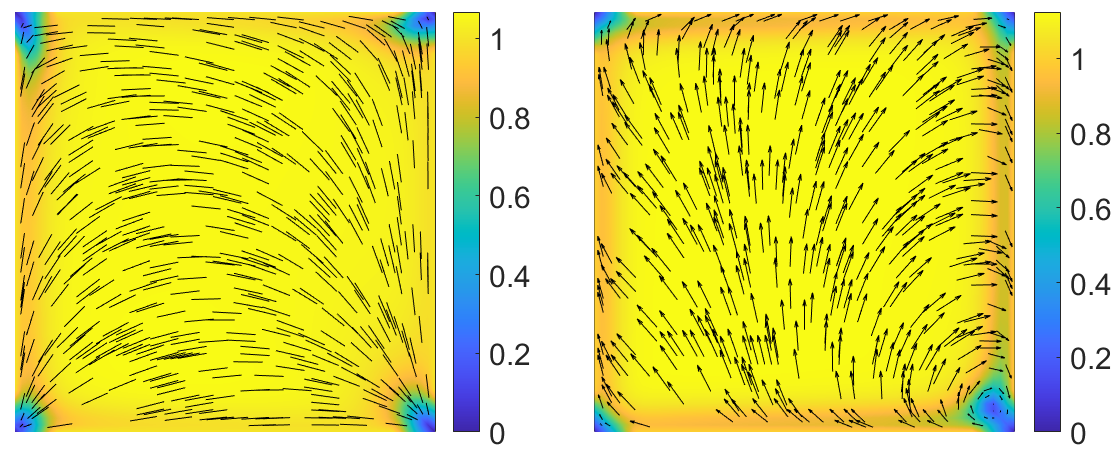}\label{Fig3cD2b1}} 				
\end{minipage}\hspace{0.4 cm}
\begin{minipage}[t]{0.31\linewidth}
	\centering	
	\subfloat[$\Qvec^1_{R4}$ and $\Mvec^1_{R4}$ profile]{\includegraphics[height=1.9cm, width=4.6cm]{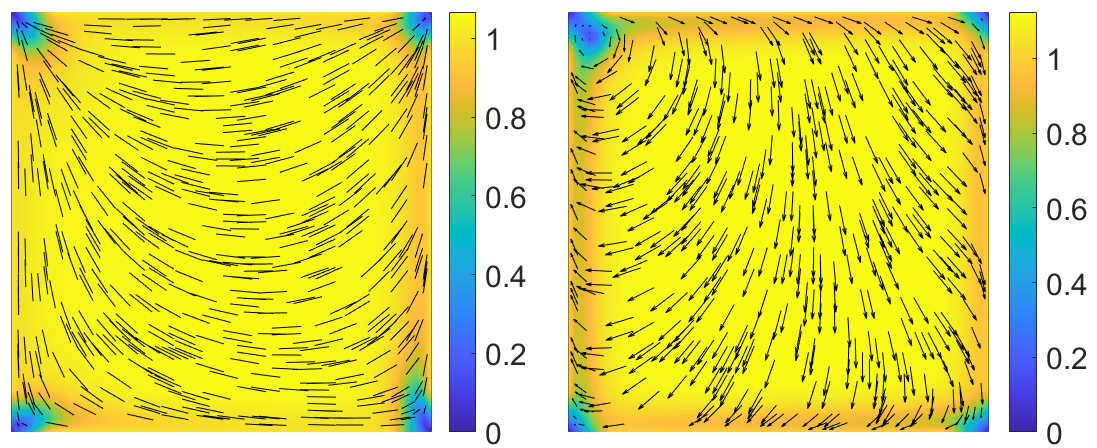}\label{Fig3dR1a1}}
	\\
	\subfloat[$\Qvec^2_{R4}$ and $\Mvec^2_{R4}$ profile]{\includegraphics[height=1.9cm, width=4.6cm]{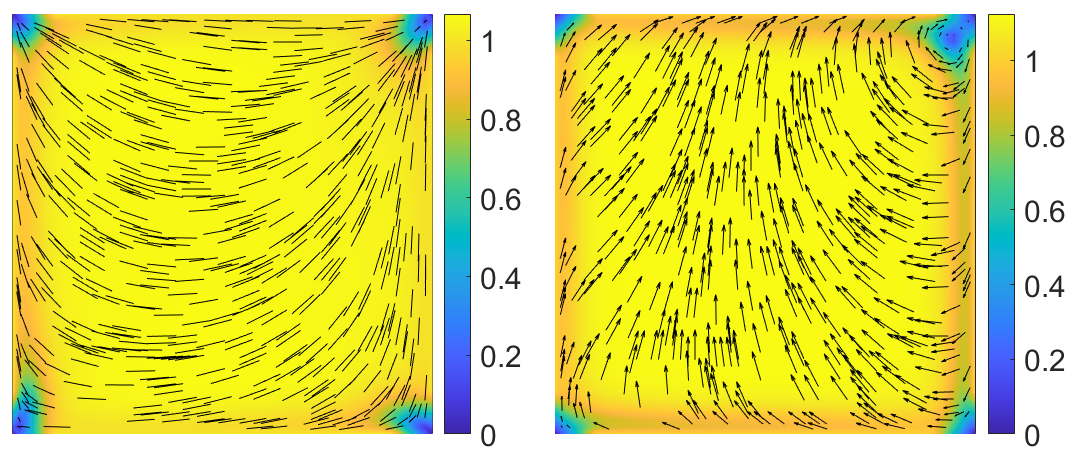}\label{Fig3dR2b1}}	
\end{minipage}
		\caption{Nematic $\Qvec$ and magnetic $\Mvec$ configurations for $\ell =0.001$ and $c=-0.25.$ Left column: two solution profiles $\Psi_h=(\Qvec^1_{R2},\Mvec^1_{R2})$ and $\Psi_h=(\Qvec^2_{R2},\Mvec^2_{R2})$ corresponding to rotated R2 nematic stable solution; Middle column: two solution profiles $\Psi_h=(Q^1_{R3},\Mvec^1_{R3})$ and $\Psi_h=(\Qvec^2_{R3},\Mvec^2_{R3})$ corresponding to rotated R3 nematic stable solution; Right column: two solution profiles $\Psi_h=(\Qvec^1_{R4},\Mvec^1_{R4})$ and $\Psi_h=(\Qvec^2_{R4},\Mvec^2_{R4})$ corresponding to rotated R4 nematic stable solution.}
	\label{Figure_c_negative_coupling}
\end{figure}
\noindent \textbf{Case III: the coupling parameter $c<0$}

\medskip

\noindent Now, we discuss the discrete solution profiles for negative coupling, which favours perpendicular alignment of $\mathbf{n}$ and $\Mvec$, i.e, $\mathbf{n} \cdot \Mvec =0,$  for the parameter values $\ell =0.001 $ and $c=-0.25.$ For diagonal nematic solutions (resp. rotated solutions), the symmetry between the diagonally opposite (resp. square edge) splay vertices is broken, so that there are $4$ distinct diagonal solutions. By similar reasoning, there are $8$ distinct rotated solutions, so that the number of stable admissible equilibria is doubled.
\begin{figure}[H]
	\centering
	%[width=5.1cm,height=5cm]
	\subfloat[]{\includegraphics[width=6.3cm,height=4.9cm]{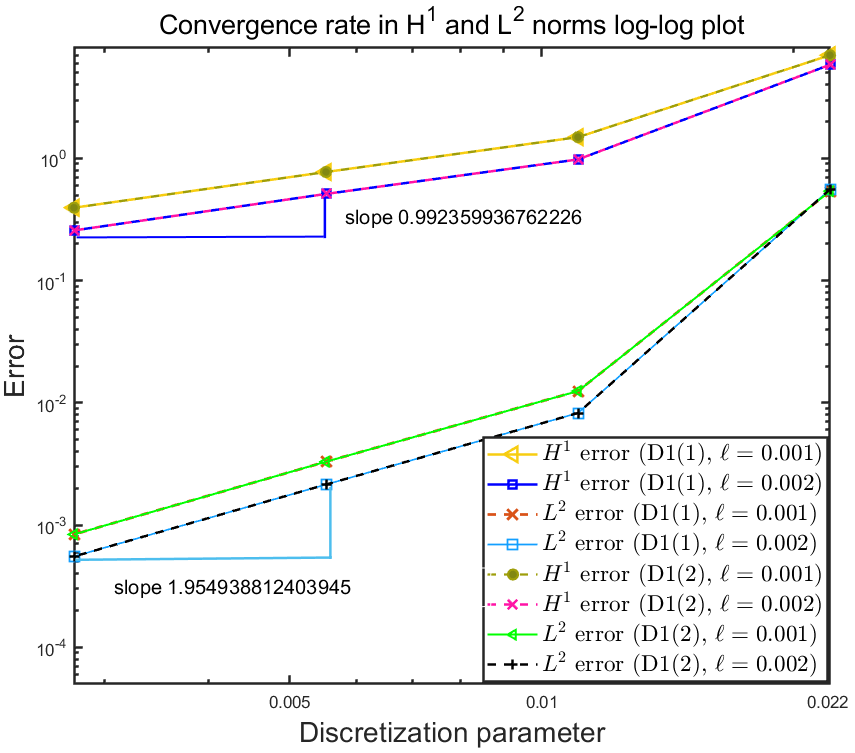}\label{D1cnegative}} \hspace{0.6 cm}
	\subfloat[]{\includegraphics[width=6.2cm,height=4.9cm]{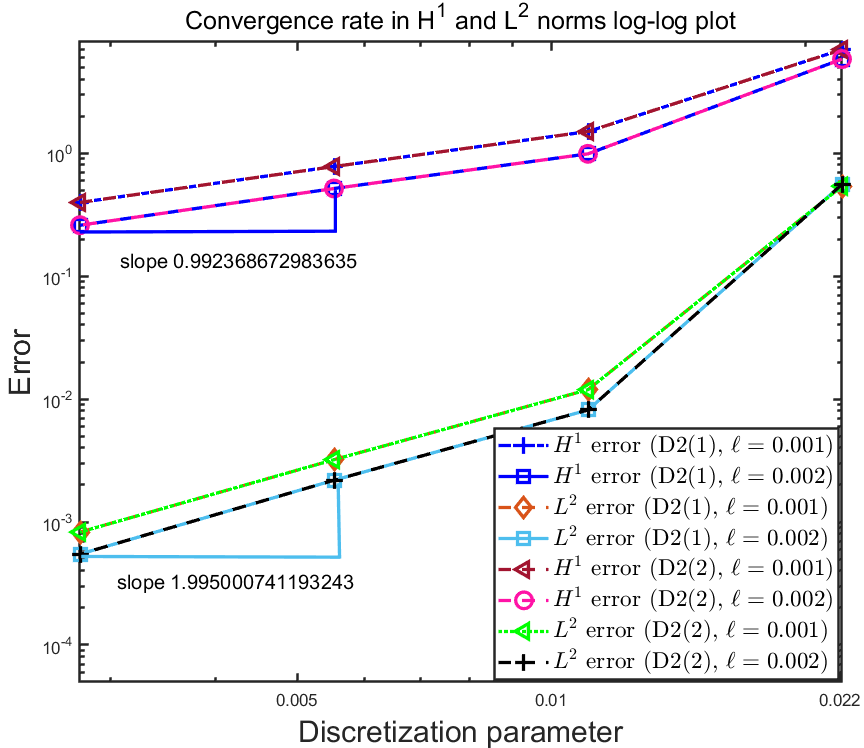}\label{D2cnegative}} 
	\caption{Energy and  $\mathbf{L}^2$ norm errors versus discretization parameter $h$ plots for the discrete solutions (a) $\Psi_h=(\Qvec_{D1}^1,\Mvec_{D1}^1)$ and $\Psi_h=(\Qvec_{D1}^2,\Mvec_{D1}^2)$, (b) $\Psi_h=(\Qvec_{D2}^1,\Mvec_{D2}^1)$ and  $\Psi_h=(\Qvec_{D2}^2,\Mvec_{D2}^2)$, for two sets of parameter values  $\ell =0.001 ,c=-0.25,$ and $\ell =0.002 ,c=-0.25.$}
	\label{convergence rates for c negative}
\end{figure}
\begin{figure}[H]
	\centering
	\subfloat[]{\includegraphics[width=6.3cm,height=4.9cm]{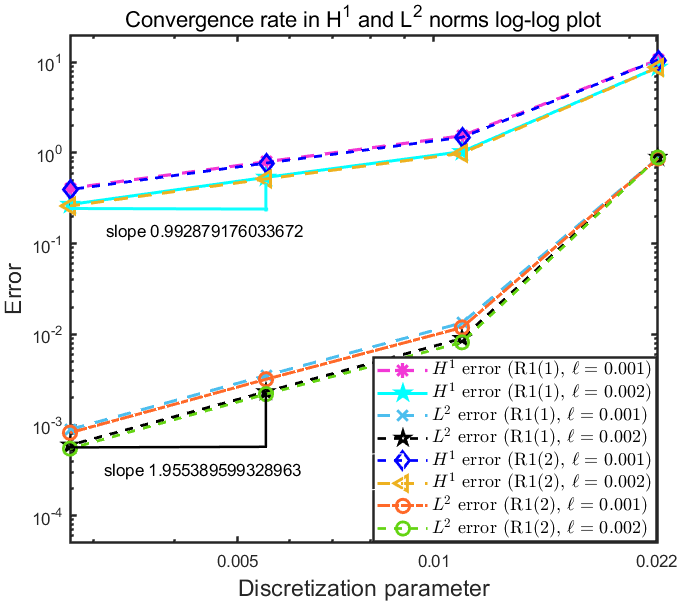}\label{R1cnegative}} \hspace{0.6 cm}		
	%	\\
	\subfloat[]{\includegraphics[width=6.3cm,height=4.9cm]{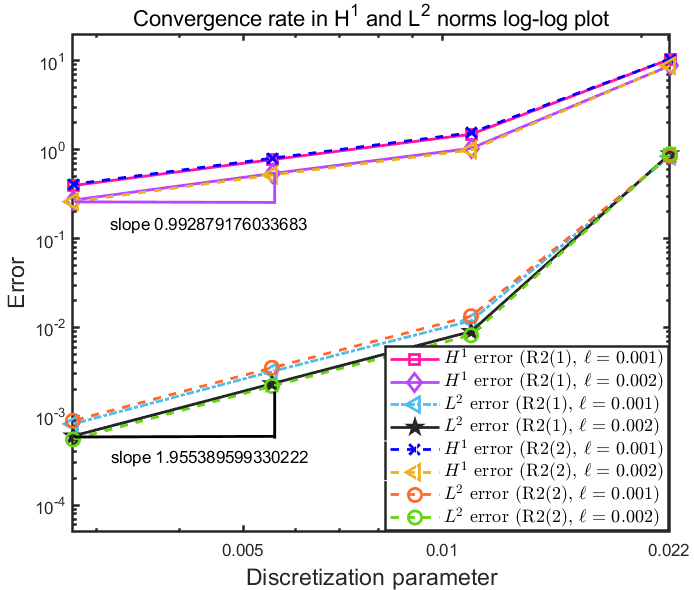}\label{R2cnegative}} 
	\caption{Energy and  $\mathbf{L}^2$ norm errors versus discretization parameter $h$ plots for the discrete solutions  (a) $\Psi_h=(\Qvec_{R1}^1,\Mvec_{R1}^1)$ and  $\Psi_h=(\Qvec_{R1}^2,\Mvec_{R1}^2)$, (b) $\Psi_h=(\Qvec_{R2}^1,\Mvec_{R2}^1)$ and  $\Psi_h=(\Qvec_{R2}^2,\Mvec_{R2}^2)$,  for two sets of parameter values  $\ell =0.001 ,c=-0.25,$ and $\ell =0.002 ,c=-0.25.$}
	\label{convergence rates for c negativeR1R2}
\end{figure}
\begin{figure}[H]
	\centering
	\subfloat[]{\includegraphics[width=6.2cm,height=4.9cm]{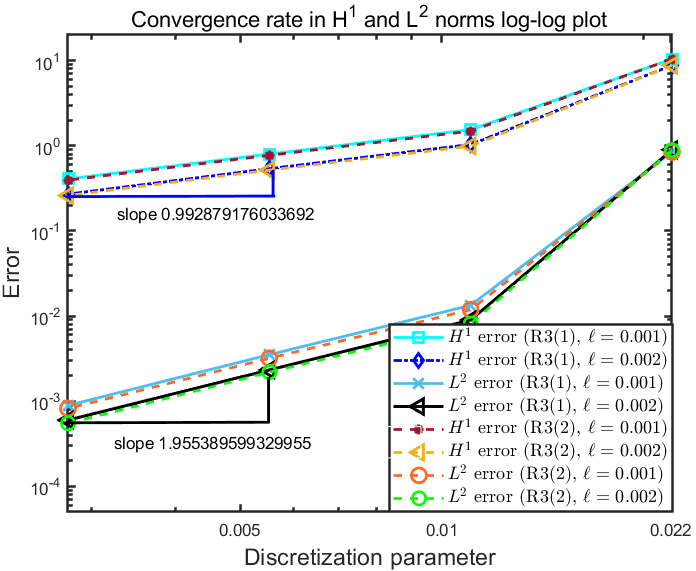}\label{R3cnegative}} \hspace{0.6 cm}
	\subfloat[]{\includegraphics[width=6.2cm,height=4.9cm]{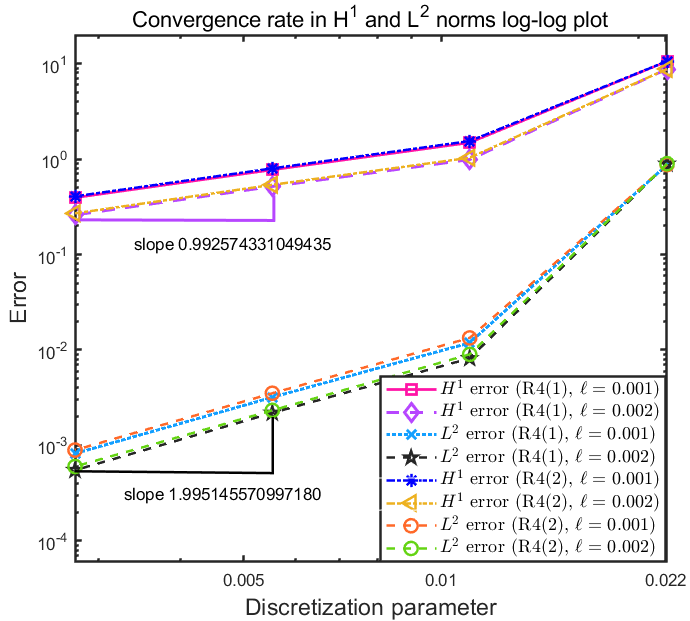}\label{R4cnegative}} 
	\caption{Energy and  $\mathbf{L}^2$ norm errors versus discretization parameter $h$ plots for the discrete solutions,  (a) $\Psi_h=(\Qvec_{R3}^1,\Mvec_{R3}^1)$ and  $\Psi_h=(\Qvec_{R3}^2,\Mvec_{R3}^2)$, (b) $\Psi_h=(\Qvec_{R4}^1,\Mvec_{R4}^1)$ and $\Psi_h=(\Qvec_{R4}^2,\Mvec_{R4}^2),$ for two sets of parameter values  $\ell =0.001 ,c=-0.25,$ and $\ell =0.002 ,c=-0.25.$}
	\label{convergence rates for c negativeR3R4}
\end{figure}
\noindent For instance, $\Psi_h=(\Qvec_{D1}^1,\Mvec_{D1}^1)$ and $\Psi_h=(\Qvec_{D1}^2,\Mvec_{D1}^2) $ in  Figure \ref{Figure_c_negative_couplingPart1}  are two distinct stable, numerically computed solutions corresponding to standard D1 diagonal nematic profile (for $c=0$). Similarly, there are ten pairs of distinct stable  solution  profiles,  corresponding to the standard D2, R1, R2, R3 and R4 profiles; see Figures \ref{Figure_c_negative_couplingPart1} and \ref{Figure_c_negative_coupling}. The numerical errors and orders of convergence of the discrete solutions associated with the diagonal (D1, D2), and rotated (R1, R2) and (R3, R4) nematic equilibria are plotted in  Figures \ref{convergence rates for c negative},  \ref{convergence rates for c negativeR1R2} and \ref{convergence rates for c negativeR3R4}, respectively, for two sets of parameter values $\ell=0.001,  c=-0.25$ and $\ell=0.002,  c=-0.25$.  The convergence rates in energy and $\mathbf{L}^2$ norms are noted to be of order, $O(h)$ and $O(h^2)$, respectively.
\medskip

\noindent \textbf{Parameter dependent plots}
\medskip

\noindent Figure \ref{convergence_rates_parameter_dependency} (resp. Figure \ref{convergence_rates_parameter_dependency_c_negative}) presents discretization parameter $h$ versus energy and $\mathbf{L}^2$ norm error plots, for various values of  $\ell$ and  positive coupling (resp. negative coupling) parameter, for the discrete solution corresponding to D1 diagonal nematic equillibria. We observe that both the energy and $\mathbf{L}^2$ norm errors are sensitive to the choice of the small parameter $\ell$, for both instances of positive and negative nemato-magnetic coupling. For instance, fix $h=10^{-2},$ the energy norm error for $\ell=0.01$ at $h=10^{-2}$ is smaller than the error for $\ell=0.006$ and similarly, the error increases as $\ell$ further decreases.
\begin{figure}[H]
	\centering
	\subfloat[]{\includegraphics[width=7.3cm,height=6cm]{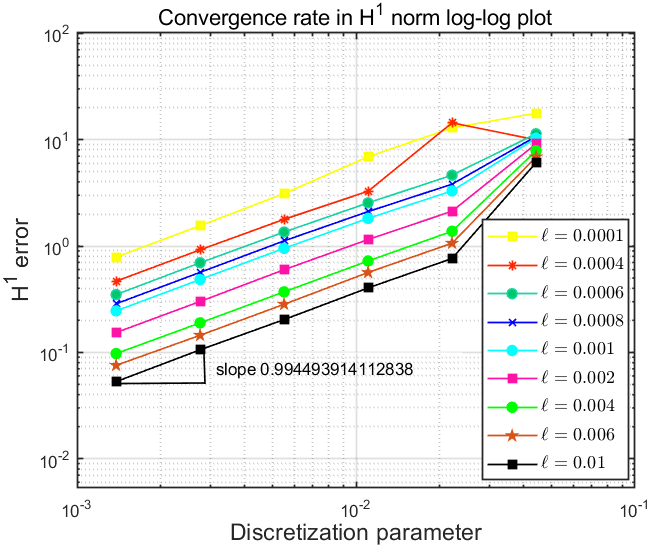}} 
		\hspace{0.2 cm}
	\label{convergence_rates_parameter_dependencyH1}
	\subfloat[]{\includegraphics[width=7.2cm,height=6cm]{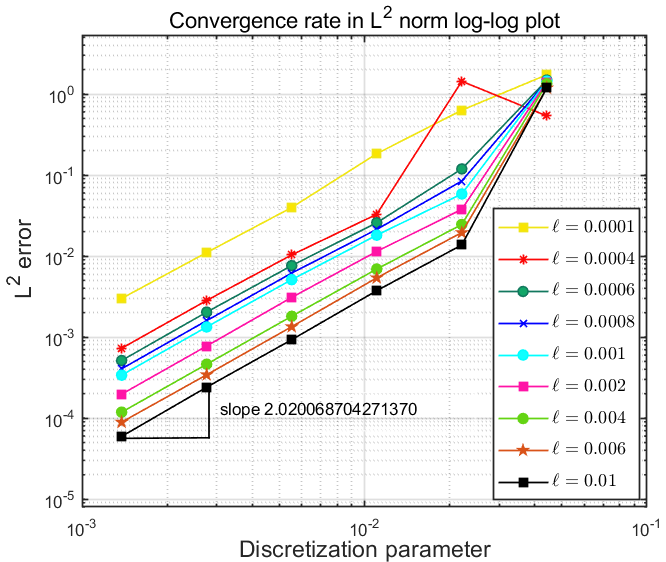}} 
	\caption{Convergence behavior plots of error in the (a) energy norm and (b)  $\mathbf{L}^2$ norm  versus the discretization parameter $h$ for $\Psi_h:=(\Qvec_{D1}, \Mvec_{D1})$ solution for various values of $\ell$ and $c=0.25.$}
		\label{convergence_rates_parameter_dependency}
\end{figure}
\begin{figure}[H]
	\centering
	\subfloat[]{\includegraphics[width=7.3cm,height=6cm]{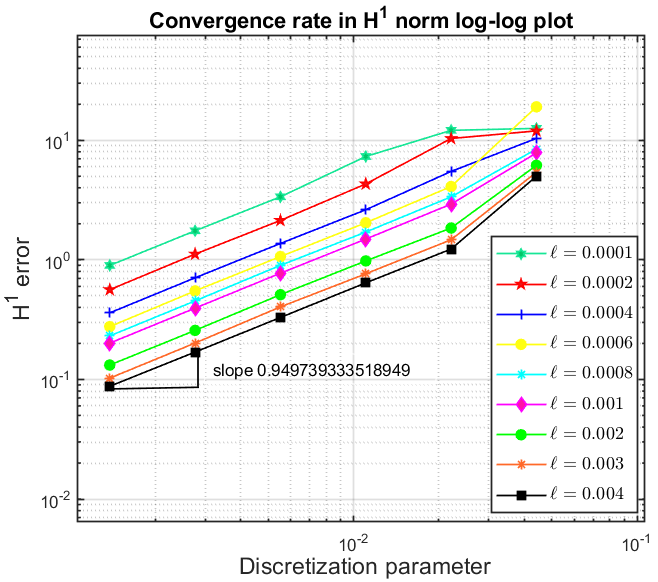}} 
	\hspace{0.2 cm}
	\label{convergence_rates_parameter_dependency_c_negative_H1error1}
	\subfloat[]{\includegraphics[width=7.2cm,height=6cm]{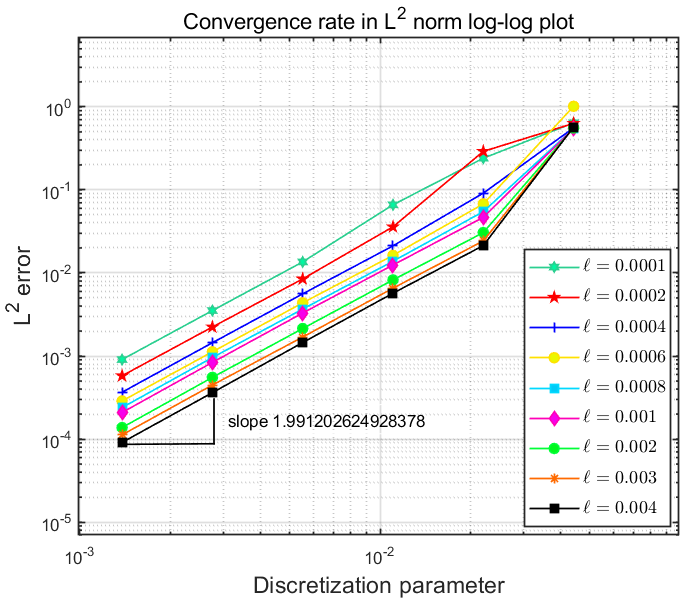}} 
	\caption{Convergence behavior plots of error in the (a) energy norm and (b)  $\mathbf{L}^2$ norm  versus the discretization parameter $h$ for $\Psi_h:=(\Qvec_{D1}^1, \Mvec_{D1}^1)$ solution for various values of $\ell$ and $c=-0.25.$}
	\label{convergence_rates_parameter_dependency_c_negative}
\end{figure}
	\begin{rem}
The Landau-de Gennes energy for nematic liquid crystal 
	 is defined  \cite{MultistabilityApalachong} as 
	\begin{align*} %\label{Landau-de Gennes energy functional nlc}
	F_{\text{nem}}(\bar{{\Psi}}_\epsilon) =\int_\Omega (\abs{\nabla \bar{{\Psi}}_\epsilon}^2 + 
	\epsilon^{-2}(\abs{\bar{{\Psi}}_\epsilon}^2  - 1)^2) \dx, 
	\end{align*}
	where  $\bar{{\Psi}}_\epsilon:=(u_1,u_2) = \bar{g} $ on $\partial \Omega$ and $\epsilon$ is a material-dependent parameter that depends on the elastic constant, domain size and temperature. 
	The Euler-Lagrange equations are a system of second order non-linear elliptic partial differential equations  that seeks $\bar{{\Psi}}_\epsilon\in X \times X   $ such that for all $\bar{\varphi}:=(\varphi_1, \varphi_2) \in V \times V $
	\begin{align*}
	&	 \int_{\Omega} \nabla u_1 \cdot \nabla \varphi_1 \dx+2\epsilon^{-2} \int_{\Omega} (u_1^2 + u_2^2-1)u_1 \varphi_{1} \dx=0,\notag\\ &
	\int_{\Omega} \nabla u_2 \cdot \nabla \varphi_2 \dx+ 2\epsilon^{-2}\int_{\Omega} (u_1^2 + u_2^2-1)u_2 \varphi_{2} \dx=0,
	\end{align*}
	which is \eqref{continuous nonlinear ferro} for ${\Psi}:=(u_1,u_2, 0,0)$ and ${\Phi}:=(\varphi_1,\varphi_2, 0,0)$ with the parameter values $\ell =\frac{\epsilon^2}{2}.$
	Note that the non-linearity in reduced Landau-de Gennes minimization problem is cubic. The quadratic  non-linear term $B_2(\cdot, \cdot, \cdot)$ in \eqref{continuous nonlinear ferro} is zero here. An {\it a priori} error analysis with $h-\epsilon$ dependency has been discussed  for this model for discontinuous Galerkin method in \cite{DGFEM}. The analysis for conforming finite element method is a special case of the problem considered in this paper. \qed
\end{rem}

\section{Conclusions}
We study the minimizers of a $\Qvec$-tensor -$\Mvec$ model for dilute ferronematic suspensions in 2-D framework. The energy functional has two parameters- a scaled elastic parameter $\ell$ and nemato-magnetic coupling parameter $c$. We analyze the asymptotic behavior of the global minimizers of $\tilde{\mathcal{E}}$, as $\ell \rightarrow 0$ and establish that $\vertiii{\Psi^{\ell}}_2 $ is bounded, independent of $\ell.$ This result plays a key role in the $h-\ell$ dependent finite element analysis for regular solutions of the corresponding Euler-Lagrange PDEs  such that \eqref{H2 bound for minimizers} holds. Whether the analysis holds for all regular solutions of the Euler-Lagrange PDEs is proposed as future work. The numerical results focus on the solution landscapes for $\ell = 0.001, c=0, \pm 0.25$  in a square domain, the convergence rates in energy and $\mathbf{L}^2$ norms, the convergence behavior of discrete solutions for various values of $\ell$. The numerical results in this manuscript can be extended to stable solutions, for other values of $c$ and $\ell$ as reported in \cite{Ferronematics_2D}. The convergence of minimizers in $\mathbf{L}^{\infty}$ norm i.e., $\vertiii{\Psi^{\ell}-\Psi_0}_{\infty}$ estimates, the analysis for three-dimensional geometries, the finite element analysis for polygonal domains with re-entrant corners and  Dirichlet boundary data with lesser regularity, {\it a posteriori} error analysis to investigate the effects of defects on numerical errors are interesting and challenging extensions of this work. Moreover, the asymptotic analysis of minimizers with topologically non-trivial boundary conditions and/or %relaxed boundary data regularity assumption,  $\text{deg}(\mathbf{g})=0,$ 
$\ell$-dependent Dirichlet boundary data $\mathbf{g}_{\ell}$, including star-shaped domains, are further interesting areas to be  investigated. 	    
	
\section*{Acknowledgements}
R. R. Maity gratefully acknowledges Professor Yiwei Wang for his illuminating suggestions in numerical computations, as well as Professor Giacomo Canevari for helpful discussions. R. R. Maity also acknowledges the support from institute Ph.D. fellowship. N. N. gratefully acknowledges SERB POWER Fellowship  SPF/2020/000019. A.M. acknowledges support from the University of Strathclyde Global Engagement Fund and an OCIAM Visiting Fellowship, Visiting Professorship from the University of Bath. A.M. acknowledges support from a Leverhulme International Academic Fellowship. A.M. also acknowledges support from the DST-UKIERI for the project on "Theoretical and experimental studies of suspensions of magnetic nanoparticles, their applications and generalizations".

\bibliographystyle{amsplain}
\bibliography{ReferencesLDG}

\end{document}